\newlist{primenumerate}{enumerate}{1}
\setlist[primenumerate,1]{label={\arabic*$'$.}}
\newif\ifptitle
\newif\ifpnumber
\newcounter{para}
\newcommand\ptitle[1]{\par\refstepcounter{para}
{\ifpnumber{\noindent\textcolor{lightgray}{\textbf{\thepara}}\indent}\fi}
{\ifptitle{\textbf{[{#1}]}}\fi}}
\definecolor{dark-gray}{gray}{0.35}
\newcommand{\Pl}{Pl{\"u}cker }
\newcommand{\Gr}{\textnormal{Gr}}
\newcommand{\GL}{\textnormal{GL}}
\newcommand{\Fl}{\textnormal{Fl}}
\newcommand{\TP}{\textnormal{TrFl}^{\geq 0}}
\newcommand{\trop}{\overline{\textnormal{trop}}}
\newcommand{\Trop}{\textnormal{Trop}\hspace{0.7mm}}
\newcommand{\Perm}{\textnormal{Perm}}
\newcommand{\supp}{\textnormal{Supp}}
\newcounter{typeop}
\newcommand{\typeop}[2]{%
  \def\thetypeop{#1}%
  \refstepcounter{typeop}%
  \label{#2}%
  \textit{#1}
}
\newtheorem{definition}{Definition}
\newtheorem{theorem}[definition]{Theorem}
\newtheorem{proposition}[definition]{Proposition}
\newtheorem{cor}[definition]{Corollary}
\newtheorem{example}[definition]{Example}
\newtheorem{lemma}[definition]{Lemma}
\newtheorem{remark}[definition]{Remark}
\newtheorem*{intro}{Theorem}
\newenvironment{thmbis}[1]
  {%
   \addtocounter{theorem}{-1}%
   \begin{theorem}}
  {\end{theorem}}
\newenvironment{propbis}[1]
  {%
   \addtocounter{proposition}{-1}%
   \begin{proposition}}
  {\end{proposition}}
\newenvironment{lemmabis}[1]
  {%
   \addtocounter{lemma}{-1}%
   \begin{lemma}}
  {\end{lemma}}
  \newenvironment{defbis}[1]
  {%
   \addtocounter{definition}{-1}%
   \begin{definition}}
  {\end{definition}}
\numberwithin{definition}{section}
\numberwithin{theorem}{section}
\numberwithin{proposition}{section}
\numberwithin{cor}{section}
\numberwithin{example}{section}
\numberwithin{lemma}{section}
\numberwithin{remark}{section}
\setlist{listparindent=\parindent}
\title{Totally Nonnegative Tropical Flags and the Totally Nonnegative Flag Dressian}
\author{Jonathan Boretsky}
\date{\today}
\begin{document}

\maketitle

\begin{abstract}

 We study the totally nonnegative part of the complete flag variety and of its tropicalization. We show that Lusztig's notion of nonnegative complete flag variety coincides with the set of complete flags which have nonnegative Pl{\"u}cker coordinates. This mirrors the characterization of the totally nonnegative Grassmannian as the set of points in the Grassmannian with nonnegative Pl{\"u}cker. We then study the tropical complete flag variety and complete flag Dressian, which are two tropical versions of the complete flag variety, capturing realizable and abstract flags of tropical linear spaces, respectively. In general, the complete flag Dressian properly contains the tropical complete flag variety. However, we show that the totally nonnegative parts of these spaces coincide.
\end{abstract}
\tableofcontents
\section{Introduction}

We build upon and unite two perspectives on flag varieties: On the one hand, there has been progress in understanding the totally nonnegative parts of flag varieties and how we can characterize them, including \cite{Lus2}\cite{pos}\cite{Wil}\cite{TW}\cite{Lam}\cite{Lus3} and \cite{BK}. On the other hand, there has been interest in the tropicalizations of flag varieties and various ways to understand the resulting tropical flag varieties, including \cite{SS}\cite{Spe}\cite{BLM} and \cite{BEZ}. The Grassmannian is a particularly nice flag variety where these two mathematical notions have been brought together. It has been proven that the \textit{totally nonnegative tropical Grassmannian}, which is the space of realizable positive tropical linear spaces, equals the totally nonnegative part of the \textit{Dressian}, which parameterizes all tropical linear spaces, not just those that are realizable \cite{SW}\cite{ALS}\cite{SW2}. Here, we show that the \textit{totally nonnegative tropical complete flag variety} equals the \textit{totally nonnegative complete flag Dressian}. 

The \textit{real Grassmannian} of $k$-planes in $n$-space, $\Gr_{k,n}$, is the variety where each point corresponds to a $k$-dimensional linear subspace of $\mathbb{R}^n$. A natural generalization of the Grassmannian is the \textit{flag variety} $\textnormal{Fl}_{\bm{r},n}$ of rank $\bm{r}=(r_1,r_2,\ldots, r_k)$ in $n$-space. The points of this space correspond to collections of linear subspaces  $L_1\subsetneq L_2\subsetneq \cdots \subsetneq L_k\subset \mathbb{R}^n$ such that $\dim(L_i)=r_i$. Two notable examples of flag varieties are $\Gr_{k,n}$, of rank $(k)$, and the complete flag variety $\textnormal{Fl}_n$, of rank $(1,2,\ldots,n)$.

In \cite{Lus2}, the totally nonnegative part of a flag variety is defined. A number of authors, among them \cite{Rie}, \cite{TW}, \cite{Lam} and \cite{Lus3}, have proven that the totally nonnegative Grassmannian consists precisely of those points in the Grassmannian for which each \Pl coordinate (see \Cref{TNNflagvariety} for the definition) is nonnegative. We extend this result to the setting of the complete flag variety. A construction based on the parameterization of the totally nonnegative complete flag variety, $\textnormal{Fl}_n^{\geq 0}$, by Marsh and Rietsch \cite{MR} will allow us to understand explicitly the \Pl coordinates of an arbitrary flag $F$ in $\textnormal{Fl}_n^{\geq0}$. These coordinates will be indexed by proper nonempty subsets $I\subset [n]$ and denoted $P_I(F)$. We can now state our first main result, \Cref{nonnegPluckCoords}:

\begin{intro}
The totally nonnegative complete flag variety $\textnormal{Fl}_n^{\geq 0}$ equals the set $\{F\in \textnormal{Fl}_n|\;P_I(F)\geq 0\; \forall \; \emptyset\neq I\subset[n]\}$.
\end{intro}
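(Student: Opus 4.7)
The plan is to prove the two containments separately, using the Marsh-Rietsch parameterization as the central tool throughout. The forward containment $\textnormal{Fl}_n^{\geq 0} \subseteq \{F \in \textnormal{Fl}_n : P_I(F) \geq 0 \text{ for all } I\}$ is the easier direction. Given $F \in \textnormal{Fl}_n^{\geq 0}$, the Marsh-Rietsch parameterization expresses a matrix representative of $F$ as an ordered product of elementary one-parameter subgroups with nonnegative parameters. Each Plücker coordinate $P_I(F)$ is then a polynomial in these parameters obtained by expanding a minor, and because the factors in the product have nonnegative entries, this polynomial has nonnegative coefficients. Hence $P_I(F) \geq 0$ for every $I \subseteq [n]$.

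For the reverse containment, the strategy is to invert the Marsh-Rietsch parameterization cell by cell. The totally nonnegative complete flag variety decomposes into cells indexed by combinatorial data, and each cell is parameterized by a tuple of nonnegative real parameters whose cardinality equals the dimension of the cell. Given an arbitrary $F \in \textnormal{Fl}_n$ with $P_I(F) \geq 0$ for all $I$, I would first locate the unique cell whose vanishing pattern of Plücker coordinates matches that of $F$. Then, within that cell, I would invoke the combinatorial description of a maximal algebraically independent subset $\{P_J\}_{J \in \mathcal{J}}$ of the Plücker coordinates; since its cardinality equals the cell dimension, it provides local coordinates on the cell, and the Marsh-Rietsch parameters can be recovered from the values $P_J(F)$ for $J \in \mathcal{J}$ via explicit rational expressions.

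The crucial step, and what I expect to be the main obstacle, is showing that these inversion formulas are \emph{subtraction-free}: each Marsh-Rietsch parameter is a ratio of polynomials in $\{P_J\}_{J \in \mathcal{J}}$ with nonnegative coefficients. Granted this, the nonnegativity of $P_J(F)$ for $J \in \mathcal{J}$ immediately yields nonnegativity of the recovered parameters, placing $F$ in the image of the Marsh-Rietsch map and hence in $\textnormal{Fl}_n^{\geq 0}$. Establishing subtraction-free inversion will likely proceed by induction on the length of the Marsh-Rietsch word, using three-term Plücker and incidence relations to solve for each successive parameter as a ratio of specific Plücker minors, in the spirit of the known cluster-algebraic formulas on double Bruhat cells. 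A subsidiary technical point is to verify that the combinatorially defined cell is in fact uniquely determined by the support $\{I : P_I(F) \neq 0\}$, so that one does not need to try multiple candidate cells; this should follow from the fact that the algebraically independent subset $\mathcal{J}$ distinguishes cells by whether its members vanish at $F$.
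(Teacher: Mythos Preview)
Your overall strategy for the reverse containment matches the paper's: identify the cell $\mathcal{R}_{v,w}^{>0}$ from the support of $F$, invert the Marsh--Rietsch parameterization on a transcendence basis of Pl\"ucker coordinates, and check the recovered parameters are positive. However, two steps are genuinely problematic as written.

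First, your forward-direction argument is incorrect. The Marsh--Rietsch product $M_{v,w}(\bm a)=M_1\cdots M_k$ is \emph{not} a product of matrices with nonnegative entries: whenever $M_j=\dot s_{i_j}$, that factor has a $-1$ in position $(i_j{+}1,i_j)$. So one cannot conclude that the minors are subtraction-free polynomials just by multiplying out. The paper handles this via the Lindstr\"om--Gessel--Viennot interpretation of $M_{v,w}(\bm a)$ (its Lemma~3.8), where the potential sign contributions from the $\dot s_i$'s and from the permutation in the LGV formula are shown to cancel; ultimately the forward direction is quoted from Kodama--Williams.

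Second, and more seriously, you treat ``locate the unique cell whose vanishing pattern matches that of $F$'' as a routine step and relegate it to a ``subsidiary technical point'' about \emph{uniqueness}. The real difficulty is \emph{existence}: one must show that every flag with all $P_I\ge 0$ has support equal to the support of some $\mathcal{R}_{v,w}^{>0}$. This is the paper's Lemma~5.13 (synthetic flag positroids $=$ realizable flag positroids), and its proof is substantial---it requires first knowing that extremal coordinates determine all others (Theorem~5.8) and then running a careful Gale-order induction using three-term incidence relations to rule out any extra vanishing. Your sketch offers no mechanism for this. Once existence is established, the inversion the paper actually performs is cleaner than you anticipate: the map $\Psi_{v,w}$ is expressed as Laurent \emph{monomials} (not merely subtraction-free rationals) in a specific set $S_{v,w}$ of extremal Pl\"ucker coordinates, which is what makes both positivity of the parameters and the tropical analogue immediate.
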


Around the time that this preprint appeared on the Arxiv, this result also appeared in independent work of Bloch and Karp \cite{BK}, where they show that the totally nonnegative part of the flag variety $\textnormal{Fl}_{\bm{r},n}$ can be characterized as the set of flags with nonnegative \Pl coordinates if and only if $\bm{r}=(a, a+1,\ldots, b)$ consists of consecutive integers. In contrast to the proof of Bloch and Karp, our proof offers insight into the dependencies of the \Pl coordinates on the parameters in the Marsh-Rietsch parameterization of the totally nonnegative complete flag variety. This will prove to be an important feature for proving our second main result.

Tropical geometry is the geometry of the tropical semiring $\mathbb{T}=\mathbb{R}\cup\{\infty\}$ where multiplication is replaced by addition, and addition is replaced by minimization. Thus, if we tropicalize a polynomial, we get a minimization over a collection of sums of variables. We say a point is a solution of a tropical polynomial if that minimum is achieved at least twice. In this setting, varieties become polyhedral cell complexes, making them amenable to combinatorial study. For precise definitions, see \Cref{tropicaldefs}. Tropical geometry has indeed proven a useful tool in algebraic combinatorics, as in \cite{Mik}\cite{Kap}, and most notably for our purposes, \cite{SW} and \cite{SW2}. 

For $k\leq n$, $\Gr_{k,n}$ is an algebraic variety cut out by the \emph{\Pl relations}, which generate an ideal called the \textit{\Pl ideal}. The set of points satisfying the tropicalizations of all the \Pl relations is called the \textit{Dressian} and is the parameter space of abstract tropical linear spaces \cite{Spe}. The set of points satisfying the tropicalizations of all polynomials in the \Pl ideal is called the \textit{tropical Grassmannian} and is the parameter space of realizable tropical linear spaces \cite{SS}\cite{HKT}. In general, the Dressian properly contains the tropical Grassmannian (see, for instance, \cite{HJM}). However, independently in \cite{SW2} and in \cite{ALS}, it is shown that if we restrict to positive solutions, for an appropriate notion of positivity, the situation is simpler: the \textit{positive (resp. totally nonnegative) Dressian} equals the \textit{positive (resp. totally nonnegative) tropical Grassmannian}. More explicitly, this means that a common positive solution to the tropicalizations of all the \Pl relations is also a positive solution to the tropicalization of any polynomial in the ideal generated by the \Pl relations. We generalize this fact to the setting of the complete flag variety:

\begin{intro}
The totally nonnegative tropical complete flag variety, $\textnormal{TrFl}_n^{\geq{0}}$, equals the totally nonnegative complete flag Dressian, $\textnormal{FlDr}_n^{\geq 0}$.
\end{intro}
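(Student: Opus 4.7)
The inclusion $\TP_n \subseteq \textnormal{FlDr}_n^{\geq 0}$ is immediate, since the flag Dressian is cut out by tropicalizations of the incidence-\Pl relations, which themselves lie in the \Pl ideal of $\Fl_n$. The real content is the reverse inclusion: every $\bmu \in \textnormal{FlDr}_n^{\geq 0}$ is realizable as the tropicalization of a genuine positive flag. My plan is to pass to a non-Archimedean valued field $K$ (for example, the generalized Puiseux series with real exponents, which has an evident positive cone $K_{>0}$) and produce a flag $F \in \Fl_n^{\geq 0}(K)$ with $\textnormal{val}(P_I(F)) = \bmu_I$ for every $I \subseteq [n]$. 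The first main theorem, applied over $K$, identifies $\Fl_n^{\geq 0}(K)$ with the set of $K$-flags whose \Pl coordinates lie in $K_{>0}$; coordinatewise valuation of any such flag lies in $\TP_n$ by definition, so constructing $F$ suffices.

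To build $F$, I would use the Marsh--Rietsch parameterization $\Phi$ of $\Fl_n^{\geq 0}(K)$ by a product of copies of $K_{>0}$. Each composition $P_I \circ \Phi$ is subtraction-free in the Marsh--Rietsch parameters, and subtraction-free rational maps commute with tropicalization. Consequently the tropicalized map $\textnormal{trop}(\Phi)$ lands in $\TP_n \subseteq \textnormal{FlDr}_n^{\geq 0}$, and the problem reduces to showing it surjects onto $\textnormal{FlDr}_n^{\geq 0}$. For this I would invoke the combinatorial description---obtained en route to the first main theorem---of a maximal algebraically independent set $S$ of \Pl coordinates on $\Fl_n^{\geq 0}$. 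A point $\bmu \in \textnormal{FlDr}_n^{\geq 0}$ should be determined by its restriction $\bmu|_S$, with the remaining coordinates forced by the tropicalized incidence-\Pl relations read with min-plus arithmetic. Prescribing the values on $S$ and choosing Marsh--Rietsch parameters whose tropicalizations reproduce them would then yield the desired lift, since the other tropical \Pl coordinates of $\textnormal{trop}(\Phi)$ satisfy the same positive relations as $\bmu$ and must agree with $\bmu$ on the complement of $S$.

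The main obstacle is this final matching step: realizing arbitrary positive tropical values of $\bmu|_S$ through $\textnormal{trop}(\Phi)$ and ensuring that the induced values on the dependent \Pl coordinates coincide with $\bmu$. In the Grassmannian case, Speyer--Williams and Arkani-Hamed--Lam--Spradlin exploit cluster or network parameterizations in which the dependence of \Pl coordinates on the cluster variables is sufficiently triangular that the tropicalized map is a piecewise-linear bijection. I expect the flag analogue to rely on a similar unimodular triangularity between the Marsh--Rietsch parameters and the independent set $S$ isolated in the proof of the first main theorem. Verifying this bijectivity, and reconciling it with the positive-tropical propagation along the incidence-\Pl relations, is where the technical difficulty concentrates.
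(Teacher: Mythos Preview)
Your overall strategy matches the paper's: use the (tropicalized) Marsh--Rietsch parameterization, show that a distinguished set of Pl\"ucker coordinates determines the rest via three-term incidence relations, and exploit that the inverse map is given by Laurent monomials (your ``unimodular triangularity''), so that tropicalization commutes with inversion. The paper carries this out exactly, proving that each extremal Pl\"ucker coordinate is a monomial in the Marsh--Rietsch parameters and conversely, so that $\Trop\Psi_{v,w}$ and $\Trop\Phi_{v,w}$ are mutually inverse and involve no minimization.

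There is, however, one genuine omission. You speak of ``the'' Marsh--Rietsch parameterization $\Phi$ and ``a'' maximal independent set $S$, but these exist only cell by cell: $\Fl_n^{\geq 0}$ decomposes as $\bigsqcup_{v\leq w}\mathcal{R}_{v,w}^{>0}$, and each cell has its own $\Phi_{v,w}$ and its own set $S_{v,w}$ of extremal coordinates. A general $\bmu\in\textnormal{FlDr}_n^{\geq 0}$ has some coordinates equal to $\infty$, and before any parameter-matching can begin you must know that $\supp(\bmu)$ coincides with the support of \emph{some} $\mathcal{R}_{v,w}^{>0}$. A priori, $\supp(\bmu)$ is only a flag matroid each of whose constituents is a positroid; it is not obvious that such a thing is a realizable flag positroid (the paper gives an explicit example of a flag of positroids that is not). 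The paper devotes a separate argument to precisely this point, using the three-term relations to force the support of any nonnegative Dressian point to agree with some $\bm{\mathcal{M}}^{v,w}$, and thereby obtains a decomposition $\textnormal{FlDr}_n^{\geq 0}=\bigsqcup_{v\leq w}\textnormal{FlDr}_{v,w}^{>0}$. Without this step there is no cell to which your lifting argument can be applied, and your sketch as written only covers the totally positive case (all coordinates finite, top cell).
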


Here, the tropical complete flag variety is the set of points satisfying the tropicalizations of all polynomials in the ideal cutting out the complete flag variety, which is called the \textit{incidence \Pl ideal}. The complete flag Dressian is the set of points satisfying the tropicalizations  of a particular set of generators for the incidence \Pl ideal, called the \textit{incidence \Pl relations}, which generalize the \Pl relations. These spaces can be thought of as parameterizing abstract flags of tropical linear spaces and realizable flags of tropical linear spaces, respectively \cite{BEZ}.  This result implies the equivalence of the totally positive complete flag variety and the totally positive complete flag Dressian, proven independently by \cite{JLLO}.

While the results highlighted in this introduction may feel fairly different from one another, their proofs are virtually identical. In fact, in this paper, we will present two parallel stories, one in the real world and the second in the tropical world. Many of our results and definitions about the complete flag variety will immediately be followed by an analogous statement about the complete flag Dressian, which will be indicated by a superscript ``trop" in the numbering of the statement.

Let $\mathfrak{S}_n$ be the symmetric group on $[n]$. Define the \textit{permutahedron} $\Perm_n$ to be the convex hull of the points $\{(w(1),w(2),\ldots, w(n))\mid w\in \mathfrak{S}_n\}$ in $\mathbb{R}^n$. The symmetric group carries a partial order called the (strong) Bruhat order, which will be defined in 
\Cref{sectparam}. For $u,v\in \mathfrak{S}_n$ with $u\leq v$ in Bruhat order, define the \textit{Bruhat interval polytope} $P_{u,v}$ to be the convex hull of the points $\{(w(1),w(2),\ldots, w(n))\mid w\in \mathfrak{S}_n\textnormal{ and }u\leq w\leq v \textnormal{ in Bruhat order}\}$ in $\mathbb{R}^n$ \cite{KW}. Note that Bruhat interval polytopes generalize $\Perm_n=P_{id,w_0}$, where $id$ is the identity permutation, which is minimal in Bruhat order, and $w_0$ is the unique maximal element in Bruhat order. In \cite{JLLO}, the authors showed that a point of the positive complete flag Dressian can be thought of as a height function on the permutahedron, which induces a coherent subdivision of the permutahedron into Bruhat interval polytopes. 

In this paper, we study the totally nonnegative complete flag Dressian, which extends the positive complete flag Dressian by allowing points to have coordinates whose values are $\infty$. Each such point can be seen as a height function on some Bruhat interval polytope $P_{u,v}$. In our follow up paper \cite{BEW}, we will study coherent subdivisions of Bruhat interval polytopes arising from the points of the totally nonnegative flag Dressian, and also generalize some of the results of this paper to the totally nonnegative flag varieties $\textnormal{Fl}_{\bm{r},n}^{\geq 0}$ where $\bm{r}=(a,a+1,\cdots, b)$ consists of consecutive integers, that is, the varieties of flags of subspaces of consecutive ranks.

The structure of the paper is as follows: In \Cref{sect:flagvarieties}, we introduce the totally nonnegative complete flag variety and its tropicalization. In \Cref{sect:comb}, we present the two key combinatorial ideas that underlie many of our proofs. In \Cref{sectparam}, we delve deeper into the combinatorics of the totally nonnegative complete flag variety and explore the details of a parameterization of it. In \Cref{secttopcell}, we present our main results in the context of the totally positive flag variety, where all \Pl coordinates are strictly positive. Our results are significantly easier to prove in this setting. As such, the totally positive flag variety offers an illustrative application of our proof method which avoids many of the technical details. In \Cref{sectgeneral}, we extend the methods from the previous section to show our main results in full generality. We present the proof of \Cref{extremunique} in its own section, \Cref{sec:BigProof}, due to its technicality and length. 

\ptitle{Acknowledgements}\textbf{Acknowledgements:} The author thanks their supervisor Lauren Williams for introducing them to the tropical flag variety and for many helpful conversations as this paper developed. The author also thanks Chris Eur, Mario Sanchez and Melissa Sherman-Bennett for helpful conversations, examples and references. The author greatly appreciates the thorough and thoughtful recommendations provided by an anonymous reviewer. The author was supported by the Natural Sciences and Engineering Research Council of Canada (NSERC). L'auteur a été financé par le Conseil de recherches en sciences naturelles et en génie du Canada (CRSNG), [Ref. no. 557353-2021].

\section{The Flag Variety}\label{sect:flagvarieties}

In this section, we will introduce the complete flag variety and its nonnegative part. We will then discuss tropicalizations of the flag variety and their nonnegative parts. These spaces will be the setting for our main results. 

 \subsection{The Totally nonnegative Complete Flag Variety}\label{TNNflagvariety}

 \begin{definition}
 The \textbf{complete flag variety} $\textnormal{Fl}_n$ is the set of all \textbf{complete flags} in $\mathbb{R}^n$, which are collections $(V_i)_{i=0}^n$ of linear subspaces satisfying $\{0\}=V_0\subsetneq V_1 \subsetneq \cdots \subsetneq V_n=\mathbb{R}^n$.  
 \end{definition}

 $\textnormal{Fl}_n$ is a multi-projective variety. We can represent a flag $(V_i)_{i=0}^n$ by a full rank $n\times n$ matrix $M$ such that $V_i$ equals the span of the topmost $i$ rows of $M$. Let $\GL(n,\mathbb{R})$ be the group of invertible $n\times n$ real matrices. Let $B_-$ be the \textit{Borel subgroup} of $\GL(n,\mathbb{R})$ consisting of lower triangular matrices. One can check that two matrices $M,M'\in \GL(n,\mathbb{R})$ represent the same flag if and only if they are related by left multiplication by some $B\in B_{-}$.  Thus, we can think of the complete flag variety as $\textnormal{Fl}_n=\{B_-u|u\in GL(n,\mathbb{R})\}$, where a flag in $\textnormal{Fl}_n$ represented by a matrix $u$ is identified with the set $B_-u$.
 
 For $\emptyset\neq I\subset[n]=\{1,\ldots,n\}$ and $M$ an $n\times n$ matrix, the \textit{\Pl coordinate} (or, alternatively, \textit{flag minor}) $P_I(M)$ is the determinant of the submatrix of $M$ in rows $\{1,2,\ldots,|I|\}$ and columns $I$. To any flag $F$, associate the collection of \Pl coordinates $P(F)=(P_I(F))_{\emptyset \neq I\subset[n]}$, defined to be the \Pl coordinates of any matrix representative of that flag.  By {\cite[Proposition 14.2]{MS}}, this is an embedding of $\textnormal{Fl}_n$ in $\mathbb{R}\mathbb{P}^{{\binom{n}{1}}-1}\times\cdots \times \mathbb{R}\mathbb{P}^{\binom{n}{n-1}-1}$.  The \Pl coordinates of flags in $\textnormal{Fl}_n$ are cut out by multi-homogeneous polynomials which generalize the usual (Grassmann) \Pl relations, as shown in the following statements. Note that we will often use shorthand notation such as $(S\setminus ab)\cup cd$ in place of $\left(S\setminus \{a,b\}\right)\cup \{c,d\}$.

 \ptitle{incidence \Pl relations}\begin{definition}[\cite{Ful}]
  Consider $\mathbb{R}\mathbb{P}^{\binom{n}{1}-1}\times\cdots \times \mathbb{R}\mathbb{P}^{\binom{n}{n-1}-1}$, with coordinates indexed by proper nonempty subsets of $[n]$. For $1\leq r\leq s\leq n$, the \textbf{incidence \Pl relations} for indices of size $r$ and $s$ are 
  \begin{equation}\label{pluckerrelation}
      \mathfrak{P}_{r,s;n}=\left\{\sum_{j\in J\setminus I}sign(j,I,J)P_{I\cup j}P_{J\setminus j }\left| I \in {\binom{[n]}{r-1}},\: J\in {\binom{[n]}{s+1}} \right.\right\},
  \end{equation}
 where $sign(j,I,J)=(-1)^{|\{k\in J|k<j\}|+|\{i\in I|j<i\}|}$.
 
The full set of incidence \Pl relations is $\mathfrak{P}_{IP;n}=\bigcup_{1\leq r\leq s\leq n} \mathfrak{P}_{r,s;n}$. The ideal generated by $\mathfrak{P}_{IP;n}$, denoted $I_{{IP;n}}$, is called the \textbf{incidence \Pl ideal}.

 \end{definition}
 
\begin{remark} \label{rmk:grassmannrelations}
 When $r=s$, we recover the (Grassmann) \textit{\Pl relations}. When we want to emphasize that we are interested in the incidence \Pl relations for which $r\neq s$, we will call them \textit{incidence relations}. 

\end{remark} 

\begin{remark} \label{rem:threetermrelations}
The shortest relations appearing in $\mathfrak{P}_{IP;n}$ consist of three terms. These three-term relations will be of fundamental importance in this paper, so we take the time here to write them out explicitly. When $s=r+1$, we have three-term incidence relations: Let $S\subset[n]$ be of size $r-1$ and fix $i,j,k\notin S$ with $i<j<k$. Then, letting $I=S$ and $J=S\cup ijk$, we obtain $P_{S\cup j}P_{S\cup ik}=P_{S\cup i}P_{S\cup jk}+P_{S\cup k}P_{S\cup ij}$. When $s=r$, we have three-term \Pl relations: Let $S\subset[n]$ be of size $r-2$ and fix $i,j,k,l\notin S$ with $i<j<k<l$. Then, letting $I=S\cup i$ and $J=S\cup jkl$, we obtain $P_{S\cup ik}P_{S\cup jl}=P_{S\cup il}P_{S\cup jk}+P_{S\cup ij}P_{S\cup kl}$. 
    
\end{remark}

\begin{proposition}
[{\cite[Section 9, Proposition 1 and the following discussion]{Ful}}]\label{pluckerideal}
 Let $P\in\mathbb{R}\mathbb{P}^{\binom{n}{1}-1}\times\cdots \times \mathbb{R}\mathbb{P}^{\binom{n}{n-1}-1}$. Then $P=P(F)$ for some $F\in \textnormal{Fl}_n$ if and only if $P$ satisfies the incidence \Pl relations $\mathfrak{P}_{IP;n}$. 
 \end{proposition}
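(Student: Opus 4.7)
The proof has two directions, and since this is a classical multi-projective analogue of the Plücker embedding of a single Grassmannian, the natural strategy is to reduce everything to the known Grassmannian case and then handle incidences.

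The plan is as follows. For the easy direction, suppose $F \in \Fl_n$ is represented by a full-rank $n \times n$ matrix $M$. I need to verify that the numbers $P_I(M) = \det(M_I)$ satisfy every relation in $\mathfrak{P}_{r,s;n}$. Fix $I$ of size $r-1$ and $J$ of size $s+1$. The expression
\[
\sum_{j\in J\setminus I} \textnormal{sign}(j,I,J)\, P_{I\cup j}(M)\, P_{J\setminus j}(M)
\]
is a bilinear alternating form in the rows of $M$ indexed by $\{1,\dots,r\}$ and $\{1,\dots,s\}$. One checks it equals the determinantal identity obtained by Laplace/cofactor expansion: it can be rewritten as the expansion of a $(r+s) \times (r+s)$ determinant of a block matrix whose first $s$ rows are forced to be linearly dependent (since $r \le s$ forces the top $r$ rows to repeat inside the bottom $s$ rows). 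Hence the sum is zero. This is a routine determinantal manipulation and does not depend on $F$ being a flag in any essential way.

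For the hard direction, suppose $P = (P_I)_{I \subsetneq [n]}$ satisfies all incidence Plücker relations. First I would split the data by size: for each $1 \le k \le n-1$, the coordinates $(P_I)_{|I|=k}$ satisfy the pure Grassmann–Plücker relations $\mathfrak{P}_{k,k;n}$ (these are the case $r=s=k$ of the incidence relations). By the classical Plücker theorem for the Grassmannian, there is a unique $k$-dimensional subspace $V_k \subset \mathbb{R}^n$ whose Plücker coordinates are $(P_I)_{|I|=k}$ (up to scale, which we absorb into the projective coordinate). This produces candidate subspaces $V_1, \dots, V_{n-1}$; all that remains is to show they are nested, $V_k \subsetneq V_{k+1}$ for all $k$.

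The heart of the argument is to show that the mixed relations $\mathfrak{P}_{k,k+1;n}$ (with $I$ of size $k-1$ and $J$ of size $k+2$) are exactly the algebraic translation of the geometric condition $V_k \subseteq V_{k+1}$. To see this, pick matrices $A$ and $B$ whose rows span $V_k$ and $V_{k+1}$ respectively; containment $V_k \subseteq V_{k+1}$ is equivalent to the vanishing of every $(k+2) \times (k+2)$ minor of the stacked matrix $\binom{A}{B}$. Expanding such a minor by Laplace along the top $k$ rows (using column sets $I \cup \{j\}$ with $|I|=k-1$) produces exactly the incidence relation for $(r,s)=(k,k+1)$, with the signs matching $\textnormal{sign}(j,I,J)$. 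Thus satisfaction of $\mathfrak{P}_{k,k+1;n}$ forces $V_k \subseteq V_{k+1}$, and the dimensions already differ by $1$, so the inclusion is strict. Then $F = (V_1 \subsetneq \dots \subsetneq V_{n-1})$ is a flag, its Plücker coordinates equal $P$ by construction, and the more general mixed relations $\mathfrak{P}_{r,s;n}$ with $s - r \ge 2$ follow automatically from the nested containments, so they impose no additional constraints.

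The main obstacle, I expect, is the bookkeeping in this Laplace-expansion step: carefully identifying the signed sum over $(k+2) \times (k+2)$ minors of $\binom{A}{B}$ with the signed incidence relation, and verifying that $\textnormal{sign}(j,I,J)$ agrees with the signs produced by the cofactor expansion. Once that identification is made, both directions reduce to the known Grassmannian case combined with the geometric reformulation of containment of subspaces.
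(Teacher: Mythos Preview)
The paper does not prove this proposition at all; it is stated as a citation of a classical result from Fulton's \emph{Young Tableaux} (Section 9, Proposition 1 and the discussion following it), with no proof given. Your sketch is a reasonable outline of the standard argument (reduce to the single-Grassmannian case via the pure Pl\"ucker relations, then interpret the mixed relations as the algebraic condition for containment), which is essentially the approach in the cited reference, so there is nothing to compare.
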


In particular, this means the incidence \Pl relations are precisely the relations between the topmost minors of a generic full rank matrix.

\begin{definition}
    A real matrix is \textbf{totally positive} if all of its minors are positive. We denote the set of totally positive $n\times n$ matrices by $\GL_n^{>0}$.
\end{definition}

Lusztig \cite{Lut} defined the non-negative flag variety to be the closure of the set of flags with a totally positive representative: 

\begin{definition}
    The (Lusztig) positive part of the flag variety, denoted $\Fl_n^{>0}$, is the subset of $\Fl_n$ with a representative in $\GL_n^{>0}$. The (Lusztig) nonnegative part of the flag variety, denoted $\Fl_n^{\geq 0}$, is the closure of $\Fl_n^{>0}$ in the Euclidean topology on $\mathbb{R}\mathbb{P}^{\binom{n}{1}-1}\times\cdots \times \mathbb{R}\mathbb{P}^{\binom{n}{n-1}-1}$. 
\end{definition}

In \Cref{sectparam}, we give a cell decomposition and a parameterization of the totally nonnegative complete flag variety and investigate the \Pl coordinates of totally nonnegative complete flags.

\subsection{The Complete Flag Dressian and the Tropical Complete Flag Variety}\label{tropicaldefs}

\ptitle{Section outline} We now introduce some notation and terminology needed for discussing \textit{tropical varieties}, as well as the precise definitions of the \textit{totally positive (resp. nonnegative) tropical complete flag variety} and the \textit{totally positive (resp. nonnegative) complete flag Dressian}.

Let $\bm x=(x_1,\ldots,x_n)\in \mathbb{R}^n$. For $\bm{b}=(b_1,\ldots,b_n)\in\mathbb{Z}^n$, we write $\bm {x}^{\bm {b}}=x_1^{{b}_1}\cdots x_n^{{b}_n}$. We now define the \textit{tropicalization of a polynomial $p$} which is, roughly speaking, the expression obtained by replacing addition by minimization and multiplication by addition. \Cref{tropsol} and \Cref{tropideal} are modified from \cite{BEZ}. \Cref{otherdef} will offer further motivation for some of the terminology employed. 

\begin{definition}\label{tropsol}
Let $p=\sum_i a_i\bm{x}^{\bm b_i}$ be a Laurent polynomial, where each $a_i\in\mathbb{R}$ and each $\bm b_i\in \mathbb{Z}^n$. We define the \textbf{tropicalization} of $p$, $\Trop(p):\mathbb{R}^n\rightarrow\mathbb{R}$, by

\begin{equation} 
\Trop(p)=\min_i \left\{\bm{x}\cdot \bm {b_i}\right\}= \min_i \left\{x_1(b_i)_1+\cdots+ x_n(b_i)_n\right\},
\end{equation}
if $p$ is non-zero. We define $\Trop(0)=\infty$. For an $m-$tuple of polynomials $p=(p_1,\cdots, p_m):\mathbb{R}^n\rightarrow\mathbb{R}^m$, we define $\Trop(p):\mathbb{R}^n\rightarrow \mathbb{R}^m$ componentwise.
\end{definition}

\begin{definition}
Let $\mathbb{T}=\mathbb{R}\cup \infty$. For a polynomial $p=\sum_i a_i\bm{x}^{\bm{b}_i}:\mathbb{R}^n\rightarrow\mathbb{R}$, we say that a point $\bm{x}\in\mathbb{T}^n$ is a \textbf{solution of the tropicalization of $p$} if $\min_i \left\{\bm{x}\cdot \bm {b_i}\right\}$ is achieved at least twice. We say that a point in $\mathbb{T}^n$ is a \textbf{nonnegative solution of the tropicalization of $p$} if, additionally, at least one of the minima comes from a term of $p$ with a positive real coefficient, and at least one of the minima comes from a term of $p$ with a negative real coefficient. Equivalently, if we rewrite $p=0$ in the form $\sum_j c_j\bm{x}^{\bm b_j}=\sum_i d_i\bm{x}^{\bm b_i}$ with all $c_j$ and $d_i$ positive, then we want at least one minimum to occur in a term coming from each side of the equality. We say that a nonnegative solution of the tropicalization of $p$ is \textbf{positive} if $x_i\neq \infty$ for all $i\in[n]$.
\end{definition}

\begin{remark}
    Since tropical addition is minimization, $\infty$ can be described as the tropical zero. This justifies the relationship between nonnegative and positive tropical solutions to a polynomial in the previous definition: a nonnegative solution is positive if none of its coordinates equal the tropical zero. 
\end{remark}

\begin{remark} In this paper, we will only consider the tropicalization of polynomials with real coefficients. We briefly outline the construction for more general coefficients, which can be found in full in \cite{EKL}. If $K$ is a field with a valuation $\nu:K\rightarrow\mathbb{T}$, we can define the tropicalization of a polynomial with coefficients $a_i\in K$. We let $\Trop(\sum_i a_i\bm{x}^{\bm{b}_i})=\min_i \left\{\nu(a_i)+\bm{x}\cdot \bm {b_i}\right\}$. A tropical solution is then still defined to be a point in $\mathbb{T}^n$ where the minimum is achieved at least twice. For instance, we recover real-representables matroid by considering $K=\mathbb{R}$ with the trivial valuation. 

\end{remark}

\ptitle{Projective Tropical spaces} The tropical objects we are interested in will live in \textit{projective tropical spaces}, which are spaces that interact nicely with homogeneous polynomials.

\begin{definition}
 \textbf{Projective tropical space}, denoted $\mathbb{T}\mathbb{P}^n$, is given by $\left(\mathbb{T}^{n+1}\setminus (\infty,\cdots \infty)\right)/\sim$, where $\bm x \sim \bm y$ if there exists $c\in \mathbb{R}$ such that $x_i=y_i+c$ for all $i\in [n]$. 
\end{definition}

\begin{proposition}
If $p$ is a homogeneous Laurent polynomial, then $\bm x$ is a (nonnegative) solution of $\Trop(p)$ if and only if $\bm y$ is a (nonnegative) solution of $\Trop (p)$ for all $\bm y \sim \bm x$. 
\end{proposition}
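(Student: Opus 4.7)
The plan is to exploit the defining property of homogeneity: every monomial of $p$ has the same total degree. Writing $p = \sum_i a_i \bm{x}^{\bm{b}_i}$ with $|\bm{b}_i| := \sum_j (b_i)_j = d$ for every $i$, the key observation will be that replacing a tropical point by an equivalent representative shifts every tropical monomial $\bm{x} \cdot \bm{b}_i$ by the \emph{same} amount, so the set of indices $i$ achieving the minimum is unchanged.

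First, I would fix an arbitrary representative $\bm{y} \sim \bm{x}$, so $\bm{y} = \bm{x} + c\bm{1}$ for some $c \in \mathbb{R}$, with the convention $\infty + c = \infty$ on any coordinate of $\bm{x}$ equal to $\infty$. A one-line computation using homogeneity yields
$$\bm{y} \cdot \bm{b}_i \;=\; \bm{x} \cdot \bm{b}_i + c \sum_j (b_i)_j \;=\; \bm{x} \cdot \bm{b}_i + cd$$
for every term of $p$. Thus the function $i \mapsto \bm{y}\cdot \bm{b}_i$ is obtained from $i \mapsto \bm{x}\cdot \bm{b}_i$ by adding the constant $cd$ (again, extended to $\infty$ when appropriate).

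Second, since adding a global constant preserves argmins, the sets
$$\{i : \bm{x}\cdot \bm{b}_i = \Trop(p)(\bm{x})\} \quad\text{and}\quad \{i : \bm{y}\cdot \bm{b}_i = \Trop(p)(\bm{y})\}$$
coincide. The ordinary case follows: the minimum is attained at least twice for $\bm{x}$ iff it is attained at least twice for $\bm{y}$. For the positive case, the sign conditions in the definition depend only on which indices $i$ achieve the minimum and on the signs of the corresponding coefficients $a_i$, the latter of which are intrinsic to $p$ and do not involve $\bm{x}$. Since the minimizing set is identical for $\bm{x}$ and $\bm{y}$, the sign conditions hold for one iff they hold for the other.

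There is no substantive obstacle here; the statement is essentially a sanity check that the definitions from \cref{tropsol} descend from $\mathbb{T}^n$ to $\mathbb{T}\mathbb{P}^n$ when applied to homogeneous $p$. The only minor care needed is to verify that the argument handles $\infty$-coordinates correctly, which it does because $\infty + cd = \infty$ and such terms do not participate in the minimum in either the $\bm{x}$ or $\bm{y}$ calculation.
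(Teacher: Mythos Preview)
Your proof is correct and is precisely the direct verification the paper has in mind; the paper itself gives no argument beyond declaring the statement ``immediate from the definition,'' and your computation $\bm{y}\cdot\bm{b}_i = \bm{x}\cdot\bm{b}_i + cd$ is exactly the content of that immediacy.
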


\begin{proof}
    Let \[p=\sum_{\substack{{\bm b}=(b_1,\ldots, b_n)
    \in \mathbb{Z}^n\\b_1+\cdots+ b_n=d}} a_{\bm b}{\bm x}^{\bm b}\]

    \noindent be a homogeneous polynomial of degree $d$. 
    Then,
    \[\Trop(p)=\min_{\substack{{\bm b}=(b_1,\ldots, b_n)
    \in \mathbb{Z}^n\\b_1+\cdots+ b_n=d}}\left\{b_1x_1+\cdots +b_nx_n\right\}.\]
    
    If $y_i=x_i+c$ for all $i\in[n]$, then $b_1y_1+\cdots +b_ny_n=(b_1x_1+\cdots+ b_nx_n)+cd$. Thus, all terms of $\Trop(p)$ get shifted by $cd$. While this shifts the value of the minimum by $cd$, the $\bm b$ which actually achieve that minimum are unchanged, and so $\bm x$ is a (nonnegative) solution of $\Trop(p)$ if and only if $\bm y$ is a (nonnegative) solution of $\Trop(p)$.
\end{proof}

\ptitle{Example of tropical solutions} \begin{example}
 Consider the homogeneous polynomial $p(x,y)=x^2-xz+y^2$. Then $\Trop(p)$ is given by $\min\{2x, x+z, 2y\}$. The point $(1,1,2)$ is a solution to $\Trop(p)$ since $2x=2y=2\leq x+z=3$. However, this is not a positive solution to $\Trop(p)$ since the minima, $2x$ and $2y$, originate from the terms $x^2$ and $y^2$ of $p$, which have coefficients of the same sign. Since $p$ is homogeneous and $(3,3,4)\sim (1,1,2)$, we know that $(3,3,4)$ is also a solution of $\Trop(p)$. 
\end{example}

 \ptitle{Definition of tropical prevariety}

\begin{definition}\label{tropideal}
 Given a set of multi-homogeneous polynomials $\mathcal{P}$, each of which is homogeneous with respect to sets of variables of sizes $\{n_i\}_{i=1}^t$, and the ideal $I=\langle\mathcal{P}\rangle$ which they generate, we define the following sets in $\mathbb{T}\mathbb{P}^{n_1-1}\times\cdots\times \mathbb{T}\mathbb{P}^{n_t-1}$:
 
 \begin{itemize}
     \item The \textbf{tropical prevariety}, $\trop(\mathcal{P})$, and the \textbf{tropical variety},  $\trop(I)$, are the sets of simultaneous solutions of the tropicalizations of all the polynomials in $\mathcal{P}$ and in $I$, respectively.
     
     \item The \textbf{nonnegative tropical prevariety}, $\trop^{\geq 0}(\mathcal P)$, and the \textbf{nonnegative tropical variety}, $\trop^{\geq 0}(I)$, are the sets of simultaneous nonnegative solutions of the tropicalizations of all the polynomials in $\mathcal{P}$ and in $I$, respectively.
     \item The \textbf{positive tropical prevariety}, $\trop^{>0}(\mathcal P)$, and the \textbf{positive tropical variety}, $\trop^{>0}(I)$, are the sets of simultaneous positive solutions of the tropicalizations of all the polynomials in $\mathcal{P}$ and in $I$, respectively. Equivalently, $\trop^{>0}(\mathcal P)=\trop^{\geq 0}(\mathcal P)\cap\mathbb{R}^n$ and $\trop^{>0}(I)=\trop^{\geq 0}(I)\cap\mathbb{R}^n$.
 \end{itemize}
 
\end{definition}

Solutions of tropicalizations of polynomials can be described in a different way, which clarifies the term ``positive solution" and the choice of notation. Let $\mathcal{C}=\bigcup_{n=1}^\infty\mathbb{C}((t^{1/n}))$ be the field of \textit{Puiseux series} over $\mathbb{C}$. A Puiseux series $p(t)\in \mathcal{C}$ has a term with a lowest exponent, say $at^{u}$ with $a\in \mathbb{C}^*$ and $u\in\mathbb{Q}$. We define $\textnormal{val}(p(t))=u$. Further, we will say $p(t)\in\mathcal{R}^+$ if $a\in \mathbb{R}_{>0}$. 

 Given an ideal $I\trianglelefteq {\mathcal{C}}[x_1,\cdots x_n]$, let $V(I)\subseteq \mathcal{C}^n$ be the variety where all polynomials in $I$ vanish. Thinking of the semifield $\mathcal{R}^+$ in the field $\mathcal{C}$ as being analogous to the semifield $\mathbb{R}_{>0}$ in the field $\mathbb{C}$, we define the positive part of this variety to be $V^+(I)=V(I)\cap \mathcal{({R}^+)}^n$. 
 
\begin{proposition}[{\cite[Proposition 3.7]{Pay}} and {\cite[Proposition 2.2]{SW}}]\label{otherdef}

Let $I$ be an ideal of $\mathcal{C}[x_1,\ldots, x_n]$. Then $\trop(I)\cap\mathbb{R}^{n}=\overline{\textnormal{val}(V(I))}$ and $\trop^{>0}(I)=\overline{\textnormal{val}(V^+(I))}$, where $\overline{\textnormal{val}(V(I))}$ and $\overline{\textnormal{val}(V^+(I))}$ are the closures of $\textnormal{val}(V(I))$ and $\textnormal{val}(V^+(I))$, respectively.

\end{proposition}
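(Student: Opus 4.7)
The plan is to prove each equality by two-sided inclusion. In both cases one direction is a direct consequence of the definitions of tropicalization and valuation, while the reverse inclusion requires a substantive lifting argument from tropical points in $\mathbb{R}^n$ back to Puisseux-series points of the variety.

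For $\overline{\textnormal{val}(V(I))}\subseteq \trop(I)\cap\mathbb{R}^n$, I would fix $x\in V(I)\cap(\mathcal{C}^*)^n$ and set $w=\textnormal{val}(x)\in\mathbb{R}^n$. For any $f=\sum_i a_i\bm{y}^{\bm{b}_i}\in I$, the identity $f(x)=0$ forces the lowest-order terms of the monomials $a_i\,x^{\bm b_i}$ to cancel in $\mathcal{C}$. Hence the minimum of the numbers $\textnormal{val}(a_i)+w\cdot\bm b_i$ must be attained at least twice, so $w\in\trop(f)$. Intersecting over $f\in I$ and taking closures gives the claim. For the reverse inclusion $\trop(I)\cap\mathbb{R}^n\subseteq\overline{\textnormal{val}(V(I))}$ I would invoke the Kapranov-style characterization of $\trop(I)$ via initial ideals: for rational $w$, one has $w\in\trop(I)$ if and only if the initial ideal $\textnormal{in}_w(I)$ contains no monomial. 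Given such a $w$, one picks any point $\bar x\in V(\textnormal{in}_w(I))\cap(\mathbb{C}^*)^n$ and lifts it order-by-order to a Puisseux-series solution of a finite tropical basis of $I$ via a Hensel-type correction. Density of $\mathbb{Q}^n$ in $\mathbb{R}^n$, together with the closure already built into the statement, promotes this from rational $w$ to arbitrary real $w$.

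For the positive statement $\trop^{>0}(I)=\overline{\textnormal{val}(V^+(I))}$, the easy direction is a sign-refinement of the cancellation argument above. If $x\in V^+(I)\subseteq (\mathcal{R}^+)^n$ and $f\in I$, then each coordinate $x_j$ has a positive real leading coefficient, so at the minimal value of $\textnormal{val}(a_i)+w\cdot\bm b_i$ the real leading coefficients of the competing monomials must sum to zero. This forces at least one $a_i>0$ and at least one $a_i<0$ among the indices attaining the minimum, which is exactly the definition of a positive tropical solution of $f$.

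The main obstacle is the converse in the positive case: starting from $w\in\trop^{>0}(I)$, one must construct a lift $x\in V^+(I)$ with $\textnormal{val}(x)$ arbitrarily close to $w$ while guaranteeing that every coordinate $x_j$ has a \emph{positive real} leading coefficient. This is the content of \cite[Proposition 2.2]{SW}: the Hensel-style iterative correction must be performed on a tropical basis chosen to be compatible with positivity, and at each order of $t$ one must cancel the current lowest-order discrepancy without destroying positivity of any coordinate. Degenerations in which some coordinates go to $\infty$ in $\mathbb{T}\mathbb{P}^{n-1}$ are absorbed by the closure, which is why the statement is phrased as a closure rather than a set-theoretic equality; in a complete proof this step would pass to initial ideals with respect to faces of $\trop(I)$ and apply the torus-equivariant lifting statement of \cite[Proposition 3.7]{Pay} stratum by stratum.
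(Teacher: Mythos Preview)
The paper does not give its own proof of this proposition; it is stated with attribution to \cite[Proposition 3.7]{Pay} and \cite[Proposition 2.2]{SW} and then immediately used. So there is nothing in the paper to compare your argument against.

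That said, your outline is broadly the standard one found in those references. The easy inclusions $\textnormal{val}(V(I))\subseteq\trop(I)$ and $\textnormal{val}(V^+(I))\subseteq\trop^{>0}(I)$ are exactly the cancellation-of-lowest-order-terms arguments you give. The hard inclusion in the non-positive case is Kapranov's theorem (the fundamental theorem of tropical geometry), and your description via monomial-free initial ideals and a Puiseux-series lift is the right shape. For the positive hard inclusion, your sketch is a bit imprecise: the argument in Speyer--Williams does not literally run a Hensel iteration while ``preserving positivity at each step''; rather, one shows that $w\in\trop^{>0}(I)$ forces $\textnormal{in}_w(I)$ to have a point in $(\mathbb{R}_{>0})^n$, and then one lifts that positive initial point to a point of $V(I)$ over $\mathcal{C}$ whose leading coefficients are those positive reals. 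The positivity of the lift is thus secured at the level of the initial point, not maintained inductively through the correction steps. This is a minor reframing rather than a genuine gap, but it is worth getting right if you intend to write out the argument in full.
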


\ptitle{The tropical flag variety and the Flag Dressian} We now focus in on the tropical spaces we will study in the rest of this paper. We define two tropical analogues of $\textnormal{Fl}_n$ along with their totally nonnegative and totally positive parts. 

\begin{definition} We define spaces relating to the \textbf{complete flag Dressian} as follows:
\begin{itemize}
    \item The \textbf{complete flag Dressian} is $\textnormal{FlDr}_n=\trop(\mathfrak{P}_{IP;n})$, the tropical prevariety defined by the incidence \Pl relations.
    \item The \textbf{totally nonnegative complete flag Dressian} is $\textnormal{FlDr}_n^{\geq 0}=\trop^{\geq 0}(\mathfrak{P}_{IP;n})$.
    \item The \textbf{totally positive complete flag Dressian}  is $\textnormal{FlDr}_n^{> 0}=\trop^{>0}(\mathfrak{P}_{IP;n})$.
\end{itemize}

\end{definition}

\begin{definition} We define spaces relating to the \textbf{tropical complete flag variety} as follows:
\begin{itemize}
    \item The \textbf{tropical complete flag variety} is  $\textnormal{TrFl}_n=\trop(I_{IP;n})$, the tropical variety defined by the entire incidence \Pl ideal.
    \item The \textbf{totally nonnegative tropical complete flag variety} is  $\textnormal{TrFl}_n^{\geq 0}=\trop^{\geq 0}(I_{IP;n})$. 

    \item The \textbf{totally positive tropical complete flag variety} is
 $\textnormal{TrFl}_n^{>0}=\trop^{> 0}(I_{IP;n})$.
\end{itemize}

\end{definition}

One can think of $\textnormal{FlDr}_n$ and $\textnormal{TrFl}_n$ as parameterizing abstract flags of tropical linear spaces and realizable flags of tropical linear spaces, respectively \cite{BEZ}. Expanding on this, one can think of $\textnormal{FlDr}_n^{\geq 0}$ and $\textnormal{TrFl}_n^{\geq0}$ as parameterizing abstract flags of positive tropical linear spaces and positively realizable flags of tropical linear spaces, respectively. 

 Our main result, \Cref{tropnonnegPluckcoords}, will show that $\TP_n$ and $\textnormal{FlDr}_n^{\geq 0}$ coincide. Note that, a priori, a point in $\textnormal{TrFl}_n^{\geq 0}$ satisfies more relations than a point in $\textnormal{FlDr}_n^{\geq 0}$. In fact, it is shown in {\cite[Example 5.2.4]{BEZ}} that for $n\geq 6$, $\textnormal{FlDr}_n$ properly contains $\textnormal{TrFl}_n$.   

\section{Combinatorial Background}\label{sect:comb}

In this section, we discuss two important combinatorial ideas. First, we give a natural extension of positroids to the setting of the complete flag variety, called \textit{flag positroids}. Then, we discuss the useful Lindstr\"om-Gessel-Viennot lemma \cite{Lindstrom,GV}, which relates matrix minors to graph theory. In \Cref{sectparam}, we will define a parameterization of cells in the totally nonnegative complete flag variety, due to Marsh and Rietsch \cite{MR}. These cells are naturally in bijection with flag positroids. We will then use the Lindstr\"om-Gessel-Viennot lemma to give a combinatorial interpretation of this parameterization for each cell.

 \subsection{Flag Positroids}
 
Realizable matroids and positroids relate closely to the Grassmannian and totally nonnegative Grassmannian, respectively. We now introduce flag matroids and flag positroids. We assume basic familiarity with the definitions of matroids and positroids and direct the reader to \cite{Oxl} for background on matroids and to \cite{pos} for background on positroids. 
 \begin{definition} [{\cite[Lemma 8.1.7]{Kun}}]\label{def:flagmatroid}
  A (complete) \textbf{flag matroid} on a ground set $E$ is a sequence of matroids $\bm{\mathcal{M}}=(\mathcal{M}_1,\mathcal{M}_2,\ldots, \mathcal{M}_n)$ on the ground set $E$, with the rank of $\mathcal{M}_i$ equal to $i$, such that for any $j<k$,

  \begin{itemize}
      \item each basis of $\mathcal{M}_j$ is contained in some basis of $\mathcal{M}_k$, and
      \item each basis of $\mathcal{M}_k$ contains some basis of $\mathcal{M}_j$. 
  \end{itemize}
  
  The matroids $\mathcal{M}_i$ are called the \textbf{constituent matroids} of the flag matroid.
     \end{definition}
  As with many matroid theoretic concepts, there are numerous cryptomorphic descriptions of flag matroids, some of which can be found in \cite{Coxeter}. Note that the indices of the non-zero \Pl coordinates of an invertible square matrix are the bases of a flag matroid. 

  The totally nonnegative part of the Grassmannian coincides with the subset of the Grassmannian with all nonnegative \Pl coordinates. This suggests two natural generalizations of positroids to the flag setting, presented below. We will show in \Cref{synthetic} that they actually coincide.
  
  \begin{definition}
  \label{flagpositroid} A flag matroid on $[n]$ is \textbf{realizable} if its bases are the non-zero \Pl coordinates of $P(F)$ for some $F\in \textnormal{Fl}_n$. Similarly, a \textbf{realizable flag positroid} on $[n]$ is a flag matroid whose bases are the non-zero \Pl coordinates of a flag $F\in \textnormal{Fl}_n^{\geq 0}$.
  \end{definition}

  \begin{definition}
   A \textbf{synthetic flag positroid} on $[n]$ is a flag matroid on $[n]$ whose bases are the non-zero \Pl coordinates of a flag $F\in \textnormal{Fl}_n$ such that $P_I(F)\geq 0$ for all $\emptyset\neq I\subset [n]$. 
  \end{definition}
  
  \begin{remark}\label{rmk:Pluckersprojective} The above definition is slightly imprecise. Since the \Pl coordinates are multi-projective, we should really insist that for each $k\in [n]$, $P_I$ has a fixed sign for all $I$ with $|I|=k$. However, throughout this paper, we will use the projective degrees of freedom to make our \Pl coordinates nonnegative.
\end{remark}

   Both of the above definitions define stronger conditions than just being a flag of matroids in which each constituent matroid is a positroid. For instance, the definition of a synthetic flag positroid requires each of the constituent positroids to be simultaneously realizable, with nonnegative \Pl coordinates, by a single matrix. By contrast, there exist flags of positroids which are not realizable in that way.
  
  \begin{example}
  Consider the flag matroid $\bm{\mathcal{M}}=(\mathcal{M}_1,\mathcal{M}_2,\mathcal{M}_3)$ on the ground set $[3]$ whose constituent matroids have bases $\{1,3\},$ $\{12,13,23\}$ and $\{123\}$, respectively. Each of these is a positroid. Any three by three matrix with nonnegative \Pl coordinates which represents $\bm{\mathcal{M}}$ must have the first row 
 
 \begin{equation} 
  \begin{pmatrix}
  a & 0 & b
  \end{pmatrix}
  \end{equation}
  with $P_1=a>0$ and $P_3=b>0$. 
  
 We can thus write the full matrix as 
 
  \begin{equation} 
  \begin{pmatrix}
  a & 0 & b\\
  * & c & * \\
  * & * & *
  \end{pmatrix},
  \end{equation}
 where the $*$ entries are not of interest to us. Note that there is no value of $c$ which allows both $P_{12}>0$ and $P_{23}>0$. Thus, $\bm{\mathcal{M}}$ is a flag of positroids which cannot be realized by a single matrix with all nonnegative \Pl coordinates. As a result, $\bm{\mathcal{M}}$ is not a synthetic flag positroid.  
  \end{example}

 \subsection{Lindstr{\"o}m-Gessel-Viennot Construction}\label{LGVconst}

\ptitle{Motivation for LGV construction} The \textit{Lindstr{\"o}m-Gessel-Viennot (LGV) construction} relates matrix minors with certain path collections in a weighted directed graph (digraph). We will use it to study the coordinates of flags in $\textnormal{Fl}_n^{\geq 0}$.

\ptitle{Basic terminology} The construction works as follows: Let $G$ be a finite cycle-free digraph with an edge weight $\omega_e$ associated to each directed edge $e$. Within $G$, fix any collection of vertices $A=\{a_1,\ldots, a_n\}$ to be called \textit{sources} and any collection of vertices $B=\{b_1,\ldots, b_n\}$, disjoint from $A$, to be called \textit{sinks}. The \textit{weight} of a directed path $p$ from $a_i$ to $b_j$ is defined to be the product of the weights of the edges in that path. It is denoted $\omega(p)$. We define  $\omega(i,j)\coloneqq \sum_{p:a_i\rightarrow b_j}\omega(p)$ to be the sum of the weights of all paths from $a_i$ to $b_j$. 

\begin{definition}
 Let $|A|=|B|=n$. Then a \textbf{non-intersecting path collection} from $A$ to $B$ is a path collection from $a_i$ to $b_{\sigma(P)(i)}$, where $\sigma(P)$ is some permutation of $[n]$, such that no two paths have a common vertex. Such a collection is denoted $P=(p_1,\ldots, p_n)$ where $p_i$ is the path originating from $a_i$. We define the \textbf{weight of a path collection} $\omega(P) \coloneqq \prod_{i=1}^n \omega(p_i)$ to be the product of the weights of its constituent paths.

\end{definition}

\begin{definition}
    For $I,J\subset [n]$ and $\sigma:I\rightarrow J$ a bijection between $I$ and $J$, we define $sgn(\sigma')\coloneqq(-1)^t$, where $t$ is the number of inversions of $\sigma$, that is, the number of pairs $i<j\in I$ such that $\sigma(j)<\sigma(i)$. When $\sigma$ is a permutation of $[n]$, this recovers the usual notion of the sign of a permutation. 
\end{definition}

We are now ready to state the LGV construction.

\begin{theorem}[\cite{Lindstrom,GV}]\label{LGV}
Let $G$ be a finite acyclic weighted directed graph. Choose a source set $A$ and a sink set $B$ disjoint from $A$, each of size n. Consider the matrix 

\begin{equation*}
    N=\left(\omega (i,j)\right)_{i,j}.
\end{equation*}
Then 
\begin{equation*}\label{LGVe}
    \det(N)=\sum_{P:A\rightarrow B}{\rm sgn}(\sigma(P))\omega(P),
\end{equation*}
where the sum is over all non-intersecting path collections $P$ from $A$ to $B$. 
\end{theorem}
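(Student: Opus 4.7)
The plan is to prove the theorem by expanding the determinant and constructing a sign-reversing, weight-preserving involution on the subset of path tuples that do intersect, so those contributions cancel.

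First, I would expand the Leibniz formula for the determinant:
\begin{equation*}
\det(N) = \sum_{\sigma\in S_n} \mathrm{sgn}(\sigma) \prod_{i=1}^n \omega(i,\sigma(i))
       = \sum_{\sigma\in S_n} \mathrm{sgn}(\sigma) \prod_{i=1}^n \sum_{P_i:a_i\to b_{\sigma(i)}} w(P_i).
\end{equation*}
Distributing the product of sums over paths, this rewrites as a signed sum over \emph{all} tuples of paths $\mathbf{P}=(P_1,\dots,P_n)$ where $P_i$ terminates at $b_{\sigma(\mathbf{P})(i)}$, with weight $\mathrm{sgn}(\sigma(\mathbf{P}))\prod_i w(P_i)$. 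I would then split this sum into the contribution from non-intersecting tuples (which is exactly the claimed RHS) and the contribution from tuples in which at least two paths share a vertex. The goal is to show that the intersecting contribution is zero.

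To accomplish this, I would define an involution $\Phi$ on the set of intersecting path tuples. Given such a tuple $\mathbf{P}$, fix any total order on the vertices of $G$ (the graph is finite) and let $i$ be the smallest index for which $P_i$ shares a vertex with some $P_j$, $j\ne i$; among the shared vertices along $P_i$, let $v$ be the one appearing earliest on $P_i$; finally let $j>i$ be the smallest index such that $P_j$ also passes through $v$. I would then define $\Phi(\mathbf{P})$ by swapping the tails of $P_i$ and $P_j$ after $v$, producing a new tuple $\mathbf{P}'$ in which $P_i'$ now terminates at $b_{\sigma(\mathbf{P})(j)}$ and $P_j'$ at $b_{\sigma(\mathbf{P})(i)}$; so $\sigma(\mathbf{P}') = \sigma(\mathbf{P})\cdot(i\;j)$ and thus $\mathrm{sgn}(\sigma(\mathbf{P}')) = -\mathrm{sgn}(\sigma(\mathbf{P}))$. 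The multiset of edges used is unchanged, so $\prod_k w(P_k') = \prod_k w(P_k)$.

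The main obstacle, and the step that requires care, is checking that $\Phi$ is genuinely an involution: that the ``smallest index with an intersection,'' the ``first shared vertex along that path,'' and the ``smallest partner index'' chosen for $\mathbf{P}'$ coincide with those chosen for $\mathbf{P}$, so that applying $\Phi$ twice returns the original tuple. This boils down to verifying that the selected vertex $v$ is still the earliest shared vertex along $P_i'$ after the swap, and that $i,j$ are still the chosen indices; here the acyclicity of $G$ is essential, since it prevents the swapped tails from creating new intersections that would precede $v$. Once $\Phi$ is established as a fixed-point-free, sign-reversing, weight-preserving involution on intersecting tuples, those terms cancel in pairs and only the non-intersecting tuples survive, yielding the stated formula.
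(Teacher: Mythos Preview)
Your argument is the standard sign-reversing involution proof of the Lindstr\"om--Gessel--Viennot lemma and is correct. Note that the paper does not actually supply a proof of this theorem: it is stated as a cited result from \cite{GV}, so there is no ``paper's own proof'' to compare against; your write-up is essentially the proof one finds in the original source.
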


\begin{cor}\label{cor:LGVcor}
In the setting of \Cref{LGV}, let $I\subseteq A$ and $J\subseteq B$ with $|I|=|J|$. Let $\hat{I}$ and $\hat{J}$ be the sets of indices of the corresponding sources in $A$ and sinks in $B$, respectively. Let $N_{I,J}$ denote the submatrix of $N$ consisting of rows $\hat{I}$ and columns $\hat{J}$. For a non-intersecting path collection $P=(P_1,\ldots, P_n)$ from $I$ to $J$, let $\hat{\sigma}(P):\hat{I}\rightarrow \hat{J}$ be the bijection such that $P_i$ is a path from $a_i$ to $b_{\hat{\sigma}(P)(i)}$. Then,

\begin{equation}\label{LGVcor}
    \det(N_{I,J})=\sum_{P:I\rightarrow J}{\rm sgn}(\hat{\sigma}(P))\omega(P),
\end{equation}
where the sum is over all non-intersecting path collections $P$ from $I$ to $J$.
\end{cor}

In the rest of this paper, we will always label source vertices by primed vertex labels and sink vertices by unprimed vertex labels. Thus, for instance, the entry $N_{1,2}$ will be given by the sum of the weights of paths from $1'$ to $2$.

\begin{example}
Consider the weighted digraph in \Cref{LGVthreegraph} with source set $\{1',2',3'\}$ and sink set $\{1,2,3\}$.
\begin{figure}[H]
 \begin{tikzpicture}[node distance={10.5 mm}, thick, main/.style = {draw, circle,minimum size=2 mm}, 
blank/.style={circle, draw=green!0, fill=green!0, very thin, minimum size=3.5mm},]

\node[main] (1) {$1'$};
\node[main] (2) [above of=1] {$2'$};
\node[main] (3) [above of = 2] {$3'$}; 
\node (blank)[above of = 3]{};
\node[main] (33) [right of = blank] {$3$};
\node[main] (22) [right of = 33] {$2$};
\node[main] (11) [right of = 22] {$1$};
\node (first) [right of = 2]{};
\node (second) [right of = 3]{};
\node (third)[right of = second]{};
\draw[-{Latex[length=3mm]}] (1) -- (11);
\draw[-{Latex[length=3mm]}] (2) -- (22);
\draw[-{Latex[length=3mm]}] (3) -- (33);

\draw[-{Latex[length=3mm]}] (first) -- (second) node [near start, left] {$a$};

\draw[-{Latex[length=3mm]}] (second) -- (33) node [near start, left] {$b$};

\draw[-{Latex[length=3mm]}] (third) -- (22) node [near start, left] {$c$};
\end{tikzpicture} 
\caption{A weighted directed graph. The unmarked diagonal edges have weight $1$.}
\label{LGVthreegraph}
 \end{figure}

We construct the matrix $N$. There is a unique path from $1'$ to $1$, consisting of the diagonal edge, which has weight $1$. Thus, $N_{1,1}=1$. There are two paths from $1'$ to $2$, one using the weighted vertical edge $a$ and the other using the weighted vertical edge $c$. Thus, $N_{1,2}=a+c$. There is a unique path from $1'$ to $3$ using the weighted vertical edges $a$ and $b$. Thus, $N_{1,3}=ab$. Note that there are no paths from $2'$ to $1$, so $N_{2,1}=0$. Continuing in this manner, we obtain 
\begin{equation*}
N=\begin{pmatrix}
    1&a+c&ab\\    0&1&b\\
    0&0&1
\end{pmatrix}.
\end{equation*}

There are two non-intersecting path collections from $\{1',3'\}$ to $\{2,3\}$. The first, $P_1$ uses the weighted vertical edge $a$ and the second, $P_2$ uses the weighted vertical edge $c$. Both these path collections connect $1'$ to $2$ and $3'$ to $3$. Accordingly, $\sigma(P_1)=\sigma(P_2)$ is the bijection between $\{1,3\}$ and $\{2,3\}$ mapping $1$ to $2$ and $3$ to $3$. This bijection has no inversions and so $\rm sgn (\sigma(P_1))=\rm sgn (\sigma(P_2))=1$. Correspondingly, $\det(N_{\{1,3\},\{2,3\}})=a+c$. There is a unique non-intersecting path collection $P_3$ from $\{1',2',3'\}$ to $\{1,2,3\}$, consisting entirely of diagonal edges. This path collection connects $i'$ to $i$ for $i\in [3]$. Accordingly, $\sigma(P_1)$ is the identity permutation of $[3]$. This permutation has no inversions and so $\rm sgn (\sigma(P_3))=1$, which is reflected by the fact that $\det(N)=1$. 

\end{example}

\section{Parametrization of the Totally Nonnegative Complete Flag Variety}\label{sectparam}

\ptitle{Introduction to MR parameterization} As shown by Rietsch \cite{Rie}, $\textnormal{Fl}_n^{\geq 0}$ is a cell complex, whose cells $\mathcal{R}_{v,w}^{>0}$ are indexed by pairs of permutations $v\leq w\in \mathfrak{S}_n$ in a partial order called \textit{Bruhat order} on $\mathfrak{S}_n$. Each such $\mathcal{R}_{v,w}^{>0}$ is given an explicit parameterization in \cite{MR}. We will describe this parameterization here, making some choices that in principle are arbitrary but will be convenient for our purposes, and invite the reader to look at the cited references for full generalities.  

\ptitle{Useful notation} 

\subsection{Bruhat Order and Positive Distinguished Subexpressions} 

Any permutation $w$ in $\mathfrak{S}_n$ has \textit{expressions}, that is, ways to write it as a product of simple reflections $s_i=(i,\; i+1)$. The \textit{length} of $w$, $\ell(w)$, is the fewest number of simple reflections in any expression for $w$. An expression for $w$ consisting of $\ell(w)$ simple reflections is called \textit{reduced}. A \textit{subexpression} of an expression is a choice of some of the simple reflections appearing in that expression. If we view an expression as a sequence of simple reflections, a subexpression is simply a subsequence. We now define Bruhat order.  

\begin{definition}\label{def:expression}
 Let $v,w\in \mathfrak{S}_n$. If there exists a reduced expression $\bm{w}=s_{i_1}s_{i_2}\cdots s_{i_\ell}$ for $w$ which contains a subexpression for $v$, we say $v\leq w$ in \textbf{Bruhat order}. Equivalently, $v\leq w$ in Bruhat order if every expression for $w$ contains a subexpression for $v$ \cite[Corollary 2.2.3]{BB}.
\end{definition}

The second characterization of Bruhat order in \Cref{def:expression} guarantees that if we want to test whether $v\leq w$, it suffices to fix any expression for $w$. Given $v\leq w$ and an expression for $w$, we will be interested in a special choice of subexpression which is called the \textit{positive distinguished subexpression} for $v$. Intuitively, this can be thought of as the leftmost reduced subexpression. Note that our definition here differs from \cite{MR} because we use different conventions. 

\begin{definition}\label{negative} Let $v\leq w\in \mathfrak{S}_n$. Choose a reduced expression $\bm{w}=s_{i_1}s_{i_2}\cdots s_{i_\ell}$ for $w$. Then a reduced subexpression $\bm{v}=s_{i_{j_1}}\cdots s_{i_{j_m}}$ for $v$ in $\bm{w}$ is a \textbf{positive distinguished subexpression} if, whenever $\ell(s_{i_p}s_{i_{j_r}}\cdots s_{i_{j_m}})<\ell(s_{i_{j_r}}\cdots s_{i_{j_m}})$ for $j_{r-1}\leq p< j_{r}$, we have $p=j_{r-1}$. 
\end{definition}

\begin{lemma}\label{existsposdist}
For every $v\leq w\in \mathfrak{S}_n$, and every reduced expression $\bm{w}$ of $w$, there is a unique positive distinguished subexpression for $v$ in $\bm{w}$.
\end{lemma}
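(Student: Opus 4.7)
The plan is to induct on $k = \ell(w)$, peeling off the leftmost simple reflection $s_{i_1}$ of the reduced expression at each stage. The base case $k=0$ forces $v=e$, and the empty subexpression is trivially the unique PDS. For the inductive step, I would let $\bm{w}' = s_{i_2}\cdots s_{i_k}$, which is a reduced expression for $w' := s_{i_1}w$ since suffixes of reduced expressions are reduced, and split into two cases based on whether $s_{i_1}$ is a left descent of $v$.

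\textbf{Case A:} $\ell(s_{i_1}v)<\ell(v)$. I would first argue that any PDS must set $j_1 = 1$: if instead $j_1 \geq 2$, then $p=1$ lies in $[j_0, j_1) = [0, j_1)$ and $\ell(s_{i_1}v_1) = \ell(s_{i_1}v) < \ell(v) = \ell(v_1)$, violating the PDS condition. The lifting property of Bruhat order, applied to $s_{i_1}v<v$ and $s_{i_1}w<w$, then gives $s_{i_1}v\leq w'$, so by the inductive hypothesis there is a unique PDS $\{q_1,\ldots,q_{m-1}\}$ for $s_{i_1}v$ in $\bm{w}'$. Prepending $1$ and shifting the remaining indices up by $1$ would yield $\{1,\,q_1+1,\ldots,q_{m-1}+1\}$, which I claim is the unique PDS for $v$ in $\bm{w}$.

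\textbf{Case B:} $\ell(s_{i_1}v)>\ell(v)$. Here $j_1 = 1$ is impossible: it would force $\ell(v_2) = \ell(s_{i_1}v) > \ell(v)$ while reducedness demands $\ell(v_2) = \ell(v) - 1$. Hence position $1$ is never selected, and by the subword characterization of Bruhat order (or equivalently the lifting property applied to $s_{i_1}v>v$ and $s_{i_1}w<w$), $v\leq w'$. By induction there is then a unique PDS for $v$ in $\bm{w}'$, and shifting indices up by $1$ would give a candidate PDS in $\bm{w}$. The only new constraint involves $p=1\in[j_0,j_1)$, and this holds automatically because $\ell(s_{i_1}v_1) = \ell(s_{i_1}v) > \ell(v) = \ell(v_1)$.

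The main technical point will be careful bookkeeping when translating the PDS condition between $\bm{w}'$ and $\bm{w}$: one must verify that the suffix products $v_r$ coincide with those from $\bm{w}'$ under the index shift (immediately in Case B, and after factoring out the prepended $s_{i_1}$ in Case A), and that the PDS conditions at intermediate $r$'s in $\bm{w}$ match those in $\bm{w}'$. Once this matching is in place, uniqueness will follow cleanly: any PDS for $v$ in $\bm{w}$ restricted to positions $\{2,\ldots,k\}$ is forced to be a PDS in $\bm{w}'$---for $v$ in Case B and for $s_{i_1}v$ in Case A---which is unique by the inductive hypothesis.
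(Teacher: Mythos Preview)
Your inductive argument is correct: peeling off $s_{i_1}$ from the left, splitting on whether $s_{i_1}$ is a left descent of $v$, and invoking the lifting property to descend to $\bm{w}'$ is exactly the right skeleton, and the bookkeeping you flag (matching suffix products and PDS constraints under the index shift, plus the extra check at $p=1$ in Case~B) goes through cleanly.

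The paper, however, does not prove this directly. It simply observes that a subexpression $\bm{v}$ is positive distinguished in $\bm{w}$ under the paper's leftmost convention if and only if the reversed expression $\bm{v}^{-1}$ is positive distinguished in $\bm{w}^{-1}$ under the rightmost convention of Marsh--Rietsch, and then cites \cite[Lemma~3.5]{MR}. So your proof supplies the content that the paper outsources: what you have written is essentially the standard inductive argument behind the cited result, transported to the left-hand convention. The citation is shorter; your version is self-contained and makes the mechanism (lifting plus a forced first choice) visible, which is arguably more useful given how often the paper leans on the ``leftmost'' intuition downstream.
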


\begin{proof}
Given an expression $\bm{w}$ for $w$, define $\bm{w^{-1}}$ to be the expression for $w^{-1}$ obtained by reversing the order of the simple reflections in $\bm{w}$. Observe that $\bm{v}$ is a positive distinguished subexpression in $\bm{w}$ by our convention if and only if $\bm{v^{-1}}$ is a positive distinguished subexpression of $\bm{w^{-1}}$ by the convention of \cite{MR}. Thus, the result follows from {\cite[Lemma 3.5]{MR}}.
\end{proof}

\ptitle{Example of positive distinguished subexpression}
\begin{example}
Let $n=4$. Fix the expression $\bm{w}=s_{i_1}\cdots s_{i_6}=s_1s_2s_3s_1s_2s_1$ for $w$ and let $v=s_1s_2s_1$. There are a number of subexpressions for $v$ in $\bm{w}$. For instance, $v$ can be written as $s_{i_{j_1}}s_{i_{j_2}}s_{i_{j_3}}$ with ${j_1}=1$, $j_2=5$ and $j_3=6$. Note that $\ell(s_{i_2}s_{i_{j_2}}s_{i_{j_3}})=\ell(s_2^2s_1)=\ell(s_1)=1$ is less than $\ell(s_{i_{j_2}}s_{i_{j_3}})=\ell(s_2s_1)=2$, but $j_1\neq 2$. Thus, this is not the positive distinguished subexpression for $v$.

The leftmost subexpression for $v$ is $j_1=1$, $j_2=2$ and $j_3=4$. Indeed, one can verify that this choice satisfies the definition of a positive distinguished subexpression.
\end{example}

 In the remainder of this paper, we will fix a reduced subexpression \begin{equation}\label{eq:w0expression}\bm{w_0}=(s_1\cdots s_{n-1})(s_1\cdots s_{n-2})(\cdots)(s_1s_2)(s_1)\end{equation}

 \noindent for $w_0$. We give a straightforward necessary (and in fact, sufficient!) condition to identify positive distinguished subexpressions of $\bm{w_0}$. 
 
 \begin{definition} \label{def:runs}
    We define the $i^{\textnormal{th}}$ run of $\bm{w_0}$ to be the set of simple reflections in the $i^{\textnormal{th}}$ pair of parentheses of \Cref{eq:w0expression}. Explicitly, the first run is $s_1\cdots s_{n-1}$, the second run is $s_1\cdots s_{n-2}$, and so on, until reaching the $(n-1)^{\textnormal{st}}$ run, which is just $s_1$. 
  \end{definition}

\begin{lemma}\label{posdist}
 
Let $\bm{w}$ be any positive distinguished subexpression in \begin{equation*}\bm{w_0}=(s_1\cdots s_{n-1})(s_1\cdots s_{n-2})(\cdots)(s_1s_2)(s_1).\end{equation*} If, for $k\geq 2$, $\bm{w}$ uses an $s_i$ from the $k^\textnormal{th}$ run of $\bm{w_0}$ then it also uses an $s_{i+1}$ from the $(k-1)^\textnormal{st}$ run of $\bm{w_0}$. 
\end{lemma}

\begin{proof}
We will prove this lemma by induction on $i$. For $i=1$, suppose that $\bm{w}$ uses the $s_1$ in run $k$ but not the $s_2$ in run $k-1$. Then $\bm{w}$ must not use the $s_1$ in run $k-1$, since if it did, $\bm{w}$ would not be reduced. However, we then contradict \Cref{negative} by choosing the $s_{1}$ in run $k-1$ to be the $s_{i_p}$ appearing in \Cref{negative}. 

For the induction step, assume to the contrary that $\bm{w}$ uses the $s_i$ in run $k$ but not the $s_{i+1}$ in run $k-1$. Let $r$ be maximal such that $\bm{w}$ uses the simple reflections $s_{i-r},s_{i-r+1},\cdots s_{i}$, with $r$ possibly $0$. By induction, $\bm{w}$ has the simple reflections $s_{i-r+1}\cdots s_i$ in run $k-1$. Since $\bm{w}$ does not use the $s_{i+1}$ in run $k-1$ or the $s_{i-r-1}$ in run $k$, the reflections in run $k-1$ can be commuted past those in run $k$ until we obtain a reduced expression for $w$ containing the consecutive sequence of simple reflections $\left(s_{i-r+1}\cdots s_i\right)\left(s_{i-r}\cdots s_i\right)$; The simple reflections in the first pair of parentheses are from run $k-1$ and those in the second pair of parentheses are from run $k$. The product $\left(s_{i-r+1}\cdots s_i\right)\left(s_{i-r}\cdots s_i\right)$ can be rewritten as $\left(s_{i-r}s_{i-r+1}\cdots s_i\right)\left(s_{i-r}\cdots s_{i-1}\right)$, which can be verified either by computing the actions of these permutations on integers or using braid moves. Again due to the fact that $\bm{w}$ did not use the $s_{i+1}$ in run $k-1$ or the $s_{i-r-1}$ in run $k$, we can move the simple reflections in the first pair of parentheses into run $k-1$ using commutation moves. See \Cref{ex:runs} for an example of this procedure. From this new expression for $w$, we conclude that $\bm{w}$ must not have an $s_{i-r}$ in run $k-1$, since if it did, our new expression would not be reduced. However, we then contradict \Cref{negative} by choosing the $s_{i-r}$ in run $k-1$ to be the $s_{i_p}$ appearing in \Cref{negative}. 

%is an expression for the permutation which maps $i+1$ to $i-r$, $i$ to $i-r+1$, $j$ to $j+2$ for $i-r\leq j\leq i-1$, and fixes all other numbers. We claim this permutation can be rewritten as $\left(s_{i-r}s_{i-r+1}\cdots s_i\right)\left(s_{i-r}\cdots s_{i-1}\right)$; This can be verified by computing the action of the latter on integers (or using braid moves, if the reader prefers). 
\end{proof}

\begin{example}\label{ex:runs}
We give two examples of the argument in the induction step of the previous proof.

Consider the expression $\bm{w}=(s_1s_2s_3s_5)(s_1s_2s_3s_4)(s_1s_2)()()$ for a permutation $w\in \mathfrak{S}_6$. This uses $s_1,s_2, s_3, s_5$ from run $1$, $s_1,s_2,s_3,s_4$ from run $2$, $s_1,s_2$ from run $3$, and no simple reflections from runs $4$ or $5$. In particular, $\bm{w}$ uses $s_3$ from run $2$ but not $s_4$ from run $1$. By \Cref{posdist}, this is not a positive distinguished subexpression. To see this, note that the permutation $w$ can be expressed as 

\begin{align*}
    w&=(s_1s_2s_3s_5)(s_1s_2s_3s_4)(s_1s_2)\\
    &= s_1(s_2s_3)s_5(s_1s_2s_3)s_4s_1s_2\\
    &=s_1s_5(s_2s_3)(s_1s_2s_3)s_4s_1s_2\\
    &=s_1s_5(s_1s_2s_3)(s_1s_2)s_4s_1s_2\\
    & = s_1(s_1s_2s_3)s_5(s_1s_2)s_4s_1s_2.
\end{align*}

Note that the last expression has two copies of $s_1$ next to each other, showing that $\bm{w}$ was not reduced. 

Similarly, consider the expression $\bm{w}=(s_2s_3s_5)(s_1s_2s_3s_4)(s_1s_2)()()$ for a permutation $w\in \mathfrak{S}_6$. This uses $s_2, s_3, s_5$ from run $1$, $s_1,s_2,s_3,s_4$ from run $2$, $s_1,s_2$ from run $3$, and no simple reflections from runs $4$ or $5$. Again, $\bm{w}$ uses $s_3$ from run $2$ but not $s_4$ from run $1$ and so is not a positive distinguished subexpression. Indeed, similarly to the previous choice of permutation, $w$ can be expressed as 

\begin{align*}
    w&=(s_2s_3s_5)(s_1s_2s_3s_4)(s_1s_2)\\
    &= (s_1s_2s_3)s_5(s_1s_2)s_4s_1s_2.
\end{align*}

Thus, $s_1w<w$. By \Cref{negative}, for $\bm{w}$ to be positive distinguished, the $s_1$ appearing in the first run of $\bm{w_0}$ should appear in $\bm{w}$, but it does not.
\end{example}

We now give a version of the \textit{tableau criterion}, which is a useful test for when two permutations are comparable in Bruhat order without relying on expressions and subexpressions. To state it, we must first introduce \textit{Gale order} on subsets of $[n]$:

\begin{definition}
  Let $I=\{i_1<\cdots< i_k\}$ and $J=\{j_1<\cdots< j_k\}$ be subsets of $[n]$. Then $I\leq J$ in the \textbf{Gale order} if $i_r\leq j_r$ for every $r\in[k]$.
 \end{definition}

 \begin{proposition}\label{tableaucrit}
     Let $v,w\in \mathfrak{S}_n$. For $k\in[n]$, define $v[k]$ to be $\{v(1),v(2),\ldots,v(k)\}$. Then $v\leq w$ in Bruhat order if and only if $v[k]\leq w[k]$ in Gale order for all $k\in [n]$. 
 \end{proposition}

 \begin{proof}
    This is similar to the condition of \cite[Theorem 2.6.3]{BB}, which says that $v\leq w$ if and only if $v[k]\leq w[k]$ in Gale order for all $k$ such that $v(k+1)<v(k)$. We show explicitly that the two conditions are equivalent. We must prove that if $v[k]\leq w[k]$ in Gale order for all $k$ such that $v(k+1)<v(k)$, then $v[j]\leq w[j]$ in Gale order for all $j\in[n]$. 
    
    Assume towards a contradiction that $v[k]\leq w[k]$ in Gale order for all $k$ such that $v(k+1)<v(k)$ and that there exists $j\in [n]$ such that $v[j]\nleq w[j]$. Let $j$ be minimal with this property. Let $j'$ be the minimal number $j' \geq j$ such that $v(j'+1)<v(j')$ if such $j'$ exists and let $j'=n$ otherwise. By our assumption on $v$ and $w$, if $v(j'+1)<v(j')$, then $v[j']\leq w[j']$. If, instead, $j'=n$, then $v[j']=[n]\leq [n]= w[j']$. We also have $v(j)<v(j+1)<\cdots<v(j')$. Since $v[j]\nleq w[j]$, if $v[j]=\{i_1<\cdots<i_j\}$ and $w[j]=\{l_1<\cdots<l_j\}$, then $i_t>l_t$ for some $t\in[j]$. 
    
    Let $v[j-1]=\{i'_1<\cdots<i'_{j-1}\}$ and $w[j-1]=\{l'_1<\cdots<l'_{j-1}\}$. In going from $w[j-1]$ to $w[j]$, exactly one element is added. Thus, for any $s\in[j]$, either $l_s=l'_s$ or $l_s=l'_{s-1}$. In particular, this means that $l'_{s-1}\leq l_s$. Similarly, whenever $i_s>v(j)$, we have $i_s=i'_{s-1}$. By the minimality of $j$, $v[j-1]\leq w[j-1]$, and so $i'_s\leq l'_s$ for all $s\in [j-1]$. Thus, if $i_s>v(j)$, $i_s=i'_{s-1}\leq l'_{s-1}\leq l_s$. Since $i_t>l_t$, we must have $i_t\leq v(j)$. 
    
    To obtain $v[j']$ from $v[j]$, we add new entries, each larger than $v(j)$. Thus, if $v[j']=\{i''_1<\cdots< i''_{j'}\}$, then $i_t=i_t''$. To construct $w[j']$ from $w[j]$, we add some entries to $w[j]$. It is possible some of these are smaller than $l_t$, so if $w[j']=\{l''_1<\cdots< l''_{k'}\}$, then $l_t\geq l''_t$. Thus, $i''_t=i_t>l_t\geq l''_t$. This contradicts the fact that $v[j']\leq w[j']$. 
    
 \end{proof}

\subsection{The Marsh-Rietsch Parametrization}

For $1\leq k < n$, let $x_k(a)$ be the $n\times n$ matrix which is the identity matrix with an $a$ added in row $k$ of column $k+1$. Explicitly,
\begin{equation*}
    x_k(a)=\begin{blockarray}{ccccccc}
& &  & k & k+1 & & \\
\begin{block}{c(cccccc)}
  & 1 &  &  & & & \\
  & & \ddots &  & &  & \\
  k& &  & 1 & a &  &  \\
  k+1& &  &0  & 1 &  &  \\
  & &  &  &  & \ddots  & \\
  &&&&&&1\\
\end{block}
\end{blockarray} \hspace{5pt},
\end{equation*}
\noindent where unmarked off-diagonal matrix entries are $0$.

For $1\leq k<n$, let $\dot{s}_k$ be the $n\times n$ identity matrix with the $2\times 2$ submatrix in rows $\{k,k+1\}$ and columns $\{k,k+1\}$ replaced by the matrix $\begin{pmatrix}
0&1\\
-1&0
\end{pmatrix}$. Explicitly, 
\begin{equation*}
    \dot{s}_k=\begin{blockarray}{ccccccc}
& &  & k & k+1 & & \\
\begin{block}{c(cccccc)}
  & 1 &  &  & & & \\
  & & \ddots &  & &  & \\
  k& &  & 0 & 1 &  &  \\
  k+1& &  &-1  & 0 &  &  \\
  & &  &  &  & \ddots  & \\
  &&&&&&1\\
\end{block}
\end{blockarray}\hspace{5pt},
\end{equation*}
\noindent where unmarked off-diagonal matrix entries are $0$.

\begin{definition}\label{matrixdecompose}
Fix $v\leq w\in \mathfrak{S}_n$. Fix a vector $\bm{a}\in \mathbb{R}^{\ell(w)-\ell(v)}$. Consider the reduced expression $\bm{w_0}=(s_1s_2\cdots s_{n-1})(s_1s_2\cdots s_{n-2})(\cdots)(s_1s_2)(s_1)$ for $w_0$, the longest permutation in Bruhat order in $\mathfrak{S}_n$\footnote{This choice of expression is arbitrary in the context of the Marsh-Rietsch parameterization, but plays an important role in the rest of this paper.}. Choose the positive distinguished subexpression $\bm{w}$ for $w$ in $\bm{w_0}$, and the positive distinguished subexpression $\bm{v}$ for $v$ in $\bm{w}$, and write them as $\bm{w}= s_{i_1}\cdots s_{i_\ell}$ and $\bm{v}=s_{i_{j_1}}\cdots s_{i_{j_m}}$, respectively. Let $J=\{h\;|\;h=j_t \textnormal{ for some }t\}$, that is, the positions in $\bm{w}$ of simple reflections which are used in $\bm{v}$. Let
\begin{equation*}
    M_{v,w}(\bm{a})\coloneqq M_1\cdots M_\ell,\hspace{3mm} \textnormal{  where  }\hspace{3mm} M_j=\begin{cases}\dot{s}_{i_j}, & j\in J\\ x_{i_j}(a_{j}),&j\notin J\end{cases}\hspace{1mm} .
\end{equation*}
\end{definition}

 \begin{theorem}[Marsh-Rietsch Parametrization, \cite{MR}] \label{marshrietsch}

The totally nonnegative complete flag variety $\textnormal{Fl}_{n}^{\geq 0}$ admits a decomposition into cells $\mathcal{R}_{v,w}^{>0}$ for $v\leq w\in \mathfrak{S}_n$, each of which can be parameterized as 

\begin{equation*}
    \mathcal{R}_{v,w}^{>0}=\left\{M_{v,w}(\bm{a})\left|\bm{a}\in\mathbb{R}_{>0}^{\ell(w)-\ell(v)}\right.\right\}.
\end{equation*}

In particular, each $\mathcal{R}_{v,w}^{> 0}$ is homeomorphic to an open ball and each flag $F\in \textnormal{Fl}_n^{\geq 0}$ is uniquely represented in some unique $\mathcal{R}_{v,w}^{> 0}$. \end{theorem}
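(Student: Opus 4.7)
The plan is to reduce the statement to the classical Richardson decomposition of the flag variety together with Lusztig's characterization of total positivity. First, I would establish the decomposition $\Fl_n = \bigsqcup_{v\leq w} \mathcal{R}_{v,w}$, where the open Richardson variety $\mathcal{R}_{v,w}$ is defined as the intersection $(B_- v B_- \cap B_+ w B_-)/B_-$ of opposite Bruhat cells. One then defines $\mathcal{R}_{v,w}^{>0} := \mathcal{R}_{v,w} \cap \Fl_n^{\geq 0}$, so that the disjointness statement in the theorem is automatic from the disjointness of Richardson cells, and only the parametrization and exhaustion of $\Fl_n^{\geq 0}$ remain to be proved.

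The central construction is to show that the map $\Phi_{v,w}:\bm a \mapsto B_- M_{v,w}(\bm a)$ sends $\mathbb{R}_{>0}^{\ell(w)-\ell(v)}$ into $\mathcal{R}_{v,w}^{>0}$. The key input is the uniqueness of the positive distinguished subexpression $\bm v$ in $\bm w$ (\cref{existsposdist}): using the braid and commutation relations among the $x_k(a)$, and the fact that $\dot{s}_k x_k(a)\dot{s}_k^{-1}$ lies in the opposite unipotent for $a>0$, one verifies by induction on $\ell(w)$ that the product $M_1\cdots M_k$ lands in the Richardson stratum indexed by exactly $(v,w)$; the positive parameters at the non-$J$ positions ensure the product is in the TNN part.

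Injectivity of $\Phi_{v,w}$ follows from tracking where the parameters $a_j$ appear in the flag minors of $M_{v,w}(\bm a)$: using the Chevalley relations one can extract each $a_j$ as a monomial in certain \Pl coordinates, giving an explicit polynomial inverse. The homeomorphism to an open ball then follows since both $\Phi_{v,w}$ and its inverse are polynomial on $\mathbb{R}_{>0}^{\ell(w)-\ell(v)}$ and on $\mathcal{R}_{v,w}^{>0}$ respectively. Surjectivity onto $\mathcal{R}_{v,w}^{>0}$ would be proved by induction on $\ell(w)-\ell(v)$: given a flag in $\mathcal{R}_{v,w}^{>0}$, one locates the first index $i_1$ of $\bm w$, then either peels off a factor $\dot{s}_{i_1}$ (when $i_1$ is used in the PDS) or solves for a positive parameter $a_1$ so that multiplication by $x_{i_1}(a_1)^{-1}$ lands in the appropriate smaller cell $\mathcal{R}_{v,w'}^{>0}$ where $w'$ is shorter.

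The hardest step, and the one I would expect to require the most care, is the exhaustion claim: every flag in $\Fl_n^{\geq 0}$ lies in some $\mathcal{R}_{v,w}^{>0}$. This is really Rietsch's theorem, and it would be established by showing that $\Fl_n^{\geq 0}$ is closed under the action of the generators $x_k(a)$ for $a\geq 0$, together with a careful analysis of which cells appear in the closure of the top cell $\mathcal{R}_{e,w_0}^{>0}$. Once exhaustion is established, uniqueness of the cell containing a given flag is immediate from the Richardson decomposition, completing the proof.
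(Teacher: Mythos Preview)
The paper does not prove this theorem: it is stated as a citation to Marsh--Rietsch \cite{MR} (with the cell decomposition attributed to Rietsch \cite{Rie}), and no proof is given in the paper itself. Your outline is a reasonable sketch of how the original Marsh--Rietsch argument proceeds, so there is nothing to compare against in this paper --- you are essentially reconstructing the cited reference rather than the paper's own contribution.
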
 

\begin{remark}
    We adopt a slightly different convention than \cite{MR}. The matrices in their $\mathcal{R}_{v,w}^{>0}$ would be the transposes of the matrices in our $\mathcal{R}_{v^{-1},w^{-1}}^{>0}$. This difference is accounted for by the fact that we define positive distinguished subexpressions differently from \cite{MR}, as noted in the proof of \Cref{existsposdist}. This convention makes notation a bit cleaner in the rest of this paper. 
\end{remark}

 \begin{remark}
     We are slightly abusing notation, as we should be describing $\mathcal{R}_{v,w}^{>0}$ as consisting of flags, not matrices. We are implicitly identifying a flag $F\in \mathcal{R}_{v,w}^{>0}$ with the unique matrix of the form $M_{v,w}(\bm{a})$ representing it, and will continue to do so in the rest of this paper.  
 
 \end{remark}

 \begin{definition}\label{parameterization}
 Let $\Phi_{v,w}:\mathbb{R}^{\ell(w)-\ell(v)}_{>0}\rightarrow \mathbb{R}^{\binom{n}{1}+\cdots+\binom{n}{n-1}}$ be the map which sends $\bm{a}$ to the \Pl coordinates of $M_{v,w}(\bm{a})$. This map is a bijection onto $\mathcal{R}^{>0}_{v,w}$, and thus it admits an inverse on $\mathcal{R}^{>0}_{v,w}$, which we denote $\Psi_{v,w}$.
\end{definition}

\begin{example}\label{toymodel}
Let $n=4$, $\bm{w}=s_{i_1}s_{i_2}s_{i_3}s_{i_4}=s_1s_3s_2s_1$ and $v=s_2$. The positive distinguished subexpression for $v$ in $\bm {w}$ is the subexpression $\bm{v}=s_{i_{j_1}}$ where $j_1=3$, so $J=\{3\}$. Thus, $M_1=x_1(a_1)$, $M_2=x_3(a_2)$, $M_3=\dot{s}_2$, $M_4=x_1(a_4)$ and $\bm{a}=(a_1,a_2,a_4)$. The cell $\mathcal{R}_{v,w}^{>0}$ of $\Fl_4^{\geq 0}$ is given by 

\begin{equation*}
   \left\{ M_{v,w}(\bm{a})=M_1M_2M_3M_4 
    =\begin{pmatrix}
    1&a_4&a_1&0\\
    0&0&1&0\\
    0&-1&0&a_2 \\
    0&0&0&1
    \end{pmatrix} \;\middle\vert \; \bm a=(a_1,a_2,a_4)\in \mathbb{R}_{>0}^3 \right\}.
\end{equation*}
 Note that the \Pl coordinates of $M_{v,w}(\bm{a})$ are all nonnegative, as our next result will tell us is true for any cell of $\textnormal{Fl}_n^{\geq 0}$. 
 \end{example}

\begin{lemma}[{\cite[Lemma 3.10]{KW}}]\label{forwarddir}
For any $v\leq w\in \mathfrak{S}_n$ and $\bm{a}\in \mathbb{R}_{>0}^{\ell(w)-\ell(v)}$, the matrices $M_{v,w}(\bm{a})$ have all \Pl coordinates nonnegative. In particular, we can choose nonnegative \Pl coordinates for any $F\in \textnormal{Fl}_{n}^{\geq 0}$.
\end{lemma}

We establish some notation that we will use going forward.

\begin{remark}
    The map $\Phi_{v,w}$ can be extended to $(\mathbb{R}^*)^{\ell(w)-\ell(v)}$, in which case it is a bijection onto the \textit{Deodhar component} $\mathcal{R}_{\mathbf{v},\mathbf{w}}$, where $\mathbf{v}$ is the positive distinguished subexpression for $v$ in the expression $\mathbf{w}$, which is the positive distinguished subexpression for $w$ in $\bm{w_0}$. Further detail can be found in \cite{MR}, but is not needed for our purposes.
\end{remark}

We now highlight a key property of the cells $\mathcal{R}_{v,w}^{>0}$ and $\textnormal{TrFl}_n^{\geq 0}$ that is not immediately clear from our definition.

\begin{lemma}[{\cite[Lemma 3.11]{KW}}]\label{bruhatinterval}
Let $F\in \mathcal{R}_{v,w}^{>0}$ and $K=\{k_1,\cdots, k_m\}$. Then, $P_K(F)\neq 0$ if and only if $K$ consists of $\{u(1),\ldots,u(m)\}$ for some $v^{-1}\leq u\leq w^{-1}$.
\end{lemma} 

Note that the above lemma differs from the statement cited by the inclusion of inverses on the permutations $v$ and $w$. This is because of the difference in convention in defining $\mathcal{R}_{v,w}^{>0}$ between \cite{MR} and \Cref{marshrietsch}.

\begin{lemma}
The image of $\Trop \Phi_{v,w}$, the tropicalization of $\Phi_{v,w}$, lies in $\textnormal{TrFl}_n^{\geq 0}$. 
\end{lemma}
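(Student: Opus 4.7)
The plan is to realize each image point of $\Trop \Phi_{v,w}$ as the valuation of an explicit positive Puiseux series representative of a complete flag, and then to verify the positive tropical solution condition directly against an arbitrary incidence Plücker polynomial. Fix $\bm b \in \mathbb{R}^{\ell(w)-\ell(v)}$ and consider the formal substitution $a_i \mapsto t^{b_i}$ in $M_{v,w}(\bm a)$. By \cref{sumonly}, each Plücker coordinate $P_I(M_{v,w}(\bm a))$ is a subtraction-free polynomial in $\bm a$, having one monomial per non-intersecting path collection from $[|I|]'$ to $I$ in $G_{v,w}$ (and being identically zero when no such collection exists). Consequently, $P_I(M_{v,w}(t^{\bm b}))$ is either the zero Puiseux series or a Puiseux polynomial in $t$ with strictly positive real coefficients, and its valuation (with the convention $\textnormal{val}(0)=\infty$) equals $\min_P \bm b \cdot \bm e_P$, which is precisely the $I$-coordinate of $\Trop \Phi_{v,w}(\bm b)$.

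Next, fix an arbitrary $f = \sum_i c_i \bm{P}^{\bm m_i} \in I_{IP;n}$ with $c_i \in \mathbb{R}\setminus \{0\}$. Since $M_{v,w}(\bm a)$ represents a complete flag by \cref{marshrietsch}, and $\Fl_n$ is cut out by $I_{IP;n}$ in multi-projective space by \cref{pluckerideal}, we obtain the Puiseux series identity
\[
\sum_i c_i \prod_{J} P_J\bigl(M_{v,w}(t^{\bm b})\bigr)^{m_i(J)} = 0.
\]
Each nonzero summand is a product of strictly-positive-coefficient Puiseux polynomials multiplied by $c_i$; it therefore has leading coefficient of the same sign as $c_i$ and leading exponent $\bm m_i \cdot \Trop \Phi_{v,w}(\bm b)$. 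For the above identity to hold, the minimum of these leading exponents must be attained by nonzero summands whose leading coefficients cancel, which forces at least one $c_i > 0$ and at least one $c_i < 0$ among the attaining indices. This is exactly the condition that $\Trop \Phi_{v,w}(\bm b)$ is a positive tropical solution of $\Trop f$.

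Since this verification holds for every $f \in I_{IP;n}$, we conclude that $\Trop \Phi_{v,w}(\bm b) \in \trop^{\geq 0}(I_{IP;n}) = \textnormal{TrFl}_n^{\geq 0}$. The only bookkeeping subtlety—rather than a genuine obstacle—is the treatment of identically-zero Plücker coordinates and the resulting $\infty$ entries in $\Trop \Phi_{v,w}(\bm b)$: the corresponding summands in the displayed identity vanish identically and carry tropical value $\infty$, so they never attain the minimum of a nontrivial tropical linear combination and thus do not interfere with the positive-solution argument. The entire proof thus rests on the subtraction-freeness guaranteed by \cref{sumonly} together with the defining vanishing property of $I_{IP;n}$.
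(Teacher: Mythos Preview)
Your proof is correct and rests on the same underlying idea as the paper's---namely, that each $P_I \circ \Phi_{v,w}$ is a subtraction-free polynomial by \cref{sumonly}, and that this, together with the vanishing of every $f \in I_{IP;n}$ on the resulting flag, forces the positive tropical solution condition. The paper obtains this by citing \cite[Theorem~2]{PS} and \cite{SW}, which package exactly the valuation-cancellation argument you carry out by hand; for cells other than the top cell, the paper passes to the coordinate subspace where the identically-zero $P_I$ are deleted and reapplies the cited result, whereas you handle those coordinates uniformly by letting them take the value $\infty$ and observing that $\infty$-terms cannot attain a finite minimum. Your route is therefore more self-contained and slightly more elegant in its treatment of lower cells, at the cost of reproving a known general principle. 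One small terminological caveat: for arbitrary $\bm b \in \mathbb{R}^{\ell(w)-\ell(v)}$ the element $t^{b_i}$ is not a Puiseux series in the usual sense (rational exponents), but your cancellation argument is purely formal on finite real-exponent sums, so it goes through unchanged in the Hahn series field $\mathbb{R}[[t^{\mathbb{R}}]]$ (or by rational approximation and continuity). You might also note, for completeness, that when \emph{every} term of $f$ evaluates to $\infty$ the positive-solution condition still holds because any nonzero $f \in I_{IP;n}$ necessarily has coefficients of both signs (it vanishes on the totally positive flag variety, which is nonempty).
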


\begin{proof}
In the case of $\Phi_{id,w_0}$, this result is a direct application of {\cite[Theorem 2]{PS}} and \cite{SW}. For other cells, we make use of \Cref{bruhatinterval} to conclude that for some $S\subset 2^{[n]}$, $P_I=0$ if and only if $I\in S$. We can work with the lower dimensional space where we project away the coordinates $P_I$ for which $I\in S$. This space is cut out by the ideal obtained from the incidence \Pl ideal by setting $P_I=0$ for all $I\in S$ in each polynomial in the incidence \Pl ideal. The (non-zero) coordinates in the resulting variety are still the images of subtraction free polynomials by \Cref{sumonly}, allowing us to directly apply {\cite[Theorem 2]{PS}} again.    
\end{proof}

\begin{definition}
 We will denote the image of $\Trop \Phi_{v,w}$ by $\textnormal{TrFl}_{v,w}^{>0}$.
\end{definition}

\begin{remark}
    
As we continue through the next few sections, we will translate much of what we say about the complete flag variety into the language of the tropical complete flag variety and the complete flag Dressian. We will emphasize such ``translations" by labeling the relevant statements with the superscript ``trop".

\end{remark}

 \begin{lemmabis}{bruhatinterval}\label{tropicalbruhatinterval}

Let $P\in \textnormal{TrFl}_{v,w}^{>0}$ and $K=\{k_1,\cdots, k_m\}$. Then, $P_K\neq \infty$ if and only if $K$ consists of $\{u(1),\ldots,u(m)\}$ for some $v^{-1}\leq u\leq w^{-1}$.

\end{lemmabis}

 \subsection{A Graphical Description of the Marsh-Rietsch Parametrization}

\ptitle{Motivation/ outline of the section} 

Fix $v\leq w\in \mathfrak{S}_n$. Our main goal in this section is to construct a digraph $G_{v,w}$ which offers a combinatorial way to determine which \Pl coordinates are positive in $\mathcal{R}_{v,w}^{>0}$. 

Recall that in constructing $\mathcal{R}_{v,w}^{>0}$, we use subexpressions $\bm{w}$ and $\bm{v}$ of the expression $\bm{w_0}$ defined in \Cref{eq:w0expression}. The expression $\bm{w_0}$ consists of a series of runs, as defined in \Cref{def:runs}. By \Cref{def:expression}, a subexpression of $\bm{w_0}$ is just a subsequence of the sequence of simple reflections in \Cref{eq:w0expression}. Thus, each simple reflection in $\bm{w}$ or $\bm{v}$ can be associated with the run that it comes from in \Cref{eq:w0expression}.

To define $G_{v,w}$, we start by constructing a skeleton digraph $S$ to which we will apply a sequence of transformations. The skeleton $S$ has $2n$ labeled vertices, with vertices labeled from $1'$ to $n'$ lying vertically, with $n'$ on top and $1'$ on the bottom, and vertices labeled from $1$ to $n$ lying horizontally, with $1$ to the right and $n$ to the left. It has a directed edge from $i'$ to $i$ for each $i\in [n]$, as illustrated in \Cref{emptyfig}. For any graph obtained from $S$ by adding vertices, adding directed edges, and permuting vertex labels, we will say \textit{``strand $\sigma$"} to refer to the diagonal edge that is $\sigma$ from the bottom, so that the bottom-most of these diagonal edges is strand $1$ and the top-most is strand $n$. We will also say \textit{``column $c$"} to refer to the vertical slice of the graph between the vertices $n-c+2$ and $n-c+1$ (with column $1$ referring to the part of the graph to the left of vertex $n$). The columns are ordered left to right and give us a way to refer to a particular vertical slice of the graph. We use the notation $(\sigma,c)_G$ to refer to the intersection of strand $\sigma$ and column $c$ in graph $G$.

\begin{definition}\label{constructG}
 Let $v\leq w\in \mathfrak{S}_n$, with $\ell(w)=k$. We present an inductive construction of $G_{v,w}$ starting from $G_{v,w}^{(0)}=S$. Recall the product expansion for a matrix $M_{v,w}(\bm{a})$ as $M=M_1\cdots M_k$ described in \Cref{matrixdecompose}. Each $M_j$ corresponds to some simple reflection $s_{i_j}$ in $\bm{w}$, and can thus be associated to a specific run of $\bm{w_0}$. For $1\leq j\leq k$, we obtain $G_{v,w}^{(j)}$ from $G_{v,w}^{(j-1)}$ as follows: 

\begin{enumerate}
    \item \label{firststep} If $M_j=x_{i_{j}}(a_{j})$ comes from run $r$ of $\bm{w_0}$, then we obtain $G_{v,w}^{(j)}$ from $G_{v,w}^{(j-1)}$ by adding a vertical edge in column $r$, from $(i_j,r)_{G_{v,w}^{(j-1)}}$ to $(i_{j}+1,r)_{G_{v,w}^{(j-1)}}$.
    \item \label{secondstep} If $M_j=\dot{s}_{i_j}$, then we obtain $G_{v,w}^{(j)}$ from $G_{v,w}^{(j-1)}$ by swapping strands $i_j$ and $i_{j}+1$, including the primed source vertices on these strands but not the unprimed sink vertices. As part of this swap, we maintain the incidence of any vertical edges. In other words, if a vertical edge started (terminated) on strand ${i_{j}}$ in $G_{v,w}^{(j-1)}$, then it now starts (terminates) on strand $i_j+1$ in $G_{v,w}^{(j)}$.
\end{enumerate}
We define $G_{v,w}=G_{v,w}^{(k)}$ to be the end result of this procedure. 
\end{definition}

 \begin{figure}[H]
 \begin{tikzpicture}[node distance={10.5 mm}, thick, main/.style = {draw, circle}, 
blank/.style={circle, draw=green!0, fill=green!0, very thin, minimum size=7mm},]

\node[main] (1) {$1'$};
\node[main] (2) [above of=1] {$2'$};
\node (vdots) [above of = 2] {$\vdots$}; 
\node[main] (n) [above of=vdots] {$n'$};
\node (blank)[above of = n]{};
\node[main] (nn) [right  of=blank] {$n$};
\node (hdots) [right of = nn] {$\hdots$};
\node[main] (22) [right of = hdots] {$2$};
\node[main] (11) [right of = 22] {$1$};
\draw[-{Latex[length=3mm]}] (1) -- (11);
\draw[-{Latex[length=3mm]}] (2) -- (22);
\draw[-{Latex[length=3mm]}] (n) -- (nn);
\end{tikzpicture} 
\caption{The skeleton digraph, $S$.}

\label{emptyfig}
 \end{figure}

\begin{remark} \label{rmk:wkandvk}
 Fix $v\leq w\in \mathfrak{S}_n$ and let $\bm{w}=s_{i_1}\cdots s_{i_\ell}$ be the positive distinguished subexpression for $w$ in $\bm{w_0}$, with positive distinguished subexpression $s_{i_{j_1}}\cdots s_{i_{j_m}}$ for $v$. The inductive nature of the definition of $G_{v,w}$ guarantees that for $d\in[\ell]$, $G_{v,w}^{(d)}=G_{v_{(d)},w_{(d)}}$, where $w_{(d)}=s_{i_1}\cdots s_{i_d}$ and $v_{(d)}=s_{i_{j_1}}\cdots s_{i_{j_t}}$, with $t$ maximal such that $j_t\leq d$. We will make use of this fact implicitly in the following example.  
\end{remark}

\ptitle{Example of constructing the graph} \begin{example}\label{toymodel2}
Let's continue with \Cref{toymodel}, where $n=4$, $\bm{w}=s_1s_3s_2s_1$ is the positive distinguished subexpression for $w=4213$ in $\bm{w_0}$, and $\bm{v}=s_2$ is the positive distinguished subexpression for $v=1324$ in $\bm{w}$. To visualize this information, we can write out the expression $\bm{w_0}$ with the simple reflections in $\bm{w}$ underlined, and those in $\bm{v}$ double underlined. We obtain $(\underline{s_1}s_2\underline{s_3})(s_1\underline{\underline{s_2}})(\underline{s_1})$. We see here that the first two simple reflections of $\bm{w}$ come from the first run of $\bm{w_0}$, the third simple reflection comes from the second run of $\bm{w_0}$, and the final simple reflection comes from the third run of $\bm{w_0}$. 

Let $\bm{a}=(a_1,a_2,a_4)$. We had $M_{v,w}(\bm{a})=x_1(a_1)x_3(a_2)\dot{s}_2x_1(a_4)$. The first term, $x_1(a_1)$, comes from the first run of $\bm{w_0}$ and tells us to add a vertical edge from strand $1$ to strand $2$ in column $1$, obtaining \Cref{examp1}.  

\begin{figure}[H]
\begin{tikzpicture}[node distance={10.5 mm}, thick, main/.style = {draw, circle}, 
blank/.style={circle, draw=green!0, fill=green!0, very thin, minimum size=3.5mm},]
\node[main] (1) {$1'$};
\node[main] (2) [above of=1] {$2'$};
\node[main] (3) [above of = 2] {$3'$}; 
\node[main] (4) [above of=3] {$4'$};
\node (blank)[above of = 4]{};
\node[main] (44) [right  of=blank] {$4$};
\node[main] (33) [right of = 44] {$3$};
\node[main] (22) [right of = 33] {$2$};
\node[main] (11) [right of = 22] {$1$};
\node (bot1)[right of =2]{};
\node (top1)[above of = bot1]{};
\draw[-{Latex[length=3mm]}] (1) -- (11);
\draw[-{Latex[length=3mm]}] (2) -- (22);
\draw[-{Latex[length=3mm]}] (3) -- (33);
\draw[-{Latex[length=3mm]}] (4) -- (44);
\draw[-{Latex[length=3mm]}] ([xshift=-5mm,yshift=-5mm]bot1.center) -- ([xshift=-5mm,yshift=-5mm] top1.center) node [pos=0.6,right] {};
\end{tikzpicture} 
\caption{The graph $G_{v,w}^{(1)}=G_{id,s_1}$.}
\label{examp1}
\end{figure}

The next term, $x_3(a_2)$, also comes from the first run of $\bm{w_0}$ and tells us to add an edge from strand $3$ to strand $4$, still in column $1$. This is shown in \Cref{examp2}. 
 
  \begin{figure}[H]
 \begin{tikzpicture}[node distance={10.5 mm}, thick, main/.style = {draw, circle,minimum size=2 mm}, 
blank/.style={circle, draw=green!0, fill=green!0, very thin, minimum size=3.5mm},]

\node[main] (1) {$1'$};
\node[main] (2) [above of=1] {$2'$};
\node[main] (3) [above of = 2] {$3'$}; 
\node[main] (4) [above of=3] {$4'$};
\node (blank)[above of = 4]{};
\node[main] (44) [right  of=blank] {$4$};
\node[main] (33) [right of = 44] {$3$};
\node[main] (22) [right of = 33] {$2$};
\node[main] (11) [right of = 22] {$1$};
\node (bot1)[right of =2]{};
\node (top1)[above of = bot1]{};
\node (bot2)[right of = 4]{};
\draw[-{Latex[length=3mm]}] (1) -- (11);
\draw[-{Latex[length=3mm]}] (2) -- (22);
\draw[-{Latex[length=3mm]}] (3) -- (33);
\draw[-{Latex[length=3mm]}] (4) -- (44);
\draw[-{Latex[length=3mm]}] ([xshift=-5mm,yshift=-5mm]bot1.center) -- ([xshift=-5mm,yshift=-5mm]top1.center) node [pos=0.6, right] {};
\draw[-{Latex[length=3mm]}] ([xshift=-5mm,yshift=-5mm]bot2.center) -- ([xshift=-5mm,yshift=-5mm]44.center) node [pos=0.6, right] {};
\end{tikzpicture} 
\caption{The graph $G_{v,w}^{(2)}=G_{id,s_1s_3}$.}
\label{examp2}
 \end{figure}
 
 The next term, $\dot{s}_2$, tells us to swap the strands $2$ and $3$. Note that the vertical edge terminating on strand $2$ in $G_{v,w}^{(2)}$ should terminate on strand $3$ in $G_{v,w}^{(3)}$, as in \Cref{examp3}. Similarly, the edge originating on strand $3$ in $G_{v,w}^{(2)}$ should originate on strand $2$ in $G_{v,w}^{(3)}$. Observe that while the primed vertices are permuted, the unprimed vertices remain unchanged.
 
   \begin{figure}[H]
 \begin{tikzpicture}[node distance={10.5 mm}, thick, main/.style = {draw, circle,minimum size=2 mm}, 
blank/.style={circle, draw=green!0, fill=green!0, very thin, minimum size=3.5mm},]

\node[main] (1) {$1'$};
\node[main] (2) [above of=1] {$3'$};
\node[main] (3) [above of = 2] {$2'$}; 
\node[main] (4) [above of=3] {$4'$};
\node (blank)[above of = 4]{};
\node[main] (44) [right  of=blank] {$4$};
\node[main] (33) [right of = 44] {$3$};
\node[main] (22) [right of = 33] {$2$};
\node[main] (11) [right of = 22] {$1$};
\node (bot1)[right of =2]{};
\node (top1)[right of = 4]{};
\node (bot2)[right of = 4]{};
\node (bot5) [right of = 3]{};
\node (bot4)[right of =bot5]{};
\node (top4)[above of = bot4]{};
\node (colend) [below of = 33]{};
\draw[-{Latex[length=3mm]}] (1) -- (11);
\draw[-{Latex[length=3mm]}] (2) -- (22);
\draw[-{Latex[length=3mm]}] (3) -- (33);
\draw[-{Latex[length=3mm]}] (4) -- (44);
\draw[-{Latex[length=3mm]}] ([xshift=-5mm,yshift=-5mm]bot1.center) to [out=-260,in=-102] ([xshift=-5mm,yshift=-5mm]top1.center);
\draw[] ([xshift=-5mm,yshift=-5mm]bot5.center) -- ([xshift=-5mm,yshift=-5mm]colend.center);
\draw[-{Latex[length=3mm]}] ([xshift=-5mm,yshift=-5mm]bot5.center) to [out=-285,in=-72] ([xshift=-5mm,yshift=-5mm]44.center);

\end{tikzpicture} 
\caption{The graph $G_{v,w}^{(3)}=G_{s_2,s_1s_3s_2}$.}
\label{examp3}
 \end{figure}
 
 The final term, $x_1(a_4)$, comes from the third run of $\bm{w_0}$ and tells us to add an edge from strand $1$ to strand $2$ in column $3$, as illustrated in \Cref{examp4}.

 \begin{figure}[H]
 \begin{tikzpicture}[node distance={10.5 mm}, thick, main/.style = {draw, circle,minimum size=2 mm}, 
blank/.style={circle, draw=green!0, fill=green!0, very thin, minimum size=3.5mm},]

\node[main] (1) {$1'$};
\node[main] (2) [above of=1] {$3'$};
\node[main] (3) [above of = 2] {$2'$}; 
\node[main] (4) [above of=3] {$4'$};
\node (blank)[above of = 4]{};
\node[main] (44) [right  of=blank] {$4$};
\node[main] (33) [right of = 44] {$3$};
\node[main] (22) [right of = 33] {$2$};
\node[main] (11) [right of = 22] {$1$};
\node (bot1)[right of =2]{};
\node (top1)[right of = 4]{};
\node (bot2)[right of = 4]{};
\node (bot5) [right of = 3]{};
\node (bot4)[below of = 22]{};
\node (colend) [below of = 33]{};
\draw[-{Latex[length=3mm]}] (1) -- (11);
\draw[-{Latex[length=3mm]}] (2) -- (22);
\draw[-{Latex[length=3mm]}] (3) -- (33);
\draw[-{Latex[length=3mm]}] (4) -- (44);
\draw[-{Latex[length=3mm]}] ([xshift=-5mm,yshift=-5mm]bot1.center) to [out=-260,in=-102] ([xshift=-5mm,yshift=-5mm]top1.center);
\draw[] ([xshift=-5mm,yshift=-5mm]bot5.center) -- ([xshift=-5mm,yshift=-5mm]colend.center);
\draw[-{Latex[length=3mm]}] ([xshift=-5mm,yshift=-5mm]bot5.center) to [out=-285,in=-72] ([xshift=-5mm,yshift=-5mm]44.center);
\draw[-{Latex[length=3mm]}] ([xshift=-5mm,yshift=-5mm]bot4.center) -- ([xshift=-5mm,yshift=-5mm]22.center) node [pos=0.6, right] {};
\end{tikzpicture} 
\caption{The complete construction of $G_{v,w}=G_{v,w}^{(4)}$.}
 \label{examp4}
 \end{figure}

 \end{example}
 
 We now explain the combinatorial value of the graphs $G_{v,w}$.
 \begin{theorem}\label{pathcollections}
 For $v\leq w\in \mathfrak{S}_n$, and a flag $F\in\mathcal{R}_{v,w}^{>0}$, the \Pl coordinate $P_I(F)$ is non-zero if and only if there is a non-intersecting path collection from $\{1',2',\ldots,|I|'\}$ to $I$ in $G_{v,w}$.  
 \end{theorem}
 
 To prove this, we add weights on $G_{v,w}$ to obtain a weighted digraph $G_{v,w}(\bm{a})$ which, through the LGV construction, will relate $G_{v,w}$ to the Marsh-Rietsch parameterization of $\textnormal{Fl}_n^{\geq 0}$. 
 
 \begin{definition}\label{weightedG}
  Let $G_{v,w}(\bm{a})$ be the weighted digraph on the underlying digraph $G_{v,w}$ obtained by modifying the inductive steps in the construction of $G_{v,w}$ in \Cref{constructG} as follows:
  
  \begin{primenumerate}
   \item If $M_j=x_{i_j}(a_j)$, assign the vertical edge which is added in \Cref{firststep} of \Cref{constructG} a weight of $a_j$.
   \item If $M_j=\dot{s}_{i_j}$ comes from run $r$ of $w_0$, then, after appropriately modifying the strands and incident edges as in \Cref{secondstep} of \Cref{constructG}, assign the diagonal edge between $(i_j, r-2)_{G_{v,w}^{(j-1)}}$ and $(i_j, r-1)_{G_{v,w}^{(j-1)}}$ a weight of $-1$. 
  \end{primenumerate}
Any edges not already assigned a weight by this procedure will be of weight $1$. 
 \end{definition}
 
 \begin{example}\label{toymodelweighted}
 We continue \Cref{toymodel2}, with $M_{v,w}(\bm{a})=x_1(a_1)x_3(a_2)\dot{s}_2x_1(a_4)$. Modifying the steps applied to obtain \Cref{examp2}, \Cref{examp3}, and \Cref{examp4} appropriately, we obtain the weighted graphs illustrated in \Cref{weightedgraph}.
 
 \begin{figure}[H]
  \begin{tikzpicture}[node distance={10.5 mm}, thick, main/.style = {draw, circle,minimum size=2 mm}, 
blank/.style={circle, draw=green!0, fill=green!0, very thin, minimum size=3.5mm},]

\node[main] (1) {$1'$};
\node[main] (2) [above of=1] {$2'$};
\node[main] (3) [above of = 2] {$3'$}; 
\node[main] (4) [above of=3] {$4'$};
\node (blank)[above of = 4]{};
\node[main] (44) [right  of=blank] {$4$};
\node[main] (33) [right of = 44] {$3$};
\node[main] (22) [right of = 33] {$2$};
\node[main] (11) [right of = 22] {$1$};
\node (bot1)[right of =2]{};
\node (top1)[above of = bot1]{};
\node (bot2)[right of = 4]{};
\draw[-{Latex[length=3mm]}] (1) -- (11);
\draw[-{Latex[length=3mm]}] (2) -- (22);
\draw[-{Latex[length=3mm]}] (3) -- (33);
\draw[-{Latex[length=3mm]}] (4) -- (44);
\draw[-{Latex[length=3mm]}] ([xshift=-5mm,yshift=-5mm]bot1.center) -- ([xshift=-5mm,yshift=-5mm]top1.center) node [pos=0.6, right] {$a_1$};
\draw[-{Latex[length=3mm]}] ([xshift=-5mm,yshift=-5mm]bot2.center) -- ([xshift=-5mm,yshift=-5mm]44.center) node [pos=0.6, right] {$a_2$};
\end{tikzpicture} 
\begin{tikzpicture}[node distance={10.5 mm}, thick, main/.style = {draw, circle,minimum size=2 mm}, 
blank/.style={circle, draw=green!0, fill=green!0, very thin, minimum size=3.5mm},]

\node[main] (1) {$1'$};
\node[main] (2) [above of=1] {$3'$};
\node[main] (3) [above of = 2] {$2'$}; 
\node[main] (4) [above of=3] {$4'$};
\node (blank)[above of = 4]{};
\node[main] (44) [right  of=blank] {$4$};
\node[main] (33) [right of = 44] {$3$};
\node[main] (22) [right of = 33] {$2$};
\node[main] (11) [right of = 22] {$1$};
\node (bot1)[right of =2]{};
\node (top1)[right of = 4]{};
\node (bot2)[right of = 4]{};
\node (bot5) [right of = 3]{};
\node (bot4)[right of =bot5]{};
\node (top4)[above of = bot4]{};
\node (col3)[below of = 22]{};
\node (colend) [below of = 33]{};
\draw[-{Latex[length=3mm]}] (1) -- (11);
\draw[-{Latex[length=3mm]}] (2) -- (22);
\draw[-{Latex[length=3mm]}] (3) -- (33);
\draw[-{Latex[length=3mm]}] (4) -- (44);
\draw[-{Latex[length=3mm]}] ([xshift=-5mm,yshift=-5mm]bot1.center) to [out=-260,in=-102] ([xshift=-5mm,yshift=-5mm]top1.center);
\path (bot1)--(top1) node [pos=0.05, left] {$a_1$};
\draw[line width=0.75mm, color=dark-gray] ([xshift=-5mm,yshift=-5mm]bot5.center) -- ([xshift=-5mm,yshift=-5mm]colend.center);
\draw[-{Latex[length=3mm]}] ([xshift=-5mm,yshift=-5mm]bot5.center) to [out=-285,in=-72] ([xshift=-5mm,yshift=-5mm]44.center);
\path (top4) -- (44) node [pos=0.3, left] {$a_2\;\;\;\;\;$};
\end{tikzpicture} 
 \begin{tikzpicture}[node distance={10.5 mm}, thick, main/.style = {draw, circle,minimum size=2 mm}, 
blank/.style={circle, draw=green!0, fill=green!0, very thin, minimum size=3.5mm},]

\node[main] (1) {$1'$};
\node[main] (2) [above of=1] {$3'$};
\node[main] (3) [above of = 2] {$2'$}; 
\node[main] (4) [above of=3] {$4'$};
\node (blank)[above of = 4]{};
\node[main] (44) [right  of=blank] {$4$};
\node[main] (33) [right of = 44] {$3$};
\node[main] (22) [right of = 33] {$2$};
\node[main] (11) [right of = 22] {$1$};
\node (bot1)[right of =2]{};
\node (top1)[right of = 4]{};
\node (bot2)[right of = 4]{};
\node (bot5) [right of = 3]{};
\node (bot4)[right of =bot5]{};
\node (top4)[above of = bot4]{};
\node (col3)[below of = 22]{};
\node (colend) [below of = 33]{};
\draw[-{Latex[length=3mm]}] (1) -- (11);
\draw[-{Latex[length=3mm]}] (2) -- (22);
\draw[-{Latex[length=3mm]}] (3) -- (33);
\draw[-{Latex[length=3mm]}] (4) -- (44);
\draw[-{Latex[length=3mm]}] ([xshift=-5mm,yshift=-5mm]bot1.center) to [out=-260,in=-102] ([xshift=-5mm,yshift=-5mm]top1.center);
\path (bot1)--(top1) node [pos=0.05, left] {$a_1$};
\draw[line width=0.75mm, color=dark-gray] ([xshift=-5mm,yshift=-5mm]bot5.center) -- ([xshift=-5mm,yshift=-5mm]colend.center);
\draw[-{Latex[length=3mm]}] ([xshift=-5mm,yshift=-5mm]bot5.center) to [out=-285,in=-72] ([xshift=-5mm,yshift=-5mm]44.center);
\path (top4) -- (44) node [pos=0.3, left] {$a_2\;\;\;\;\;$};
\draw[-{Latex[length=3mm]}] ([xshift=-5mm,yshift=-5mm]col3.center) -- ([xshift=-5mm,yshift=-5mm]22.center) node [pos=0.6, right] {$a_4$};
\end{tikzpicture} 
\caption{From left to right, the weighted digraphs $G_{id,s_1s_3}(a_1,a_2)$, $G_{s_2,s_1s_3s_2}(a_1,a_2)$, and $G_{s_2,s_1s_3s_2s_1}(a_1,a_2,a_4)$. For legibility, the diagonal edge with weight $-1$ is indicated by a bolded line segment and edges with weight 1 are not marked.}
 \label{weightedgraph}
 \end{figure}

 \end{example}
  \begin{proposition}\label{LGVMR} For any $v\leq w\in \mathfrak{S}_n$ and $\bm{a}\in \mathbb{R}_{>0}^{\ell(w)-\ell(v)}$, the matrix obtained from $G_{v,w}(\bm{a})$ via the LGV construction coincides with $M_{v,w}(\bm{a})$. 
 \end{proposition}
 
 \begin{proof}

 To verify this, we use induction on $\ell(w)$. If $\ell(w)=0$, both constructions yield the identity matrix. 
 
 We now let $\ell=\ell(w)>0$. Let $\bm{w}=s_{i_1}\cdots s_{i_{\ell}}$ be the positive distinguished subexpression for $w$ in $\bm{w_0}$ and let $\bm{v}=s_{{i_{j_1}}}\cdots s_{i_{j_{m}}}$ be the positive distinguished subexpression for $v$ in $\bm{w}$. Let $M=M_{v,w}(\bm{a})$. Our strategy is to compare the paths in $G_{v,w}(\bm{a})$ with those in $G_{v',w'}(\bm{a'})$ for $w'=ws_{i_{\ell}}<w$ and for some appropriate $v'\leq v$ and $\bm{a'}$. Let $N$ be the matrix obtained from $G_{v,w}(\bm{a})$ via the LGV construction. By induction, $M_{v',w'}(\bm{a'})$ is the matrix obtained from the LGV construction on $G_{v',w'}(\bm{a'})$. Since $M_{v,w}(\bm{a})$ is constructed inductively, we will be able to see that $N$ differs from $M_{v',w'}(\bm{a'})$ in exactly the same way as $M_{v,w}(\bm{a})$ differs from $M_{v',w'}(\bm{a'})$. We consider two cases, depending on the value of ${j_{m}}$.
 
 Suppose ${j_{m}}\neq \ell$. Then, $M=M'x_{i_{\ell}}(a_{\ell})$. Note that $\bm{w'}=s_{i_1}\cdots s_{i_{\ell-1}}$ is the positive distinguished subexpression for $w'=ws_{i_{\ell}}$ in $\bm{w_0}$. Let $\bm{a}'$ be $\bm{a}$ with its last coordinate removed. Note that $\bm{v}$ is a positive distinguished subexpression of $\bm{w'}$. By induction, the matrix obtained from $G_{v,w'}(\bm{a}')$ via the LGV construction is $M'=M_{v,w'}(\bm{a'})$. To obtain $G_{v,w}(\bm{a})$ from $G_{v,w'}(\bm{a}')$, we add a single vertical edge $e$ of weight $a_{\ell}$ between strands $i_{\ell}$ and $i_{\ell}+1$ in some column $c$. Note that by construction, $e$ is the last edge which gets added to $G_{v,w}(\bm{a})$ and, as a result, there are no other vertical edges weakly above and to the right of it. For an example, compare $G_{s_2, s_1s_3s_2}(a_1,a_2)$ and $G_{s_2, s_1s_3s_2s_1}(a_1,a_2,a_4)$ in \Cref{weightedgraph}. Let $N$ be the matrix obtained from $G_{v,w}(\bm{a})$ by the LGV construction so that the matrix entry $N_{\alpha,\beta}$ is the sum of weights of paths in $G_{v,w}(\bm{a})$ from source vertex $\alpha'$ to sink vertex $\beta$. Similarly, $M'_{\alpha,\beta}$ is the sum of weights of paths in $G_{v,w'}(\bm{a'})$ from $\alpha'$ to $\beta$. Since there are no edges weakly above and to the right of $e$, any path in $G_{v,w}(\bm{a})$ which uses $e$ must continue along diagonal strand $i_{\ell}+1$ until vertex $i_{\ell}+1$. It follows that all paths in $G_{v,w}(\bm{a})$ which use $e$ terminate on strand $i_{\ell}+1$ and so, if $\alpha,\beta\in[n]$ and $\beta\neq i_{\ell}+1$, then $N_{\alpha,\beta}=M'_{\alpha,\beta}$. Once again using the fact that there are no vertical edges weakly above and to the right of $e$, paths terminate at $i$ in $G_{v,w'}(\bm{a'})$ if and only if they pass through $(i_{\ell},c)_{G_{v,w}}$, the intersection of strand $i_{\ell}$ and column $c$. Thus, for each path $p$ from $\alpha'$ to $i_{\ell}$ in $G_{v,w'}(\bm{a'})$, we obtain a path from $\alpha'$ to $i_{\ell}+1$ in $G_{v,w}(\bm{a})$ which is identical to $p$ until reaching the bottom of $e$, uses edge $e$, and continues to the end of strand $i+1$. This new path has weight $a_{\ell}\omega(p)$. Also, any path from $\alpha'$ to $i_{\ell}+1$ in $G_{v,w'}(\bm{a'})$ is still a path from $\alpha'$ to $i_{\ell}+1$ in $G_{v,w}(\bm{a})$. Thus, for $\alpha\in[n]$, $N_{\alpha,i_{\ell}+1}=a_{\ell}M'_{\alpha,i_{\ell}}+M'_{\alpha,i_{\ell}+1}$ and we may conclude that $N=M'x_{i_{\ell}}(a_{\ell})=M,$ as desired. 
 
 Now, suppose $j_{m}=\ell(w)$ so that  $M=M'\dot{s}_{i_{\ell}}$. If $\bm{w'}$ is as in the previous case, then $\bm{v'}=s_{i_{j_1}}\cdots s_{i_{j_{m-1}}}$ is the positive distinguished subexpression for a permutation $v'=vs_{i_{\ell}}$ in  $\bm{w'}$. By induction, the matrix obtained from $G_{v',w'}(\bm{a})$ via the LGV construction is $M'=M_{v',w'}(\bm{a})$. We obtain $G_{v,w}(\bm{a})$ from $G_{v',w'}(\bm{a})$ by swapping strands $i_{\ell}$ and $i_{\ell}+1$ and by adding a section $d$ of weight $-1$ on strand $i_{\ell}$ between columns $c$ and $c+1$ for some $c$. Note that by construction, adding $d$ is the last step in the construction of $G_{v,w}(\bm{a})$ and, as a result, there are no vertical edges weakly above and to the right of it. For example, compare $G_{id,s_1s_3}(a_1,a_2)$ and $G_{s_2,s_1s_3s_2}(a_1,a_2)$ in \Cref{weightedgraph}. Let $N$ be the matrix obtained from $G_{v,w}(\bm{a})$ by the LGV construction. Recall that any vertical edge starting or terminating on strand $i_{\ell}$ in $G_{v',w'}(\bm{a})$ will start or terminate on strand $i_{\ell}+1$ in $G_{v,w}(\bm{a})$, respectively, and any vertical edge starting or terminating on strand $i_{\ell}+1$ in $G_{v',w'}(\bm{a})$ will start or terminate on strand $i_{\ell}$ in $G_{v,w}(\bm{a})$, respectively. Accordingly, any path $p$ in $G_{v',w'}(\bm{a})$ that terminates at $\beta\neq i_{\ell},i_{\ell}+1$ will correspond to a path $q$ in $G_{v,w}(\bm{a})$ that also terminates at $\beta$ and is identical to $p$ except $q$ uses strand $i_{\ell}$ whenever $p$ uses strand $i_{\ell}+1$, and $q$ uses strand $i_{\ell}+1$ whenever $p$ uses strand $i_{\ell}$. Since $q$ does not terminate on strand $i_{\ell}$ and there are no vertical edges to the right of $d$, it does not use $d$ and so $\omega(q)=\omega(p)$. Thus, for ${\alpha,\beta}\in [n]$ and $\beta\neq i_{\ell},i_{\ell}+1$,  $N_{\alpha,\beta}=M'_{\alpha,\beta}$. By similar reasoning, for each path $p$ terminating at $i_{\ell}$ in  $G_{v',w'}(\bm{a})$ there is a corresponding path $q$ terminating at $i_{\ell}+1$ in $G_{v,w}(\bm{a})$ with $\omega(q)=\omega(p)$. Thus, for $\alpha\in[n]$, $N_{\alpha,i_{\ell}+1}=M'_{\alpha,i_{\ell}}$. Similarly, for each path $p$ terminating at $i_{\ell}+1$ in $G_{v',w'}(\bm{a})$, there is a corresponding path $q$ terminating at $i_{\ell}$ in $G_{v,w}(\bm{a})$. However, since there are no vertical edges to the right of $d$, $q$ must pass through $d$. Thus, $\omega(q)=-\omega(p)$ and, for $\alpha\in[n]$, $N_{\alpha,i_{\ell}}=-M'_{\alpha,i_{\ell}+1}$. We conclude that $N=M'\dot{s}_{i_{\ell}}=M$, as desired.
 
 \end{proof}

\begin{lemma}\label{vertexinversion}
    Read from bottom to top, the vertices on the left of $G_{v,w}$ are $v(1)',v(2)',\ldots,v(n)'$.
\end{lemma}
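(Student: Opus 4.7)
The plan is a straightforward induction on the step index $j$ in the construction of $G_{v,w}$, tracking how the arrangement of primed vertex labels evolves as we process $M_1, M_2, \ldots, M_k$ in turn.

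First I would introduce notation to make the bookkeeping clean: let $\sigma^{(j)} \in S_n$ be the permutation characterized by the property that after step $j$, the primed vertices on the left of $G_{v,w}^{(j)}$, read from bottom to top, are $\sigma^{(j)}(1)', \sigma^{(j)}(2)', \ldots, \sigma^{(j)}(n)'$. The base case is immediate: $G_{v,w}^{(0)} = S$ has its primed vertices labeled $1', 2', \ldots, n'$ from bottom to top by construction, so $\sigma^{(0)} = \textnormal{id}$.

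For the inductive step, I would use \cref{constructG} directly. If $M_j = x_{i_j}(a_j)$, then step 1 of the construction only adds a vertical edge and leaves the primed vertex labels untouched, so $\sigma^{(j)} = \sigma^{(j-1)}$. If $M_j = \dot{s}_{i_j}$, then step 2 swaps the primed labels currently sitting at vertical positions $i_j$ and $i_j+1$; since the label at position $k$ equals $\sigma^{(j-1)}(k)$, the new label at position $i_j$ is $\sigma^{(j-1)}(i_j+1)$ and at position $i_j+1$ is $\sigma^{(j-1)}(i_j)$, which is precisely to say that $\sigma^{(j)} = \sigma^{(j-1)} s_{i_j}$ (right multiplication, because we are permuting positions, not values).

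Composing these increments, the nontrivial swaps occur exactly at the steps $j \in J$, and in increasing order of $j$ these are $j_1 < j_2 < \cdots < j_m$. Thus
\begin{equation*}
\sigma^{(k)} = \textnormal{id} \cdot s_{i_{j_1}} s_{i_{j_2}} \cdots s_{i_{j_m}} = v,
\end{equation*}
since $\bm{v} = s_{i_{j_1}} s_{i_{j_2}} \cdots s_{i_{j_m}}$ is a reduced expression for $v$ by the choice of positive distinguished subexpression in \cref{matrixdecompose}. This yields the claim. The only potential pitfall is the left-vs-right multiplication convention in the inductive step, but fixing positions (rather than values) and swapping labels forces right multiplication, which is exactly what matches the order in which the transpositions of $\bm{v}$ are read off in $\bm{w}$, so no further care is needed.
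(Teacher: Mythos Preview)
Your proof is correct and follows essentially the same approach as the paper's own proof, which is the one-line observation that the transpositions $s_i$ in $\bm{v}$ are precisely what swap the primed labels on strands $i$ and $i+1$. Your version simply makes this explicit by introducing the permutations $\sigma^{(j)}$ and carefully verifying the right-multiplication convention, which is a fine expansion of the same idea.
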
 
\begin{proof}
    This follows from the fact that the $s_i$ which are in $\bm{v}$ cause us to swap the primed vertices on strands $i$ and $i+1$. 
\end{proof}
 
 \ptitle{Useful results for later}
 We now state and prove a previously known result {\cite[Proposition 5.1]{RW}}, but we give a new proof using the machinery introduced in this section. We adopt the notation $[k]'\coloneqq \{1',2',\ldots,k'\}$.
 
 \begin{lemma}\label{sumonly}
 Let $v\leq w\in \mathfrak{S}_n$, $k\in [n]$ and $\bm{a}\in\mathbb{R}_{>0}^{\ell(w)-\ell(v)}$. Then, for any $I\in {\binom{[n]}{k}}$ the \Pl coordinate $P_I(M_{v,w}(\bm{a}))$ is a subtraction free polynomial combination of the weights $\bm{a}$. Moreover, each non-intersecting path collection from $[|I|]'$ to $I$ in $G_{v,w}(\bm{a})$ contributes a term to $P_I(M_{v,w}(\bm{a}))$. 
 \end{lemma}
 \begin{proof} From \Cref{cor:LGVcor} and \Cref{LGVMR}, we see that each such non-intersecting path collection does indeed contribute a term to $P_I$. What needs to be verified is that all such terms have positive coefficients. A non-intersecting path collection is determined by its vertical edges. Thus, there is no cancellation in $P_I$ between the monomials corresponding to different path collections. Suppose towards a contradiction that a non-intersecting path collection $P$ contributed a term with a negative coefficient in \Cref{LGVcor}, then we could fix the weights of all vertical edges not involved in $P$ and make the weights of the vertical edges which are involved in $P$ large. Then, for those weights, the expression in \Cref{LGVcor} would be negative. However, \Cref{cor:LGVcor} and \Cref{LGVMR} tell us that this expression is a \Pl coordinate of a flag in $\Fl_n^{\geq0}$ and thus must be positive for any positive weights by \Cref{forwarddir}. 
 \end{proof}
 
 \begin{proof}[Proof of \Cref{pathcollections}]
 This is immediate from \Cref{sumonly}.
\end{proof}

\subsection{Extremal Non-Zero \Pl Coordinates}

 We define a special subset of the \Pl coordinates of a flag which we call \textit{extremal non-zero \Pl coordinates}. For a flag $F\in \textnormal{Fl}_n^{\geq 0}$, the indices of these coordinates will depend only on which cell $\mathcal{R}_{v,w}^{>0}$ contains $F$. In future sections, we will prove a number of useful facts about the extremal non-zero \Pl coordinates of a flag, including that the extremal non-zero \Pl coordinates determine all of the other \Pl coordinates and also form a positivity test in $\mathcal{R}_{v,w}^{>0}$. We will also see the beginnings of the parallel story about the complete flag Dressian take shape in this section.

\ptitle{Description and intuition of $\Xi$} \ptitle{Description and intuition of $\Xi$} Let $P\in \mathbb{R}\mathbb{P}^{\binom{[n]}{1}}\times\cdots\times \mathbb{R}\mathbb{P}^{\binom{[n]}{n-1}}$ and let $S=2^{[n]}\setminus\{\emptyset, [n]\}$ be the set of nonempty proper subsets of $[n]$. We define a map $\Xi_P: S\rightarrow S$.  Intuitively, when applied to the index of a non-zero \Pl coordinate $P_I$, this map finds the largest member of $I$ that can be increased without making the corresponding \Pl coordinate $0$ and increases it maximally. 

\begin{definition}\label{ximap}
 Let $P\in \mathbb{R}\mathbb{P}^{\binom{[n]}{1}}\times\cdots\times \mathbb{R}\mathbb{P}^{\binom{[n]}{n-1}}$ and $I\in S$. Let $B= \left\{i\;|\;\exists\; j\notin I,  \;j>i, \; P_{(I\setminus i)\cup j}\neq 0\right\}$. If $B$ is non-empty, define $b=\max B$ and $a=\max_{j\notin I}\left\{j\;|\; P_{(I\setminus b)\cup j}\neq 0\right\}$. Then, 
\begin{align*}
    \Xi_P(I)=
    \begin{cases}
    (I\setminus b)\cup a &\;\;\textnormal{if } I \textnormal{ is the index of a non-zero \Pl coordinate and $B$ is non-empty,}\\
    I &\;\;\textnormal{otherwise.}  
    \end{cases}
\end{align*}

\end{definition}

\begin{remark}\label{rem:ximatroidstructure}
Recall that the indices of non-zero \Pl coordinates of a point in the flag variety form the bases of a flag matroid. Thus, for $P\in\textnormal{Fl}_n$, $\Xi_P$ acts by basis exchange on the constituent matroids of a flag matroid. Also note that, by \Cref{bruhatinterval}, for a flag $F\in\Fl_n^{\geq 0}$, the map $\Xi_{P(F)}$ depends only on the cell $\mathcal{R}_{v,w}^{>0}$ containing $F$. We denote this map by $\Xi_{v,w}$.

\end{remark}
We similarly define $\Xi_P^*$ to be the map going the other way. Applied to the index of a non-zero \Pl coordinate $I$, it tries to find the smallest member of $I$ which can be lowered without making the corresponding \Pl coordinate $0$ and lowers it maximally.

\begin{definition}

Let $P\in \mathbb{R}\mathbb{P}^{\binom{[n]}{1}}\times\cdots\times \mathbb{R}\mathbb{P}^{\binom{[n]}{n-1}}$ and $I\in S$. If $B^*= \left\{i\;|\;\exists\; j\notin I,  \;i>j, \; P_{(I\setminus i)\cup j}\neq 0\right\}$ is non-empty, define $b^*=\min_{i\in I} B^*$ and $a^*=\min_{j\notin I}\left\{j\;|\; P_{(I\setminus b^*)\cup j}\neq 0\right\}$. Then, 
\begin{align*}
    \Xi^*_P(I)=
    \begin{cases}
    (I\setminus b^*)\cup a^* &\;\;\textnormal{if } I \textnormal{ is the index of a non-zero \Pl coordinate and $B^*$ is non-empty,}\\
    I &\;\;\textnormal{otherwise.}  
    \end{cases}
\end{align*}

\end{definition}

Again, this can be thought of as a basis exchange. Also, as above, if $P=P(F)$ for a flag $F$ in some cell $\mathcal{R}_{v,w}^{> 0}$, we can denote the corresponding map by $\Xi^*_{v,w}$.

\begin{defbis}{ximap}
   Let $P\in \mathbb{T}\mathbb{P}^{\binom{[n]}{1}}\times\cdots\times \mathbb{T}\mathbb{P}^{\binom{[n]}{n-1}}$ and $I\in {\binom{[n]}{k}}$ for some $k\in [n]$. If $B= \left\{i\;|\;\exists\; j\notin I,  \;i<j, \; P_{(I\setminus i)\cup j}\neq \infty\right\}$ is non-empty, define $b=\max_{i\in I} B$ and $a=\max_{j\notin I}\left\{j\;|\; P_{(I\setminus b)\cup j}\neq \infty\right\}$. Then, 
\begin{align*}
    \Xi_P(I)=
    \begin{cases}
    (I\setminus b)\cup a &\;\;\textnormal{if } I \textnormal{ is the index of a non-infinite \Pl coordinate and $B$ is non-empty,}\\
    I &\;\;\textnormal{otherwise.}  
    \end{cases}
\end{align*}

\end{defbis}

We observe that if $P\in \textnormal{TrFl}_{v,w}^{>0}=\textnormal{Im}(\Trop\Phi_{v,w})$, then $\Xi_P=\Xi_{v,w}$, since the non-infinite coordinates of $P$ are exactly the same as the non-zero coordinates of a flag $F\in \mathcal{R}_{v,w}^{>0}$ by \Cref{tropicalbruhatinterval}.

 The extremal non-zero \Pl coordinates will be given as the coordinates indexed by certain $\Xi$ orbits. To properly define them, we will first need a preliminary result related to matroids:

 \begin{lemma}[{\cite[Theorem 1.3.1]{Coxeter}}] \label{coxeter} 
 Any matroid (and in particular, positroid) has a unique Gale minimal basis and a unique Gale maximal basis. 
 \end{lemma}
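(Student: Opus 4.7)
The plan is to use the standard basis exchange axiom together with a symmetry argument on the maximum (resp.\ minimum) element of the symmetric difference of two bases.

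Existence is immediate: a matroid has only finitely many bases, so Gale-minimal and Gale-maximal bases certainly exist. The content is uniqueness, i.e.\ there is only one minimal element (not several incomparable ones). I will focus on the minimum; the maximum case is dual, swapping ``largest'' with ``smallest'' throughout.

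Suppose for contradiction that $B_1 \neq B_2$ are two distinct Gale-minimal bases. Since $B_1 \ne B_2$, the symmetric difference $B_1 \triangle B_2$ is nonempty; let $x$ be its largest element, and say $x \in B_1$ without loss of generality. By the matroid basis exchange axiom, there exists $y \in B_2 \setminus B_1$ such that $B_1' := (B_1 \setminus x) \cup y$ is again a basis. Since $y \in B_2 \setminus B_1 \subseteq B_1 \triangle B_2$ and $x$ is the \emph{largest} element of the symmetric difference, we have $y < x$.

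The key step is to verify $B_1' < B_1$ in the Gale order, which would contradict the Gale-minimality of $B_1$. Write $B_1 = \{b_1 < b_2 < \cdots < b_k\}$ with $x = b_m$, and let $j \leq m$ be the unique index such that $b_{j-1} < y < b_j$ (using the convention $b_0 = 0$). Sorting $B_1' = (B_1 \setminus b_m) \cup y$ yields
\begin{equation*}
B_1' = \{b_1,\,\ldots,\,b_{j-1},\,y,\,b_j,\,b_{j+1},\,\ldots,\,b_{m-1},\,b_{m+1},\,\ldots,\,b_k\}.
\end{equation*}
Comparing $B_1'$ with $B_1$ position by position: positions $1,\ldots,j-1$ and $m+1,\ldots,k$ agree; at position $j$ we have $y < b_j$; and at each position $p$ with $j < p \leq m$ we have the $p$'th entry of $B_1'$ equal to $b_{p-1} < b_p$. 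Thus every entry of $B_1'$ is $\leq$ the corresponding entry of $B_1$, with strict inequality at position $j$, so $B_1' < B_1$ in the Gale order, the desired contradiction.

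For the Gale-maximum, one runs the same argument after replacing ``largest element of $B_1 \triangle B_2$'' with ``smallest element,'' so that the exchanged element $y$ satisfies $y > x$, and the resulting basis is strictly Gale-larger than the assumed maximum. The only real obstacle is the bookkeeping in the position-by-position comparison above, and since it is purely combinatorial there is no technical difficulty beyond keeping track of indices.
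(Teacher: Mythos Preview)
Your proof is correct and is the standard argument via basis exchange. Note, however, that the paper does not actually prove this lemma: it is stated as a citation to \cite[Theorem 1.3.1]{Coxeter} and used as a black box, so there is no proof in the paper to compare against. Your argument is essentially the classical one that appears in the cited reference.
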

 
Note that the Gale minimal and maximal bases referenced in the previous lemma must simply be the lexicographically minimal and maximal bases, respectively. We use the fact that for $F\in \textnormal{Fl}_n$, the indices of non-zero \Pl coordinates of $F$ form the bases of a flag matroid.

 \begin{definition}\label{extremal}
  Given a flag $F\in \textnormal{Fl}_n$ such that $P_I(F)\geq 0$ for all $\emptyset\neq I\subset[n]$, let $I_k$ be the Gale minimal index of size $k$ such that $P_{I_k}(F)\neq 0$.
  The set of indices of the \textbf{extremal non-zero \Pl coordinates}, referred to as \textbf{extremal indices}, of $P(F)$ consists of the $\Xi_{P(F)}$ orbits of the $I_k$ for $k\in[n-1]$.
  
    \end{definition}
    
In \cite{BEZ} it is shown that the support of a point in the flag Dressian is a flag matroid. Thus, the following is well-defined:

\begin{defbis}{extremal}\label{tropextremalsdefinition}
 Given a point $P\in \textnormal{FlDr}_n^{\geq0}$, let $I_k$ be the Gale minimal index of size $k$ such that $P_{I_k}\neq \infty$.
  The set of indices of the \textbf{extremal non-infinite \Pl coordinates}, referred to as \textbf{extremal indices}, of $P$ consists of the $\Xi_{P}$ orbits of the $I_k$ for $k\in[n-1]$.
 
\end{defbis}
  
As we remarked in \Cref{rem:ximatroidstructure}, the extremal indices of a flag in $\textnormal{Fl}_n^{\geq 0}$ depend only on the cell $\mathcal{R}_{v,w}^{>0}$ containing that flag. Similarly, all points in $\textnormal{TrFl}_{v,w}^{>0}$ have the same extremal indices as a flag in $\mathcal{R}_{v,w}^{>0}$.

By \Cref{bruhatinterval}, in $\mathcal{R}_{v,w}^{>0}$, the Gale minimal indices of non-zero \Pl coordinates are $\{v^{-1}(1),\ldots, v^{-1}(k)\}$ for $k\in [n-1]$ while the Gale maximal indices of non-zero \Pl coordinates are $\{w^{-1}(1),\ldots,w^{-1}(k)\}$ for $k\in[n-1]$. We will use $\Xi_{v,w}$ as a way to interpolate between the non-zero (or non-infinite, in the tropical case) \Pl coordinates of a flag with Gale minimal and Gale maximal indices: The basis exchange axiom for matroids guarantees that repeated applications of $\Xi_{v,w}$ must eventually yield the unique Gale maximal index of a non-zero (non-infinite) \Pl coordinate of a flag in $\mathcal{R}_{v,w}^{>0}$ (a point in $\textnormal{TrFl}_{v,w}^{> 0}$).  

We note that one can define a dual notion of extremal non-zero (non-infinite) \Pl coordinates using the $\Xi^*$ orbit of the Gale maximal indices of non-zero (non-infinite) \Pl coordinates. While these dual extremal indices are in fact different in some cases, every result which we prove would work equally with the dual version of extremal non-zero (non-infinite) \Pl coordinates.

 \ptitle{Example of extremal indices and \Pl coordinates}
   \begin{example}
 We continue with \Cref{toymodel}. Recall that we had 
 \begin{equation*}
     M=\begin{pmatrix}
    1&a_4&a_1&0\\
    0&0&1&0\\
    0&-1&0&a_2 \\
    0&0&0&1
    \end{pmatrix}.
 \end{equation*}
 
 Let $P=P(M)$ and $\Xi=\Xi_P$. The non-zero \Pl coordinate with Gale minimal index of size $1$ is $P_1=1$. Thus, $P_1$ is an extremal non-zero \Pl coordinate. Applying $\Xi$ to $\{1\}$, we replace $1$ with the maximal single element index of a non-zero \Pl coordinate, obtaining $\Xi(\{1\})=\{3\}$. Thus, $P_3=a_1$ is also an extremal non-zero \Pl coordinate.
 
 The non-zero \Pl coordinate with Gale minimal index of size $2$ is $P_{13}=1$. Thus, $P_{13}$ is an extremal non-zero \Pl coordinate. Applying $\Xi$ to $\{1,3\}$ first tries to replace $3$ with something bigger. However, that is not possible since $P_{14}=0$. Thus, it tries to replace $1$ with something bigger, yielding $\Xi(\{1,3\})=\{2,3\}$. Thus $P_{23}=a_4$ is also an extremal non-zero \Pl coordinate. The index $\{2,3\}$ is the Gale maximal index of size $2$ of a non-zero \Pl coordinate.
 
 The non-zero \Pl coordinate with Gale minimal index of size $3$ is $P_{123}=1$. Thus, $P_{123}$ is an extremal non-zero \Pl coordinate. Applying $\Xi$, we obtain $\Xi(\{1,2,3\})=\{1,3,4\}$ and $\Xi(\{1,3,4\})=\{2,3,4\}$. Thus $P_{134}=a_2$ and $P_{234}=a_2a_4$ are both extremal non-zero \Pl coordinates. 
 \end{example}
 
 \begin{example}\label{maxsizecoords}
 For any $F\in \textnormal{Fl}_n$, any non-zero \Pl coordinate with index of size $n-1$ is an extremal non-zero \Pl coordinate. To see this, note that Gale order is a total order on subsets of size $n-1$ and in this case, $\Xi_{P(F)}$ simply acts by replacing the index of a non-zero \Pl coordinate by the next largest such index in Gale order.  
 \end{example}

 The next few results will help us get a handle on which indices are actually extremal.

 \begin{lemma}\label{positroidexch}
 For $F\in \textnormal{Fl}_n$, let $P=P(F)$ and suppose $P_I\geq 0$ for all $\emptyset\neq I\subset [n]$. Let $S$ be any subset of $[n]$ of size at most $n-4$. Let $a<b<c<d$ with $a,b,c,d\notin S$. Then, both $P_{S\cup ac}\neq 0$ and $P_{S\cup bd}\neq 0$ if and only if either both $P_{S\cup ab}\neq 0$ and $P_{S\cup cd}\neq 0$, or both $P_{S\cup ad}\neq 0$ and $P_{S\cup bc}\neq 0$. 

 \end{lemma}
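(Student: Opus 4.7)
The plan is to invoke the appropriate three-term Grassmann--Plücker relation and then use nonnegativity of all Plücker coordinates to read off both implications essentially mechanically.

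First I would set up the relevant instance of $\mathfrak{P}_{r,s;n}$. Taking $r = s = |S|+2$, we may choose $I = S\cup \{a\}$ (of size $|S|+1 = r-1$) and $J = S\cup \{b,c,d\}$ (of size $|S|+3 = s+1$); then $J\setminus I = \{b,c,d\}$ and the corresponding incidence Plücker relation reduces to the classical three-term Plücker relation. A direct computation of $sign(j,I,J)$ for $j\in\{b,c,d\}$ (using $a<b<c<d$ and $a,b,c,d\notin S$) shows, after possibly clearing a global sign, that
\begin{equation*}
P_{S\cup ac}\, P_{S\cup bd} \;=\; P_{S\cup ab}\, P_{S\cup cd} \;+\; P_{S\cup ad}\, P_{S\cup bc}.
\end{equation*}
By \cref{pluckerideal}, any $F\in\Fl_n$ satisfies this relation on its Plücker coordinates, so the identity holds for $P=P(F)$.

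For the forward implication, assume $P_{S\cup ac}\ne 0$ and $P_{S\cup bd}\ne 0$. Since every Plücker coordinate is nonnegative by hypothesis, both factors on the left are strictly positive, so their product is positive. Therefore the right-hand side is positive. Both summands on the right are products of nonnegative quantities, hence nonnegative, so at least one of them must be strictly positive. A product of nonnegatives is positive iff both factors are nonzero, giving either ($P_{S\cup ab}\ne 0$ and $P_{S\cup cd}\ne 0$) or ($P_{S\cup ad}\ne 0$ and $P_{S\cup bc}\ne 0$).

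For the converse, if either pair on the right consists of two nonzero (hence positive) coordinates, then that summand is positive while the other is nonnegative, so the right-hand side is positive. By the Plücker relation the left-hand side is then positive, which forces both $P_{S\cup ac}$ and $P_{S\cup bd}$ to be nonzero. The only technical point worth being careful about is the sign bookkeeping in $sign(j,I,J)$; once that is pinned down to confirm that the middle term carries the opposite sign from the other two, both directions are immediate.
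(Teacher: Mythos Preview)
Your proposal is correct and follows precisely the same approach as the paper: both invoke the three-term Pl\"ucker relation with $I=S\cup a$ and $J=S\cup bcd$ and then read off the equivalence from nonnegativity of all Pl\"ucker coordinates. Your write-up is simply more explicit about the two implications and the sign bookkeeping than the paper's one-line proof.
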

 \begin{proof}
 Since $P_I\geq 0$ for all $\emptyset\neq I\subset[n]$, the lemma follows from the three-term \Pl relation in \Cref{rem:threetermrelations}.
 \end{proof}

\begin{lemma}\label{claim} Let $F\in \textnormal{Fl}_n$ such that $P_I(F)\geq0$ for all $\emptyset\neq I\subset[n]$ and let $\Xi=\Xi_{P(F)}$. Let $I$ be the index of a non-zero \Pl coordinate. Let $s$ be such that $\Xi^{s-1}(I)\neq \Xi^s(I)=\Xi^{s+1}(I)$. For $1\leq r\leq s$, define $\alpha_r$ and $\beta_r$ by $\Xi^{r}(I)=\left(\Xi^{r-1}(I)\setminus \beta_r\right)\cup \alpha_r$. If $1\leq t<r\leq s$, then $\beta_r<\beta_t$.\end{lemma}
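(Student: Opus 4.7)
By induction on $r - t$, it suffices to prove the one-step statement $\alpha_{r+1} < \alpha_r$ for each $1 \leq r < s$. The strategy is to suppose for contradiction that $\alpha_{r+1} \geq \alpha_r$ and derive a contradiction using the defining maximality properties of $\alpha_r$ and $\beta_r$ together with the three-term Plücker relation recorded in \cref{positroidexch}. Note first that $\alpha_{r+1} \neq \alpha_r$, since $\alpha_r \notin \Xi^r(I_k)$ but $\alpha_{r+1} \in \Xi^r(I_k)$ by construction, so in fact $\alpha_{r+1} > \alpha_r$.

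I would handle two configurations. If $\alpha_{r+1} = \beta_r$, then a direct computation gives $\Xi^{r+1}(I_k) = (\Xi^{r-1}(I_k) \setminus \alpha_r) \cup \beta_{r+1}$, and non-vanishing of this Plücker coordinate together with $\beta_{r+1} > \beta_r$ contradicts the definition of $\beta_r$ as the largest legal increase of $\alpha_r$ at step $r$. Otherwise $\alpha_{r+1} \in \Xi^{r-1}(I_k) \setminus \{\alpha_r, \beta_r\}$, and the four elements $\alpha_r, \alpha_{r+1}, \beta_r, \beta_{r+1}$ are pairwise distinct and all lie outside $J_0 \coloneqq \Xi^{r-1}(I_k) \setminus \{\alpha_r, \alpha_{r+1}\}$. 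We know the three Plücker coordinates $P_{\Xi^{r-1}(I_k)} = P_{J_0 \cup \{\alpha_r, \alpha_{r+1}\}}$, $P_{\Xi^{r}(I_k)} = P_{J_0 \cup \{\alpha_{r+1}, \beta_r\}}$, and $P_{\Xi^{r+1}(I_k)} = P_{J_0 \cup \{\beta_r, \beta_{r+1}\}}$ are all non-zero.

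Using the inequalities $\alpha_r < \alpha_{r+1}$, $\alpha_r < \beta_r$, and $\alpha_{r+1} < \beta_{r+1}$, the element $\alpha_r$ is forced to be the minimum, and exactly three linear orderings of the four elements are possible, distinguished by whether $\alpha_{r+1}$ is less than or greater than $\beta_r$ and, in the former case, whether $\beta_{r+1}$ is less than or greater than $\beta_r$. In each ordering I would apply \cref{positroidexch} with $S = J_0$ to the appropriate pair of known non-zero Plücker coordinates; in every case this forces a fourth non-zero Plücker coordinate that is either of the form $P_{(\Xi^{r-1}(I_k) \setminus \alpha_{r+1}) \cup j}$ with $j > \alpha_{r+1}$---contradicting the maximality of $\alpha_r$, since then $\alpha_{r+1} > \alpha_r$ could be increased at step $r$---or of the form $P_{(\Xi^{r-1}(I_k) \setminus \alpha_r) \cup \beta_{r+1}}$ with $\beta_{r+1} > \beta_r$---contradicting the maximality of $\beta_r$.

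The main obstacle is the bookkeeping across the case split: identifying in each ordering which pair of known non-zero coordinates plays the role of ``opposite corners'' in \cref{positroidexch}, and tracking which disjunct of the biconditional yields the desired contradiction. The only mildly delicate case is the ordering $\alpha_r < \beta_r < \alpha_{r+1} < \beta_{r+1}$, where the known pair $\{P_{S\cup ac}, P_{S\cup bd}\}$ forces a genuine disjunction, but both disjuncts yield contradictions (one to the maximality of $\alpha_r$, the other to that of $\beta_r$). The other two orderings use the implication $(P_{S\cup ab} \neq 0 \wedge P_{S\cup cd} \neq 0) \Rightarrow (P_{S\cup ac} \neq 0 \wedge P_{S\cup bd} \neq 0)$ and are mechanical once set up.
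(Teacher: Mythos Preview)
Your proposal is correct and follows essentially the same approach as the paper's proof: reduce to consecutive steps, rule out $\alpha_{r+1}=\beta_r$ by a direct contradiction to the maximality of $\beta_r$, and then split into the three possible linear orderings of $\alpha_r,\alpha_{r+1},\beta_r,\beta_{r+1}$, applying \cref{positroidexch} with $S=J_0=\Xi^{r-1}(I_k)\setminus\{\alpha_r,\alpha_{r+1}\}$ in each. The only cosmetic difference is that the paper disposes of $\alpha_{t+1}=\beta_t$ before assuming $\alpha_t<\alpha_{t+1}$, whereas you treat it as a subcase afterward; since $\beta_r>\alpha_r$ forces $\alpha_{r+1}>\alpha_r$ in that subcase anyway, the two organizations are equivalent.
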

 
 \begin{proof} It suffices to show that $\beta_{t+1}<\beta_t$ for $1\leq t< s$. Note that $t<s$ simply ensures that $\Xi^{t+1}(I)\neq \Xi^t(I)$. By definition, we have $\alpha_t>\beta_t$ and $\alpha_{t+1}>\beta_{t+1}$. Also, $\beta_{t+1}\neq \alpha_t$ since in that case, $\Xi$ should replace $\beta_t$ by $\alpha_{t+1}$ rather than $\alpha_t$, by the maximality of $a$ in the definition of $\Xi$. Let $S=\Xi^{t-1}(I)$, so that $S$ and $\Xi^2(S)=\left(S\setminus \beta_t\beta_{t+1}\right)\cup \alpha_t\alpha_{t+1}$ are both indices of non-zero \Pl coordinates. Suppose $\beta_t<\beta_{t+1}$. Then we are in one of the following three cases:
 
 \begin{enumerate}
     \item[Case 1:] If $\beta_t<\beta_{t+1}<\alpha_t<\alpha_{t+1}$, then \Cref{positroidexch} applied to $S\setminus \beta_t\beta_{t+1}$ implies that $(S\setminus \beta_{t})\cup \alpha_{t+1}$ is the index of a non-zero \Pl coordinate as well. However, this contradicts the maximality of $a$ in the definition of $\Xi$. 
     \item[Case 2:] If $\beta_t<\beta_{t+1}<\alpha_{t+1}<\alpha_t$, then \Cref{positroidexch} applied to $S\setminus \beta_t\beta_{t+1}$ implies that $(S\setminus \beta_{t+1})\cup \alpha_{t+1}$ is the index of a non-zero \Pl coordinate as well. This contradicts the maximality of $b$ in the definition of $\Xi$. 
     \item[Case 3:] If $\beta_t<\alpha_t<\beta_{t+1}<\alpha_{t+1}$ then \Cref{positroidexch} applied to $S\setminus \beta_t\beta_{t+1}$ implies that either $(S\setminus \beta_t)\cup \alpha_{t+1}$ or $(S\setminus \beta_{t+1})\cup \alpha_{t+1}$ must be the index of a non-zero \Pl coordinate. These both lead to contradictions, as in cases 1 and 2, respectively.
     
 \end{enumerate}
\end{proof}

\begin{lemma}\label{addins}
Let $F\in \mathcal{R}_{v,w}^{>0}$. Let $I$ be an extremal index with $|I|=k$ and $\Xi_{v,w}(I)=(I\setminus a)\cup b$. Then $b\in \{w^{-1}(1),\ldots,w^{-1}(k)\}$. In particular, once $b$ is added to the index by an application of $\Xi_{v,w}$, it is never removed by subsequent applications of $\Xi_{v,w}$. 

\end{lemma}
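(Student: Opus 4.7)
The plan is to derive the main claim $b\in\{w^{-1}(1),\ldots,w^{-1}(k)\}$ from a structural fact about the dynamics of $\Xi_{v,w}$ on the rank-$k$ constituent matroid $M_k$ of $\mathcal{R}_{v,w}^{>0}$: every $\Xi_{v,w}$-orbit on bases of $M_k$ terminates at the Gale-maximal basis $W_k:=\{w^{-1}(1),\ldots,w^{-1}(k)\}$. Granting this, the ``in particular'' clause follows immediately from \cref{claim}: if $b$ enters the index at step $r$ as $b=\beta_r$, then for any later step $r'>r$, \cref{claim} gives $\alpha_{r'}<\alpha_r<\beta_r=b$, so $b\neq\alpha_{r'}$ and $b$ is never removed. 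Hence $b$ persists to the terminal set of the orbit, which is $W_k$.

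To prove the orbit terminates at $W_k$, let $I^*$ be a fixed point of $\Xi_{v,w}$ among bases of $M_k$ and suppose for contradiction that $I^*\neq W_k$. First note that $W_k$ itself is a fixed point, for any exchange $(W_k\setminus i)\cup j$ with $j>i$ would be Gale-strictly larger than $W_k$, contradicting Gale-maximality. By \cref{bruhatinterval} together with the tableau criterion \cref{tableaucrit}, every basis of $M_k$ is $\leq W_k$ in Gale order. Writing $I^*=\{i_1<\cdots<i_k\}$ and $W_k=\{w_1<\cdots<w_k\}$ in sorted form, we have $i_r\leq w_r$ for every $r$ with strict inequality somewhere. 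Let $j$ be the largest index with $i_j<w_j$; then $i_r=w_r$ for every $r>j$, and consequently $I^*\setminus W_k\subseteq\{i_1,\ldots,i_j\}$.

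Apply the symmetric basis exchange axiom to $I^*$ and $W_k$ with the distinguished element $y=w_j\in W_k\setminus I^*$: there exists $x\in I^*\setminus W_k$ such that $(I^*\setminus x)\cup y$ is a basis of $M_k$. By the containment above, $x\leq i_j<w_j=y$, so $x<y$. Thus the set $B$ appearing in \cref{ximap} evaluated at $I^*$ contains $x$, contradicting $\Xi_{v,w}(I^*)=I^*$. This closes the proof.

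The main obstacle is producing an exchange whose swapped-in element is strictly larger than the swapped-out element; the bare basis exchange axiom only guarantees some exchange partner, with no a priori control over the order. The trick is to fix $y=w_j$ at the largest position where $I^*$ and $W_k$ disagree in sorted order, which pins the corresponding $x$ into the low-index slice $\{i_1,\ldots,i_j\}$ and forces $x<y$.
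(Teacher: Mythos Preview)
Your proof is correct and follows essentially the same approach as the paper: use \cref{claim} to show that $b$ is never removed by subsequent applications of $\Xi_{v,w}$, and combine this with the fact that the $\Xi_{v,w}$-orbit of any extremal index terminates at the Gale-maximal basis $W_k=\{w^{-1}(1),\ldots,w^{-1}(k)\}$. The paper's proof is very terse, simply citing the informal observation (made after \cref{tropextremalsdefinition}) that ``basis exchange guarantees that repeated applications of $\Xi_{v,w}$ must eventually yield the unique Gale-maximal index''; you have supplied a clean explicit argument for this termination claim via the largest-disagreement-index trick, which is a nice elaboration but not a different route.
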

\begin{proof}
As noted after \Cref{bruhatinterval}, repeated applications of $\Xi_{v,w}$ eventually yield $\{w^{-1}(1),\ldots,w^{-1}(k)\}$. \Cref{claim} implies that once $b$ is added it cannot be removed by applications of $\Xi_{v,w}$ since $b>a$. The result then follows.
\end{proof}

 \section{The Totally Positive Complete Flag Variety and its Tropicalization}
\label{secttopcell} 
 In the cell decomposition of $\textnormal{Fl}_n^{\geq 0}$, the top dimensional cell is $\mathcal{R}_{id,w_0}^{>0}$. By \Cref{bruhatinterval}, $\mathcal{R}_{id,w_0}^{>0}$ consists of those points where all of the \Pl coordinates are positive. (This is also straightforward to deduce from $G_{id,w_0}$.) In this section, we give a detailed combinatorial description of this space and of its tropicalization, the totally positive tropical complete flag variety. In the next section, we will generalize the arguments presented here to address the totally nonnegative complete flag variety and the totally nonnegative tropical complete flag variety.

\subsection{Graphical Description of Extremal Indices}
 
 We first describe the extremal coordinates of flags $F\in\mathcal{R}^{>0}_{id,w_0}.$ We start with an example.
 
 \begin{example}Let $n=5$ and $F\in \mathcal{R}^{>0}_{id,w_0}$, so $P_I(F)\neq 0$ for all $\emptyset\neq I\subset [5]$. We will work out the extremal indices of size $2$. $P_{12}\neq 0$ is the non-zero \Pl coordinate with Gale minimal index of size $2$, and so it is an extremal non-zero \Pl coordinate. Applying $\Xi_{id,w_0}$, we obtain the index of a non-zero \Pl coordinate constructed by removing the $2$ and replacing it with the maximal possible element of $[5]$. Since all \Pl coordinates are non-zero, we can replace the $2$ by a $5$. Thus, $P_{15}$ is also an extremal non-zero \Pl coordinate. Applying $\Xi_{id,w_0}$ again, we increase the $1$ in the index maximally to obtain that $P_{45}$ is an extremal non-zero \Pl coordinate as well. Similarly, the extremal indices of size $3$ are $\{\{1,2,3\},\{1,2,5\},\{1,4,5\},\{3,4,5\}\}$.
 \end{example}
 
 We can extend this example to a general fact: 
 
 \begin{proposition}\label{topcellextremals}
 The extremal indices of a flag $F\in \mathcal{R}^{>0}_{id,w_0}\subset \textnormal{Fl}_n$ are precisely the complements of intervals in $[n]$. Explicitly, these are the subsets of the form $\{1,2,\ldots, m-1,m, k,k+1,\ldots, n\}$ for $0\leq m<k-1\leq n$, excluding $m=0, k=n$. 
 \end{proposition}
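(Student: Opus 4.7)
My plan is to turn this into a direct combinatorial computation of the map $\Xi_{id,w_0}$, exploiting the fact that on the top cell every \Pl coordinate is positive, so the definition of $\Xi_P$ simplifies drastically. Concretely, for $F\in\mathcal{R}^{>0}_{id,w_0}$ and any $I\subset[n]$, the set $B$ in \cref{ximap} is just $\{i\in I : i<\max([n]\setminus I)\}$ (no vanishing constraints enter), and the replacement element $a$ is simply $\max([n]\setminus I)$. So $\Xi_{id,w_0}$ can be computed by a purely set-theoretic rule: take the largest element of $I$ that is strictly less than $\max([n]\setminus I)$, and replace it by $\max([n]\setminus I)$.

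Next I would verify that the family $\mathcal{F}_r$ of size-$r$ subsets of the form $\{1,\ldots,m\}\cup\{k,\ldots,n\}$ with $0\leq m<k-1\leq n$ (allowing $m=0$ or $k=n+1$ for the degenerate ``no initial segment'' or ``no final segment'' cases) is closed under $\Xi_{id,w_0}$. If $I=\{1,\ldots,m\}\cup\{k,\ldots,n\}\in\mathcal{F}_r$ with $m\geq 1$, then $[n]\setminus I=\{m+1,\ldots,k-1\}$ has maximum $k-1$, the elements of $I$ strictly less than $k-1$ are exactly $\{1,\ldots,m\}$ (using $m<k-1$), so the rule gives $\Xi_{id,w_0}(I)=\{1,\ldots,m-1\}\cup\{k-1,k,\ldots,n\}$, which still belongs to $\mathcal{F}_r$. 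When $m=0$ we have $I=\{k,\ldots,n\}$, $B=\emptyset$, and $I$ is a fixed point.

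Then I would iterate starting from the Gale-minimal index of size $r$, which in the top cell is $\{1,\ldots,r\}$ (the member of $\mathcal{F}_r$ with $m=r$, $k=n+1$). Successive applications of $\Xi_{id,w_0}$ produce the chain
\begin{equation*}
\{1,\ldots,r\}\to\{1,\ldots,r-1,n\}\to\{1,\ldots,r-2,n-1,n\}\to\cdots\to\{n-r+1,\ldots,n\},
\end{equation*}
which visits every element of $\mathcal{F}_r$ exactly once (one per value of $m\in\{0,1,\ldots,r\}$), terminating at the fixed point $\{n-r+1,\ldots,n\}$. Combining with the closure statement, the $\Xi_{id,w_0}$-orbit of $\{1,\ldots,r\}$ equals $\mathcal{F}_r$, and by \cref{extremal} this orbit is precisely the set of extremal indices of size $r$. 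Ranging over $r\in[n-1]$ gives the desired description.

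The computations are all elementary, and I do not expect a genuine obstacle — the only care needed is in handling the boundary cases $m=0$ and $k=n+1$ (where one of the two segments is empty) cleanly, and in confirming that the Gale-minimum of size $r$ in the top cell really is $\{1,\ldots,r\}$, which is immediate from \cref{bruhatinterval} applied with $v=id$.
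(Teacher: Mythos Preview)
Your proposal is correct and takes essentially the same approach as the paper: both compute the $\Xi_{id,w_0}$-orbit of $[r]$ directly, using that in the top cell all \Pl coordinates are nonzero so the map reduces to a simple set-theoretic replacement rule. Your write-up is more careful (explicitly identifying $b$ and $a$, handling the boundary cases $m=0$ and $k=n+1$, and verifying closure of the family), whereas the paper's proof is a terse two-sentence computation, but the underlying argument is identical.
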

 
 \begin{proof}
 Start with the Gale minimal extremal index of size $m+(n-k)+1$, which is $\{1,2,\ldots, m+n-k+1\}$. Now, bearing in mind that all \Pl coordinates are non-zero in this case, apply $\Xi_{id,w_0}$ to this index $n-k+1$ times. Each time we replace the largest entry of the index by the largest element of $[n]$ not in the index. After $(n-k)+1$ applications of $\Xi_{id,w_0}$, this yields $\{1,2,\ldots, m-1,m, k,k+1,\ldots, n\}$, proving the result.
 \end{proof}
 
 \begin{propbis}{topcellextremals}\label{troptopcellextremals}
  
 The extremal indices of a point $P\in \textnormal{TrFl}^{>0}_{id,w_0}\subset\textnormal{TrFl}_n$ are precisely the complements of intervals in $[n]$. Explicitly, these are the subsets of the form $\{1,2,\ldots, m-1,m, k,k+1,\ldots, n\}$ for $0\leq m<k-1\leq n$, excluding $m=0, k=n$.

 \end{propbis}
 \begin{proof}
 The extremal indices of a point in $\textnormal{TrFl}_{id,w_0}^{>0}$ are identical to those of a flag in $\mathcal{R}_{id,w_0}^{>0}$.
 \end{proof}
 \begin{definition}
  We call a path collection in $G_{v,w}$ \textbf{diagonal} if none of the paths it contains use any vertical edges.
  \end{definition}

 \begin{definition}\label{defleftgreedy}
Consider a source set $A\subset[n]'$ in $G_{v,w}$. Label the sources in $A$ from top to bottom as $a_1',\ldots, a_m'$. Construct a non-intersecting path collection as follows: Start with the path originating from $a_1'$ which takes every vertical edge it can. After having added a path originating at $a_i'$ to the path collection, we add in the path originating at $a_{i+1}'$ which takes every vertical edge it can without intersecting any of the paths already in the collection. The path collection obtained after adding in the path originating at $a_m'$ will be called the \textbf{greedy path collection} originating at $A$. Less formally, this is a non-intersecting path collection with source set $A$ where each path takes every vertical edge available to it without intersecting a path that originates above it.
\end{definition}

\begin{remark}\label{rem:lexmax}
    To any path collection $C$ (or path, thought of as a path collection with only one path) in $G$, we can associate a tuple $\epsilon(C)=(e_1,\ldots, e_{n-1})$ where $e_i$ is the number of vertical edges used by $C$ in column $i$ of $G$. Observe that the greedy path collection with source $S$ is equivalently the non-intersecting path collection $C$ which uniquely maximizes $\epsilon(C)$ in lexicographic order amongst all paths originating at $S$; Once we have determined the greedy path collection through columns $\{1,\ldots, i-1\}$, any path that can use a vertical edge in column $i$ without blocking another path of the collection, must do so. 
\end{remark}

 From our explicit description of the extremal indices in \Cref{topcellextremals} and \Cref{troptopcellextremals}, we can immediately make a few observations.
 
 \begin{proposition}\label{topcellcor}
 Let $I$ be an extremal index of a flag in $\mathcal{R}_{id, w_0}^{>0}$ (or, equivalently, of a point in $\textnormal{TrFl}_{id,w_0}^{>0}$). There is a unique collection originating from $[|I|]'$ with sink set $I$ in the graph $G_{id,w_0}$. It is the union of a diagonal path collection and a greedy path collection.

  Conversely, the sink set of any non-intersecting path collection in $G_{id,w_0}$ that is the union of a greedy path collection originating from the top $q$ vertices of $[|I|]'$ and a diagonal path collection originating from the remaining vertices of $[|I|]'$ for some $0\leq q\leq |I|$ is an extremal index.
 \end{proposition}
 
 \begin{proof}
By \Cref{topcellextremals}, we are interested in non-intersecting path collections in $G_{id,w_0}$ with origin set $\{1',2',\ldots, (m+n-k+1)'\}$ and sink set $\{1,2,\ldots, m-1,m, k,k+1,\ldots, n\}$ for $0\leq m<k\leq n$ (other than $m=0,k=n$). The only path which terminates at $1$ is the diagonal path originating at $1'$. Once we have added this path to our path collection, the only path which avoids it and terminates at $2$ is the diagonal path originating at $2'$. Continuing in this way, we obtain a unique non-intersecting path collection consisting of diagonal paths which terminates at $\{1,\ldots,m\}$. Note that $G_{id,w_0}$ is the graph which has the maximum number of vertical edges, that is, $n-c$ vertical edges in column $c$, as in \Cref{exampletopcellextremals}. Given that there must be a path terminating at $n$ and that we want our paths to be non-intersecting in the planar graph $G_{id,w_0}$, the path originating at the topmost source in our source set, namely $(m+n-k+1)'$, must terminate at $n$. Thus, we must include the unique path between these two vertices. This happens to be a greedy path. Similarly, once we have added this path to our collection, in order to have a non-intersecting path collection, the path terminating at $n-1$ must originate at the second source from the top, namely $(m+n-k)'$. There is a unique such path which does not intersect the path from $(m+n-k+1)'$ to $n$ and these two paths together form a greedy path collection. Continuing in this way we obtain the greedy path collection which originates at $\{(m+1)',\ldots, (m+n-k+1)'\}$ and terminates at $\{k,\ldots,n\}$. Putting these two collections together gives the desired unique non-intersecting path collection. See \Cref{exampletopcellextremals} for an example of this construction.

 Finally, to see the converse, observe that any non-intersecting path collection originating from $|I|'$ which consists of a greedy path collection on top of a diagonal path collection appears in this proof for appropriate values of $m$ and $k$.
  
 \end{proof}
  
     \begin{figure}[H]
 \begin{tikzpicture}[node distance={10.5 mm}, thick, main/.style = {draw, circle,minimum size=2 mm}, 
blank/.style={circle, draw=green!0, fill=green!0, very thin, minimum size=3.5mm},]

\node[main] (1) {$1'$};
\node[main] (2) [above of=1] {$2'$};
\node[main] (3) [above of = 2] {$3'$}; 
\node[main] (4) [above of=3] {$4'$};
\node[main](5) [above of = 4] {$5'$};
\node (blank)[above of = 5]{};
\node[main](55) [right of = blank]{$5$};
\node(54) [below of = 55]{};
\node(53) [below of = 54]{};
\node(52) [below of = 53]{};
\node(51) [below of = 52]{};
\node[main] (44) [right  of=55] {$4$};
\node(43) [below of = 44]{};
\node(42) [below of = 43]{};
\node(41) [below of = 42]{};
\node[main] (33) [right of = 44] {$3$};
\node(32) [below of = 33]{};
\node(31) [below of = 32]{};
\node[main] (22) [right of = 33] {$2$};
\node(21) [below of = 22]{};
\node[main] (11) [right of = 22] {$1$};

\draw[-{Latex[length=3mm]}] (5) -- (55);
\draw[](4)--([xshift=-5mm,yshift=-5mm]54.center);
\draw[-{Latex[length=3mm]}] ([xshift=-5mm,yshift=-5mm]54.center) -- (44);
\draw[dashed] (3) -- ([xshift=-5mm,yshift=-5mm]53.center);
\draw[] ([xshift=-5mm,yshift=-5mm]53.center) -- ([xshift=-5mm,yshift=-5mm]43.center);
\draw[-{Latex[length=3mm]}] ([xshift=-5mm,yshift=-5mm]43.center)--(33);
\draw[dashed] (2) -- ([xshift=-5mm,yshift=-5mm]52.center);
\draw[dashed] ([xshift=-5mm,yshift=-5mm]52.center) -- (42.center);
\draw[] ([xshift=-5mm,yshift=-5mm]42.center) -- ([xshift=-5mm,yshift=-5mm]32.center);
\draw[-{Latex[length=3mm]}] ([xshift=-5mm,yshift=-5mm]32.center) -- (22);
\draw[dashed] (1) -- ([xshift=-5mm,yshift=-5mm]51.center);
\draw[dashed] ([xshift=-5mm,yshift=-5mm]51.center) -- ([xshift=-5mm,yshift=-5mm]41.center);
\draw[-{Latex[length=3mm]},dashed] ([xshift=-5mm,yshift=-5mm]41.center)--(11);

\draw[-{Latex[length=3mm]},dashed] ([xshift=-5mm,yshift=-5mm]54.center) -- ([xshift=-5mm,yshift=-5mm]55.center);
\draw[-{Latex[length=3mm]},dashed] ([xshift=-5mm,yshift=-5mm]53.center) -- ([xshift=-5mm,yshift=-5mm]54.center);
\draw[-{Latex[length=3mm]}] ([xshift=-5mm,yshift=-5mm]52.center) -- ([xshift=-5mm,yshift=-5mm]53.center);
\draw[-{Latex[length=3mm]}] ([xshift=-5mm,yshift=-5mm]51.center) -- ([xshift=-5mm,yshift=-5mm]52.center);
\draw[-{Latex[length=3mm]},dashed] ([xshift=-5mm,yshift=-5mm]43.center) -- ([xshift=-5mm,yshift=-5mm]44.center);
\draw[-{Latex[length=3mm]},dashed] ([xshift=-5mm,yshift=-5mm]42.center) -- ([xshift=-5mm,yshift=-5mm]43.center);
\draw[-{Latex[length=3mm]}] ([xshift=-5mm,yshift=-5mm]41.center) -- ([xshift=-5mm,yshift=-5mm]42.center);
\draw[-{Latex[length=3mm]}] ([xshift=-5mm,yshift=-5mm]31.center) -- ([xshift=-5mm,yshift=-5mm]32.center);
\draw[-{Latex[length=3mm]}] ([xshift=-5mm,yshift=-5mm]32.center) -- ([xshift=-5mm,yshift=-5mm]33.center);
\draw[-{Latex[length=3mm]}] ([xshift=-5mm,yshift=-5mm]21.center) -- ([xshift=-5mm,yshift=-5mm]22.center);

\end{tikzpicture} 
\caption{A non-intersecting path collection, indicated by dashed edges, in $G_{id, w_0}$ for $n=5$ whose sink set is an extremal index. Note that this is the union of the diagonal path originating at $1'$ and the greedy path collection originating at $\{2',3'\}$.} 
\label{exampletopcellextremals}
 \end{figure}

\begin{cor}\label{monomialtopcellcor}
Let $P=\Phi_{id,w_0}(\bm{a})$. For each extremal index $I$ for $\mathcal{R}_{id,w_0}^{>0}$, the \Pl coordinate $P_I$ is a monomial in the $\{a_i\}$. 
\end{cor}
\begin{proof}
Since there is a unique non-intersecting path collection originating from $[|I|]'$ with sink set $I$ by \Cref{topcellcor}, this follows from \Cref{LGVcor}.
\end{proof}

\begin{definition}\label{def:topcellgraphextremal}
 We define a \textbf{graph extremal path collection} to be a path collection of the form described in \Cref{topcellcor}, that is, a non-intersecting path collection at $[k]'$ in $G_{id,w_0}$ for some $k$ whose sink set is an extremal index. 
\end{definition} 

\subsection{Determining Non-Extremal \Pl Coordinates}
 We now show that the extremal non-zero and non-infinite \Pl coordinates determine the rest of the \Pl coordinates in the totally positive flag variety and totally positive Dressian, respectively. Recall the three-term incidence \Pl relations from \Cref{rem:threetermrelations}.
 
 \begin{theorem}\label{topcell3termgenerate}
 For any flag $F$ in $\mathcal{R}_{id,w_0}^{> 0}$, the extremal non-zero \Pl coordinates of $F$ uniquely determine the other non-zero \Pl coordinates of $F$ by three-term incidence \Pl relations. 
 \end{theorem}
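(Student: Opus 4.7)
The plan is to induct on the measure $\mu(S) := |e_{id,w_0}(S) \setminus S|$, which quantifies how far $S$ deviates from its nearest extremal index along the $\Xi_{id,w_0}$-orbit. The base case $\mu(S) = 0$ is vacuous, because $S$ is then extremal and $P_S$ is given. For the inductive step, fix a non-extremal $S$ with $\mu(S) = \mu_0 \geq 1$, and assume every $P_{S'}$ with $\mu(S') < \mu_0$ has been expressed in terms of the extremal coordinates through a sequence of three-term incidence Pl\"ucker relations. The goal is to exhibit a three-term Grassmann Pl\"ucker relation, which is a member of $\mathfrak{P}_{|S|, |S|; n}$ and hence an incidence Pl\"ucker relation in the sense of the paper, that involves $P_S$ together with five other Pl\"ucker coordinates, each with $\mu$ strictly less than $\mu_0$.

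Concretely, I write $S = T \sqcup \{s_1, s_2\}$ for some $T \subseteq S$ of size $|S| - 2$, and pick $u, v \in [n] \setminus S$. The three-term Grassmann Pl\"ucker relation corresponding to $I = T \cup \{s_1\}$ and $J = T \cup \{s_1, s_2, u, v\}$ involves the six coordinates $P_S$, $P_{T \cup \{u, v\}}$, $P_{T \cup \{s_1, u\}}$, $P_{T \cup \{s_2, v\}}$, $P_{T \cup \{s_1, v\}}$, $P_{T \cup \{s_2, u\}}$, organized into three pair-products that sum to zero; in particular, $P_S \cdot P_{T \cup \{u, v\}}$ appears as one of those products. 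Since every Pl\"ucker coordinate is strictly positive on $\mathcal{R}_{id,w_0}^{>0}$ by the LGV description and positivity of the edge weights, we have $P_{T \cup \{u,v\}} \neq 0$, and we may solve for $P_S$ as a rational combination of the other five coordinates. Hence the inductive step reduces to the following lemma.

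\textbf{Key Lemma.} For any non-extremal $S$ with $\mu(S) = \mu_0 \geq 1$, one can choose $T \subset S$ of size $|S| - 2$ and $u, v \in [n] \setminus S$ so that each of $T \cup \{u, v\}$, $T \cup \{s_1, u\}$, $T \cup \{s_1, v\}$, $T \cup \{s_2, u\}$, $T \cup \{s_2, v\}$ has $\mu < \mu_0$.

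The main obstacle is proving the Key Lemma. My approach would be to exploit the explicit description of the extremal indices from \cref{topcellextremals} as prefix-plus-suffix sets $\{1, \ldots, m\} \cup \{k', \ldots, n\}$, together with the concrete form of $e_{id,w_0}(S)$ itself. I would choose $T$ to consist of elements of $S \cap e_{id,w_0}(S)$ whenever possible (``elements already sitting in the correct extremal slot''), and pick $\{u, v\}$ to include at least one element of $e_{id,w_0}(S) \setminus S$; the intuition is that exchanging either $s_1$ or $s_2$, which lie outside $e_{id,w_0}(S)$, for $u$ or $v$ partially corrects $S$ toward its nearest extremal, so each of the five resulting sets should have strictly smaller $\mu$, verifiable by inspecting where the symmetric difference with $e_{id,w_0}(\cdot)$ decreases. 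The trickiest case is when $|S \cap e_{id,w_0}(S)|$ is too small to build $T$ directly from this intersection (e.g.\ a set like $\{2,3,4\}$ in $n = 7$, whose nearest extremal is $\{5,6,7\}$); there one must select $T$ and $\{u, v\}$ more carefully, using one of the ``corner'' elements of $S$ together with the minimum and maximum of $[n]\setminus S$ as the pair $\{u, v\}$, and a finite case analysis on the component structure of $[n]\setminus S$ completes the verification.
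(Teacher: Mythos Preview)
Your Key Lemma is false, and the failure is unavoidable with your single induction on $\mu$. Take $n=4$ and $S=\{2,3\}$. The extremal size-$2$ indices are $\{1,2\},\{1,4\},\{3,4\}$ by \cref{topcellextremals}, so the three non-extremal ones are $\{1,3\},\{2,3\},\{2,4\}$, and a direct computation gives $\mu(\{1,3\})=\mu(\{2,3\})=\mu(\{2,4\})=1$. Since $|S|=2$, you are forced to take $T=\emptyset$ and $\{u,v\}=\{1,4\}$; the five companion sets are then exactly $\{1,2\},\{1,3\},\{1,4\},\{2,4\},\{3,4\}$, two of which have $\mu=1=\mu_0$. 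More structurally, the unique three-term Grassmann relation in $\Gr_{2,4}$ involves all six $2$-subsets, only three of which are extremal, so no choice of $T,u,v$ can make all five companions extremal. Your induction on $\mu$ alone therefore never gets off the ground at $\mu_0=1$.

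The paper avoids this by two devices you are missing. First, it uses genuine \emph{incidence} relations mixing sizes $|S|$ and $|S|+1$, not Grassmann relations: the relation $P_S P_{(S\setminus b)\cup ac}=P_{(S\setminus b)\cup c}P_{S\cup a}+P_{(S\setminus b)\cup a}P_{S\cup c}$ puts three of the five companion coordinates at size $|S|+1$. Second, it runs a \emph{triple} induction---reverse on $|S|$, then on $t_{id,w_0}(S)=\mu(S)$, then on Gale order---so those size-$(|S|+1)$ coordinates are already known, and among the two remaining size-$|S|$ companions one has strictly smaller $\mu$ while the other is allowed to have the same $\mu$ but is Gale-smaller. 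For $S=\{2,3\}$ this produces $P_{23}P_{134}=P_{34}P_{123}+P_{13}P_{234}$, where $\{3,4\}$ is extremal and $\{1,3\}$ has $\mu=1$ but is Gale below $\{2,3\}$. Your scheme cannot be repaired just by adding a Gale tiebreaker, because with same-size Grassmann relations you will always have companions that are Gale-\emph{larger} as well; you need the size drop.
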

 
 \begin{proof}
 Let $P=P(F)$. Our goal is to determine $P_I$ for any $\emptyset\neq I\subset[n]$ using just the extremal non-zero \Pl coordinates and the three-term incidence \Pl relations. This proof will proceed by triple induction. First, we will work by reverse induction on the size of the index $I$. For subsets of a fixed size, we will use reverse induction on a statistic $t$, which we will define shortly. Finally, for subsets with equal size and equal values of $t$, we will use reverse induction on Gale order. 

We now define the statistic $t$ by the following property: Among all non-intersecting path collections from $[|I|]'$ to $I$, the non-diagonal path with the lowest source vertex originates at the source vertex which is $(t(I)+1)^\textnormal{st}$ from the bottom in $[|I|]'$. In particular, we have diagonal paths originating from the bottom-most $t(I)$ vertices of $[|I|]'$ in any such path collection. This is also the largest integer $\tau$ such that $[\tau]\subset I$. From the latter description, it is straightforward to verify that we may define $t$ in terms of any given non-intersecting path collection, rather than minimizing over all of them.
 
 All non-zero\footnote{\label{note1}The adjective non-zero is redundant here, but we will wish to make use of this proof for general $G_{v,w}$ later.} \Pl coordinates of size $n-1$ are extremal, as explained in \Cref{maxsizecoords}. Suppose $t(I)=|I|$. Any path collection from $[|I|]'$ to $I$ in $G_{id,w_0}$ must be a diagonal path collection. Since there are no arrows directed downwards in $G_{id,w_0}$, the diagonal path collection originating from $[|I|]'$ has Gale minimal sink set among all path collections originating from $[|I|]'$. Thus, $I$ is extremal. These serve as base cases for our induction.

Let $\emptyset\neq I\subset [n]$ with $|I|=k$ and $t(I)<|I|$ be such that $P_{I}\neq 0$. Let $C$ be a non-intersecting path collection from $[|I|]'$ to $I$ in $G_{id,w_0}$. By definition of $t$, the path $p$ in $C$ originating from the source which is $t(I)+1$ from the bottom of $[|I|]'$ is not diagonal. Say that $p$ originates on strand $a$ and terminates at sink vertex $b$. Since $p$ is not diagonal and there are no vertical edges directed downwards in $G_{id,w_0}$, $b>a$. Note that the path collection $D$ which is identical to $C$ except that we replace $p$ by a diagonal path $q$ terminating at sink vertex $a$ is non-intersecting; paths originating below $q$ in $D$ cannot intersect it, since they are diagonal, and paths originating above $q$ in $D$ cannot intersect it, since $q$ is diagonal and there are no vertical edges directed downwards in $G_{id,w_0}$. Thus, $P_{(I\setminus b)\cup a}\neq 0$. Also, since the indices of non-zero \Pl coordinates form a flag matroid, there exists $c\in[n]$ such that $c\notin (I\setminus b)\cup a$ and $P_{(I\setminus b)\cup ac}\neq 0$ by \Cref{def:flagmatroid}.  Choose $c$ to be minimal with this property. We consider three cases. 

First, suppose that $c>b$. Using the three-term incidence \Pl relation $P_IP_{(I\setminus b)\cup ac}=P_{(I\setminus b)\cup c}P_{I\cup a}+P_{(I\setminus b)\cup a}P_{I\cup c}$, we can determine $P_I$ as long as all other terms in this relation are already determined by induction. Each of $(I\setminus b)\cup ac$, $I\cup a$ and $I\cup c$ are of larger cardinality than $I$ and so the corresponding \Pl coordinates are already determined by induction. The remaining two terms have indices of the same size as $I$, so we move to our next layers of induction, first induction on $t$ and then reverse induction on Gale order. Recall that we defined $a$ to be the strand on which the lowest non-diagonal path of $C$ lies. Thus, $t((I\setminus b)\cup a)>t(I)$ and $P_{(I\setminus b)\cup a}$ is determined by induction. Finally, since the path terminating at $b$ is non-diagonal and since replacing it by $c>b$ does not affect the number of diagonal paths in our path collection, we have $t((I\setminus b)\cup c)= t(I)$. Additionally, $(I\setminus b)\cup c>I$ in Gale order, so $P_{(I\setminus b)\cup c}$ is determined by induction. 

Next, suppose that $c<a$. Since the path originating on strand $a$ in $C$ is the non-diagonal path with the lowest origin in $C$, and there are no edges directed downwards in $G_{id,w_0}$, $|I\cap [a-1]|=t(I)$, that is, there are precisely $t(I)$ paths (all of them diagonal) originating and terminating on or below strand $a-1$. In particular, precisely $t(I)$ vertices of $[|I|]'$ lie on or below strand $a-1$. We consider the three-term incidence \Pl relation $P_{(I\setminus b)\cup a}P_{I\cup c}=P_IP_{(I\setminus b)\cup ac}+P_{(I\setminus b)\cup c}P_{I\cup a}$. We first show that $P_{(I\setminus b)\cup c}=0$. Note that $|((I\setminus b)\cup c)\cap [a-1]|=t(I) +1$ whereas only $t(I)$ of the vertices in $[|I|]'$ lie weakly below strand $a-1$. Since there are no vertical edges directed downwards in $G_{id,w_0}$, it is impossible for any path collection to have more sinks than sources on the bottom $a-1$ strands. Thus, there are no path collections from $[|I|]'$ to ${(I\setminus b)\cup c}$ in $G_{id,w_0}$ and so $P_{(I\setminus b)\cup c}=0$. Since $P_{(I\setminus b)\cup ac}\neq0,$ it suffices to show that $P_{(I\setminus b)\cup ac}$, $P_{I\cup c}$, and $P_{(I\setminus b)\cup a}$ are already determined by the induction. The first two have indices of larger cardinality than $I$, and so are already determined by induction. The last is of the same cardinality, but as in the previous case, $t((I\setminus b)\cup a)>t(I)$ and so $P_{(I\setminus b)\cup a}$ is already determined by induction. 

Finally, we have the case where $a<c<b$. We begin by observing that in this case, $P_{I\cup a}\neq 0$. To see this, we use the three-term incidence \Pl relation $P_{(I\setminus b)\cup c}P_{I\cup a}=P_{I}P_{(I\setminus b)\cup ac}+P_{(I\setminus b)\cup a}P_{I\cup c}$. Note that by assumption, $P_I\neq 0$ and by choice of $c$, $P_{(I\setminus b)\cup ac}\neq 0$. Since all \Pl coordinates are nonnegative by \Cref{forwarddir}, we must have that $P_{(I\setminus b)\cup c}P_{I\cup a}\neq 0$ and so, $P_{I\cup a}\neq 0$. 

Next, observe that we may assume $I$ is not extremal since, if it were, there would be nothing to prove. Thus, $\Xi_{id,w_0}(I)=I\setminus b'\cup d'$ for some $b',d'\in [n]$ with $b'<d'$. 

We next show by contradiction that $b'>a$. Note that $b'\neq a$ since $b'\in I$ and $a\notin I$. Suppose $b'<a$. Then, $b'$ is the sink of a diagonal path in $C$. By definition of $\Xi_{id,w_0}$, this means that it is impossible to replace the sink of any path in $C$ which is not diagonal by something larger. In other words, the non-diagonal part of $C$ has a sink set which is Gale maximal among all non-intersecting path collections originating from the top $|I|-t(I)$ vertices of $[|I|]'$. Observe that in the planar graph $G_{id,w_0}$, greedy path collections achieve Gale maximal sink sets amongst all non-intersecting path collections with the same source set. Thus, the non-diagonal paths of $C$ achieve the same sink set as a greedy path collection. By \Cref{topcellcor}, this implies that the sink set $I$ of $C$ is extremal, contradicting our assumption that $I$ is not extremal. 

Thus, we have $a<b'<d'$. We then have the three-term incidence \Pl relation $P_{I}P_{(I\setminus b')\cup ad'}=P_{(I\setminus b')\cup a}P_{I\cup d'}+P_{(I\setminus b')\cup d'}P_{I\cup a}$. Recall that we began the $a<c<b$ case by showing that $P_{I\cup a}\neq 0$. Moreover, since $\Xi_{id,w_0}(I)=(I\setminus b')\cup d'$,  $P_{(I\setminus b')\cup d'}\neq 0$. Since, by \Cref{forwarddir}, all \Pl coordinates are nonnegative, we must have $P_{(I\setminus b')\cup ad'}\neq 0$. We are just left to show that all terms other than $P_I$ in this relation are already determined by induction. Observe that $|(I\setminus b')\cup ad'|=|I\cup d'|=|I\cup a|>|I|$. Also, while $|(I\setminus b')\cup a|=|I|$, we have $t((I\setminus b')\cup a)>t(I)$. Finally, while $|(I\setminus b')\cup d'|=|I|$ and $t((I\setminus b')\cup d')=t(I)$, we have $(I\setminus b')\cup d'>I$ in Gale order. Thus, all the corresponding \Pl coordinates are already determined by induction.

\end{proof}

\begin{cor}\label{postest}
 For any flag $F$ in $\mathcal{R}^{> 0}_{id,w_0}$, the extremal \Pl coordinates serve as a positivity test. Explicitly, if the extremal non-zero \Pl coordinates of $F$ are positive, then so are all the \Pl coordinates of $F$.
 \end{cor}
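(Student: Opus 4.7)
The plan is to run essentially the same triple induction used in the proof of \cref{topcell3termgenerate}, but this time tracking the sign of each Plücker coordinate rather than only its expression in terms of the extremal ones. That is, I would proceed by reverse induction on $|S|$, then by induction on $t_{id,w_0}(S) = |e_{id,w_0}(S) \setminus S|$, and then by induction on the Gale order on subsets $S$ of fixed size with fixed $t_{id,w_0}$-value.

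For the base cases, I would argue that positivity is automatic. First, by \cref{maxsizecoords}, every nonzero Plücker coordinate of index size $n-1$ is extremal, so $P_I(F) > 0$ whenever $|I| = n-1$, directly from the hypothesis. Second, indices $S$ with $t_{id,w_0}(S) = 0$ are by definition extremal, so $P_S(F) > 0$ by hypothesis as well.

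For the inductive step, I would invoke the very three-term incidence \Pl relation used in the proof of \cref{topcell3termgenerate}: for the appropriate choice of $a < b < c$ with $b \in S \setminus e_{id,w_0}(S)$, $c \in e_{id,w_0}(S) \setminus S$, and $a \notin S$ with $a < b$, we have
\begin{equation*}
P_S \cdot P_{(S\setminus b)\cup ac} \;=\; P_{(S\setminus b)\cup c}\cdot P_{S\cup a} \;+\; P_{(S\setminus b)\cup a}\cdot P_{S\cup c}.
\end{equation*}
Since $F \in \mathcal{R}^{>0}_{id,w_0}$, all of these \Pl coordinates are nonzero, and the proof of \cref{topcell3termgenerate} already verified that each of $P_{(S\setminus b)\cup ac}$, $P_{(S\setminus b)\cup c}$, $P_{S\cup a}$, $P_{(S\setminus b)\cup a}$, and $P_{S\cup c}$ precedes $P_S$ in the induction (either by strictly larger index size, by strictly smaller $t_{id,w_0}$-value, or by being strictly smaller in the Gale order at the same size and $t_{id,w_0}$-value). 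By the inductive hypothesis, all five of these quantities are strictly positive. Hence the right-hand side is positive, the left-hand side is a positive multiple of $P_S$, and so $P_S > 0$.

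The only real content beyond the proof of \cref{topcell3termgenerate} is the observation that the relation allows us to \emph{solve} for $P_S$ as a ratio of strictly positive quantities, which is legitimate because $P_{(S\setminus b)\cup ac}$ is nonzero in the top cell. I do not anticipate any obstacle: once \cref{topcell3termgenerate} is established and one recognizes that the three-term relation exhibited there has the favorable sign pattern (a single monomial equal to a sum of two monomials, all on the same side of zero), positivity propagates through the induction with no further work.
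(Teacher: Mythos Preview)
Your proposal is correct and takes essentially the same approach as the paper. The paper's proof is simply the one-line observation that in the inductive argument of \cref{topcell3termgenerate}, $P_S$ always appears on the monomial side of the three-term relation (with all terms written with positive coefficients), which is exactly what you have unpacked in detail.
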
 
 
 \begin{proof}
 Observe that whenever an unknown $P_I$ appears in an equation as described in the proof of \Cref{topcell3termgenerate}, it is uniquely determined in a subtraction free way by the other coordinates appearing in that equation.  
 \end{proof} 
 
 \begin{thmbis}{topcell3termgenerate}\label{tropicaltopcell3termgenerate}

 For any point $P$ in $\textnormal{TrFl}_{id,w_0}^{> 0}$, the extremal non-infinite \Pl coordinates of $P$ uniquely determine the other non-infinite \Pl coordinates of $P$ by three-term tropical incidence \Pl relations.

 \end{thmbis}

 \begin{proof}
 The tropicalization of the three-term relation $P_IP_{(I\setminus b)\cup ac}=P_{(I\setminus b)\cup c}P_{I\cup a}+P_{(I\setminus b)\cup a}P_{I\cup c}$ is $P_I+P_{(I\setminus b)\cup ac}=\min\{P_{(I\setminus b)\cup c}+P_{I\cup a},P_{(I\setminus b)\cup a}+P_{I\cup c}\}$. Observe that a coordinate appearing on the left hand side of this equation is uniquely determined by the other five coordinates appearing in the equation. Similarly, for any equality of monomials $P_{I_1}P_{I_2}=P_{I_3}P_{I_4}$, the tropical relation becomes $P_{I_1}+P_{I_2}=P_{I_3}+P_{I_4}$ and any one term can be uniquely determined by the other three. Since in the proof of \Cref{topcell3termgenerate}, unknowns $P_I$ always appear on the left hand side of a three-term incidence \Pl relation or in an equality of monomials, the proof carries through to the tropical case. 
 \end{proof}

\subsection{Determining the Parameters}

 Note that, for a flag $F\in \mathcal{R}_{id,w_0}^{>0}$, the extremal non-zero \Pl coordinates must be algebraically independent. To see this, observe that the dimension of $\mathcal{R}^{>0}_{id,w_0}$ is $\ell(w_0)-\ell(id)={\binom{n}{2}}$. Also, we have $N\coloneqq{\binom{n}{2}}+n$ extremal non-zero \Pl coordinates. We may ignore the $n$ extremal non-zero \Pl coordinates $P_{[k]}$, since we fix these to be $1$ in order to specify the projective scaling of our coordinates. Specifying the remaining $\binom{n}{2}$ many extremal non-zero \Pl coordinates determines a point in the $\binom{n}{2}$ dimensional space $\mathcal{R}_{id,w_0}^{>0}$, by \Cref{topcell3termgenerate}, and every point in $\mathcal{R}_{id,w_0}^{>0}$ can be determined in this way. Thus, they must be algebraically independent. Recall that $\Phi_{id,w_0}$ maps the weights $\bm{a}$ to the \Pl coordinates of the flag determined by $G_{id,w_0}(\bm a)$ via the LGV construction. Our next major goal is to show that the inverse $\Psi_{id,w_0}$ of $\Phi_{id,w_0}$ is well behaved. Our next result is a technical tool we will need for that purpose.

\begin{lemma}\label{lem:constructionstar}
    Let $v\leq w\in \mathfrak{S}_n$. Let $C$ be a non-intersecting path collection of $N$ paths in $G_{v,w}$ with source set $O$ which is the union of a greedy path collection originating weakly above strand $q$ and a diagonal path collection originating from the rest of $O$ for some $q$. Let $S$ denote the sink set of $C$. For any $o\in O$, let $D$ be the path collection with source set $O\setminus o$ which is the union of a greedy path collection originating weakly above strand $q$ and a diagonal path collection originating from the rest of $O\setminus o$. Then, away from vertical edges of $G_{v,w}$, $D$ is locally identical to $C$ with a single path removed. Accordingly, the sink set of $D$ is $S\setminus \kappa$ for some $\kappa$. 
\end{lemma}
\begin{proof}
The second claim of the Lemma is immediate from the first claim. We prove the first claim. 

Let $D^{(0)}$ be the path collection identical to $C$ but with the path originating on $o$ removed. If the path originating on $o$ were diagonal, we are done, as $D=D^{(0)}$ has the desired properties. Otherwise, the path originating on strand $o$ is in the greedy part of $C$. In this case, we will inductively define a sequence of non-intersecting path collections $D^{(0)},D^{(1)},\cdots, D^{(z)}$ that interpolate between $D^{(0)}$ and a path collection $D=D^{(z)}$ with the desired properties. We begin with a base step. 

If $D^{(0)}$ has the desired properties, we are done. Otherwise, it must be the case that the non-diagonal paths of $D^{(0)}$ do not form a greedy path collection. There must be a path $q_1$ in the non-diagonal part of $D^{(0)}$ which approaches the origin of a vertical edge $e_1$ along a strand but does not use $e_1$, even though there is no other path of $D^{(0)}$ using the strand at the terminus of $e_1$ and blocking $q_1$ from using $e_1$. Assume $e_1$ is leftmost with this property and that it lies in column $c_1$. This can only happen once in column $c_1$ since to form $D^{(0)}$ from $C$, we only removed a single path. Since the non-diagonal part of $C$ is greedy, it must be the case that some path $p'_1$ of $C$ used the strand at the terminus of $e_1$. Note that, by the inductive construction of greedy path collections in \Cref{defleftgreedy}, $q_1$ must originate below $p_1'$. Let $D^{(1)}$ be the non-intersecting path collection obtained from $D^{(0)}$ by replacing $q_1$ by the path $\hat{q}_1$, also originating on strand $\sigma_1$, which is identical to $q_1$ until reaching $e_1$, uses edge $e_1$, and then finishes identically to $p'_1$. For an example, see \Cref{fig:removingapath}. Note that, immediately to the left of column $c_1$, $D^{(1)}$ is identical to $C$ with a path removed and, to the right of column $c_1$, $D^{(1)}$ is identical to $C$ but with a different path removed. If the non-diagonal part of $D^{(1)}$ is greedy, $D=D^{(1)}$ has the desired properties. Otherwise, we iterate this argument, as follows.

We establish some inductive hypotheses: Suppose we constructed $D^{(i)}$ from $D^{(i-1)}$ by (1) identifying the unique edge $e_{i}$ in column $c_{i}$ where there was a path of $D^{(i-1)}$ which used the strand at the origin of edge $e_{i}$ but did not use edge $e_{i}$ itself, despite the fact that there is no path of $D^{(i-1)}$ using the strand at the terminus of $e_{i}$ and (2) modifying a path $q_{i}$ of $D^{(i-1)}$ into a new path $\hat{q}_i$ which is identical to $q_i$ to the left of column $c_i$ and uses edge $e_i$ in column $c_i$. Further suppose that to the right of column $c_i$, $D^{(i)}$ is identical to $C$ with some path $p'_{i+1}$ removed. Finally, suppose the non-diagonal part of $D^{(i)}$ is not greedy. With these assumptions, we construct $D^{(i+1)}$. Find the leftmost column $c_{i+1}$ where there is a path $q_{i+1}$ originating on some strand $\sigma_{i+1}$ in $D^{(i)}$ that uses the strand at the bottom of a vertical edge $e_{i+1}$ and no path in $D^{(i)}$ that uses the strand at the top of $e_{i+1}$. Column $c_{i+1}$ is necessarily to the right of column $c_{i}$ since in going from $D^{(i-1)}$ to $D^{(i)}$, we do not alter anything to the left of column $c_i$ and we observed that $e_i$ was the unique edge with its defining properties in column $c_i$. The edge $e_{i+1}$ is the unique edge in column $c_{i+1}$ with these properties since, to the right of $c_i$, $D^{(i)}$ is identical to $C$ with a path $p_{i+1}'$ removed. Since $C$ is greedy, $p'_{i+1}$ must use the strand at the terminus of $e_{i+1}$. Let $D^{(i+1)}$ be the non-intersecting path collection obtained from $D^{(i)}$ by replacing $q_{i+1}$ by the path $\hat{q}_{i+1}$, also originating on strand $\sigma_{i+1}$, which is identical to $q_{i+1}$ until reaching $e_{i+1}$, uses edge $e_{i+1}$, and then finishes identically to $p'_{i+1}$. Note that to the right of column $c_{i+1}$, $D^{(i+1)}$ is identical to $C$ with a path $p'_{i+2}\neq p'_{i+1}$ removed. Also observe that by the inductive construction of greedy path collections in \Cref{defleftgreedy}, we must have that $p'_{i+2}$ originates below $p'_{i+1}$. If the non-diagonal part of $D^{(i+1)}$ is greedy, then $D=D^{(i+1)}$ has the desired properties and we are done. Otherwise, we satisfy all the inductive hypotheses laid out at the beginning of this paragraph and we can continue to iterate this argument. Since $p'_{i+1}$ originates below $p'_i$ for each $i$, this process must eventually terminate.

\end{proof}

\begin{remark}\label{rem:constructionstarproof}
    We record here the observation that $p'_{i+1}$ originates below $p'_i$ for each $i$. This also implies that each $\sigma_i$ is distinct. We will use these facts later.
\end{remark}

     \begin{figure}[H]
 \begin{tikzpicture}[node distance={10.5 mm}, thick, main/.style = {draw, circle,minimum size=2 mm}, 
blank/.style={circle, draw=green!0, fill=green!0, very thin, minimum size=3.5mm},]

\node[main] (1) {$1'$};
\node[main] (2) [above of=1] {$2'$};
\node[main] (3) [above of = 2] {$3'$}; 
\node[main] (4) [above of=3] {$4'$};
\node[main](5) [above of = 4] {$5'$};
\node (blank)[above of = 5]{};
\node[main](55) [right of = blank]{$5$};
\node(54) [below of = 55]{};
\node(53) [below of = 54]{};
\node(52) [below of = 53]{};
\node(51) [below of = 52]{};
\node[main] (44) [right  of=55] {$4$};
\node(43) [below of = 44]{};
\node(42) [below of = 43]{};
\node(41) [below of = 42]{};
\node[main] (33) [right of = 44] {$3$};
\node(32) [below of = 33]{};
\node(31) [below of = 32]{};
\node[main] (22) [right of = 33] {$2$};
\node(21) [below of = 22]{};
\node[main] (11) [right of = 22] {$1$};

\draw[] (5) -- ([xshift=-5mm,yshift=-5mm]55.center);
\draw[dashed,-{Latex[length=3mm]}]([xshift=-5mm,yshift=-5mm]55.center)--(55);
\draw[dashed](4)--([xshift=-5mm,yshift=-5mm]54.center);
\draw[] ([xshift=-5mm,yshift=-5mm]54.center) -- ([xshift=-5mm,yshift=-5mm]44.center);
\draw[-{Latex[length=3mm]}] ([xshift=-5mm,yshift=-5mm]44.center) -- (44);
\draw[dashed] (3) -- ([xshift=-5mm,yshift=-5mm]53.center);
\draw[dashed] ([xshift=-5mm,yshift=-5mm]53.center) -- ([xshift=-5mm,yshift=-5mm]43.center);
\draw[] ([xshift=-5mm,yshift=-5mm]43.center) -- ([xshift=-5mm,yshift=-5mm]33.center);
\draw[-{Latex[length=3mm]}] ([xshift=-5mm,yshift=-5mm]33.center)--(33);
\draw[dashed] (2) -- ([xshift=-5mm,yshift=-5mm]42.center);
\draw[dashed] ([xshift=-5mm,yshift=-5mm]42.center) -- ([xshift=-5mm,yshift=-5mm]32.center);
\draw[-{Latex[length=3mm]}] ([xshift=-5mm,yshift=-5mm]32.center) -- (22);
\draw[dashed] (1) -- ([xshift=-5mm,yshift=-5mm]51.center);
\draw[dashed] ([xshift=-5mm,yshift=-5mm]51.center) -- ([xshift=-5mm,yshift=-5mm]41.center);
\draw[-{Latex[length=3mm]},dashed] ([xshift=-5mm,yshift=-5mm]41.center)--(11);

\draw[-{Latex[length=3mm]},dashed] ([xshift=-5mm,yshift=-5mm]54.center) -- ([xshift=-5mm,yshift=-5mm]55.center);
\draw[-{Latex[length=3mm]}] ([xshift=-5mm,yshift=-5mm]53.center) -- ([xshift=-5mm,yshift=-5mm]54.center);
\draw[-{Latex[length=3mm]}] ([xshift=-5mm,yshift=-5mm]52.center) -- ([xshift=-5mm,yshift=-5mm]53.center);
\draw[-{Latex[length=3mm]}] ([xshift=-5mm,yshift=-5mm]51.center) -- ([xshift=-5mm,yshift=-5mm]52.center);
\draw[-{Latex[length=3mm]},dashed] ([xshift=-5mm,yshift=-5mm]43.center) -- ([xshift=-5mm,yshift=-5mm]44.center);
\draw[-{Latex[length=3mm]}] ([xshift=-5mm,yshift=-5mm]42.center) -- ([xshift=-5mm,yshift=-5mm]43.center);
\draw[-{Latex[length=3mm]}] ([xshift=-5mm,yshift=-5mm]41.center) -- ([xshift=-5mm,yshift=-5mm]42.center);
\draw[-{Latex[length=3mm]}] ([xshift=-5mm,yshift=-5mm]31.center) -- ([xshift=-5mm,yshift=-5mm]32.center);
\draw[-{Latex[length=3mm]},dashed] ([xshift=-5mm,yshift=-5mm]32.center) -- ([xshift=-5mm,yshift=-5mm]33.center);
\draw[-{Latex[length=3mm]}] ([xshift=-5mm,yshift=-5mm]21.center) -- ([xshift=-5mm,yshift=-5mm]22.center);

\end{tikzpicture} 
 \begin{tikzpicture}[node distance={10.5 mm}, thick, main/.style = {draw, circle,minimum size=2 mm}, 
blank/.style={circle, draw=green!0, fill=green!0, very thin, minimum size=3.5mm},]

\node[main] (1) {$1'$};
\node[main] (2) [above of=1] {$2'$};
\node[main] (3) [above of = 2] {$3'$}; 
\node[main] (4) [above of=3] {$4'$};
\node[main](5) [above of = 4] {$5'$};
\node (blank)[above of = 5]{};
\node[main](55) [right of = blank]{$5$};
\node(54) [below of = 55]{};
\node(53) [below of = 54]{};
\node(52) [below of = 53]{};
\node(51) [below of = 52]{};
\node[main] (44) [right  of=55] {$4$};
\node(43) [below of = 44]{};
\node(42) [below of = 43]{};
\node(41) [below of = 42]{};
\node[main] (33) [right of = 44] {$3$};
\node(32) [below of = 33]{};
\node(31) [below of = 32]{};
\node[main] (22) [right of = 33] {$2$};
\node(21) [below of = 22]{};
\node[main] (11) [right of = 22] {$1$};

\draw[] (5) -- ([xshift=-5mm,yshift=-5mm]55.center);
\draw[-{Latex[length=3mm]}]([xshift=-5mm,yshift=-5mm]55.center)--(55);
\draw[](4)--([xshift=-5mm,yshift=-5mm]54.center);
\draw[] ([xshift=-5mm,yshift=-5mm]54.center) -- ([xshift=-5mm,yshift=-5mm]44.center);
\draw[-{Latex[length=3mm]}] ([xshift=-5mm,yshift=-5mm]44.center) -- (44);
\draw[dashed] (3) -- ([xshift=-5mm,yshift=-5mm]43.center);
\draw[] ([xshift=-5mm,yshift=-5mm]43.center) -- ([xshift=-5mm,yshift=-5mm]33.center);
\draw[-{Latex[length=3mm]}] ([xshift=-5mm,yshift=-5mm]33.center)--(33);
\draw[dashed] (2) -- ([xshift=-5mm,yshift=-5mm]32.center);
\draw[-{Latex[length=3mm]}] ([xshift=-5mm,yshift=-5mm]32.center) -- (22);
\draw[dashed] (1) -- ([xshift=-5mm,yshift=-5mm]51.center);
\draw[dashed] ([xshift=-5mm,yshift=-5mm]51.center) -- ([xshift=-5mm,yshift=-5mm]41.center);
\draw[-{Latex[length=3mm]},dashed] ([xshift=-5mm,yshift=-5mm]41.center)--(11);

\draw[-{Latex[length=3mm]}] ([xshift=-5mm,yshift=-5mm]54.center) -- ([xshift=-5mm,yshift=-5mm]55.center);
\draw[-{Latex[length=3mm]}] ([xshift=-5mm,yshift=-5mm]53.center) -- ([xshift=-5mm,yshift=-5mm]54.center) node [pos=0.6, right] {$e_1$};
\draw[-{Latex[length=3mm]}] ([xshift=-5mm,yshift=-5mm]52.center) -- ([xshift=-5mm,yshift=-5mm]53.center);
\draw[-{Latex[length=3mm]}] ([xshift=-5mm,yshift=-5mm]51.center) -- ([xshift=-5mm,yshift=-5mm]52.center);
\draw[-{Latex[length=3mm]},dashed] ([xshift=-5mm,yshift=-5mm]43.center) -- ([xshift=-5mm,yshift=-5mm]44.center);
\draw[-{Latex[length=3mm]}] ([xshift=-5mm,yshift=-5mm]42.center) -- ([xshift=-5mm,yshift=-5mm]43.center);
\draw[-{Latex[length=3mm]}] ([xshift=-5mm,yshift=-5mm]41.center) -- ([xshift=-5mm,yshift=-5mm]42.center);
\draw[-{Latex[length=3mm]}] ([xshift=-5mm,yshift=-5mm]31.center) -- ([xshift=-5mm,yshift=-5mm]32.center);
\draw[-{Latex[length=3mm]},dashed] ([xshift=-5mm,yshift=-5mm]32.center) -- ([xshift=-5mm,yshift=-5mm]33.center);
\draw[-{Latex[length=3mm]}] ([xshift=-5mm,yshift=-5mm]21.center) -- ([xshift=-5mm,yshift=-5mm]22.center);

\end{tikzpicture} 

\vspace{0.7 cm}

\begin{tikzpicture}[node distance={10.5 mm}, thick, main/.style = {draw, circle,minimum size=2 mm}, 
blank/.style={circle, draw=green!0, fill=green!0, very thin, minimum size=3.5mm},]

\node[main] (1) {$1'$};
\node[main] (2) [above of=1] {$2'$};
\node[main] (3) [above of = 2] {$3'$}; 
\node[main] (4) [above of=3] {$4'$};
\node[main](5) [above of = 4] {$5'$};
\node (blank)[above of = 5]{};
\node[main](55) [right of = blank]{$5$};
\node(54) [below of = 55]{};
\node(53) [below of = 54]{};
\node(52) [below of = 53]{};
\node(51) [below of = 52]{};
\node[main] (44) [right  of=55] {$4$};
\node(43) [below of = 44]{};
\node(42) [below of = 43]{};
\node(41) [below of = 42]{};
\node[main] (33) [right of = 44] {$3$};
\node(32) [below of = 33]{};
\node(31) [below of = 32]{};
\node[main] (22) [right of = 33] {$2$};
\node(21) [below of = 22]{};
\node[main] (11) [right of = 22] {$1$};

\draw[] (5) -- ([xshift=-5mm,yshift=-5mm]55.center);
\draw[-{Latex[length=3mm]}]([xshift=-5mm,yshift=-5mm]55.center)--(55);
\draw[](4)--([xshift=-5mm,yshift=-5mm]54.center);
\draw[] ([xshift=-5mm,yshift=-5mm]54.center) -- ([xshift=-5mm,yshift=-5mm]44.center);
\draw[-{Latex[length=3mm]}] ([xshift=-5mm,yshift=-5mm]44.center) -- (44);
\draw[dashed] (3) -- ([xshift=-5mm,yshift=-5mm]53.center);
\draw[] ([xshift=-5mm,yshift=-5mm]53.center) -- ([xshift=-5mm,yshift=-5mm]33.center);
\draw[-{Latex[length=3mm]}] ([xshift=-5mm,yshift=-5mm]33.center)--(33);
\draw[dashed] (2) -- ([xshift=-5mm,yshift=-5mm]32.center);
\draw[-{Latex[length=3mm]}] ([xshift=-5mm,yshift=-5mm]32.center) -- (22);
\draw[dashed] (1) -- ([xshift=-5mm,yshift=-5mm]51.center);
\draw[dashed] ([xshift=-5mm,yshift=-5mm]51.center) -- ([xshift=-5mm,yshift=-5mm]41.center);
\draw[-{Latex[length=3mm]},dashed] ([xshift=-5mm,yshift=-5mm]41.center)--(11);

\draw[-{Latex[length=3mm]},dashed] ([xshift=-5mm,yshift=-5mm]54.center) -- ([xshift=-5mm,yshift=-5mm]55.center);
\draw[-{Latex[length=3mm]},dashed] ([xshift=-5mm,yshift=-5mm]53.center) -- ([xshift=-5mm,yshift=-5mm]54.center) node [pos=0.6, right] {$e_1$};
\draw[-{Latex[length=3mm]}] ([xshift=-5mm,yshift=-5mm]52.center) -- ([xshift=-5mm,yshift=-5mm]53.center);
\draw[-{Latex[length=3mm]}] ([xshift=-5mm,yshift=-5mm]51.center) -- ([xshift=-5mm,yshift=-5mm]52.center);
\draw[-{Latex[length=3mm]}] ([xshift=-5mm,yshift=-5mm]43.center) -- ([xshift=-5mm,yshift=-5mm]44.center);
\draw[-{Latex[length=3mm]}] ([xshift=-5mm,yshift=-5mm]42.center) -- ([xshift=-5mm,yshift=-5mm]43.center)node [pos=0.6, right] {$e_2$};
\draw[-{Latex[length=3mm]}] ([xshift=-5mm,yshift=-5mm]41.center) -- ([xshift=-5mm,yshift=-5mm]42.center);
\draw[-{Latex[length=3mm]}] ([xshift=-5mm,yshift=-5mm]31.center) -- ([xshift=-5mm,yshift=-5mm]32.center);
\draw[-{Latex[length=3mm]},dashed] ([xshift=-5mm,yshift=-5mm]32.center) -- ([xshift=-5mm,yshift=-5mm]33.center);
\draw[-{Latex[length=3mm]}] ([xshift=-5mm,yshift=-5mm]21.center) -- ([xshift=-5mm,yshift=-5mm]22.center);

\end{tikzpicture} 
\begin{tikzpicture}[node distance={10.5 mm}, thick, main/.style = {draw, circle,minimum size=2 mm}, 
blank/.style={circle, draw=green!0, fill=green!0, very thin, minimum size=3.5mm},]

\node[main] (1) {$1'$};
\node[main] (2) [above of=1] {$2'$};
\node[main] (3) [above of = 2] {$3'$}; 
\node[main] (4) [above of=3] {$4'$};
\node[main](5) [above of = 4] {$5'$};
\node (blank)[above of = 5]{};
\node[main](55) [right of = blank]{$5$};
\node(54) [below of = 55]{};
\node(53) [below of = 54]{};
\node(52) [below of = 53]{};
\node(51) [below of = 52]{};
\node[main] (44) [right  of=55] {$4$};
\node(43) [below of = 44]{};
\node(42) [below of = 43]{};
\node(41) [below of = 42]{};
\node[main] (33) [right of = 44] {$3$};
\node(32) [below of = 33]{};
\node(31) [below of = 32]{};
\node[main] (22) [right of = 33] {$2$};
\node(21) [below of = 22]{};
\node[main] (11) [right of = 22] {$1$};

\draw[] (5) -- ([xshift=-5mm,yshift=-5mm]55.center);
\draw[-{Latex[length=3mm]}]([xshift=-5mm,yshift=-5mm]55.center)--(55);
\draw[](4)--([xshift=-5mm,yshift=-5mm]54.center);
\draw[] ([xshift=-5mm,yshift=-5mm]54.center) -- ([xshift=-5mm,yshift=-5mm]44.center);
\draw[-{Latex[length=3mm]}] ([xshift=-5mm,yshift=-5mm]44.center) -- (44);
\draw[dashed] (3) -- ([xshift=-5mm,yshift=-5mm]53.center);
\draw[] ([xshift=-5mm,yshift=-5mm]53.center) -- ([xshift=-5mm,yshift=-5mm]33.center);
\draw[-{Latex[length=3mm]}] ([xshift=-5mm,yshift=-5mm]33.center)--(33);
\draw[dashed] (2) -- ([xshift=-5mm,yshift=-5mm]42.center);
\draw[-{Latex[length=3mm]}] ([xshift=-5mm,yshift=-5mm]42.center) -- (22);
\draw[dashed] (1) -- ([xshift=-5mm,yshift=-5mm]51.center);
\draw[dashed] ([xshift=-5mm,yshift=-5mm]51.center) -- ([xshift=-5mm,yshift=-5mm]41.center);
\draw[-{Latex[length=3mm]},dashed] ([xshift=-5mm,yshift=-5mm]41.center)--(11);

\draw[-{Latex[length=3mm]},dashed] ([xshift=-5mm,yshift=-5mm]54.center) -- ([xshift=-5mm,yshift=-5mm]55.center);
\draw[-{Latex[length=3mm]},dashed] ([xshift=-5mm,yshift=-5mm]53.center) -- ([xshift=-5mm,yshift=-5mm]54.center) node [pos=0.6, right] {$e_1$};
\draw[-{Latex[length=3mm]}] ([xshift=-5mm,yshift=-5mm]52.center) -- ([xshift=-5mm,yshift=-5mm]53.center);
\draw[-{Latex[length=3mm]}] ([xshift=-5mm,yshift=-5mm]51.center) -- ([xshift=-5mm,yshift=-5mm]52.center);
\draw[-{Latex[length=3mm]},dashed] ([xshift=-5mm,yshift=-5mm]43.center) -- ([xshift=-5mm,yshift=-5mm]44.center);
\draw[-{Latex[length=3mm]},dashed] ([xshift=-5mm,yshift=-5mm]42.center) -- ([xshift=-5mm,yshift=-5mm]43.center)node [pos=0.6, right] {$e_2$};
\draw[-{Latex[length=3mm]}] ([xshift=-5mm,yshift=-5mm]41.center) -- ([xshift=-5mm,yshift=-5mm]42.center);
\draw[-{Latex[length=3mm]}] ([xshift=-5mm,yshift=-5mm]31.center) -- ([xshift=-5mm,yshift=-5mm]32.center);
\draw[-{Latex[length=3mm]}] ([xshift=-5mm,yshift=-5mm]32.center) -- ([xshift=-5mm,yshift=-5mm]33.center);
\draw[-{Latex[length=3mm]}] ([xshift=-5mm,yshift=-5mm]21.center) -- ([xshift=-5mm,yshift=-5mm]22.center);

\end{tikzpicture} \caption{The top left subfigure shows a path collection $C$ (dashed) which is a union of diagonal paths and a greedy path collection in $G_{id,w_0}$ for $n=5$. The next subfigures show how to construct $D$ from $C$ for $o=4'$, step by step. The top right shows the path collection $D^{(0)}$ obtained by deleting the path originating at $4'$ from $C$. It also indicates the edge $e_1$ in column $c_1=1$ which has a path using the strand at its origin but no path using the strand at its terminus in $D^{(0)}$. The bottom left subfigure shows the path collection $D^{(1)}$ where we replace the path $q_1$ originating on strand $\sigma_1=3$ in $D^{(0)}$ by a path $\hat{q}_1$ that uses $e_1$ and then terminates identically to the path $p_1'$ originating on strand $4$ in $C$. It also indicates the edge $e_2$. The bottom right subfigure shows the path collection $D^{(2)}$ where we replace the path $q_2$ originating on strand $\sigma_2=2$ in $D^{(1)}$ by a path $\hat{q}_2$ that uses $e_2$ and then terminates identically to the path $p_2'$ originating at $3'$ in $C$. The path collection $D^{(2)}$ is a union of diagonal paths and a greedy path collection. Its sink set is the same as the sink set of $C$ but with $3$ removed, as guaranteed by \Cref{lem:constructionstar}.} 
\label{fig:removingapath}
 \end{figure}

\begin{proposition}\label{prop:oneextraweight}
    Let $P=\Phi_{id,w_0}(\bm{a})$. Let $N$ be the number of extremal indices of a flag in $\mathcal{R}_{id,w_0}^{>0}$. Then, there are sets $A_i\subset[N]$ and a total order $\prec$ on the extremal indices $\{E_1\prec \cdots \prec E_N\}$ for $\mathcal{R}_{id,w_0}^{>0}$ such that $P_{E_{i}}=\prod_{k\in A_i}a_k$ and $\left|A_i\setminus \left(\bigcup_{j<i}A_j\right)\right|\leq 1$.
\end{proposition}

\begin{proof}
   The fact that each $P_{E_i}$ can be expressed as $P_{E_{i}}=\prod_{k\in A_i}a_k$ for some $A_i$ is a direct restatement of \Cref{monomialtopcellcor}. 

   Note that by \Cref{topcellcor}, each extremal index corresponds to a unique extremal path collection in $G_{id,w_0}$. Using this correspondence, the proposition is equivalent to the following statement: There exists an order $\prec$ on the extremal path collections such that if one goes through the extremal path collections in $\prec$ order, each one uses at most a single vertical edge which was not used previously. 

    We now define the total order $\prec$. If $|E_i|>|E_j|,$ then $E_i\prec E_j$. For extremal indices of the same size, $\prec$ is the lexicographic order. Equivalently, $E_i\prec \Xi_{id,w_0}(E_i)$. By abuse of notation, we will use the order $\prec$ on both indices and the corresponding unique non-intersecting path collections. 
   
   For $\alpha\in [N]$, we now show that $\left|A_\alpha\setminus \left(\bigcup_{\beta<\alpha}A_\beta\right)\right|\leq 1$. Let $C_\alpha$ be the extremal non-intersecting path collection with sink set $E_\alpha$. Suppose $C_\alpha$ consists of $k$ paths. If $C_\alpha$ were diagonal, we would have $A_\alpha=\emptyset$, so we may assume that $E_\alpha$ is not the Gale minimal extremal index of size $k$. As a result, $C_\alpha$ uses the same number of paths as the path collection $C_{\alpha-1}$ immediately preceding it in $\prec$ order. Suppose the bottom-most non-diagonal path of $C_\alpha$ originates on strand $\sigma$. Denote this path by $p$. The set $A_\alpha\setminus A_{\alpha-1}$ must index the weights on edges used by $p$, since all non-diagonal paths in $C_\alpha$ other than $p$ appear in $C_{\alpha-1}$ as well. 

Consider the non-intersecting path collection $C_{\alpha'}\prec C_\alpha$ which has $k+1$ paths and which is the union of a greedy path collection originating on or above strand $\sigma$ and a diagonal path collection originating below strand $\sigma$. Let $p'$ be the path originating on strand $\sigma$ in $C_{\alpha'}$. We will now analyze the difference between the greedy part of $C_{\alpha'}$ and that of $C_{\alpha}$ in order to show that $p$ uses at most a single vertical edge which is not used by $C_{\alpha'}$, proving that $|A_\alpha\setminus (A_{\alpha-1}\cup A_{{\alpha'}})|\leq 1$, which suffices to prove the proposition.

If the source vertex $(k+1)'$ lies below strand $\sigma$, the greedy part of $C_{\alpha'}$ is identical to the greedy part of $C_\alpha$ and so $p'$ is identical to $p$\footnote{This never happens in $G_{id,w_0}$, where $(k+1)'$ lies above all of $[k]'$. However for general $G_{v,w}$, this may occur, and we will want to reuse this proof.}. Otherwise, the vertex $(k+1)'$ lies above strand $\sigma$. It is still possible that $p=p'$, in which case we have nothing to show. Thus, we assume $p\neq p'$. We first determine the set of vertical edges used by $C_\alpha$ but not by $C_{\alpha'}$, and then show that at most one of these edges is used by $p$.

Note that $C_{\alpha}$ is precisely the path $D$ obtained by applying the construction of \Cref{lem:constructionstar} to the non-intersecting path collection $C=C_{\alpha'}$, with $o=(k+1)'$. The vertical edges used by $C_{\alpha}$ but not by $C_{\alpha'}$ are precisely the edges $\{e_1,\ldots, e_d\}$, in the notation of the proof of \Cref{lem:constructionstar}. We continue to use the notation of that proof. When we construct the path collection $D^{(i)}$, we replace a path $q_i$, originating on strand $\sigma_i$, which does not use any of the edges in $\{e_i, e_{i+1},\ldots, e_d\}$ by a path $\hat{q}_i$, also originating on strand $\sigma_i$, which uses the edge $e_i$ but not any of $\{e_{i+1},e_{i+2},\ldots, e_d\}$. Moreover, by \Cref{rem:constructionstarproof}, $\sigma_i\neq \sigma_j$ whenever $i\neq j$. Therefore, at the point in the construction where $\sigma_i=\sigma$, $p'$ gets replaced by $p$, which uses exactly one of the edges in $\{e_1,\ldots, e_d\}$, namely, $e_i$. 

\end{proof}

 \begin{cor}\label{inverseLaurent}
The map $\Psi_{id,w_0}$, which is inverse to $\Phi_{id,w_0}$ on $\mathcal{R}_{id,w_0}^{>0}$, can be expressed as Laurent monomials in the extremal non-zero \Pl coordinates of a flag in $\mathcal{R}^{>0}_{id,w_0}$. 
\end{cor}

\begin{proof}

We use the notation of \Cref{prop:oneextraweight}. For each $i\in [N]$, let ${k_i}$ be the unique element of $A_i\setminus \left(\bigcup_{j<i}A_j\right)$ if this set is nonempty and let $k_i$ be $0$ otherwise. We let $a_0=1$ so that $a_{k_i}$ is defined for all $i\in [N]$. We now show by induction that we can express $a_{k_i}$ as a Laurent monomial in $\{P_{E_1},\ldots P_{E_i}\}$. 

As a base case, $E_1$ is Gale minimal. Thus, the corresponding extremal path collection is a diagonal path collection and $A_1=\emptyset$. It follows that $a_{k_1}=1$ is the empty Laurent monomial in the extremal \Pl coordinates. 

For the induction step, suppose that for $l<i$, $a_{k_l}$ is expressible as a Laurent monomial in $\{P_{E_{1}},\ldots, P_{E_{l}}\}$. If $a_{k_i}=1$, there is nothing to show. Otherwise, \Cref{prop:oneextraweight} gives a way to express $a_{k_i}$ as a Laurent monomial in $P_{E_i}$ and $\{a_{k_j}\mid j<i\}$. By induction, this gives a way to express $a_{k_i}$ as a Laurent monomial in $\{P_{E_{1}},\ldots, P_{E_{i}}\}$. 

By \Cref{topcell3termgenerate}, the extremal \Pl coordinates determine all other \Pl coordinates. Thus, they must also determine the weights $a_i$ appearing in $G_{id,w_0}(\mathbf{a})$. Accordingly, every weight $a_i$ appears in some extremal path collection and so all the weights are determined as Laurent monomials in the extremal non-zero \Pl coordinates by the induction in the previous paragraph.

\end{proof}

See \Cref{monomialsidw_0} for a concrete example of this result. 

\begin{propbis}{inverseLaurent}\label{tropinverse}
 The map $\Trop\Psi_{id,w_0}$ is inverse to $\Trop\Phi_{id,w_0}$ and can be written in terms of sums and differences of extremal \Pl coordinates.
\end{propbis}

\begin{proof}
By \Cref{inverseLaurent}, the map $\Psi_{id,w_0}$ expresses the weights $a_i$ as Laurent monomials in the extremal non-zero \Pl coordinates of $F$. By \Cref{topcellcor}, the map $\Phi_{id,w_0}$ expresses the extremal \Pl coordinates as monomials in $\bm{a}$. We know that $\Phi_{id,w_0}$ and $\Psi_{id,w_0}$ are inverses. Since tropicalizing converts products and quotients to sums and differences, respectively, we will have that $\Trop \Psi_{id,w_0}$ can be written as sums and differences of extremal \Pl coordinates and, moreover, that $\Trop \Psi_{id,w_0}$ and $\Trop \Phi_{id,w_0}$ are inverses.
\end{proof}

\begin{remark}Based on the preceding propositions, we can apply $\Psi_{id,w_{0}}$, thought of as a collection of Laurent monomials in the extremal non-zero \Pl coordinates, to any point whose extremal non-zero \Pl coordinates coincide with the extremal non-zero \Pl coordinates of a flag in $\mathcal{R}_{id,w_0}^{>0}$. Similarly, we can apply $\Trop\Psi_{id,w_{0}}$, thought of as a collection of sums and differences in the extremal non-infinite \Pl coordinates, to any point whose extremal non-infinite \Pl coordinates coincide with the extremal non-infinite \Pl coordinates of a point in $\textnormal{TrFl}_{id,w_0}^{>0}$.

\end{remark}
 
We now have all the tools we need to prove our main result of this section. The key insight is that, by the preceding remark, a flag $F$ with all \Pl coordinates positive can be mapped by $\Psi_{id,w_0}$ to a weight vector $\bm{a}$. From there, it is not difficult to show that $F=\Phi_{id,w_0}(\bm{a})\in \mathcal{R}_{id,w_0}^{>0}$. 
 
 \begin{theorem}\label{topcellnonnegPluckCoords}
The totally positive complete flag variety $\mathcal{R}^{> 0}_{id,w_0}$ equals the set $\{F\in \textnormal{Fl}_n|\;P_I(F)> 0\; \forall \; \emptyset\neq I\subset [n]\}$.
\end{theorem}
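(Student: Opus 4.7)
The forward inclusion is immediate from the Marsh--Rietsch setup. For $F \in \mathcal{R}^{>0}_{id,w_0}$, the graph $G_{id,w_0}$ contains every possible vertical edge (since $\bm{w}=\bm{w_0}$ has no $\dot s$ factors and no cancellations of vertical edges), so for every $I \subseteq [n]$ there exists at least one non-intersecting path collection from $[|I|]'$ to $I$. By \cref{pathcollections} and the subtraction-freeness in \cref{sumonly}, $P_I(F) > 0$ for every $I$.

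For the reverse inclusion, fix $F \in \Fl_n$ with $P_I(F) > 0$ for all $I \subseteq [n]$. The plan is to produce a flag $F' \in \mathcal{R}^{>0}_{id,w_0}$ having the \emph{same} Pl\"ucker coordinates as $F$, then invoke the projective embedding of $\Fl_n$ by its Pl\"ucker coordinates (cf.\ \cref{pluckerideal}) to conclude $F=F'$. To build $F'$, apply the inverse map $\Psi_{id,w_0}$, viewed as a collection of Laurent monomials in the extremal Pl\"ucker coordinates (\cref{inverseLaurent}), to the positive extremal Pl\"ucker coordinates of $F$. Because each Laurent monomial expression arises from solving subtraction-free LGV equations $P_I = \prod \text{weights}$ for the weights in $\prec$-order, positive extremal coordinates produce positive weights $\bm{a} \in \mathbb{R}^{\binom{n}{2}}_{>0}$. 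Set $F' := M_{id,w_0}(\bm{a}) \in \mathcal{R}^{>0}_{id,w_0}$; by construction of $\Psi_{id,w_0}$ as the inverse of $\Phi_{id,w_0}$, the extremal Pl\"ucker coordinates of $F'$ agree with those of $F$.

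It remains to promote this agreement on the extremal indices to agreement on all indices. Here I invoke \cref{topcell3termgenerate}: its proof only uses that $P_I \geq 0$ for all $I$ and that all the auxiliary terms appearing in the three-term incidence relations are positive, and it produces $P_S$ as an explicit rational function of strictly larger / lexicographically smaller / $t_{id,w_0}$-smaller Pl\"ucker coordinates. Applying this inductive scheme identically to $F$ and to $F'$ (both have strictly positive Pl\"ucker coordinates throughout, so no denominator vanishes) shows $P_S(F) = P_S(F')$ for every $S$. By the Pl\"ucker embedding we conclude $F = F' \in \mathcal{R}^{>0}_{id,w_0}$.

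The step most in need of care is confirming that the Laurent-monomial inversion $\Psi_{id,w_0}$ returns genuinely \emph{positive} weights when fed the positive extremal coordinates of an arbitrary $F$ (not just of a flag that is \emph{a priori} known to lie in the cell). This is handled by reading off the sign content of \cref{inverseLaurent}: the monomials expressing each weight are obtained by successively solving equations $P_I = \prod \text{weights}$ in which both sides are subtraction-free products in the weights, so the solved weight is a positive Laurent monomial in previously-determined positive quantities. Once this positivity is secured, the rest of the argument is a clean application of results already established in \cref{secttopcell}.
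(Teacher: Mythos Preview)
Your proposal is correct and follows essentially the same approach as the paper: apply $\Psi_{id,w_0}$ to the extremal coordinates of $F$ to obtain positive weights $\bm a$, set $F'=M_{id,w_0}(\bm a)\in\mathcal{R}^{>0}_{id,w_0}$, and then use \cref{topcell3termgenerate} to propagate agreement on extremals to agreement on all Pl\"ucker coordinates. You are simply a bit more explicit than the paper about why the resulting weights are positive and why equality of Pl\"ucker coordinates forces $F=F'$.
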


\begin{proof}
We already established in \Cref{forwarddir} that for any $F$ in $\mathcal{R}^{> 0}_{id,w_0}$, we have $P_I(F)> 0$ for any $\emptyset\neq I\subset[n]$. We are left to prove the reverse direction. 

Let $F$ be any flag in $\textnormal{Fl}_n$ such that $P_I(F)> 0$ for all $\emptyset\neq I\subset [n]$. We prove that $F\in\mathcal{R}_{id,w_0}^{>0}$. To do so, we show that there exist some positive real weights $\bm{a}$ such that  $\Phi_{id,w_0}(\bm{a})=P(F)$. Apply the map $\Psi_{id,w_0}$ to the extremal non-zero \Pl coordinates of $F$ to get a collection of weights $\bm{a}$. Let $\Phi_{id,w_0}(\bm{a})=Q\in\mathbb{R}\mathbb{P}^{\binom{n}{1}-1}\times\cdots \times \mathbb{R}\mathbb{P}^{\binom{n}{n-1}-1}$. By construction, $P(F)$ and $Q$ agree on their extremal non-zero \Pl coordinates. By \Cref{topcell3termgenerate}, the three-term incidence \Pl relations determine all the other \Pl coordinates in terms of the extremal non-zero \Pl coordinates and so $P(F)$ and $Q$ agree on all \Pl coordinates. Thus, $F$ lies in $\mathcal{R}_{id,w_0}^{>0}$.
\end{proof}

Before tropicalizing this result, we present an example.

\begin{example}
 \label{monomialsidw_0}
 For $n=3$, the graph $G_{id,w_0}$ is given in \Cref{graphidw_0}. The extremal indices in $\mathcal{R}_{id,w_0}^{>0}$ are $\{1\}$, $\{3\}$, $\{1,2\}$, $\{1,3\}$ and $\{2,3\}$. Let $F$ be a flag such that $P(F)_I>0$ for all $\emptyset\neq I\subset[n]$.
 
      \begin{figure}[H]
 \begin{tikzpicture}[node distance={10.5 mm}, thick, main/.style = {draw, circle,minimum size=2 mm}, 
blank/.style={circle, draw=green!0, fill=green!0, very thin, minimum size=3.5mm},]

\node[main] (1) {$1'$};
\node[main] (2) [above of=1] {$2'$};
\node[main] (3) [above of = 2] {$3'$}; 
\node (blank)[above of = 3]{};
\node[main] (33) [right of = blank] {$3$};
\node[main] (22) [right of = 33] {$2$};
\node[main] (11) [right of = 22] {$1$};
\node (first) [right of = 2]{};
\node (second) [right of = 3]{};
\node (third)[right of = second]{};
\draw[-{Latex[length=3mm]}] (1) -- (11);
\draw[-{Latex[length=3mm]}] (2) -- (22);
\draw[-{Latex[length=3mm]}] (3) -- (33);

\draw[-{Latex[length=3mm]}] ([xshift=-5mm,yshift=-5mm]first.center) -- ([xshift=-5mm,yshift=-5mm]second.center) node [pos=0.6, right] {$a_1$};

\draw[-{Latex[length=3mm]}] ([xshift=-5mm,yshift=-5mm]second.center) -- ([xshift=-5mm,yshift=-5mm]33.center) node [pos=0.6, right] {$a_2$};

\draw[-{Latex[length=3mm]}] ([xshift=-5mm,yshift=-5mm]third.center) -- ([xshift=-5mm,yshift=-5mm]22.center) node [pos=0.6, right] {$a_3$};
\end{tikzpicture} 
\caption{The graph $G_{id, w_0}$ for $n=3$.}
\label{graphidw_0}
 \end{figure}
 
 First, we will determine the weights $a_i$ in terms of the extremal \Pl coordinates. We do so by working through the extremal indices in the order $\prec$ introduced in \Cref{prop:oneextraweight} and solving for the $a_i$. We start with the lexicographically minimal extremal index of size $2$, which is $\{1,2\}$. The unique non-intersecting path collection with source set $[2]'$ and sink set $\{1,2\}$ has weight $1$. The Gale-next extremal index of size 2 is $\{1,3\}$. The weight of the unique non-intersecting path collection with source set $[2]'$ and sink set $\{1,3\}$ is just $a_2$. Thus, we have $a_2=P_{13}$. The Gale-next extremal index of size $2$ is $\{2,3\}$. The unique non-intersecting path collection with source set $[2]'$ and sink set $\{2,3\}$ has weight $a_2a_3$. Thus, $a_3=\frac{P_{23}}{P_{13}}$. Next, we move on to the extremal indices of size $1$. The Gale minimal extremal index of size $1$ is $\{1\}$. The unique path from $[1]'$ to $\{1\}$ has weight $1$. The Gale-next extremal index of size $1$ is $\{3\}$. The unique path from $[1]'$ to $\{3\}$ has weight $a_1a_2$, so $a_1=\frac{P_3}{P_{13}}$. This determines all the weights in $G_{id,w_0}$ as Laurent monomials in the extremal non-zero \Pl coordinates. 
 
 We now check that if we apply the LGV construction, the \Pl coordinates $Q$ which we obtain are the same as the \Pl coordinates $P$ we started with. The only interesting coordinate to check is the non-extremal \Pl coordinate $Q_2$. Indeed, $Q_2=a_1+a_3=\frac{P_3}{P_{13}}+\frac{P_{23}}{P_{13}}=P_2$, where the last equality follows from the three-term incidence \Pl relation $P_{2}P_{13}=P_{1}P_{23}+P_{3}P_{12}$ with $P_1=P_{12}=1$. 
 
\end{example}

\begin{thmbis}{topcellnonnegPluckCoords}\label{maintheoremtrop}
The totally positive tropical complete flag variety $\textnormal{TrFl}_n^{>0}$ equals the totally positive complete flag Dressian $\textnormal{FlDr}_n^{>0}$. 
\end{thmbis}

\begin{proof}
It is clear by definition that $\textnormal{TrFl}_n^{>0}\subseteq \textnormal{FlDr}_n^{>0}$. By \Cref{tropicalbruhatinterval}, $\textnormal{TrFl}_{id,w_0}^{>0}$ is the subset of $\textnormal{TrFl}_n^{\geq0}$ where all coordinates are real, that is, $\textnormal{TrFl}_{id,w_0}^{>0}=\textnormal{TrFl}_n^{>0}$. Thus, it will suffice to show that $\textnormal{FlDr}_n^{>0}\subseteq \textnormal{TrFl}^{>0}_{id,w_0}$. 

Let $P\in \textnormal{FlDr}_n^{>0}$. We must show that there exists some real weights $\bm{a}$ such that $\Trop \Phi_{id,w_0}(\bm{a})=P$. Since all coordinates of $P$ are real, it has the same extremal indices as a point in $\textnormal{TrFl}_{id,w_0}^{>0}$. Thus, we can apply the map $\Trop \Psi_{id,w_0}$ to the extremal non-infinite \Pl coordinates of $P$ to get a collection of weights $\bm{a}$. Consider $Q=\Trop\Phi_{id,w_0}(\bm{a})\in\mathbb{T}\mathbb{P}^{\binom{n}{1}-1}\times\cdots \times \mathbb{T}\mathbb{P}^{\binom{n}{n-1}-1}$. By \Cref{tropinverse}, $P$ and $Q$ agree on their extremal non-infinite \Pl coordinates. By \Cref{tropicaltopcell3termgenerate}, the three-term incidence \Pl relations determine all the other \Pl coordinates in terms of the extremal non-infinite \Pl coordinates and so $P$ and $Q$ agree on all \Pl coordinates. Thus, $P$ lies in $\textnormal{TrFl}_{id,w_0}^{>0}=\textnormal{TrFl}_{n}^{>0}$.  
\end{proof}

We now take a moment to comment on similarities and differences between the extremal non-zero \Pl coordinates and the \textit{standard chamber minors} defined and studied in \cite{MR}. Marsh and Rietsch express the parameters in their parameterization of the totally nonnegative complete flag variety as Laurent monomials in the standard chamber minors and express the standard chamber minors as monomials in the parameters. We accomplish similar things with the extremal non-zero \Pl coordinates. However, in general, the standard chamber minors differ from the extremal non-zero \Pl coordinates. These differences are best illustrated by an example. 

\begin{example}
We consider $\textnormal{Fl}_4^{>0}$ and the corresponding weighted digraph $G_{id,w_0}(\bm{a})$, illustrated in \Cref{stdchamberfig}.

     \begin{figure}[H]
 \begin{tikzpicture}[node distance={10.5 mm}, thick, main/.style = {draw, circle,minimum size=2 mm}, 
blank/.style={circle, draw=green!0, fill=green!0, very thin, minimum size=3.5mm},]

\node[main] (1) {$1'$};
\node[main] (2) [above of=1] {$2'$};
\node[main] (3) [above of = 2] {$3'$}; 
\node[main] (4) [above of=3] {$4'$};
\node (blank)[above of = 4]{};
\node[main] (44) [right  of=blank] {$4$};
\node(43) [below of = 44]{};
\node(42) [below of = 43]{};
\node(41) [below of = 42]{};
\node[main] (33) [right of = 44] {$3$};
\node(32) [below of = 33]{};
\node(31) [below of = 32]{};
\node[main] (22) [right of = 33] {$2$};
\node(21) [below of = 22]{};
\node[main] (11) [right of = 22] {$1$};

\draw[](4)--(44);
\draw[] (3) --  ([xshift=-5mm,yshift=-5mm]43.center);
\draw[-{Latex[length=3mm]}] ([xshift=-5mm,yshift=-5mm]43.center)--(33);
\draw[] (2) -- (42.center);
\draw[] ([xshift=-5mm,yshift=-5mm]42.center) -- ([xshift=-5mm,yshift=-5mm]32.center);
\draw[-{Latex[length=3mm]}] ([xshift=-5mm,yshift=-5mm]32.center) -- (22);
\draw[] (1) --  ([xshift=-5mm,yshift=-5mm]41.center);
\draw[-{Latex[length=3mm]}] ([xshift=-5mm,yshift=-5mm]41.center)--(11);

\draw[-{Latex[length=3mm]}] ([xshift=-5mm,yshift=-5mm]43.center) -- ([xshift=-5mm,yshift=-5mm]44.center)node [pos=0.65, right] {$a_3$};
\draw[-{Latex[length=3mm]}] ([xshift=-5mm,yshift=-5mm]42.center) -- ([xshift=-5mm,yshift=-5mm]43.center)node [pos=0.65, right] {$a_2$};
\draw[-{Latex[length=3mm]}] ([xshift=-5mm,yshift=-5mm]41.center) -- ([xshift=-5mm,yshift=-5mm]42.center)node [pos=0.65, right]{$a_1$};
\draw[-{Latex[length=3mm]}] ([xshift=-5mm,yshift=-5mm]31.center) -- ([xshift=-5mm,yshift=-5mm]32.center)node [pos=0.65, right]{$a_4$};
\draw[-{Latex[length=3mm]}] ([xshift=-5mm,yshift=-5mm]32.center) -- ([xshift=-5mm,yshift=-5mm]33.center)node [pos=0.65, right]{$a_5$};
\draw[-{Latex[length=3mm]}] ([xshift=-5mm,yshift=-5mm]21.center) -- ([xshift=-5mm,yshift=-5mm]22.center)node [pos=0.65, right]{$a_6$};

\end{tikzpicture} 
\caption{The graph $G_{id, w_0}(\bm{a})$ for $n=4$.}
\label{stdchamberfig}
 \end{figure}
Reading off the extremal non-zero \Pl coordinates and ignoring those which are simply $1$, we obtain $\{a_1a_2a_3, a_2a_3, a_2a_3a_4a_5, a_3, a_3a_5, a_3a_5a_6\}$. Meanwhile, in the notation of \cite{MR}, one can calculate the standard chamber minor $\Delta^{v_{(k-1)}\omega_{i_k}}_{w_{(k-1)}\omega_{i_{k}}}(z)$ for $k=5$. In this case, $v_{(4)}=id$, $w_{(4)}=s_1s_2s_3s_1$ and $i_5=2$. Inputting this data and working through the computation, we obtain the result $a_2a_3a_4$. Note that this is not an extremal non-zero \Pl coordinate and the simplest way to express it as a Laurent monomial in the extremal non-zero \Pl coordinates requires three terms. Moreover, there is no \Pl coordinate (not even a non-extremal one) which equals $a_2a_3a_4$. This highlights, even in the relatively simple setting of the top cell, that there are subtle but potentially important differences between the extremal non-zero \Pl coordinates and the standard chamber minors of Marsh and Rietsch. 
\end{example}

\section{The Totally Nonnegative Complete Flag Variety and its Tropicalization}\label{sectgeneral}
In this section, we prove results analogous to those in the previous section but in the more general setting of $\mathcal{R}_{v,w}^{>0}$ for any $v\leq w\in \mathfrak{S}_n$. The three subsections of this section exactly mirror the three subsections of the previous section, with analogous results appearing in corresponding subsections.

\subsection{Graphical Description of Extremal Indices}\label{generealextremals}

We start by generalizing a basic fact about $G_{id,w_0}$ that followed immediately from its definition: We show that in the graphs $G_{v,w}$, all vertical edges are directed upwards.

\begin{lemma} \label{vertarrow} For any $v\leq w\in \mathfrak{S}_n$, there are no vertical edges in $G_{v,w}$ directed downwards.
\end{lemma}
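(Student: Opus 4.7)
I would prove this by induction on the step $j$ in the construction of $G_{v,w}$, with the inductive hypothesis that $G_{v,w}^{(j)}$ contains no downward vertical arrows. The base case $G_{v,w}^{(0)}=S$ is immediate, since the skeleton $S$ has no vertical arrows at all.

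For the inductive step, consider the two cases in \cref{constructG}. If $M_j = x_{i_j}(a_j)$, the newly added edge goes from $(i_j, n+1-r)$ to $(i_j+1, n+1-r)$, hence from strand $i_j$ to strand $i_j+1$; since $i_j < i_j+1$, this is upward, and no previously existing edge is modified. If $M_j = \dot{s}_{i_j}$, no new edge is added, but the ``maintain incidence'' clause relabels endpoints of arrows incident to strands $i_j$ and $i_j+1$: an endpoint on strand $i_j$ becomes an endpoint on strand $i_j+1$, and vice versa. A direct case analysis on an existing upward arrow from strand $a$ to strand $b$ (with $a<b$) shows that its direction is preserved unless $\{a,b\}=\{i_j,i_j+1\}$: in each other sub-case (e.g.\ exactly one of $a,b$ lies in $\{i_j,i_j+1\}$), the strict inequality $a<b$ is maintained after the substitution, while in the forbidden sub-case the start and terminus of the arrow interchange and it becomes downward.

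Thus the inductive step reduces to the following key subclaim: at the moment a swap $\dot{s}_{i_j}$ is applied, there is no vertical arrow in $G_{v,w}^{(j-1)}$ with endpoints on both strand $i_j$ and strand $i_j+1$. I would prove this subclaim by contradiction: a hypothetical problematic arrow would have been created at some earlier step $j''<j$ by $M_{j''}=x_k$, originally between the adjacent strands $k$ and $k+1$, and its labels must have evolved to $\{i_j,i_j+1\}$ under the intervening swaps from $\bm v$. The main work would then be to derive a contradiction by combining the leftmost-reduced characterization of positive distinguished subexpressions (applied both to $\bm{w}$ in $\bm{w_0}$ and to $\bm{v}$ in $\bm{w}$) with \cref{posdist}, which dictates how transpositions in consecutive runs of $\bm{w_0}$ are related (if $\bm{w}$ uses $s_i$ in run $k\geq 2$, it must use $s_{i+1}$ in run $k-1$). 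The main obstacle is precisely this combinatorial bookkeeping — keeping track of how a given arrow's strand labels transform under each intervening swap and showing that the rigid inter-run PDS structure forbids the label trajectory that would put the arrow between the two strands being swapped at step~$j$.
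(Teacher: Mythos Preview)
Your reduction is correct: the only way a swap $\dot s_{i_j}$ can produce a downward arrow is if there is already an upward arrow with both endpoints on strands $i_j$ and $i_j+1$, so everything hinges on your subclaim. The subclaim is true and provable combinatorially, though more directly than you suggest: you do not need \cref{posdist} or anything about $\bm{w}$ inside $\bm{w_0}$; the PDS property of $\bm{v}$ in $\bm{w}$ alone suffices. If an edge created by $x_{i_{j''}}$ (hence between strands $i_{j''}$ and $i_{j''}+1$) sits between strands $i_j$ and $i_j+1$ just before step $j=j_s$, and $j_{r-1}<j''<j_r$, then the product $\sigma=s_{i_{j_r}}\cdots s_{i_{j_{s-1}}}$ of the intervening $\bm{v}$-swaps satisfies $\sigma s_{i_j}\sigma^{-1}=s_{i_{j''}}$. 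Hence $s_{i_{j''}}\cdot s_{i_{j_r}}\cdots s_{i_{j_m}}=\sigma\cdot s_{i_{j_{s+1}}}\cdots s_{i_{j_m}}$, which has length strictly less than $m-r+1$; by \cref{negative} this forces $j''=j_{r-1}$, contradicting $j''\notin\bm{v}$. So your outline can be completed, but the ``inter-run'' structure you invoke is a red herring here.

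The paper's proof is entirely different and not purely combinatorial. It inducts on $\ell(v)$ and, assuming a downward edge $e$ appears when passing from $G_{u,w}$ to $G_{v,w}$, builds four specific path collections (two in $G_{u,w}$, two in $G_{v,w}$; two diagonal, two using $e$), each of whose signed weight must be positive by \cref{sumonly}. A direct sign-and-weight accounting in the product of these four weights then yields $a_e^{2}=-a_e^{2}$. Thus the paper leverages the already-established nonnegativity of flag minors (\cref{forwarddir}), while your route stays entirely inside the combinatorics of reduced words; yours is more self-contained, the paper's sidesteps all strand-tracking at the price of invoking an external positivity fact.
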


\begin{proof}
We will prove this by induction on $\ell(v)$. If $\ell(v)=0$, we never swap any strands in the construction of $G_{v,w}$ and so the result is clear. 

Fix $v\leq w\in \mathfrak{S}_n$ with $\ell(v)=d+1$. Suppose that whenever $\ell(u)=d$, the graph $G_{u,w}$ has no edges directed downwards. Let $\bm{w}$ be the positive distinguished subexpression for $w$ in $\bm{w_0}$ and let $\bm{v}$ be the positive distinguished subexpression for $v$ in $\bm{w}$. Let $u$ be obtained from $v$ by removing the last simple reflection in $\bm{v}$. Then $\ell(u)=d$, $u\leq w$, and the positive distinguished subexpression for $u$ in $\bm{w}$ is the subexpression obtained by removing the final simple reflection of $\bm{v}$. 

Suppose towards a contradiction that there is an edge $e$ directed downwards in $G_{v,w}$. We will construct four non-intersecting path collections in $G_{u,w}$ and $G_{v,w}$, each originating at $[k]'$ for some $k$. We will show that the product of their contributions to \Cref{LGVcor} is negative, contradicting \Cref{sumonly}. By induction, there are no vertical edges directed downwards in $G_{u,w}$. Thus, the final simple reflection $s^*$ in $\bm{v}$ must have caused $e$ to flip. Suppose that $e$ has weight $a_e$ and that $s^*=s_r$. In such a situation $e$ was directed from strand $r$ to strand $r+1$ in $G_{u,w}$ whereas it is directed from strand $r+1$ to strand $r$ in $G_{v,w}$. Suppose that, in $G_{u,w}$, the source vertex on strand $r$ is $k'$ and the source vertex on strand $r+1$ is $l'$. By the reducedness of $\bm{v}$ and by \Cref{vertexinversion}, swapping strands $r$ and $r+1$ of $G_{u,w}$ must introduce an inversion in the labels of the source vertices of $G_{u,w}$, so we must have $k<l$. In $G_{v,w}$, they swap; the source vertex on strand $r+1$ is $k'$ and the source vertex on strand $r$ is $l'$. This is illustrated in \Cref{figdownarrows}.

By \Cref{weightedG}, the edges of $G_{v,w}$ with weight different from $1$ are the vertical edges and certain sections of diagonal strands. The contribution of a non-intersecting path collection to a minor, via the LGV construction, also contains a sign due to the relative ordering of the source vertices, as indicated by the sign term in \Cref{LGVcor}. 

Let $C_1$ be the diagonal path collection originating from $[k]'$ in $G_{u,w}$. Let $C_2$ be the non-intersecting path collection in $G_{u,w}$ originating from $[k]'$ in which all paths are diagonal except the one originating at vertex $k'$. This one should follow strand $r$ until reaching edge $e$, use edge $e$, and then terminate along strand $r+1$. This is possible since $l>k$ and so there is no path originating from source vertex $l'$ on strand $r+1$ in this path collection. Let $C_3$ be the diagonal path collection originating from $[k]'$ in $G_{v,w}$. Let $C_4$ be the non-intersecting path collection in $G_{v,w}$ originating from $[k]'$ in which all paths are diagonal except the one originating at vertex $k'$. This one should follow strand $r+1$ until reaching edge $e$, use edge $e$, and then terminate along strand $r$. This is possible since $l>k$ and so there is no path originating from source vertex $l'$ on strand $r$ in this path collection. We now compute the product of the contributions of $C_1$, $C_2$, $C_3$ and $C_4$ to \Cref{LGVcor} and see that it is negative, contradicting the fact that all non-intersecting path collections originating from $[k]'$ in both $G_{u,w}$ and $G_{v,w}$ contribute a positive term to \Cref{LGVcor}.

Aside from flipping the edge $e$, the application of $s^*$ adds a weight $-1$ section of strand $r$ in $G_{v,w}$. This section must be to the right of edge $e$ since edge $e$ was added to the graph prior to the application of $s^*$ in the construction of $G_{v,w}$. Consider the product $W$ of the contributions of each of the non-intersecting path collections $C_1,C_2,C_3$ and $C_4$ to \Cref{LGVcor}. These four path collections all use mostly the same edges. They differ in their use of the edge $e$ and of certain sections of strands $r$ and $r+1$. 

Let $\delta\in\{-1,1\}$ represent the weight of the diagonal path collection originating from $[k-1]'$, which is common to all the $C_i$ for $i\in [4]$. This contributes a factor of $\delta^4=1$ to $W$. All these path collections maintain the same relative ordering of the source vertices to the sink vertices, so they all contribute the same sign $\epsilon\in \{-1,1\}$ to \Cref{LGVcor}. This contributes a factor of $\epsilon^4=1$ to $W$. Strand $r+1$ has the same total weight in both $G_{u,w}$ and $G_{v,w}$. The part of strand $r+1$ to the left of $e$ is used by $C_3$ and by $C_4$ while the part to the right of $e$ is used by $C_2$ and $C_3$. If the total weight of strand $r+1$ in $G_{u,w}$ is $\delta_{r+1}\in \{-1,1\}$, then this strand contributes $\delta_{r+1}^2=1$ to $W$. The weights on strand $r$ to the left of $e$ are identical in $G_{u,w}$ and in $G_{v,w}$. This section gets used by $C_1$ and by $C_2$. The section of strand $r$ to the right of edge $e$ picks up an extra weight of $-1$ in $G_{v,w}$. This section of the graph gets used by $C_2$ in $G_{u,w}$ and by $C_4$ in $G_{v,w}$. Thus, if the total weight of strand $r$ in $G_{u,w}$ is $\delta_r\in \{-1,1\}$, this strand contributes $-\delta_r^2=-1$ to $W$. Finally, we still need to account for the vertical edge $e$ which gets used by both $C_2$ and $C_4$, contributing $a_e^2$ to $W$. Thus, $W=\epsilon^2\delta^4\delta_{r+1}^2(-\delta_r^2)a_e^2=-a_e^2$. Edge weights are non-zero, so this is a contradiction to the fact that each non-intersecting path collection contributes positively to \Cref{LGVcor}. Thus, it is impossible that there is an edge oriented downwards in $G_{v,w}$.

\begin{figure}[H]
 \begin{subfigure}{0.4\textwidth}
  \scalebox{.6}{
 \begin{tikzpicture}[node distance={14 mm}, very thick, main/.style = {draw, circle,minimum size=16 mm}, 
blank/.style={circle, draw=green!0, fill=green!0, very thin, minimum size=3.5mm},]
\node[main] (1) {\large $k'$};
\node (2) [above of=1] {};
\node[main](3) [above of = 2] {\large $l'$}; 
\node[] (4) [above of=3] {};
\node(5) [above of = 4] {};
\node (blank)[above of = 5]{};
\node(55) [right of = blank]{};
\node(54) [below of = 55]{};
\node(53) [below of = 54]{};
\node(52) [below of = 53]{};
\node(51) [below of = 52]{};
\node(44) [right  of=55] {};
\node(43) [below of = 44]{};
\node(42) [below of = 43]{};
\node(41) [below of = 42]{};
\node[main] (33) [right of = 44] {\large $r+1$};
\node(32) [below of = 33]{};
\node(31) [below of = 32]{};
\node (22) [right of = 33] {};
\node(21) [below of = 22]{};
\node[main](11) [right of = 22] {\large $r$};

\draw[-{Latex[length=3mm]}] (3)--(33);
\draw[-{Latex[length=3mm]}] (1) -- (11);

\draw[-{Latex[length=3mm]}] ([xshift=-5mm,yshift=-5mm]41.center) -- ([xshift=-5mm,yshift=-5mm]43.center) node[midway,xshift=2mm]{\Large$e$};
\end{tikzpicture}}
\label{flip1}
\caption{}
\end{subfigure}
 \begin{subfigure}{0.4\textwidth}
  \scalebox{.6}{
 \begin{tikzpicture}[node distance={14 mm}, very thick, main/.style = {draw, circle,minimum size=16 mm}, 
blank/.style={circle, draw=green!0, fill=green!0, very thin, minimum size=3.5mm},]
\node[main] (1) {\large $l'$};
\node (2) [above of=1] {};
\node[main](3) [above of = 2] {\large $k'$}; 
\node[] (4) [above of=3] {};
\node(5) [above of = 4] {};
\node (blank)[above of = 5]{};
\node(55) [right of = blank]{};
\node(54) [below of = 55]{};
\node(53) [below of = 54]{};
\node(52) [below of = 53]{};
\node(51) [below of = 52]{};
\node(44) [right  of=55] {};
\node(43) [below of = 44]{};
\node(42) [below of = 43]{};
\node(41) [below of = 42]{};
\node[main] (33) [right of = 44] {\large $r+1$};
\node(32) [below of = 33]{};
\node(31) [below of = 32]{};
\node (22) [right of = 33] {};
\node(21) [below of = 22]{};
\node[main](11) [right of = 22] {\large $r$};

\draw[-{Latex[length=3mm]}] (3)--(33);
\draw[-{Latex[length=3mm]}] (1) -- (11);

\draw[-{Latex[length=3mm]}] ([xshift=-5mm,yshift=-5mm]43.center) -- ([xshift=-5mm,yshift=-5mm]41.center) node[midway,xshift=2mm]{\Large$e$};
\end{tikzpicture}}
\label{flip2}
\caption{}
\end{subfigure}

\caption{Illustration of strands $r$ and $r+1$ of $G_{u,w}$ (left) and $G_{v,w}$ (right), as described in the proof of \Cref{vertarrow}. In this figure, $l>k$.}
\label{figdownarrows}
 \end{figure}

\end{proof}

We begin by understanding the extremal \Pl coordinates of a point in an arbitrary cell $\mathcal{R}_{v,w}^{>0}$ of the totally nonnegative complete flag variety. This is where we will make real use of the graphs $G_{v,w}$ introduced earlier. At a basic level, what changes from \Cref{secttopcell} is that we can now have coordinates which are zero. Thus, it will be helpful to introduce the following terminology:

\begin{definition}\label{support}
 For a point $P\in\mathbb{R}\mathbb{P}^{\binom{n}{1}-1}\times\cdots \times \mathbb{R}\mathbb{P}^{\binom{n}{n-1}-1}$, the \textbf{support} of $P$, denoted $\supp(P)$, is the set of indices of the non-zero coordinates of $P$.
\end{definition} 

\begin{defbis}{support}\label{tropicalsupport}
For a point $P\in\mathbb{T}\mathbb{P}^{\binom{n}{1}-1}\times\cdots \times \mathbb{T}\mathbb{P}^{\binom{n}{n-1}-1}$, the \textbf{support} of $P$, denoted $\supp(P)$, is the set of indices of the non-infinite coordinates of $P$.   
\end{defbis}

Recall that by tropicalizing the parameterization $\Phi_{v,w}$ of $\mathcal{R}_{v,w}^{>0}$, we obtain a parameterization of a cell $\textnormal{TrFl}_{v,w}^{>0}$ of $\textnormal{TrFl}_n^{\geq 0}$. Thus, the points of $\textnormal{TrFl}_{v,w}^{>0}$ have the same support as flags in $\mathcal{R}_{v,w}^{>0}$. Since $\textnormal{TrFl}_n^{\geq 0}\subset \textnormal{FlDr}_n^{\geq0}$, there exist points in $\textnormal{FlDr}_n^{\geq 0}$ with that same support as well. This justifies the following definition:

\begin{definition}
 We denote by $\textnormal{FlDr}_{v,w}^{> 0}$ the set $\{P\in \textnormal{FlDr}
_n^{\geq0}\mid \supp(P)=\supp(F) \textnormal{ for some, equivalent any, } F\in \mathcal{R}_{v,w}^{>0}\}$. 
   
\end{definition}

We now present a graphical description of extremal coordinates, analogous to \Cref{topcellcor}, which offers a more combinatorial way to think about the extremal indices of a totally nonnegative flag. The key fact is that for any $v\leq w\in \mathfrak{S}_n$ and any extremal index $I$ of $\mathcal{R}_{v,w}^{>0}$, there is a unique non-intersecting path collection in $G_{v,w}$ with source set $\{1',2',\cdots, |I|'\}$ and sink set $I$. 

 Recall the greedy path collections of \Cref{defleftgreedy}. We now introduce a related notion. These concepts are in general different, although we will see that they coincide in cases of interest. 

 \begin{definition}\label{leftextremedef}
  
 Consider a source set $A=\{a_i'\}_{i=1}^m$ ordered from top to bottom in $G_{v,w}$. Choose the leftmost possible sink $b_1$ such that there exists a path from $a_1'$ to $b_1$. Having determined $b_i$ for $i\leq j\leq m-1$, choose the leftmost possible sink $b_{j+1}$ subject to the constraint that there is a non-intersecting path collection from $\{a_i'\}_{i=1}^{j+1}$ to $\{b_i\}_{i=1}^{j+1}$. Then, any non-intersecting path collection from $A=\{a_i'\}_{i=1}^m$ to $\{b_i\}_{i=1}^m$ is called \textbf{left extreme}. See \Cref{leftextremefig} for an illustration.
\end{definition}

    \begin{figure}[H]
     
 \begin{tikzpicture}[node distance={10.5 mm}, thick, main/.style = {draw, circle,minimum size=2 mm}, 
blank/.style={circle, draw=green!0, fill=green!0, very thin, minimum size=3.5mm},]

\node[main] (1) {$1'$};
\node[main] (2) [above of=1] {$2'$};
\node[main] (3) [above of = 2] {$3'$}; 
\node[main] (4) [above of=3] {$4'$};
\node[main](5) [above of = 4] {$5'$};
\node (blank)[above of = 5]{};
\node[main](55) [right of = blank]{$5$};
\node(54) [below of = 55]{};
\node(53) [below of = 54]{};
\node(52) [below of = 53]{};
\node(51) [below of = 52]{};
\node[main] (44) [right  of=55] {$4$};
\node(43) [below of = 44]{};
\node(42) [below of = 43]{};
\node(41) [below of = 42]{};
\node[main] (33) [right of = 44] {$3$};
\node(32) [below of = 33]{};
\node(31) [below of = 32]{};
\node[main] (22) [right of = 33] {$2$};
\node(21) [below of = 22]{};
\node[main] (11) [right of = 22] {$1$};

\draw[-{Latex[length=3mm]},dashed] (5) -- (55);
\draw[](4)--([xshift=-5mm,yshift=-5mm]54.center);
\draw[-{Latex[length=3mm]},dashed] ([xshift=-5mm,yshift=-5mm]54.center) -- (44);
\draw[dashed] (3) -- ([xshift=-5mm,yshift=-5mm]53.center);
\draw[] ([xshift=-5mm,yshift=-5mm]53.center) -- ([xshift=-5mm,yshift=-5mm]43.center);
\draw[-{Latex[length=3mm]},dashed] ([xshift=-5mm,yshift=-5mm]43.center)--(33);
\draw[-{Latex[length=3mm]}] (2) -- (22);
\draw[dashed] (1) -- ([xshift=-5mm,yshift=-5mm]41.center);
\draw[-{Latex[length=3mm]}] ([xshift=-5mm,yshift=-5mm]41.center)--(11);

\draw[-{Latex[length=3mm]}] ([xshift=-5mm,yshift=-5mm]54.center) -- ([xshift=-5mm,yshift=-5mm]55.center);
\draw[-{Latex[length=3mm]},dashed] ([xshift=-5mm,yshift=-5mm]53.center) -- ([xshift=-5mm,yshift=-5mm]54.center);
\draw[-{Latex[length=3mm]}] ([xshift=-5mm,yshift=-5mm]52.center) -- ([xshift=-5mm,yshift=-5mm]53.center);
\draw[-{Latex[length=3mm]}] ([xshift=-5mm,yshift=-5mm]51.center) -- ([xshift=-5mm,yshift=-5mm]52.center);
\draw[-{Latex[length=3mm]}] ([xshift=-5mm,yshift=-5mm]43.center) -- ([xshift=-5mm,yshift=-5mm]44.center);
\draw[-{Latex[length=3mm]},dashed] ([xshift=-5mm,yshift=-5mm]42.center) -- ([xshift=-5mm,yshift=-5mm]43.center);
\draw[-{Latex[length=3mm]}, dashed] ([xshift=-5mm,yshift=-5mm]41.center) -- ([xshift=-5mm,yshift=-5mm]42.center);

\end{tikzpicture}
\label{a1}

\caption{The dashed paths form a left extreme path collection with source set $\{a_1'=5',a_2'=3', a_3'=1'\}$, and sink set $\{b_1=5, b_2=4,b_3=3\}$. Note that this is not a greedy path collection because the path originating at $1'$ does not use the first vertical edge it passes.}
\label{leftextremefig}
 \end{figure}

 \begin{definition}\label{def:graphextremal}
 Let $v\leq w\in \mathfrak{S}_n$. Fix $0\leq i\leq k<n$ and let $A=[k]'$. We define a \textbf{graph extremal path collection} to be the union of a left extreme path collection with source set the topmost $i$ vertices of $A$ and the diagonal path collection with source set the bottom-most $k-i$ vertices of $A$ in the graph $G_{v,w}$. Its sink set will be called a \textbf{graph extremal index}. 
\end{definition} 

\begin{definition} \label{def:Rvwpositroid}
    By \Cref{bruhatinterval}, the support of any flag in $\Fl_n^{\geq 0}$ is determined by the cell $\mathcal{R}_{v,w}^{>0}$ it belongs to. This support is a realizable flag positroid, which we denote by $\bm{\mathcal{M}^{v,w}}$. The realizable flag positroid $\bm{\mathcal{M}^{v,w}}$ is also the support of $\textnormal{TrFl}_{v,w}^{>0}$.
\end{definition}

\begin{proposition}\label{galemax}
    The sink set of the left extreme part of a graph extremal path collection is Gale maximal among the sink sets of all non-intersecting path collections originating from the same source set, namely, the topmost $i$ vertices of $[k]'$ for some $1\leq i\leq k\leq n$. 
\end{proposition}

\begin{proof}

    Fix $v\leq w\in \mathfrak{S}_n$ and $1\leq k\leq n$.  Let $(\mathcal{M}_1,\ldots, \mathcal{M}_{n-1})$ be the constituent positroids of $\bm{\mathcal{M}^{v,w}}$. Suppose the source vertices $[k]'$ in $G_{v,w}$ lie on strands $j_1,\ldots,j_k$, ordered from bottom to top. Let $S_i$ be the sink set of a left extreme path collection originating from the top $i$ vertices of $[k]'$, so that $S_i\cup\{j_1,\ldots, j_{k-i}\}$ is a graph extremal index. Since there are no arrows directed downwards in $G_{v,w}$ by \Cref{vertarrow}, any non-intersecting path collection originating from the topmost $i$ vertices of $[k]'$ with sink set $S$ can be extended to a non-intersecting path collection from $[k]'$ to $S\cup\{j_1,\ldots, j_{k-i}\}$ by adding diagonal paths on strands $j_1,\ldots, j_{k-i}$. Thus, to prove the proposition, it suffices to show that $S_i$ is Gale maximal among all sets $S$ such that $S\cup\{j_1,\ldots, j_{k-i}\}$ is a basis of $\mathcal{M}_k$. Equivalently, it suffices to show that $S_i$ is the Gale maximal basis of $\mathcal{N}_{i}=\mathcal{M}_k/ \{j_1,\ldots, j_{k-i}\}$, the contraction of $\mathcal{M}_k$ by $\{j_1,\ldots, j_{k-i}\}$. By \cite[Lemma 7.3.3]{Oxl}, $\bm{\mathcal{N}}=(\mathcal{N}_1,\ldots, \mathcal{N}_k)$ is a flag matroid since for $i\in[k-1]$, $\mathcal{N}_i=\mathcal{N}_{i+1}/ \{j_{k-i}\}$. Thus, by \cite[Corollary 1.7.2]{Coxeter}, the Gale maximal bases $G_i$ of the matroids $\mathcal{N}_i$ form a flag. We must show $G_i=S_i$ for each $i\in [k]$. We will do so by induction. 
    
    By definition, $G_1=S_1$. For the induction step, suppose $S_i=G_i$. Since $(G_1,\ldots, G_k)$ form a flag, $G_{i+1}=G_i\cup a$ for some $a\in [n]$ and since the $G_i$ are Gale maximal bases, $a$ must be maximal subject to the condition that $G_{i+1}$ is a basis of $\mathcal{N}_{i+1}$. By definition of graph extremal indices, we also have $S_{i+1}=S_i\cup a'$, where $a'$ is maximal subject to the condition that there is a non-intersecting path collection from the topmost $i+1$ vertices of $[k]'$ to $S_{i+1}$. However, $S_i=G_i$ and there exists a non-intersecting path collection from the top $i+1$ vertices of $[k]'$ to $S_{i+1}$ if and only if $S_{i+1}$ is a basis of $\mathcal{N}_{i+1}$. Thus, $a'=a$ and $S_{i+1}=G_{i+1}$. We are then finished by induction. 
\end{proof}

\Cref{def:graphextremal} is formulated precisely such that the following holds: 

\begin{proposition}\label{prop:graphextremalisextremal}
Let $v\leq w\in \mathfrak{S}_n$. The extremal indices of a flag in $\mathcal{R}_{v,w}^{>0}$ coincide precisely with graph extremal indices of $G_{v,w}$.
\end{proposition}

\begin{proof}
    Both extremal indices and graph extremal indices are constructed inductively. We will compare the two constructions. Note that the sink set of the diagonal path collection originating from $[k]'$ in $G_{v,w}$ is the Gale minimal extremal \Pl coordinate of size $k$, since there are no vertical edges directed downwards in $G_{v,w}$ by \Cref{vertarrow}. Denote this sink set by $I_k$. This is the base case for the construction of both the extremal indices and the graph extremal indices. 
    
    Consider an extremal path collection $C_l$, which is the union of a left extreme path collection originating from the topmost $l$ vertices of $[k]'$ and diagonal paths from the remaining $k-l$ vertices of $[k]'$. How do we construct $C_{l'}$, where $l'$ is the smallest number greater than $l$ such that $C_{l'}\neq C_l$? First, we identify the topmost path $p$ in the diagonal part of $C_l$ which satisfies the following property: If we construct a left extreme path collection originating from the vertices of $[k]'$ which lie weakly above the strand $\sigma$ where $p$ originates, then the path originating on $\sigma$ is not diagonal. Then, $C_{l'}$ is the union of the left extreme path collection originating from the vertices of $[k]'$ weakly above $\sigma$ and diagonal paths originating from the vertices of $[k]'$ below $\sigma$. By \Cref{leftextremedef}, the sink set of $C_{l'}$ equals $(S_l\setminus\sigma)\cup\tau$, where $S_l$ is the sink set of $C_l$ and $\tau$ is maximal subject to the condition that there is a non-intersecting path collection from $[k]'$ to $S_l$. Equivalently, $\tau$ is maximal subject to the condition that $(S_l\setminus\sigma)\cup \tau$ is the index of a non-zero \Pl coordinate of a flag in $\mathcal{R}_{v,w}^{>0}$. We remark that since the path $p$ originating on strand $\sigma$ in $C_l$ is diagonal, we necessarily have that $\sigma$ is in the sink set of $C_r$ for all $r\leq l$. Thus, $\sigma$ is the maximal element in the intersection of the sink sets of $\{C_r\mid r\leq l\}$ with the property that there exists a path collection with sink set $(C_l\setminus\sigma)\cup \tau$ for some $\tau$.

    Now consider $\Xi^i(I_k)$. How do we construct $\Xi^{i+1}(I_k)$? By \Cref{ximap}, we identify the largest element $b$ of $\Xi^{i}(I_k)$ which can be increased, and replace it by $a$, where $a$ is maximal subject to the condition that $(\Xi^{i}(I_k)\setminus b)\cup a$ is the index of a non-zero \Pl coordinate of a flag in $\mathcal{R}_{v,w}^{>0}$. Also, by \Cref{claim}, we know that $b\in \bigcap_{0\leq r\leq i}\Xi^{r}(I_k)$. 
    
    Comparing the previous two paragraphs, and assuming by induction that $\Xi^{r}(I_k)$ is the sink set of some $C_{r'}$ for each $r<i$ and for some $r'$ depending on $r$, we must have $\sigma=b$ and $\tau=a$. This proves that indeed, the graph extremal indices and extremal indices of a flag in $\mathcal{R}_{v,w}^{>0}$ coincide.

    \end{proof}

We will use the terms \textit{graph extremal indices} and \textit{extremal indices} interchangeably, depending whether we want to emphasize the combinatorics of the graph $G_{v,w}$ or the algebraic geometry of $\mathcal{R}_{v,w}^{>0}$.

    \begin{remark}
    Combining \Cref{galemax} and \Cref{prop:graphextremalisextremal}, we obtain a new description of extremal indices: Fix $v\leq w\in \mathfrak{S}_n$ and $k\in[n-1]$. Let $I_k$ be the Gale minimal basis of the rank $k$ constituent matroid of $\bm{\mathcal{M}^{v,w}}$, and for $i\in [k]$, let $J_i$ be the smallest $i$ elements of $I_k$. Then, extremal indices are sets of the form $J_i\cup S$ where $S$ is Gale maximal subject to the constraint that $J_i\cup S$ be a basis of the rank $k$ constituent matroid of $\bm{\mathcal{M}^{v,w}}$. 
   
\end{remark}

 The following result, which is the main result of this section, will play a key role in allowing us to relate the weights $a_i$ of the graph $G_{v,w}(\bm{a})$ to the extremal non-zero \Pl coordinates. 

  \begin{proposition}\label{extremunique} 
  
  Let $v\leq w\in \mathfrak{S}_n$, and let $F\in \mathcal{R}_{v,w}^{> 0}$. Suppose $I$ is an extremal index of $F$. There is a unique extremal path collection with sink set $I$ in the graph $G_{v,w}$. It is the union of a diagonal path collection and a greedy path collection.

  Conversely, the sink set of any non-intersecting path collection in $G_{v,w}$ that is the union of a greedy path collection originating from the top $q$ vertices of $[|I|]'$ and a diagonal path collection originating from the remaining vertices of $[|I|]'$ for some $0\leq q\leq |I|$ is an extremal index.
\end{proposition}

\begin{cor}\label{extremuniquecor}
Let $P=\Phi_{v,w}(\bm{a})$. For each extremal index $I$ for $\mathcal{R}_{v,w}^{>0}$, the \Pl coordinate $P_I$ is a monomial in the $\{a_i\}$. 
\end{cor}
\begin{proof}
Since there is a unique non-intersecting path collection originating from $[|I|]'$ with sink set $I$ by \Cref{extremunique}, this follows from \Cref{LGVcor}.
\end{proof}

 The proof of \Cref{extremunique} is long and technical, and relies on a number of other technical results. To maintain the flow of the larger argument, we limit ourselves to an example here and reserve the proof of this result for \Cref{sec:BigProof}.

  \begin{example}\label{ex:extremeunique}
  We study the extremal indices of size $3$ when $v=s_2s_3s_2$ and $w=s_1s_2s_3s_4s_1s_2s_3s_1$ in $\mathfrak{S}_5$. Let $G=G_{v,w}$ be the corresponding graph, illustrated in \Cref{examplegraphical}. We will observe that for each extremal index $I$ with $|I|=3$, there is a unique non-intersecting path collection from $[3]'$ to $I$ in $G$, which is the union of a greedy path collection and a diagonal path collection. The Gale minimal extremal index of size $3$ is the sink set of the diagonal path collection with source set $[3]'$. As indicated with dashed lines in the top left of \Cref{examplegraphical}, this is $I=\{1,3,4\}$. There are no other non-intersecting path collections from $[3]'$ to $\{1,3,4\}$ in $G$.

    \begin{figure}[H]
 \begin{tikzpicture}[node distance={10.5 mm}, thick, main/.style = {draw, circle,minimum size=2 mm}, 
blank/.style={circle, draw=green!0, fill=green!0, very thin, minimum size=3.5mm},]

\node[main] (1) {$1'$};
\node[main] (2) [above of=1] {$4'$};
\node[main] (3) [above of = 2] {$3'$}; 
\node[main] (4) [above of=3] {$2'$};
\node[main](5) [above of = 4] {$5'$};
\node (blank)[above of = 5]{};
\node[main](55) [right of = blank]{$5$};
\node(54) [below of = 55]{};
\node(53) [below of = 54]{};
\node(52) [below of = 53]{};
\node(51) [below of = 52]{};
\node[main] (44) [right  of=55] {$4$};
\node(43) [below of = 44]{};
\node(42) [below of = 43]{};
\node(41) [below of = 42]{};
\node[main] (33) [right of = 44] {$3$};
\node(32) [below of = 33]{};
\node(31) [below of = 32]{};
\node[main] (22) [right of = 33] {$2$};
\node(21) [below of = 22]{};
\node[main] (11) [right of = 22] {$1$};

\draw[] (5) -- ([xshift=-5mm,yshift=-5mm]55.center);
\draw[-{Latex[length=3mm]}] ([xshift=-5mm,yshift=-5mm]55.center) -- (55);
\draw[dashed] (4) -- ([xshift=-5mm,yshift=-5mm]54.center);
\draw[dashed] ([xshift=-5mm,yshift=-5mm]54.center)--([xshift=-5mm,yshift=-5mm]44.center);
\draw[-{Latex[length=3mm]},dashed] ([xshift=-5mm,yshift=-5mm]44.center) -- (44);
\draw[dashed] (3) -- ([xshift=-5mm,yshift=-5mm]43.center);
\draw[-{Latex[length=3mm]}, dashed] ([xshift=-5mm,yshift=-5mm]43.center)--(33);
\draw[] (2) -- ([xshift=-5mm,yshift=-5mm]32.center);
\draw[-{Latex[length=3mm]}] ([xshift=-5mm,yshift=-5mm]32.center)--(22);
\draw[dashed](1)--([xshift=-5mm,yshift=-5mm]31.center);
\draw[-{Latex[length=3mm]}, dashed] ([xshift=-5mm,yshift=-5mm]31.center)--(11);

\draw[-{Latex[length=3mm]}] ([xshift=-5mm,yshift=-5mm]54.center) -- ([xshift=-5mm,yshift=-5mm]55.center);
\draw[-{Latex[length=3mm]}] ([xshift=-5mm,yshift=-5mm]51.center) to [out=-285,in=-78] ([xshift=-5mm,yshift=-5mm]54.center);
\draw[-{Latex[length=3mm]}] ([xshift=-5mm,yshift=-5mm]43.center) -- ([xshift=-5mm,yshift=-5mm]44.center);
\draw[-{Latex[length=3mm]}] ([xshift=-5mm,yshift=-5mm]41.center) to [out=-285,in=-78] ([xshift=-5mm,yshift=-5mm]43.center);
\draw[-{Latex[length=3mm]}] ([xshift=-5mm,yshift=-5mm]31.center) -- ([xshift=-5mm,yshift=-5mm]32.center);

\end{tikzpicture} 
 \begin{tikzpicture}[node distance={10.5 mm}, thick, main/.style = {draw, circle,minimum size=2 mm}, 
blank/.style={circle, draw=green!0, fill=green!0, very thin, minimum size=3.5mm},]

\node[main] (1) {$1'$};
\node[main] (2) [above of=1] {$4'$};
\node[main] (3) [above of = 2] {$3'$}; 
\node[main] (4) [above of=3] {$2'$};
\node[main](5) [above of = 4] {$5'$};
\node (blank)[above of = 5]{};
\node[main](55) [right of = blank]{$5$};
\node(54) [below of = 55]{};
\node(53) [below of = 54]{};
\node(52) [below of = 53]{};
\node(51) [below of = 52]{};
\node[main] (44) [right  of=55] {$4$};
\node(43) [below of = 44]{};
\node(42) [below of = 43]{};
\node(41) [below of = 42]{};
\node[main] (33) [right of = 44] {$3$};
\node(32) [below of = 33]{};
\node(31) [below of = 32]{};
\node[main] (22) [right of = 33] {$2$};
\node(21) [below of = 22]{};
\node[main] (11) [right of = 22] {$1$};

\draw[] (5) -- ([xshift=-5mm,yshift=-5mm]55.center);
\draw[-{Latex[length=3mm]},dashed] ([xshift=-5mm,yshift=-5mm]55.center) -- (55);
\draw[dashed] (4) -- ([xshift=-5mm,yshift=-5mm]54.center);
\draw[] ([xshift=-5mm,yshift=-5mm]54.center)--([xshift=-5mm,yshift=-5mm]44.center);
\draw[-{Latex[length=3mm]}] ([xshift=-5mm,yshift=-5mm]44.center) -- (44);
\draw[dashed] (3) -- ([xshift=-5mm,yshift=-5mm]43.center);
\draw[-{Latex[length=3mm]},dashed] ([xshift=-5mm,yshift=-5mm]43.center)--(33);
\draw[] (2) -- ([xshift=-5mm,yshift=-5mm]32.center);
\draw[-{Latex[length=3mm]}] ([xshift=-5mm,yshift=-5mm]32.center)--(22);
\draw[dashed](1)--([xshift=-5mm,yshift=-5mm]31.center);
\draw[-{Latex[length=3mm]},dashed] ([xshift=-5mm,yshift=-5mm]31.center)--(11);

\draw[-{Latex[length=3mm]},dashed] ([xshift=-5mm,yshift=-5mm]54.center) -- ([xshift=-5mm,yshift=-5mm]55.center);
\draw[-{Latex[length=3mm]}] ([xshift=-5mm,yshift=-5mm]43.center) -- ([xshift=-5mm,yshift=-5mm]44.center);
\draw[-{Latex[length=3mm]}] ([xshift=-5mm,yshift=-5mm]51.center) to [out=-285,in=-78] ([xshift=-5mm,yshift=-5mm]54.center);
\draw[-{Latex[length=3mm]}] ([xshift=-5mm,yshift=-5mm]41.center) to [out=-285,in=-78] ([xshift=-5mm,yshift=-5mm]43.center);
\draw[-{Latex[length=3mm]}] ([xshift=-5mm,yshift=-5mm]31.center) -- ([xshift=-5mm,yshift=-5mm]32.center);

\end{tikzpicture} 

\vspace{0.7 cm}

\begin{tikzpicture}[node distance={10.5 mm}, thick, main/.style = {draw, circle,minimum size=2 mm}, 
blank/.style={circle, draw=green!0, fill=green!0, very thin, minimum size=3.5mm},]

\node[main] (1) {$1'$};
\node[main] (2) [above of=1] {$4'$};
\node[main] (3) [above of = 2] {$3'$}; 
\node[main] (4) [above of=3] {$2'$};
\node[main](5) [above of = 4] {$5'$};
\node (blank)[above of = 5]{};
\node[main](55) [right of = blank]{$5$};
\node(54) [below of = 55]{};
\node(53) [below of = 54]{};
\node(52) [below of = 53]{};
\node(51) [below of = 52]{};
\node[main] (44) [right  of=55] {$4$};
\node(43) [below of = 44]{};
\node(42) [below of = 43]{};
\node(41) [below of = 42]{};
\node[main] (33) [right of = 44] {$3$};
\node(32) [below of = 33]{};
\node(31) [below of = 32]{};
\node[main] (22) [right of = 33] {$2$};
\node(21) [below of = 22]{};
\node[main] (11) [right of = 22] {$1$};

\draw[] (5) -- ([xshift=-5mm,yshift=-5mm]55.center);
\draw[-{Latex[length=3mm]},dashed] ([xshift=-5mm,yshift=-5mm]55.center) -- (55);
\draw[dashed] (4) -- ([xshift=-5mm,yshift=-5mm]54.center);
\draw[] ([xshift=-5mm,yshift=-5mm]54.center)--([xshift=-5mm,yshift=-5mm]44.center);
\draw[-{Latex[length=3mm]}, dashed] ([xshift=-5mm,yshift=-5mm]44.center) -- (44);
\draw[dashed] (3) -- ([xshift=-5mm,yshift=-5mm]43.center);
\draw[-{Latex[length=3mm]}] ([xshift=-5mm,yshift=-5mm]43.center)--(33);
\draw[] (2) -- ([xshift=-5mm,yshift=-5mm]42.center);
\draw[-{Latex[length=3mm]}] ([xshift=-5mm,yshift=-5mm]42.center)--(22);
\draw[dashed](1)--([xshift=-5mm,yshift=-5mm]31.center);
\draw[-{Latex[length=3mm]},dashed] ([xshift=-5mm,yshift=-5mm]31.center)--(11);

\draw[-{Latex[length=3mm]},dashed] ([xshift=-5mm,yshift=-5mm]54.center) -- ([xshift=-5mm,yshift=-5mm]55.center);
\draw[-{Latex[length=3mm]}, dashed] ([xshift=-5mm,yshift=-5mm]43.center) -- ([xshift=-5mm,yshift=-5mm]44.center);
\draw[-{Latex[length=3mm]}] ([xshift=-5mm,yshift=-5mm]51.center) to [out=-285,in=-78] ([xshift=-5mm,yshift=-5mm]54.center);
\draw[-{Latex[length=3mm]}] ([xshift=-5mm,yshift=-5mm]41.center) to [out=-285,in=-78] ([xshift=-5mm,yshift=-5mm]43.center);
\draw[-{Latex[length=3mm]}] ([xshift=-5mm,yshift=-5mm]31.center) -- ([xshift=-5mm,yshift=-5mm]32.center);

\end{tikzpicture} 
\begin{tikzpicture}[node distance={10.5 mm}, thick, main/.style = {draw, circle,minimum size=2 mm}, 
blank/.style={circle, draw=green!0, fill=green!0, very thin, minimum size=3.5mm},]

\node[main] (1) {$1'$};
\node[main] (2) [above of=1] {$4'$};
\node[main] (3) [above of = 2] {$3'$}; 
\node[main] (4) [above of=3] {$2'$};
\node[main](5) [above of = 4] {$5'$};
\node (blank)[above of = 5]{};
\node[main](55) [right of = blank]{$5$};
\node(54) [below of = 55]{};
\node(53) [below of = 54]{};
\node(52) [below of = 53]{};
\node(51) [below of = 52]{};
\node[main] (44) [right  of=55] {$4$};
\node(43) [below of = 44]{};
\node(42) [below of = 43]{};
\node(41) [below of = 42]{};
\node[main] (33) [right of = 44] {$3$};
\node(32) [below of = 33]{};
\node(31) [below of = 32]{};
\node[main] (22) [right of = 33] {$2$};
\node(21) [below of = 22]{};
\node[main] (11) [right of = 22] {$1$};

\draw[] (5) -- ([xshift=-5mm,yshift=-5mm]55.center);
\draw[-{Latex[length=3mm]},dashed] ([xshift=-5mm,yshift=-5mm]55.center) -- (55);
\draw[dashed] (4) -- ([xshift=-5mm,yshift=-5mm]54.center);
\draw[] ([xshift=-5mm,yshift=-5mm]54.center)--([xshift=-5mm,yshift=-5mm]44.center);
\draw[-{Latex[length=3mm]}, dashed] ([xshift=-5mm,yshift=-5mm]44.center) -- (44);
\draw[dashed] (3) -- ([xshift=-5mm,yshift=-5mm]43.center);
\draw[-{Latex[length=3mm]}] ([xshift=-5mm,yshift=-5mm]43.center)--(33);
\draw[] (2) -- ([xshift=-5mm,yshift=-5mm]42.center);
\draw[] ([xshift=-5mm,yshift=-5mm]42.center) -- ([xshift=-5mm,yshift=-5mm]32.center);
\draw[-{Latex[length=3mm]}, dashed] ([xshift=-5mm,yshift=-5mm]32.center)--(22);
\draw[dashed](1)--([xshift=-5mm,yshift=-5mm]31.center);
\draw[-{Latex[length=3mm]}] ([xshift=-5mm,yshift=-5mm]31.center)--(11);

\draw[-{Latex[length=3mm]},dashed] ([xshift=-5mm,yshift=-5mm]54.center) -- ([xshift=-5mm,yshift=-5mm]55.center);
\draw[-{Latex[length=3mm]}, dashed] ([xshift=-5mm,yshift=-5mm]43.center) -- ([xshift=-5mm,yshift=-5mm]44.center);
\draw[-{Latex[length=3mm]}] ([xshift=-5mm,yshift=-5mm]51.center) to [out=-285,in=-78] ([xshift=-5mm,yshift=-5mm]54.center);
\draw[-{Latex[length=3mm]}] ([xshift=-5mm,yshift=-5mm]41.center) to [out=-285,in=-78] ([xshift=-5mm,yshift=-5mm]43.center);
\draw[-{Latex[length=3mm]}, dashed] ([xshift=-5mm,yshift=-5mm]31.center) -- ([xshift=-5mm,yshift=-5mm]32.center);

\end{tikzpicture} 
\caption{The four extremal path collections consisting of $3$ paths in $G_{v,w}$, indicated with dashed lines, for $v=s_2s_3s_2$ and $w=s_1s_2s_3s_4s_1s_2s_3s_1$ in $\mathfrak{S}_5$.}
\label{examplegraphical}
 \end{figure}

 To find the next extremal index, we find the topmost source vertex for which the path originating at that vertex does not attain the leftmost possible sink. In this case, that is $2'$. Swapping the diagonal path originating at $2'$ for a path which does attain the leftmost possible sink, we get the dashed paths in the top right of \Cref{examplegraphical}. Thus, the next extremal index is $\Xi_{v,w}(I)=\{1,3,5\}$, the sink set of these paths. Observe that this is indeed the unique non-intersecting path collection with source set $[3]'$ and sink set $\{1,3,5\}$, as asserted by \Cref{extremunique}, and consists of 2 diagonal paths and a greedy path. To find the extremal index $\Xi_{v,w}^2(I)$, we replace the path collection originating from $\{2',3'\}$ by a non-intersecting path collection with Gale maximal sink set. We obtain $\Xi_{v,w}^2(I)=\{1,4,5\}$. This is illustrated in the bottom left of \Cref{examplegraphical}. Similarly, to find the extremal index $\Xi_{v,w}^3(I)$, we replace the entire path collection by a non-intersecting path collection with Gale maximal sink set. We obtain $\Xi_{v,w}^3(I)=\{2,4,5\}$. This is illustrated in the bottom right of \Cref{examplegraphical}. Observe that the path collections illustrated in the two bottom panels of \Cref{examplegraphical} are both unions of greedy path collections and diagonal paths. Moreover, they are the unique non-intersecting path collections from $[3]'$ to their respective sink sets.

 \end{example}

 \subsection{Determining Non-Extremal \Pl Coordinates}\label{sect:3termgenerate}

Our next goal is to prove that the extremal non-zero and non-infinite \Pl coordinates determine the rest of the \Pl coordinates in the totally nonnegative flag variety and totally nonnegative Dressian, respectively.

 \begin{theorem}\label{3termgenerate}
 Let $v\leq w\in \mathfrak{S}_n$. For any flag $F$ in $\textnormal{Fl}_n^{\geq 0}$, the extremal non-zero \Pl coordinates of $F$ uniquely determine the other non-zero \Pl coordinates of $F$ by three-term incidence \Pl relations. 
 \end{theorem}
 
 \begin{proof}
The proof is identical to the proof of \Cref{topcell3termgenerate}, except for the following changes:

\begin{enumerate}
    \item The definition of the statistic $t(I)$ is unchanged, but what it represents changes slightly. It is now the largest integer $\tau$ such that, among the strands $\{\sigma_1<\sigma_2<\cdots< \sigma_k\}$ containing vertices in the source set $[k]'$ in $G_{v,w}$,  we have $\left\{\sigma_1,\ldots, \sigma_\tau\right\}\subset I$. 

    \item All subscripts $id,w_0$ should be replaced by subscripts $v,w$.

    \item Some justifications need to be changed. Every time we say there are no arrows directed downwards in $G_{v,w}$, we rely on \Cref{vertarrow}. Instead of \Cref{topcellcor}, we refer to \Cref{extremunique}.

    \item The proof of \Cref{topcell3termgenerate} uses planarity to justify that a non-intersecting path collection whose non-diagonal paths achieve a Gale maximal sink set is a union of a greedy path collection and a diagonal path collection. We replace this argument by a combination of \Cref{galemax} and \Cref{extremunique}. 
\end{enumerate}
 \end{proof}

 \begin{cor}\label{generalpostest}
 For any flag $F$ in $\textnormal{Fl}_n^{\geq 0}$, the extremal \Pl coordinates serve as a positivity test for the non-zero \Pl coordinates. Explicitly, if the extremal non-zero \Pl coordinates of $F$ are positive, then so are all the other non-zero \Pl coordinates of $F$.
 \end{cor}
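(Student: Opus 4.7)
The plan is to mirror the strategy used in the proof of \cref{postest} for the top cell, but now invoking the case analysis in the proof of \cref{3termgenerate} instead of \cref{topcell3termgenerate}. Concretely, we run through the same triple induction (reverse induction on $|S|$, then induction on $t_{v,w}(S) = |e_{v,w}(S)\setminus S|$, then induction on the Gale order) used to prove \cref{3termgenerate}, and at each stage we verify that the three-term incidence \Pl relation employed there expresses the unknown $P_S$ as a nonnegative polynomial ratio of \Pl coordinates that are already known to be positive. The base cases (extremal coordinates, and coordinates of size $n-1$) are positive by hypothesis, thanks to \cref{maxsizecoords}.

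First I would handle the two ``generic'' cases in the proof of \cref{3termgenerate}. When some $a<b$ with $a\notin S$ satisfies $P_{S\cup a}\neq 0$, the relation used is
\[
P_S P_{(S\setminus b)\cup ac} \;=\; P_{(S\setminus b)\cup c} P_{S\cup a} + P_{(S\setminus b)\cup a} P_{S\cup c},
\]
so $P_S$ appears with positive coefficient on the monomial side; $P_{(S\setminus b)\cup ac}$ is non-zero because the right-hand side is non-zero (the first summand was shown to be non-zero in the proof of \cref{3termgenerate}). All four \Pl coordinates on the right, as well as $P_{(S\setminus b)\cup ac}$, were shown to precede $P_S$ in the induction, so by the induction hypothesis they are positive, hence $P_S>0$. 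The case where $P_{S\cup a}=0$ for every such $a$ but $P_{(S\setminus b)\cup a}\neq 0$ is analogous: the relation is $P_S P_{(S\setminus b)\cup ad} = P_{(S\setminus b)\cup a} P_{S\cup d}$ after the vanishing term is dropped, so again $P_S$ is determined as a positive quotient of positive quantities already known by induction to be positive.

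The main content will be checking the final ``degenerate'' case, where $P_{S\cup a}=P_{(S\setminus b)\cup a}=0$ for every $a<b$ outside $S$. Here the proof of \cref{3termgenerate} splits into sub-cases depending on the relative order of $b,c,x,y$, and in each non-vacuous sub-case the three-term relation reduces (after cancelling the zero term $P_{(S\setminus b)\cup x}=0$) to an equation of the form
\[
P_{(S\setminus y)\cup x}\, P_{(S\setminus b)\cup c} \;=\; P_S\, P_{(S\setminus by)\cup cx},
\]
or the same relation with the two sides swapped. The left-hand factors are positive by induction (the proof of \cref{3termgenerate} explicitly verifies that each has smaller $t_{v,w}$ or is smaller in the Gale order); the factor $P_{(S\setminus by)\cup cx}$ on the right is likewise positive by induction. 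Since $P_S$ is assumed non-zero and satisfies a monomial identity with all other entries positive, we conclude $P_S>0$.

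The potential obstacle is to make sure that every quantity we wish to invoke as ``positive by induction'' is actually of non-zero $P$-index, so that the induction hypothesis applies. This is not a new issue: the proof of \cref{3termgenerate} already verifies that the relevant auxiliary indices index non-zero \Pl coordinates (using basis exchange among the constituents of the flag matroid together with \cref{addins} and \cref{positroidexch}). Once that bookkeeping is in hand, the corollary follows immediately by induction, and the proof is essentially a one-line observation analogous to that of \cref{postest}: \emph{in the proof of \cref{3termgenerate}, $P_S$ always appears on the monomial side of the three-term incidence \Pl relation used to determine it}.
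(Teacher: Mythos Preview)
Your proposal is correct and takes essentially the same approach as the paper: both argue that in the proof of \cref{3termgenerate}, the unknown $P_S$ always appears either on the monomial side of a three-term relation or in an equation of monomials (after a vanishing term drops out), so positivity propagates through the induction. Your write-up spells out the case analysis in more detail than the paper's one-sentence proof, but the content is the same; note only that your closing summary (``$P_S$ always appears on the monomial side'') slightly understates what you correctly observed earlier, namely that in several cases the relation degenerates to an equality of two monomials rather than $P_S$ sitting on the monomial side of a genuine three-term relation.
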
 
 
 \begin{proof}
This is similar to the proof of \Cref{postest}.
 \end{proof} 
 
\begin{thmbis}{3termgenerate}\label{tropical3termgenerate}
 Let $v\leq w\in \mathfrak{S}_n$. For any point $P$ in $\textnormal{FlDr}_{v,w}^{>0}$, the extremal non-infinite \Pl coordinates of $P$ uniquely determine the other non-infinite \Pl coordinates of $P$. 
 \end{thmbis}
 
 \begin{proof}
This is similar to the proof of \Cref{tropicaltopcell3termgenerate}.
 \end{proof}
 
\begin{definition}
    A \textbf{flag of subsets} of $[n]$ is a collection of subsets $(R_1,\ldots, R_n)$ of $[n]$ such that $R_1\subsetneq R_2\subsetneq\cdots \subsetneq R_n=[n]$. In particular, $|R_i|=i$.
\end{definition} 

\begin{proposition}\label{extremalflag}
    For any $v\leq w\in \mathfrak{S}_n$, each extremal index of a flag $F$ in $\mathcal{R}_{v,w}^{>0}$ lies in a flag of subsets of $[n]$ consisting of extremal indices of $F$. 
\end{proposition}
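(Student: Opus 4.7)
The plan is to build the flag directly from the graphical description of extremal indices as sink sets of graph extremal path collections in $G_{v,w}$. Given the extremal index $I$ of size $k$, I will first write it as the sink set of a graph extremal path collection with parameter $i \in \{0,\ldots,k\}$, so that left-extreme paths originate from the topmost $i$ vertices of $[k]'$ and diagonal paths from the remaining $k-i$ vertices. I then choose a threshold strand $\tau$ so that the topmost $i$ vertices of $[k]'$ are exactly those in $[k]'$ lying on strands $\geq \tau$; such a $\tau$ exists because ``top'' refers to largest strand index. The purpose of fixing $\tau$ is to generate a coherent family of extremal indices at every size.

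For each $j$ with $1 \leq j \leq n-1$, I will define $I_j$ to be the sink set of the graph extremal path collection on source $[j]'$ in which left-extreme paths originate from $[j]' \cap \{\text{strands} \geq \tau\}$ and diagonal paths from $[j]' \cap \{\text{strands} < \tau\}$. Each $I_j$ is a graph extremal index by construction, and the choice of $\tau$ guarantees $I_k = I$. What remains is to prove $I_1 \subsetneq I_2 \subsetneq \cdots \subsetneq I_{n-1} \subsetneq [n]$, which I will carry out by induction on $j$, analyzing what happens as we pass from $I_j$ to $I_{j+1}$ by adding the new source $(j+1)'$, which lies on strand $s := v^{-1}(j+1)$.

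The case $s < \tau$ will be straightforward: $(j+1)'$ contributes a diagonal path with new sink $s$ and the left-extreme part is untouched, so $I_{j+1} = I_j \cup \{s\}$; the fact that $s \notin I_j$ follows because $v^{-1}$ is injective (ruling out $s$ as a diagonal sink of $I_j$) and because by \cref{vertarrow} all left-extreme sinks of $I_j$ lie on strands $\geq \tau > s$. The hard case will be $s \geq \tau$, where the diagonal part is unchanged but a new source is added to the left-extreme collection and its sink set must be shown to grow strictly. Here I plan to use \cref{galemax} to identify the left-extreme sink sets for sources $[j]'$ and $[j+1]'$ as the Gale-maximal bases of the contractions $M_j^{v,w}/D_j$ and $M_{j+1}^{v,w}/D_j$, where $D_j$ is the set of diagonal sinks associated to source $[j]'$. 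Since $D_j \subseteq I_j$ is independent in $M_j^{v,w}$ and hence, by the flag matroid quotient relation, also in $M_{j+1}^{v,w}$, these contractions are consecutive constituents of a flag matroid, so their Gale-maximal bases form a chain by \cite[Corollary 1.7.2]{Coxeter} (the same result invoked in \cref{galemax}). This will yield the required strict inclusion $I_j \subsetneq I_{j+1}$ and complete the flag.
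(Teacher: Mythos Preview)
Your proof is correct and takes a genuinely different route from the paper's.

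The paper argues locally: given an extremal index $I$ of size $t$, it explicitly constructs an extremal index of size $t-1$ contained in $I$ (by deleting the path from the source $t'$ and iteratively re-left-greedying the remaining paths) and an extremal index of size $t+1$ containing $I$ (by adding the source $(t+1)'$ and checking that removing one element recovers $I$). Iterating produces the flag.

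You instead construct the entire flag globally. Fixing a threshold strand $\tau$ that separates the left-extreme sources from the diagonal sources of $I$, you define $I_j$ for every $j$ by the same threshold and must show the $I_j$ nest. The diagonal case $s<\tau$ is direct; for the left-extreme case $s\geq\tau$ you invoke the matroid-theoretic identification from the proof of \cref{galemax}, namely that the left-extreme sink set on source $[j]'$ is the Gale-maximal basis of $M_j^{v,w}/D_j$. Since $M_j^{v,w}$ is a quotient of $M_{j+1}^{v,w}$ and $D_j$ is independent in both, the two contractions form a two-step flag matroid, so \cite[Corollary~1.7.2]{Coxeter} gives the desired strict containment of Gale-maximal bases. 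Note that this is a mild extension of \cref{galemax} itself (there one contracts a single $M_k$ by a growing set; you contract two different $M_j,M_{j+1}$ by the same set), but the needed fact---that quotient relations are preserved under contraction---is standard, so the argument goes through.

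What each approach buys: the paper's argument is entirely internal to the path-collection combinatorics and is more explicit about which element is added or removed at each step; yours is shorter and more conceptual, packaging the chain property into a single matroid statement rather than an iterative graph manipulation.
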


\begin{proof}
    Fix an extremal index $I$ with $|I|=t$. By \Cref{extremunique}, there is a corresponding unique non-intersecting path collection $C$. If $t=1$, then there is no need to check that a subset of $I$ is extremal. Otherwise, apply \Cref{lem:constructionstar} with $o=t'$ to obtain a new non-intersecting path collection $D$. The path collection $D$ is also extremal, by \Cref{extremunique}. Moreover, its sink set is of the form $I\setminus \kappa$ for some $\kappa\in I$.

    If $t=n-1$, there is no need to check that $I$ is contained in an extremal index. Otherwise, consider the extremal path collection $C'$ originating from $[t+1]'$ where a path is greedy if and only if the path originates weakly above the origin of the bottom-most greedy path of $C$. Say this path collection has sink set $J$. By \Cref{extremunique}, $J$ is extremal. Applying \Cref{lem:constructionstar} to $C'$ with $o=(t+1)'$, we recover precisely the original path collection $C$. Thus, $J$ contains $I$. 
    
    In summary, we have shown that for $|I|>1$, $I$ contains an extremal index of size $|I|-1$ and, for $|I|<n-1$, $I$ is contained in an extremal index of size $|I|+1$. Thus, there is a flag of extremal indices containing $I$.
\end{proof}

Armed with \Cref{3termgenerate}, \Cref{tropical3termgenerate}, and \Cref{extremalflag}, we revisit the two notions of flag positroid we introduced earlier, namely, realizable and synthetic. Recall that a flag positroid is realizable if it is the support of a point in $\textnormal{Fl}_{n}^{\geq{0}}$ and synthetic if it is the support of a point in $\textnormal{Fl}_n$ whose \Pl coordinates are all nonnegative. In particular, the support of any flag in $\mathcal{R}_{v,w}^{>0}$ is a realizable flag positroid which depends only on $v$ and $w$. We denote it by $\bm{\mathcal{M}^{v,w}}=(\mathcal{M}_1^{v,w},\ldots,\mathcal{M}_n^{v,w})$.
  
  \begin{lemma}\label{synthetic}
  The set of synthetic flag positroids on $[n]$ equals the set of realizable flag positroids on $[n]$.
  \end{lemma}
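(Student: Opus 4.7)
The forward containment, realizable $\subseteq$ synthetic, is immediate: by \cref{forwarddir}, any $F \in \textnormal{Fl}_n^{\geq 0}$ has $P_I(F) \geq 0$ for all $I \subseteq [n]$, so its support is a synthetic flag positroid.

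For the reverse containment, I would take $F \in \textnormal{Fl}_n$ with $P_I(F) \geq 0$ for all $I$, set $\bm{\mathcal{M}} = \supp(F)$, and construct a flag $F' \in \textnormal{Fl}_n^{\geq 0}$ with $\supp(F') = \bm{\mathcal{M}}$, following the scheme of \cref{topcellnonnegPluckCoords} adapted to a general cell. First, identify permutations $v, w \in S_n$ by declaring $v^{-1}[k]$ and $w^{-1}[k]$ to be the Gale-minimum and Gale-maximum bases of $\mathcal{M}_k$, respectively. The flag matroid structure ensures these are honest permutations, and the tableau criterion (\cref{tableaucrit}) gives $v \leq w$ in Bruhat order, since the Gale-minimum basis is pointwise dominated by the Gale-maximum basis at every rank. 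A routine flag-matroid extension argument combined with \cref{bruhatinterval} yields $\supp(F) \subseteq \supp(\mathcal{R}_{v,w}^{>0})$, and $F$'s extremal indices (in the sense of \cref{extremal}) coincide with those of $\mathcal{R}_{v,w}^{>0}$.

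Next, I would apply the inverse parametrization $\Psi_{v,w}$ to the extremal non-zero Plücker coordinates of $F$ to obtain positive real weights $\bm{a} \in \mathbb{R}_{>0}^{\ell(w) - \ell(v)}$, and set $F' = \Phi_{v,w}(\bm{a}) \in \mathcal{R}_{v,w}^{>0} \subseteq \textnormal{Fl}_n^{\geq 0}$, so that $F$ and $F'$ have identical extremal non-zero Plücker coordinates by construction. This step relies on the generalization of \cref{inverseLaurent} from the top cell to arbitrary $\mathcal{R}_{v,w}^{>0}$: using \cref{extremunique} and \cref{extremalflag}, the extremal indices can be processed in a suitable refinement of the $\prec$-order from the top-cell proof so that each successive extremal path collection in $G_{v,w}$ introduces exactly one previously-unused vertical edge, yielding $\Psi_{v,w}$ as a vector of Laurent monomials in the extremal non-zero Plücker coordinates.

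Finally, I would invoke \cref{3termgenerate} to propagate equality beyond the extremal indices: its inductive derivation expresses each non-extremal Plücker coordinate indexed in $\supp(\mathcal{R}_{v,w}^{>0})$ as a rational function of the extremal coordinates via three-term incidence \Pl relations, and the choices of auxiliary indices in that induction depend only on $\supp(\mathcal{R}_{v,w}^{>0})$ and not on $\supp(F)$. Since $F$ satisfies the same incidence \Pl relations, has nonnegative coordinates, and agrees with $F'$ on the extremal indices, the same induction yields $P_I(F) = P_I(F')$ for every $I \in \supp(\mathcal{R}_{v,w}^{>0})$, and hence $\supp(F) = \supp(F') = \bm{\mathcal{M}}$. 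The main obstacle is the generalization of \cref{inverseLaurent}: in the top cell, $G_{id,w_0}$ contains all possible vertical edges, making the one-new-edge analysis transparent, whereas in a general $G_{v,w}$ the sparser edge set and the strand labels permuted by the $\dot s_i$ factors force one to combine the $\prec$-induction with the case analysis from the proof of \cref{extremunique} to track which vertical edge is freshly introduced by each extremal path collection.
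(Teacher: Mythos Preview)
Your outline has a genuine gap at the step you call ``routine'': the claim that $F$'s extremal indices coincide with those of $\mathcal{R}_{v,w}^{>0}$. At this point you only know $\supp(F)\subseteq\supp(\mathcal{R}_{v,w}^{>0})$ together with agreement on the Gale-minimal and Gale-maximal bases of each rank. The extremal indices of $F$ are defined via the map $\Xi_{P(F)}$ of \cref{ximap}, which depends on the \emph{full} support of $F$: if some intermediate extremal index $L_1$ of $\mathcal{R}_{v,w}^{>0}$ happened to satisfy $P_{L_1}(F)=0$, then $\Xi_{P(F)}$ would skip past it and the two extremal sets would differ. Ruling this out is exactly where the nonnegativity hypothesis does real work, and it is the heart of the paper's proof. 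The paper argues by induction on rank and on Gale order: taking $L_1$ Gale-minimal with $P_{L_1}=0$, one writes $L_1=\Xi_{v,w}(L_2)=(L_2\setminus b)\cup c$, chooses $a<b$ in $L_2$, and uses a three-term incidence relation to force $P_{L_1\setminus a}=0$; then, invoking \cref{extremalflag} and \cref{addins} together with a dual basis exchange against the Gale-maximal basis, one produces indices $d$ and $b'$ for which another three-term relation yields a strictly positive term on one side and zero on the other, contradicting nonnegativity. None of this follows from a ``routine flag-matroid extension argument.''

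The remainder of your sketch (apply $\Psi_{v,w}$, set $F'=\Phi_{v,w}(\bm a)$, propagate via \cref{3termgenerate}) is not wrong, but it overshoots: the lemma only asks that the supports agree, and your argument is essentially the proof of the later \cref{nonnegPluckCoords}, which in the paper \emph{invokes} the present lemma. So you have reorganized the two results into one while leaving out the step that actually requires an argument. The generalization of \cref{inverseLaurent} that you flag as the main obstacle is handled in the paper (see \cref{generalinverseLaurent}), but it is not the crux here.
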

  \begin{proof}
  
  We have established in \Cref{forwarddir} that if $F\in \textnormal{Fl}_n^{\geq 0}$ then $P_I(F)\geq 0$ for all $\emptyset\neq I\subset [n]$, so all realizable flag positroids are synthetic flag positroids. 
  
  We now show that synthetic flag positroids are realizable. Let $F$ be a flag such that $P_I=P_I(F)\geq 0$ for all $\emptyset\neq I\subset[n]$. Let $\bm{\mathcal{M}}=(\mathcal{M}_1,\ldots, \mathcal{M}_n)$ be the synthetic flag positroid given by the indices of the non-zero coordinates of $P$. For any flag matroid, the Gale minimal bases of each size form a flag of subsets in $[n]$ and similarly for the Gale maximal bases \cite[Section 1.8]{Coxeter}. Consider the two flags of subsets $F_I=\left\{\{i_1\}\subset \{i_1,i_2\}\subset\cdots\subset \{i_1,\cdots, i_n\}=[n]\right\}$ and $F_J=\left\{\{j_1\}\subset \{j_1,j_2\}\subset\cdots\subset \{j_1,\cdots, j_n\}=[n]\right\}$ corresponding to the Gale minimal and Gale maximal bases of $\bm{\mathcal{M}}$, respectively. Let $v,w\in \mathfrak{S}_n$ be defined by $v^{-1}(k)=i_k$ and $w^{-1}(k)=j_k$ for $k\in[n]$. We will show that $\bm{\mathcal{M}}=\bm{\mathcal{M}^{v,w}}$. 
  
  We first show that the extremal indices of $P$ coincide with those of $Q=P(F')$ for a flag $F'\in\mathcal{R}_{v,w}^{> 0}$. In a complete flag matroid, by definition, each basis lies in a flag of subsets of $[n]$ consisting of bases of the constituent matroids. Thus, if we are to have $P_K\neq 0$ for some index $K$, we must (at least) have that $K=\{k_1,\ldots, k_m\}$ lies in some flag of subsets $F=\left\{\{k_1\}\subset \{k_1,k_2\}\subset\cdots\subset [n]\right\}$ satisfying $\{i_1,\ldots,i_t\}\leq \{k_1,\ldots,k_t\}\leq \{j_1,\ldots,j_t\}$ for each $t\in [n]$, by the minimality and maximality of $\{i_1,\ldots,i_t\}$ and $\{j_1,\ldots,j_t\}$, respectively. By \Cref{bruhatinterval}, $Q_K\neq 0$ for each such $K$. Thus, any basis of $\bm{\mathcal{M}}$ is guaranteed to be a basis of $\bm{\mathcal{M}^{v,w}}$. We are left to show that for any extremal index $L$ of $Q$, that is, of $\mathbf{\mathcal{M}^{v,w}}$, we have $P_L\neq 0$. 
  
  For indices of size $1$, the extremal indices are simply the minimal and maximal indices of non-zero \Pl coordinates. These are $i_1$ and $j_1$, respectively, for both $P$ and $Q$. For larger indices, we argue by contradiction. Suppose $l>1$ is minimal such that an extremal index $L_1$ of size $l$ of $Q$ satisfies $P_{L_1}=0$. Choose $L_1$ to be the Gale minimal such index. Since $L_1$ is an extremal index of $Q$, \Cref{extremalflag} tells us that $L_1\setminus d$ is extremal for some $d\in L_1$. By the minimality of $l$, we must have that $P_{{L_1}\setminus d}\neq 0$. We will use \Pl relations to derive a contradiction in two stages, first showing $d$ must be suitably large and then showing that such a $d$ cannot satisfy the \Pl equations.
  
  Recall that, for each $t\in [n]$, the Gale minimal and Gale maximal indices of size $t$ of non-zero \Pl coordinates of $P$ and of $Q$ coincide, being $\{i_1,\ldots, i_t\}$ and $\{j_1,\ldots, j_t\}$, respectively. Thus, $L_1$ cannot be either the Gale minimal 
 or the Gale maximal extremal index of size $l$. Thus, we know that $L_1$ is of the form $\Xi_{v,w}(L_2)=(L_2\setminus b)\cup c$ for some $c>b$ and some other extremal index $L_2$ of $Q$ which is Gale less than $L_1$. Moreover, since $L_1$ is not Gale maximal, we claim it is impossible that $b$ is the minimal element of $L_2$. By the characterization of extremal path collections in \Cref{extremunique}, the unique extremal path collection $C$ in $G_{v,w}$ with sink set $L_2$ is a union of diagonal paths and a greedy path collection. To construct the extremal path collection $C'$ with set $L_1$, we identify the topmost diagonal path of $C$ which is not already greedy and replace it with a greedy path. If $b$ were the minimal element of $L_2$, it would have to be the bottom-most diagonal path in $C$. This would imply $C'$ is a greedy path collection, which would in turn mean $L_1$ is Gale maximal. Since we assume $L_1$ is not Gale maximal, we must have that $b$ is not the minimal element of $L_2$.
 
 By Gale minimality of $L_1$, we have, $P_{L_2}\neq 0$. Since $b$ is not minimal in $L_2$, we can pick $a<b$ in $L_2$. Then, $a\in L_1=(L_2\setminus b)\cup c$. We have the three-term incidence \Pl relation $P_{L_1}P_{({L_1}\setminus ac)\cup b}=P_{({L_1}\setminus a)\cup b}P_{{L_1}\setminus c}+P_{L_2}P_{{L_1}\setminus a}$. Since all the \Pl coordinates are nonnegative, and $P_{L_1}=0$ and $P_{L_2}\neq 0$, we must have $P_{{L_1}\setminus a}=0$ for any $a\in L_1$ satisfying $a<b$. Thus, since $P_{L_1\setminus d}\neq 0$, we have $d>b$.      

Note that by \Cref{addins} applied to $Q$, $c$ must be in the Gale maximal extremal index $N=\{j_1,\cdots, j_l\}$ of size $l$ of $Q$. Since the Gale maximal indices of non-zero \Pl coordinates of $P$ and of $Q$ coincide, $P_N\neq 0$. By dual basis exchange between $N$ and $L_2$ in $\mathcal{M}_l$, there must be some $b'\in L_2\setminus N$ such that $P_{(L_2\setminus b')\cup c}\neq 0$. Since $P_{L_1}=0$, $b'\neq b$. Moreover, we cannot have $b'>b$ since $P_{(L_2\setminus b')\cup c}\neq 0$ implies $Q_{(L_2\setminus b')\cup c}\neq 0$ and this would contradict the fact that $\Xi_{v,w}(L_2)=(L_2\setminus b)\cup c$, which imposes a maximality condition on $b$. Thus, $b'<b$. We consider the incidence \Pl relation $P_{{L_1}}P_{{(L_1\setminus b'd)\cup b}}=P_{(L_1\setminus b')\cup b}P_{{L_1}\setminus d}+P_{({L_1}\setminus d)\cup b}P_{{L_1}\setminus b'}$. We know that $P_{L_1}=0$ but $P_{(L_1\setminus b')\cup b}\neq 0$ since $(L_1\setminus b')\cup b=(L_2\setminus b')\cup c$. This contradicts the assumption that $P_{L_1}=0$, since all the \Pl coordinates are nonnegative.

Finally, we prove $\bm{\mathcal{M}}=\bm{\mathcal{M}^{v,w}}$: As proven earlier, any basis of $\bm{\mathcal{M}}$ is a basis of $\bm{\mathcal{M}^{v,w}}$ by \Cref{bruhatinterval}. Let $I$ be the index of a non-zero \Pl coordinate of $Q$, that is, a basis of $\bm{\mathcal{M}^{v,w}}$. We show $P_I\neq 0$, that is, $I$ is a basis of $\bm{\mathcal{M}}$. By \Cref{3termgenerate}, the extremal non-zero \Pl coordinates of $F'\in\mathcal{R}_{v,w}^{> 0}$ determine $Q_I$ by three-term incidence \Pl relations. In particular, the proof of \Cref{3termgenerate} gives a way to express $Q_I$ as a subtraction-free rational expression in the extremal non-zero \Pl coordinates. Since $P$ is subject to the three-term relations too, $P_I$ has the same expressions as a subtraction-free rational expression in the extremal non-zero \Pl coordinates. Since the extremal non-zero \Pl coordinates of $F$ must be positive by definition, $P_I\neq 0$. Thus,  $\bm{\mathcal{M}}=\bm{\mathcal{M}^{v,w}}$ and so, $\bm{\mathcal{M}}$ is a realizable flag positroid.   
  \end{proof}
  
The previous theorem says that the support of any complete flag with nonnegative \Pl coordinates is a realizable flag positroid. The following result will analogously say that the support of any point in the totally nonnegative complete flag Dressian is a realizable flag positroid.

\begin{lemmabis}{synthetic}\label{tropsynthetic}
The totally nonnegative complete flag Dressian decomposes as $\textnormal{FlDr}_n^{\geq 0}=\sqcup_{v\leq w}\textnormal{FlDr}_{v,w}^{> 0}$, where the disjoint union is over all pairs $v\leq w\in \mathfrak{S}_n$.
\end{lemmabis}
\begin{proof}
By {\cite[Section 4.2]{BEZ}}, the support of any point in $\textnormal{FlDr}_n$ is a flag matroid. For the reverse direction, the previous proof applies with the following modifications: $P$ should lie in $\textnormal{FlDr}^{\geq 0}$, $Q$ should lie in $\textnormal{FlDr}^{\geq0}_{v,w}$, and all equations should be positively tropicalized, with instances of $0$ replaced by $\infty$. When we derive contradictions from incidence \Pl relations, it is in such a way that the tropical analogue also leads to a contradiction. The argument of the last paragraph holds with \Cref{tropical3termgenerate} in place of \Cref{3termgenerate}. 
\end{proof}
  
\subsection{Determining the Parameters}

Our next goal is to show that any flag in $\textnormal{Fl}_n$ with \Pl coordinates which are all nonnegative lies in $\textnormal{Fl}_n^{\geq 0}$. Recall the maps $\Phi_{v,w}$ and $\Psi_{v,w}$ from \Cref{parameterization}. Our argument will be similar to the top cell case in \Cref{inverseLaurent} but in this more general setting, we will have to exercise caution because our extremal non-zero \Pl coordinates are not always algebraically independent.  

We require the following preliminaries. For the rest of this section, we fix $v\leq w\in \mathfrak{S}_n$ and $G=G_{v,w}$.

\begin{definition}
    Let $r\in [n]$. Let $L_r$ be the greedy path collection in $G=G_{v,w}$ with source set $[r]'$. This is the path collection whose sink set is the Gale maximal extremal index of size $r$. 
\end{definition}

\begin{lemma}\label{addingedges}
    Suppose $t_2>t_1$ and there is a path which is blocked from using the edge $e$ in $L_{t_1}$ --- In other words, there is a path in $L_{t_1}$ which uses the diagonal strand below $e$ but does not turn onto edge $e$ because some other path uses the diagonal strand at the top of $e$. Then edge $e$ is not used in $L_{t_2}$.
    \end{lemma}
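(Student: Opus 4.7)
The plan is to argue by contradiction using \cref{nonewdiagonals}. Let $e$ be the vertical edge in question, with bottom endpoint $(s, c_e)$ and top endpoint $(s+1, c_e)$, and suppose toward a contradiction that $e$ is used by some path $p^*$ in $L_{t_2}$. The overall strategy is to produce a second path in $L_{t_2}$ that also passes through the vertex $(s+1, c_e)$, contradicting pairwise vertex-disjointness of paths in $L_{t_2}$.

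First, I will extract the structure of the blocking configuration in $L_{t_1}$. By hypothesis there is a path $q$ in $L_{t_1}$ that occupies $(s+1, c_e)$, and since $e$ itself is not used in $L_{t_1}$, $q$ must enter $(s+1, c_e)$ from the source side of strand $s+1$. Let $\epsilon$ denote the last segment of strand $s+1$ traversed by $q$ before entering $(s+1, c_e)$: concretely, the diagonal edge from $(s+1, c_e+1)$ to $(s+1, c_e)$ when $c_e < n$, or the initial segment from the source on strand $s+1$ when $c_e = n$. In either case $\epsilon$ is a section of a diagonal strand used by $q$ in $L_{t_1}$. Second, I iterate \cref{nonewdiagonals} once for each of the $t_2 - t_1$ sources added when passing from $L_{t_1}$ to $L_{t_2}$ to conclude that $\epsilon$ is also used in $L_{t_2}$; say it is used by a path $q^*$. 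Third, I observe that $q^*$ enters $(s+1, c_e)$ via $\epsilon$ while $p^*$ enters via $e$, so since a single path uses at most one incoming edge at a vertex we have $q^* \neq p^*$. Both paths then occupy $(s+1, c_e)$, which contradicts the non-intersecting property of $L_{t_2}$.

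The main technical obstacle is the first step: ensuring that $q$ genuinely arrives at $(s+1, c_e)$ from the source side of strand $s+1$. This is where the blocking hypothesis is indispensable, because it forces $q$ to be present at $(s+1, c_e)$ while simultaneously excluding arrival via the only other incoming edge at that vertex, namely $e$. Once this local structural observation is pinned down, the rest of the argument is a direct application of \cref{nonewdiagonals} and the definition of a non-intersecting path collection; I anticipate no further significant difficulties.
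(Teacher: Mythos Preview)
Your proof is correct and follows essentially the same approach as the paper's. The paper's proof is a single sentence: by repeated application of \cref{nonewdiagonals}, the diagonal section at the top of $e$ is used in $L_{t_2}$, hence $e$ cannot be used there; your argument simply unpacks this, making explicit the contradiction at the top endpoint of $e$ between the path using $e$ and the path using the incoming diagonal segment $\epsilon$. One minor remark: your notation assumes $e$ goes from strand $s$ to strand $s+1$, but in $G_{v,w}$ vertical edges can skip strands; this does not affect the logic, just replace $(s+1,c_e)$ by the actual top endpoint of $e$.
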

    
    \begin{proof}
  
    By repeated application of the last statement of \Cref{lem:constructionstar}, the section of the diagonal strand at the top of edge $e$ must also be used in $L_{t_2}$ and thus the edge $e$ cannot be used in $L_{t_2}$. 
\end{proof}

Recall the order $\prec$ on $2^{[n]}$ defined by $I\prec J$ if $|I|>|J|$ or if $|I|=|J|$ and $I$ is lexicographically smaller than $J$. Using this order, we inductively define a subset $S_{v,w}$ of the extremal non-zero indices of a flag in $\mathcal{R}_{v,w}^{>0}$. Start with $S_{v,w}=\emptyset$. Then, going through the extremal non-zero \Pl coordinates according to the total order $\prec$, add $I$ to $S_{v,w}$ if $P_I$ is algebraically independent of the extremal non-zero \Pl coordinates $P_J$ with $J\prec I$.

We first claim that the \Pl coordinates with index in $S_{v,w}$ determine the other extremal non-zero \Pl coordinates by three-term incidence \Pl relations. These are algebraically independent by definition and so, we will be able to prove that any flag with nonnegative coordinates is in fact a totally nonnegative flag in a way that is very similar to \Cref{topcellnonnegPluckCoords}. Note that it is clear by construction that the coordinates whose indices lie in $S_{v,w}$ determine the other extremal non-zero \Pl coordinates. What will be important here (for the tropical version of our results) is that we can deduce the other extremal non-zero \Pl coordinates using just the three-term incidence \Pl relations.

\begin{lemma}\label{Svwgenerate}
For any flag $F\in \mathcal{R}_{v,w}^{>0}$, any extremal non-zero \Pl coordinate of $F$ whose index is not in $S_{v,w}$ is determined by the coordinates whose indices are in $S_{v,w}$ using three-term incidence \Pl relations.
\end{lemma}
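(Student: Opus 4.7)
My plan is to proceed by induction on the total order $\prec$. The inductive hypothesis together with \cref{3termgenerate} — which already expresses every non-zero Plücker coordinate through the extremal ones using only three-term incidence Plücker relations — implies that every coordinate $P_{I'}$ with $I' \prec I$ is already determined from $\{P_J : J \in S_{v,w}\}$ by three-term incidence Plücker relations. So it suffices to produce, for each extremal $I \notin S_{v,w}$, a single three-term incidence Plücker relation in which $P_I$ appears and all other factors are indexed by sets $\prec$-earlier than $I$.

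Write $k = |I|$. If $I$ is the Gale-minimal extremal index of size $k$, the unique path collection with sink $I$ is diagonal by \cref{uniquemin}, so $P_I$ is constant and is handled by the projective scaling convention. Otherwise $I = \Xi_{v,w}(J) = (J \setminus b) \cup c$ for the $\prec$-predecessor $J$ of $I$ in the $\Xi_{v,w}$-orbit, with $c > b$. The natural candidate is the incidence Plücker relation with $I' = J \setminus b$ (size $k-1$) and $J' = J \cup \{a, c\}$ (size $k+2$):
$$\pm\, P_J\, P_{(J \setminus b) \cup \{a,c\}} \;\pm\; P_I\, P_{J \cup a} \;\pm\; P_{(J\setminus b) \cup a}\, P_{J \cup c} \;=\; 0,$$
where $a \notin J$ satisfies $a < c$ and $P_{J \cup a} \neq 0$. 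Whenever such an $a$ exists, $P_J$ is size-$k$ but Gale-smaller than $I$, $P_{(J \setminus b) \cup a}$ is size-$k$ but Gale-smaller than $I$ (using $a < c$ to compare with $(J\setminus b) \cup c = I$), and the three coordinates $P_{(J\setminus b) \cup \{a,c\}}$, $P_{J \cup a}$, $P_{J \cup c}$ are all of size $k+1$; every factor other than $P_I$ is therefore $\prec$-earlier, and dividing by the nonzero $P_{J \cup a}$ isolates $P_I$.

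The main obstacle is guaranteeing the existence of such an $a$: the flag matroid property ensures $J$ extends to some basis $J \cup a^*$ of $\mathcal{M}_{k+1}$, but a priori every such $a^*$ could exceed $c$. When this happens, I will supply a substitute three-term relation: either a size-$k$ Grassmann--Plücker relation $P_{S \cup \{x,y\}} P_{S \cup \{z,w\}} = P_{S \cup \{x,z\}} P_{S \cup \{y,w\}} - P_{S \cup \{x,w\}} P_{S \cup \{y,z\}}$ with $S$ chosen using the two largest elements of $I$ as the pivots so that $P_I$ is the sole coordinate not Gale-smaller than $I$, or an incidence Plücker relation with modified parameters (e.g.\ using $a > c$ coupled with an index shift on $b$). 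The selection of the correct substitute will be a trichotomy analogous to the three-case analysis at the end of the proof of \cref{3termgenerate}, ruling out the bad sign configurations by invoking the nonnegativity of all Plücker coordinates on $\mathcal{R}_{v,w}^{>0}$ together with the extremal-flag structure established in \cref{extremalflag}. Once each case produces a valid three-term relation of the required form, the induction closes and the lemma follows.
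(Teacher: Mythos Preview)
There is a genuine gap: your outline never uses the hypothesis $I\notin S_{v,w}$, and without it the strategy proves too much. If for every non--Gale-minimal extremal $I$ you could produce a three-term relation determining $P_I$ from $\prec$-earlier coordinates, then by induction every extremal coordinate would be determined by the Gale-minimal ones, which are all equal to $1$; this is impossible whenever $\dim\mathcal{R}_{v,w}^{>0}>0$. So the ``substitute relation'' you promise in the bad case simply cannot exist in general. The trichotomy from \cref{3termgenerate} is not the right model here: that argument determines \emph{non}-extremal coordinates from extremal ones and always succeeds, whereas here you are trying to express one extremal coordinate in terms of others, and this must fail precisely when $I\in S_{v,w}$.

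The paper's proof resolves this by working in the graph $G_{v,w}$. With $C$ the unique path collection for $I$ and $K$ its left-greedy source set, one looks at the extremal index of size $|I|+1$ obtained by adjoining the vertex $(|I|+1)'$, giving $I\cup f$ or $I\cup d$ depending on whether $(|I|+1)'$ lies below $K$ or not. In the easy cases the resulting three-term incidence relation has a term that vanishes (because a certain left-extreme sink set would be exceeded), and $P_I$ is determined monomially from $\prec$-earlier \emph{extremal} coordinates. In the remaining case one shows, via \cref{addingedges}, that the bottom left-greedy path of $C$ uses a vertical edge not used by any $\prec$-earlier extremal path collection; hence $P_I$ is algebraically independent of those earlier monomials, i.e.\ $I\in S_{v,w}$, contradicting the hypothesis. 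That graph-theoretic step is exactly where $I\notin S_{v,w}$ enters, and your proposal has no counterpart to it.
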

\begin{proof}

Let $P=P(F)$. Suppose $I$ with $|I|=t$ is the index of an extremal non-zero \Pl coordinate of $F$ which is not in $S_{v,w}$. Note that if $I_{min}$ is the $\prec$ minimal index of a non-zero \Pl coordinate, then $P_{I_{min}}$ is always forced to equal $1$ since the corresponding non-intersecting path collection is diagonal. Thus, it suffices by induction to show that $P_I$ can be expressed, via three-term incidence \Pl relations, in terms of extremal non-zero \Pl coordinates $P_{J}$ with $J\prec I$

By \Cref{extremunique}, we can let $C$ be the unique non-intersecting path collection in $G_{v,w}$ with sink set $I$. We will assume not all paths in $C$ are diagonal, or equivalently, that $I$ is not Gale minimal amongst the size $t$ indices of non-zero \Pl coordinates, since if it were, $P_I=1$ and so $P_I$ is determined without any equations. Let $K$ be a set of vertices such that the paths in $C$ originating from $K$ form a greedy and left extreme path collection (that is, the topmost $k$ sources of $C$ for some $0\leq k\leq n$). It is possible that $K$ is not uniquely defined: If the bottom-most path of $K$ is diagonal because it is unable to make any left turns, it can be removed and the remaining smaller set still satisfies the definition. Thus, we may assume without loss of generality that the path originating from the bottom-most vertex of $K$ is not diagonal. Suppose the bottom-most vertex of $K$ lies on strand $a$, and the path originating on strand $a$ in $C$ terminates at $b>a$. Then, $I'=(I\setminus b)\cup a$ is also an extremal index and $I=\Xi_{v,w}(I')$. We will proceed by cases on where the source vertex $(t+1)'$ lies with respect to $K$. 

If the vertices in $K$ all lie above the vertex labeled $(t+1)'$, then consider the non-intersecting path collection $C'$ with source set $[t+1]'$ such that the subcollection originating from $K$ is a left extreme path collection and the subcollection originating from the remaining vertices is diagonal. The sink set will be the extremal index $I\cup f$, where the vertex labeled $(t+1)'$ lies on strand $f$. In this case, $f<a<b$. Note that $I'\cup f$ is an extremal index as well. Thus, we have the three-term incidence \Pl relation $P_{I'}P_{I\cup f}=P_{I}P_{I'\cup f}+P_{(I'\setminus a)\cup f}P_{I'\cup b}$. Note that, $P_{(I'\setminus a)\cup f}=0$ since in the unique non-intersecting path collection with sink set $I'$, the path with sink $a$ must be diagonal and so it cannot be replaced by a path with sink $f<a$. Since $I'$ is lexicographically less than $I$, we have determined $P_I$ according to the desired condition and in fact, we have determined it by a monomial relation. This situation is illustrated in \Cref{examplealgindependent1}.

    \begin{figure}[H]
 \begin{tikzpicture}[node distance={10.5 mm}, thick, main/.style = {draw, circle,minimum size=2 mm}, 
blank/.style={circle, draw=green!0, fill=green!0, very thin, minimum size=3.5mm},]

\node[main] (1) {$3'$};
\node[main] (2) [above of=1] {$4'$};
\node[main] (3) [above of = 2] {$1'$}; 
\node[main] (4) [above of=3] {$2'$};
\node[main](5) [above of = 4] {$5'$};
\node (blank)[above of = 5]{};
\node[main](55) [right of = blank]{$5$};
\node(54) [below of = 55]{};
\node(53) [below of = 54]{};
\node(52) [below of = 53]{};
\node(51) [below of = 52]{};
\node[main] (44) [right  of=55] {$4$};
\node(43) [below of = 44]{};
\node(42) [below of = 43]{};
\node(41) [below of = 42]{};
\node[main] (33) [right of = 44] {$3$};
\node(32) [below of = 33]{};
\node(31) [below of = 32]{};
\node[main] (22) [right of = 33] {$2$};
\node(21) [below of = 22]{};
\node[main] (11) [right of = 22] {$1$};

\draw[] (5) -- ([xshift=-5mm,yshift=-5mm]55.center);
\draw[-{Latex[length=3mm]}, dashed] ([xshift=-5mm,yshift=-5mm]55.center) -- (55);
\draw[dashed] (4) -- ([xshift=-5mm,yshift=-5mm]54.center);
\draw[] ([xshift=-5mm,yshift=-5mm]54.center)--([xshift=-5mm,yshift=-5mm]44.center);
\draw[-{Latex[length=3mm]}, dashed] ([xshift=-5mm,yshift=-5mm]44.center) -- (44);
\draw[dashed] (3) -- ([xshift=-5mm,yshift=-5mm]43.center);
\draw[-{Latex[length=3mm]}] ([xshift=-5mm,yshift=-5mm]43.center)--(33);
\draw[-{Latex[length=3mm]}] (2)--(22);
\draw[-{Latex[length=3mm]},dotted] (1)--(11);

\draw[-{Latex[length=3mm]}, dashed] ([xshift=-5mm,yshift=-5mm]54.center) -- ([xshift=-5mm,yshift=-5mm]55.center);
\draw[-{Latex[length=3mm]}] ([xshift=-5mm,yshift=-5mm]53.center) -- ([xshift=-5mm,yshift=-5mm]54.center);
\draw[-{Latex[length=3mm]},dashed] ([xshift=-5mm,yshift=-5mm]43.center) -- ([xshift=-5mm,yshift=-5mm]44.center);
\draw[-{Latex[length=3mm]}] ([xshift=-5mm,yshift=-5mm]31.center) -- ([xshift=-5mm,yshift=-5mm]32.center);
\draw[-{Latex[length=3mm]}] ([xshift=-5mm,yshift=-5mm]32.center) -- ([xshift=-5mm,yshift=-5mm]33.center);
\draw[-{Latex[length=3mm]}] ([xshift=-5mm,yshift=-5mm]21.center) -- ([xshift=-5mm,yshift=-5mm]22.center);

\end{tikzpicture} 
\caption{Illustration of the case where $(t+1)'$ lies below $K$ in the case $v=34125$, $w=w_0$ and $t=2$. The dashed path collection is the unique non-intersecting path collection whose sink set is the extremal index $I=\{4,5\}$. In this case, the bottom-most vertex in $K$ is $1'$ lying on strand $a=3$ and the path originating at $1'$ terminates at $b=4$. Thus, $I'=\{3,5\}$. The dotted path shows the path originating at $3'$ and terminating at $f=1$. Note that $I\cup f$, and $I'\cup f$ are both easily seen to be extremal indices by our characterization of extremal path collections as unions of left extreme and diagonal path collections in \Cref{extremunique}.}
\label{examplealgindependent1}
 \end{figure}

Otherwise, $K$ contains vertices below the vertex labeled $(t+1)'$. Consider the non-intersecting path collection $C'$ of size $t+1$ where the paths originating at $K\cup \{(t+1)'\}$ are greedy and the rest are diagonal. This is precisely the construction we used in \Cref{extremalflag} to show that every extremal index is contained in a larger extremal index, and so the sink set of $C'$ must be of the form $I\cup d$ for some $d\in[n]$. Furthermore, since the lowest strand in $K$ is $a$, we have $d\geq  a$. 

We first consider the case where $d>b>a$. In this case, we have a three-term incidence \Pl relation which says that $P_IP_{I'\cup d}=P_{I'}P_{I\cup d}+P_{(I\setminus b)\cup d}P_{I\cup a}$. Observe that since $d>b$ and the paths originating from $K$ are already left extreme in $C$, $P_{(I\setminus b)\cup d}=0$. Also, since extremal path collections are unions of greedy and diagonal path collections and $I\cup d$ is extremal, $I'\cup d$ is an extremal index as well. Since $I'$ is lexicographically less than $I$, we have determined $P_I$ according to the desired condition and in fact, we have determined it by a monomial relation. This situation is illustrated in \Cref{examplealgindependent2}.

    \begin{figure}[H]
 \begin{tikzpicture}[node distance={10.5 mm}, thick, main/.style = {draw, circle,minimum size=2 mm}, 
blank/.style={circle, draw=green!0, fill=green!0, very thin, minimum size=3.5mm},]

\node[main] (1) {$1'$};
\node[main] (2) [above of=1] {$3'$};
\node[main] (3) [above of = 2] {$4'$}; 
\node[main] (4) [above of=3] {$2'$};
\node[main](5) [above of = 4] {$5'$};
\node (blank)[above of = 5]{};
\node[main](55) [right of = blank]{$5$};
\node(54) [below of = 55]{};
\node(53) [below of = 54]{};
\node(52) [below of = 53]{};
\node(51) [below of = 52]{};
\node[main] (44) [right  of=55] {$4$};
\node(43) [below of = 44]{};
\node(42) [below of = 43]{};
\node(41) [below of = 42]{};
\node[main] (33) [right of = 44] {$3$};
\node(32) [below of = 33]{};
\node(31) [below of = 32]{};
\node[main] (22) [right of = 33] {$2$};
\node(21) [below of = 22]{};
\node[main] (11) [right of = 22] {$1$};

\draw[] (5) -- ([xshift=-5mm,yshift=-5mm]55.center);
\draw[-{Latex[length=3mm]}, dashed] ([xshift=-5mm,yshift=-5mm]55.center) -- (55);
\draw[dashed] (4) -- ([xshift=-5mm,yshift=-5mm]54.center);
\draw[] ([xshift=-5mm,yshift=-5mm]54.center)--([xshift=-5mm,yshift=-5mm]44.center);
\draw[-{Latex[length=3mm]},dotted] ([xshift=-5mm,yshift=-5mm]44.center) -- (44);
\draw[] (3) -- ([xshift=-5mm,yshift=-5mm]43.center);
\draw[-{Latex[length=3mm]}] ([xshift=-5mm,yshift=-5mm]43.center)--(33);
\draw[dotted] (2) -- ([xshift=-5mm,yshift=-5mm]42.center);
\draw[-{Latex[length=3mm]}] ([xshift=-5mm,yshift=-5mm]42.center)--(22);
\draw[dashed](1)--([xshift=-5mm,yshift=-5mm]31.center);
\draw[-{Latex[length=3mm]}] ([xshift=-5mm,yshift=-5mm]31.center)--(11);

\draw[-{Latex[length=3mm]}, dashed] ([xshift=-5mm,yshift=-5mm]54.center) -- ([xshift=-5mm,yshift=-5mm]55.center);
\draw[-{Latex[length=3mm]}] ([xshift=-5mm,yshift=-5mm]51.center) to [out=-285,in=-78] ([xshift=-5mm,yshift=-5mm]54.center);
\draw[-{Latex[length=3mm]},dotted] ([xshift=-5mm,yshift=-5mm]43.center) -- ([xshift=-5mm,yshift=-5mm]44.center);
\draw[-{Latex[length=3mm]},dotted] ([xshift=-5mm,yshift=-5mm]42.center) -- ([xshift=-5mm,yshift=-5mm]43.center);
\draw[-{Latex[length=3mm]},dashed] ([xshift=-5mm,yshift=-5mm]31.center) -- ([xshift=-5mm,yshift=-5mm]32.center);
\draw[-{Latex[length=3mm]},dashed] ([xshift=-5mm,yshift=-5mm]32.center) -- ([xshift=-5mm,yshift=-5mm]33.center);
\draw[-{Latex[length=3mm]}] ([xshift=-5mm,yshift=-5mm]21.center) -- ([xshift=-5mm,yshift=-5mm]22.center);

\end{tikzpicture} 
\caption{Illustration of the case where $(t+1)'$ does not lie below $K$ in the case $v=13425$, $w=54231$ and $t=2$. The dashed path collection is the unique non-intersecting path collection whose sink set is the extremal index $I=\{3,5\}$. In this case, the bottom-most vertex in $K$ is $1'$ lying on strand $a=1$ and the path originating at $1'$ terminates at $b=3$. The dotted path shows the path originating at $3'$ and terminating at $d=4>b$. Note that $I\cup d$, $(I\setminus b)\cup a$ and $(I\setminus b)\cup ad$ are all easily seen to be extremal indices by our characterization of extremal path collections as unions of left extreme and diagonal path collections in \Cref{extremunique}.}
\label{examplealgindependent2}
 \end{figure}

Finally, we show by contradiction that we cannot have $b>d\geq a$. We use two subcases. For the first subcase, we suppose the path originating on strand $a$ in $C$ is identical to the path originating on strand $a$ in $C'$. If $d=a$, we must have a diagonal path on strand $a$ in $C'$ and thus also in $C$. This contradicts our definition of $K$. Therefore, in this subcase, $d> a$. We study the three-term relation $P_{(I\setminus b)\cup d}P_{I\cup a}=P_{I'\cup d}P_I+P_{I'}P_{I\cup d}$. We first show that $P_{I\cup a}=0$. To see this, observe that all the paths originating below strand $a$ are already diagonal in $C$, which has sink set $I$. Moreover, the vertex labeled $(t+1)'$ lies above strand $a$. Thus, the only way to have a non-intersecting path collection originating at $[t+1]'$ with sink set $I\cup a$ is to have a diagonal path on strand $a$. If we denote by $J$ the sinks of paths originating from $K$ in $C$, then the paths originating above strand $a$ in $C'$ would need to reach sink set $J$. However, in $C'$, the paths originating above strand $a$ are left extreme and achieve sink set $(J\setminus b)\cup d$, which is Gale less than $J$. Thus, it is impossible to have the desired non-intersecting path collection originating above strand $a$ and terminating at $J$, and so $P_{I\cup a}=0$. However, by definition of $I'$ and $d$, $P_{I'}P_{I\cup d}\neq 0$. Moreover, all \Pl are nonnegative. Thus, this case can never occur. 

Now, we consider the subcase where the path $p$ originating on strand $a$ in $C$ differs from the path $p'$ originating on strand $a$ in $C'$. By greediness, it must be the case that $p$ takes a left turn on a vertical edge $e$ which $p'$ is blocked from using. We claim that $e$ cannot be used by any graph extremal path collection consisting of at least $t+1$ paths. To show this, recall that $L_\tau$ denotes the greedy path collection originating from $[\tau]'$. Then, $e$ is not used in $L_{t+1}$, since whichever path blocks $p'$ from using edge $e$ in $C'$ will also block edge $e$ from being used in $L_{t+1}$. Consequently, by \Cref{addingedges}, $e$ is not used in $L_{\tau}$ for any $\tau>t$. Also, any vertical edge used by an extremal path collection consisting of $\tau$ paths is used by $L_\tau$, since that vertical edge must be used by the greedy part of the extremal path collection. Thus, $e$ is not used by any extremal path collection consisting of at least $t+1$ paths. Further, consider the unique non-intersecting path collection $\tilde{C}$ with sink set $\tilde{I}$ for any extremal $\tilde{I}$ which is of size $t$ and is Gale less than $I$. The path collection $\tilde{C}$ is obtained as the union of a diagonal path collection and the left extreme path collection originating from the top $\tilde{r}$ vertices of $K$ for some $\tilde{r}<|K|$. Since it is the path originating from the bottom-most strand $a$ of $K$ that uses $e$ in $C$, the edge $e$ is not used by $\tilde{C}$. This means that $C$ uses an edge that was not used by any non-intersecting path collections with sink set $J\prec I$ and thus the extremal non-zero \Pl coordinate corresponding to this path collection must be algebraically independent of those earlier than it in $\prec$ order. This contradicts the fact that $I\notin S_{v,w}$.

Thus, we can determine any extremal non-zero \Pl coordinate with index not in $S_{v,w}$ from those whose indices are in $S_{v,w}$ by three-term incidence \Pl relations.
\end{proof}

\begin{lemmabis}{Svwgenerate}\label{tropicalSvwgenerate}
For any point $P\in \textnormal{FlDr}_{v,w}^{\geq 0}$, any extremal non-infinite \Pl coordinate of $P$ whose index is not in $S_{v,w}$ is determined from the coordinates whose indices are in $S_{v,w}$ by positive tropical three-term incidence \Pl relations.
\end{lemmabis}

\begin{proof}
Argue exactly as in the proof of \Cref{Svwgenerate}, observing that any time we use incidence \Pl relations to determine a previously unknown coordinate, we do so in such a way that the corresponding tropical coordinate is also uniquely determined by the corresponding positive tropical incidence \Pl relation. 
\end{proof}

\begin{proposition}\label{prop:oneextraweightvw}
    Let $P=\Phi_{v,w}(\bm{a})$. Let $N$ be the number of extremal indices of a flag in $\mathcal{R}_{v,w}^{>0}$. Then, there are sets $A_i$ and a total order $\prec$ on the extremal indices $\{E_1\prec \cdots \prec E_N\}$ for $\mathcal{R}_{v,w}^{>0}$ such that $P_{E_{i}}=\prod_{k\in A_i}a_k$ and $\left|A_i\setminus \left(\bigcup_{j<i}A_j\right)\right|\leq 1$.
\end{proposition}

\begin{proof}
    The proof is identical to the proof of \Cref{prop:oneextraweight}, with $id,w_0$ replaced by $v,w$ and with references to \Cref{topcellcor} and \Cref{monomialtopcellcor} replaced by references to \Cref{extremunique} and \Cref{extremuniquecor}, respectively.
\end{proof}

\begin{proposition}\label{generalinverseLaurent}
The map $\Psi_{v,w}$, which is inverse to $\Phi_{v,w}$ on $\mathcal{R}_{v,w}^{>0}$, consists of Laurent monomials in the \Pl coordinates whose indices are in $S_{v,w}$. 
\end{proposition}

\begin{proof}

The proof is nearly identical to the proof of \Cref{inverseLaurent}, but with some subtle changes, so we reproduce it here.

We use the notation of \Cref{prop:oneextraweight}. We additionally denote by $S_1\prec\cdots \prec S_{N'}$ the subsequence of $E_1\prec\cdots \prec E_N$ consisting of extremal indices which are in $S_{v,w}$. For each $i\in [N]$, let ${k_i}$ be the unique element of $A_i\setminus \left(\bigcup_{j<i}A_j\right)$ if this set is nonempty and let $k_i$ be $0$ otherwise. Note that, by the definition of $S_{v,w}$, this set difference can only be nonzero if $E_i=S_{i'}$ for some $i'$. We let $a_0=1$ so that $a_{k_i}$ is defined for all $i\in [N]$. If $a_{k_i}=1$, there is nothing to show. Accordingly, we now show by induction that when $a_{k_i}\neq 1$, we can express $a_{k_i}$ as a Laurent monomial in $\{P_{S_1},\ldots ,P_{S_{i'}}\}$, where $i'$ is such that $S_{i'}=E_i$. 

As a base case, $E_1$ is Gale minimal. Thus, the corresponding extremal path collection is a diagonal path collection and $A_1=\emptyset$. It follows that $a_{k_1}=1$ and there is nothing to show. 

For the induction step, suppose that for $l<i$, $a_{k_l}$ is expressible as a Laurent monomial in $\{P_{S_{1}},\ldots, P_{S_{l'}}\}$. Suppose $a_{k_i}\neq 1$. Then, \Cref{prop:oneextraweightvw} gives a way to express $a_{k_i}$ as a Laurent monomial in $P_{S_{i'}}$ and $\{a_{k_j}\mid j<i\}$. By induction, this gives a way to express $a_{k_i}$ as a Laurent monomial in $\{P_{S_{1}},\ldots, P_{S_{i'}}\}$. 

We are just left to show that every weight $a_i$ gets determined in this way. By \Cref{3termgenerate}, the extremal \Pl coordinates determine all other \Pl coordinates. Moreover, by definition of $S_{v,w}$, the \Pl coordinates with indices in $S_{v,w}$ determine all the extremal \Pl coordinates. Thus, those \Pl coordinates with indices in $S_{v,w}$ determine all other \Pl coordinates. Thus, they must also determine the weights $a_i$ appearing in $G_{id,w_0}(\mathbf{a})$. Accordingly, every weight $a_i$ appears in the extremal path collection with sink set $S_{i'}$ for some value of $i'$. We conclude that all the weights are determined as Laurent monomials in the extremal non-zero \Pl coordinates by the induction in the previous paragraph.
\end{proof}

\begin{propbis}{generalinverseLaurent}\label{generaltropinverse}
 The map $\Trop\Psi_{v,w}$ is inverse to $\Trop\Phi_{v,w}$ and does not involve minimization.
\end{propbis}

\begin{proof}
By \Cref{generalinverseLaurent}, the map $\Psi_{v,w}$ expresses the weights $a_i$ as Laurent monomials in the extremal non-zero \Pl coordinates of $F$. By \Cref{extremunique}, the map $\Phi_{v,w}$ expresses the extremal \Pl coordinates as monomials in $\bm{a}$. We know that $\Phi_{v,w}$ and $\Psi_{v,w}$ are inverses. Since tropicalizing converts products and quotients to sums and differences, respectively, we will have that $\Trop \Psi_{v,w}$ can be written as sums and differences of extremal \Pl coordinates and, moreover, that $\Trop \Psi_{v,w}$ and $\Trop \Phi_{v,w}$ are inverses.
\end{proof}

We provide an example of \Cref{prop:oneextraweightvw}. 
\begin{example}\label{monomialsrunning}

We recall the graph $G_{v,w}=G_{1324,4213}$ from \Cref{toymodel2} and \Cref{toymodelweighted} in \Cref{graphrunning}. We also recall the extremal indices in the cell $\mathcal{R}_{1324,4213}^{\geq 0}$ are $\{1\}$, $\{3\}$, $\{1,3\}$, $\{2,3\}$, $\{1,2,3\}$, $\{1,3,4\}$ and $\{2,3,4\}$.

     \begin{figure}[H]
 \begin{tikzpicture}[node distance={10.5 mm}, thick, main/.style = {draw, circle,minimum size=2 mm}, 
blank/.style={circle, draw=green!0, fill=green!0, very thin, minimum size=3.5mm},]

\node[main] (1) {$1'$};
\node[main] (2) [above of=1] {$3'$};
\node[main] (3) [above of = 2] {$2'$}; 
\node[main] (4) [above of=3] {$4'$};
\node (blank)[above of = 4]{};
\node[main] (44) [right  of=blank] {$4$};
\node[main] (33) [right of = 44] {$3$};
\node[main] (22) [right of = 33] {$2$};
\node[main] (11) [right of = 22] {$1$};
\node (bot1)[right of =2]{};
\node (top1)[right of = 4]{};
\node (bot2)[right of = 4]{};
\node (bot5) [right of = 3]{};
\node (bot4)[right of =bot5]{};
\node (bot6)[right of = bot4]{};
\node (bot7)[above of = bot6]{};
\node (top4)[above of = bot4]{};
\node (top6)[right of = top4]{};
\node (top7)[above of = top6]{};
\node (colend) [below of = 33]{};
\draw[-{Latex[length=3mm]}] ([xshift=-5mm,yshift=-5mm]bot5.center) to [out=-285,in=-72] ([xshift=-5mm,yshift=-5mm]44.center);
\path (top4) -- (44) node [near start, left] {$a_2\;\;\;\;\;$};

\draw[-{Latex[length=3mm]}] (1) -- (11);
\draw[-{Latex[length=3mm]}] (2) -- (22);
\draw[-{Latex[length=3mm]}] (3) -- (33);
\draw[-{Latex[length=3mm]}] (4) -- (44);
\draw[-{Latex[length=3mm]}] ([xshift=-5mm,yshift=-5mm]bot1.center) to [out=-260,in=-102] ([xshift=-5mm,yshift=-5mm]top1.center);
\path ([xshift=-5mm,yshift=-5mm]bot1.center)--([xshift=-5mm,yshift=-5mm]top1.center) node [pos=0.35, right] {$a_1$};

\draw[-{Latex[length=3mm]}] ([xshift=-5mm,yshift=-5mm]bot7.center) -- ([xshift=-5mm,yshift=-5mm]top7.center) node [pos=0.6, right] {$a_4$};
\draw[line width=0.75mm, color=dark-gray] ([xshift=-5mm,yshift=-5mm]bot5.center) -- ([xshift=-5mm,yshift=-5mm]colend.center);
\end{tikzpicture} 
\caption{The graph $G_{1324,4213}$.}
 \label{graphrunning}
 \end{figure}

We start with the Gale minimal extremal index of size $3$, which is $\{1,2,3\}$. The unique non-intersecting path collection with source set $[3]'$ and sink set $\{1,2,3\}$ has weight $1$. The Gale-next extremal index of size $3$ is $\{1,3,4\}$. The unique non-intersecting path collection with source set $[3]'$ and sink set $\{1,3,4\}$ has weight $a_2$, so $a_2=P_{134}$. Next, we look at $\{2,3,4\}$. The unique non-intersecting path collection with source set $[3]'$ and sink set $\{2,3,4\}$ has weight $a_2a_4$, so $a_4=\frac{P_{234}}{P_{134}}$. Now we move on to the Gale minimal extremal index of size $2$, which is $\{1,3\}$. The corresponding non-intersecting path collection has weight $1$. The Gale-next extremal index of size $2$ is $\{2,3\}$. The unique non-intersecting path collection with source set $[2]'$ and sink set $\{2,3\}$ has weight $a_4$. We have already determined $a_4$ as a Laurent monomial in the extremal non-zero \Pl coordinates, so we have nothing further to do here. However, we know that the relation $P_{23}=a_4=\frac{P_{234}}{P_{134}}$ follows using only three-term incidence \Pl relations. Indeed, it follows from $P_{23}P_{134}=P_{13}P_{234}+P_{34}P_{123}$, with $P_{13}=1$ and $P_{34}=0$. Finally, we move to the Gale minimal extremal index of size $1$, which is just $\{1\}$. The corresponding non-intersecting path collection has weight $1$. The Gale-next extremal index of size $1$ is $\{3\}$. The unique non-intersecting path collection with source set $[1]'$ and sink set $\{3\}$ has weight $a_1$, so $a_1=P_3$. Thus, we have determined all the weights as Laurent monomials in the extremal non-zero \Pl coordinates. By definition, $S_{1324,4213}=\{\{1,3,4\},\{2,3,4\},\{3\}\}$ and we see that the weights are in fact Laurent monomials in the extremal \Pl coordinates with indices in $S_{1324,4213}$. 
\end{example}

We are now ready to prove our main results. The next theorem shows that the totally nonnegative complete flag variety exactly coincides with the part of the complete flag variety where all \Pl coordinates are nonnegative. The tropicalization of this result will say that the totally nonnegative tropical complete flag variety is identical to the totally nonnegative complete flag Dressian.

\begin{theorem}\label{nonnegPluckCoords}
The totally nonnegative complete flag variety $\textnormal{Fl}_n^{\geq 0}$ equals the set $\{F\in \textnormal{Fl}_n\mid\;P_I(F)\geq 0\; \forall \;\emptyset\neq I\subset [n]\}$.
\end{theorem}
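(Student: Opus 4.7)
One direction, namely that every $F \in \textnormal{Fl}_n^{\geq 0}$ has nonnegative Plücker coordinates, is already \cref{forwarddir}. So the plan is to prove the reverse inclusion: given $F \in \textnormal{Fl}_n$ with $P_I(F) \geq 0$ for every $I \subseteq [n]$, I want to exhibit positive weights $\bm{a}$ and a pair $v \leq w$ so that $F = M_{v,w}(\bm{a})$ under the Marsh--Rietsch parameterization, which places $F$ inside the cell $\mathcal{R}_{v,w}^{>0} \subseteq \textnormal{Fl}_n^{\geq 0}$. The strategy parallels the proof of \cref{topcellnonnegPluckCoords}, only now the parameterization lives in an arbitrary cell rather than the top cell.

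First I would use \cref{synthetic} to pin down the correct cell: the support of $P(F)$ is a synthetic flag positroid, so by that lemma it is a realizable flag positroid, i.e.\ it coincides with the support of some (and hence a unique) cell $\mathcal{R}_{v,w}^{>0}$. Concretely, one reads off $v^{-1}$ and $w^{-1}$ from the Gale-minimal and Gale-maximal non-zero Plücker indices of each size, exactly as in the proof of \cref{synthetic}. Having fixed this cell, I would apply $\Psi_{v,w}$ to the Plücker coordinates of $F$ indexed by $S_{v,w}$. By \cref{generalinverseLaurent}, $\Psi_{v,w}$ is a collection of Laurent monomials in those coordinates; since indices in $S_{v,w}$ index extremal non-zero Plücker coordinates of the cell $\mathcal{R}_{v,w}^{>0}$, which agree with the support of $F$, all of the relevant $P_I(F)$ are strictly positive. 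Thus the output is a vector $\bm{a} \in \mathbb{R}_{>0}^{\ell(w)-\ell(v)}$ of honest positive real weights.

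Next, let $Q = \Phi_{v,w}(\bm{a}) \in \mathbb{R}\mathbb{P}^{\binom{n}{1}-1} \times \cdots \times \mathbb{R}\mathbb{P}^{\binom{n}{n-1}-1}$, which represents the Plücker coordinates of a flag $F' \in \mathcal{R}_{v,w}^{>0}$. By construction $P(F)$ and $Q$ agree on every coordinate indexed by $S_{v,w}$. Invoking \cref{Svwgenerate}, which expresses each extremal non-zero Plücker coordinate (both in $F$ and in $F'$) as a rational combination of coordinates indexed by $S_{v,w}$ via three-term incidence Plücker relations, I get agreement on all extremal non-zero Plücker coordinates. Then \cref{3termgenerate} extends this: it reconstructs every non-zero Plücker coordinate from the extremal ones using three-term incidence Plücker relations that are satisfied by both $P(F)$ (since $F$ is an honest flag) and $Q$ (since $F' \in \textnormal{Fl}_n$). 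Finally, the coordinates that are zero for $Q$ are exactly those whose indices lie outside the shared support of $F$ and $F'$, so those agree too. Hence $P(F) = Q = P(F')$, forcing $F = F' \in \mathcal{R}_{v,w}^{>0} \subseteq \textnormal{Fl}_n^{\geq 0}$.

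The main obstacle is making sure the three-term relations in \cref{Svwgenerate} and \cref{3termgenerate} propagate equality from $S_{v,w}$ outward without division by zero, so that at each step the common value of the known Plücker coordinates on both sides actually forces the next coordinate to agree for $F$ and $F'$. This is precisely why those two lemmas were proved in the monomial form they were, rather than just as algebraic consequences: at each stage the unknown coordinate sits on the monomial side of a three-term relation whose other entries are already known and strictly positive (or else the unknown is itself forced to be zero), so the value for $F$ is uniquely determined and must match the value for $F'$. All the technical care has already been absorbed into those lemmas, so the argument here is short and mostly bookkeeping.
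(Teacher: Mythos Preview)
Your proposal is correct and follows essentially the same approach as the paper's proof: identify the cell via \cref{synthetic}, apply the Laurent monomial map $\Psi_{v,w}$ to the coordinates indexed by $S_{v,w}$ to obtain positive weights, and then use \cref{Svwgenerate} together with \cref{3termgenerate} to conclude that $P(F)$ agrees with $\Phi_{v,w}(\bm{a})$ on all Pl{\"u}cker coordinates. Your additional remarks on why the relevant coordinates are strictly positive and why the propagation avoids division by zero make explicit what the paper leaves implicit, but the argument is the same.
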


\begin{proof}
We already established in \Cref{forwarddir} that for any $F$ in $\textnormal{Fl}_n^{\geq 0}$, we have $P_I(F)\geq 0$ for any $\emptyset\neq I\subset[n]$. We are left to prove the reverse direction. 

Let $F$ be any flag in $\textnormal{Fl}_n$ such that $P_I(F)\geq 0$ for any $\emptyset\neq I\subset [n]$. By \Cref{synthetic}, the flag matroid corresponding to the non-zero \Pl coordinates of $F$ is $\bm{\mathcal{M}^{v,w}}$ for some $v\leq w\in \mathfrak{S}_n$. We will show that $F$ has a representative in the corresponding $\mathcal{R}_{v,w}^{>0}$, completing the proof of the theorem.

Specifically, we construct weights $\bm{a}$ on $G_{v,w}$ such that $\Phi_{v,w}(\bm{a})=P(F)$. Using \Cref{generalinverseLaurent}, apply the Laurent monomial map $\Psi_{v,w}$ to the subset of the extremal non-zero \Pl coordinates of $F$ whose indices lie in $S_{v,w}$. This yields a collection of positive weights $\bm{a}$. Let $Q=\Phi_{v,w}(\bm{a})$. This means $Q=P(F')$ for some $F'\in \mathcal{R}_{v,w}^{>0}$. By construction, $P(F)$ and $Q$ have the same support. Since $\Phi_{v,w}$ and $\Psi_{v,w}$ are inverse maps, $Q$ and $P(F)$ also agree on coordinates with indices in $S_{v,w}$. By \Cref{Svwgenerate} and \Cref{3termgenerate}, the three-term incidence \Pl relations determine all the other \Pl coordinates in terms of the coordinates with indices in $S_{v,w}$. Thus, $P(F)$ and $Q$ agree on all \Pl coordinates and so, $F$ lies in $\mathcal{R}_{v,w}^{>0}$.
\end{proof}

\begin{thmbis}{nonnegPluckCoords}\label{tropnonnegPluckcoords}
The totally nonnegative tropical complete flag variety $\textnormal{TrFl}_n^{\geq 0}$ equals the totally nonnegative complete flag Dressian $\textnormal{FlDr}_n^{\geq 0}$.
\end{thmbis}

\begin{proof}
It is clear by definition that $\textnormal{TrFl}_n^{\geq 0}\subseteq \textnormal{FlDr}_n^{\geq 0}$. We are left to prove the reverse inclusion. 

Let $P\in \textnormal{FlDr}_n^{\geq 0}$. By \Cref{tropsynthetic}, $P\in \textnormal{FlDr}^{> 0}_{v,w}$ for some $v\leq w\in \mathfrak{S}_n$. Using \Cref{generaltropinverse}, Apply the map $\Trop\Psi_{v,w}$ to the subset of the extremal non-infinite \Pl coordinates of $P$ whose indices lie in $S_{v,w}$. This yields a collection of real weights $\bm{a}$. Consider $Q=\Trop\Phi_{v,w}(\bm{a})\in \textnormal{TrFl}_n^{\geq 0}$. By construction, $P$ and $Q$ have the same support. Since $\Trop \Phi_{v,w}$ and $\Trop\Psi_{v,w}$ are inverse maps, $P$ and $Q$ agree on those \Pl coordinates with indices in $S_{v,w}$. By \Cref{tropicalSvwgenerate} and \Cref{tropical3termgenerate}, the positive tropicalizations of the three-term incidence \Pl relations determine all the other \Pl coordinates in terms of the coordinates with indices in $S_{v,w}$. Since both $\textnormal{FlDr}_n^{\geq 0}$ and $\textnormal{TrFl}_n^{\geq 0}$ satisfy the positive tropicalizations of the three-term incidence \Pl relations, $P$ and $Q$ agree on all \Pl coordinates. Thus, $P$ lies in $\textnormal{TrFl}_n^{\geq 0}$.
\end{proof}

\section{Proof of \Cref{extremunique}}\label{sec:BigProof}

In this section, we present the proof of \Cref{extremunique}. The proof is by a careful induction. The base cases are involved in their own rights and are presented as their own results in this section. 

Throughout this section, fix an integer $k\in[n]$ and let $O_{v,w}$ be the set of strands in $G_{v,w}$ whose source labels are in $[k]'$. Note that, by \Cref{vertexinversion}, $O_{v,w}$ consists of strands $\{v^{-1}(i)\mid i\in[k]\}$. For $q\in[n]$, we will also denote by $O_{v,w}(q)$ the subset of $O_{v,w}$ of strands weakly above strand $q$. If $C$ is a path collection originating from $O_{v,w}$, we will denote by $C(q)$ the subcollection of $C$ originating from $O_{v,w}(q)$.

Our proof will induct on $\ell(v)$ and on the size of our path collection. We first establish the base case with respect to our induction on $\ell(v)$ by considering graphs $G_{id,w}$. 

\begin{definition}
    Let $v\leq w\in \mathfrak{S}_n$. We will call a non-intersecting path collection $C$ from strands $O$ to sink set $S$ in $G_{v,w}$ \textbf{unique} if it is the unique non-intersecting path collection from $O$ to $S$ in $G_{v,w}$.
\end{definition}

\begin{lemma} \label{idextremunique} 

  Let $w\in \mathfrak{S}_n$, and let $F\in \mathcal{R}_{id,w}^{> 0}$. Suppose $I$ is an extremal index of $F$. There is a unique extremal path collection with sink set $I$ in the graph $G_{id,w}$. It is the union of a diagonal path collection and a greedy path collection.

  Conversely, the sink set of any non-intersecting path collection in $G_{id,w}$ that is the union of a greedy path collection originating from the top $q$ vertices of $[|I|]'$ and a diagonal path collection originating from the remaining vertices of $[|I|]'$ for some $0\leq q\leq |I|$ is an extremal index.

 \end{lemma}

 \begin{proof}

Let $G=G_{id,w}$. A graph extremal index in $G$ is the sink set of a non-intersecting path collection consisting of a left extreme path collection originating from strands $O_{id,w}(q)$, union the unique diagonal path collection originating from strands $O_{id,w}\setminus O_{id,w}(q)$, for some $0\leq q\leq k+1$ (where $O_{id,w}(k+1)=\emptyset$). Let $C$ be the greedy path collection originating from $O_{id,w}(q)$. There are no vertical edges skipping strands in $G$, because strands are never permuted when constructing $G$. Thus, greedy paths are left extreme. We are just left to show that $C$ is unique. 

We will first prove an auxiliary fact: Suppose there is a path $p_{\kappa}$ which originates at strand $1\leq\kappa\leq k$, does not intersect $p_{\kappa'}^C$ for any $\kappa<\kappa'\leq k$ and which terminates at the same sink as $p_{\kappa}^C$. Then, $p_\kappa=p_\kappa^C$. We observe that since $p_\kappa^C$ is greedy, it will never lie below $p_\kappa$ and as a result, whenever $p_\kappa$ meets $p_\kappa^C$, it must be at a point where $p_\kappa^C$ is traveling along a diagonal strand and $p_\kappa$ is traveling vertically. This can only happen if $p_\kappa^C$ and $p_\kappa$ both reach a point $(\sigma,c)_{G}$, but $p_\kappa^C$ uses a diagonal strand from $(\sigma,c-1)_G$ to $(\sigma,c)_G$, whereas $p_\kappa$ uses a vertical edge from $(\sigma-1,c)_G$ to $(\sigma,c)_G$. 

Let $\iota$ be the largest number such that there are paths $q_1,q_2,\ldots, q_\iota$ of $C$, necessarily distinct, passing through $(\sigma+1,c)_G,(\sigma+2,c-1)_G,\ldots, (\sigma+\iota,c-\iota+1)_G$. Let $q_0=p^C_\kappa$. We argue by induction that for $0\leq i\leq \iota$, $q_i$ also passes through $(\sigma+i,c-i)_G$. This is true by construction for $q_0=p^C_\kappa$. For $0<i\leq \iota$, assume the claim holds for $q_{i-1}$. Since $q_i$ passes through $(\sigma+i,c-i+1)_G$ and, by induction, $q_{i-1}$ passes through $(\sigma+i-1,c-i+1)_G$, $q_i$ cannot reach $(\sigma+i,c-i+1)_G$ using a vertical edge. It must instead take a diagonal strand from $(\sigma+i,c-i)_G$, proving the claim.

By repeated application of \Cref{posdist}, since we have a vertical edge from $(\sigma,c)_{G}$ to $(\sigma+1,c)_{G}$, we also have a vertical edge $e_\iota$ from $(\sigma+\iota,c-\iota)_{G}$ to $(\sigma+\iota+1,c-\iota)_{G}$. By definition of $\iota$, there is no path of $C$ passing through $(\sigma+\iota+1,c-\iota)_{G}$. The fact that $q_\iota$ did not use $e_\iota$ then contradicts the greediness of $C$. The $\iota=1$ case is illustrated in \Cref{fig:casesforid}.

We now return to proving that $C$ is unique. Suppose that $C'$ has the same source and sink set as $C$. Note that, in any path collection in $G_{id,w}$, the path with the $i^\textnormal{th}$ highest source terminates at the sink the $i^\textnormal{th}$ furthest to the left since no vertical edges skip strands. Thus, for any strand $\kappa$ in $O_{id,w}$, we have that $p_\kappa^{C'}$ terminates at the same sink as $p_\kappa^C$. Thus, applying the claim inductively to $p_k^C, p_{k-1}^C,\ldots, p_1^C$ proves that each path of $C'$ is identical to the path with the same source vertex in $C$. 

The converse of the Lemma holds since all greedy path collections in $G_{id,w}$ are left extreme.  
 
\end{proof}

 \begin{figure}[H]
  
 \scalebox{0.8}{\begin{tikzpicture}[node distance={10.5 mm}, thick, main/.style = {draw, circle,minimum size=2 mm}, 
blank/.style={circle, draw=green!0, fill=green!0, very thin, minimum size=3.5mm},]

\node[main] (1) {$1'$};
\node[main] (2) [above of=1] {$2'$};
\node[main] (3) [above of = 2] {$3'$}; 
\node[main] (4) [above of=3] {$4'$};
\node[main](5) [above of = 4] {$5'$};
\node (blank)[above of = 5]{};
\node[main](55) [right of = blank]{$5$};
\node(54) [below of = 55]{};
\node(53) [below of = 54]{};
\node(52) [below of = 53]{};
\node(51) [below of = 52]{};
\node[main] (44) [right  of=55] {$4$};
\node(43) [below of = 44]{};
\node(42) [below of = 43]{};
\node(41) [below of = 42]{};
\node[main] (33) [right of = 44] {$3$};
\node(32) [below of = 33]{};
\node(31) [below of = 32]{};
\node[main] (22) [right of = 33] {$2$};
\node(21) [below of = 22]{};
\node[main] (11) [right of = 22] {$1$};

\draw[-{Latex[length=3mm]}] (5) -- (55);
\draw[-{Latex[length=3mm]}] (4) -- (44);
\draw[] (3) -- ([xshift=-5mm,yshift=-5mm]53.center);
\draw[dashed] ([xshift=-5mm,yshift=-5mm]53.center) -- ([xshift=-5mm,yshift=-5mm]43.center);
\draw[-{Latex[length=3mm]}] ([xshift=-5mm,yshift=-5mm]43.center)--(33);
\draw[dashed] (2) -- ([xshift=-5mm,yshift=-5mm]52.center);
\draw[] ([xshift=-5mm,yshift=-5mm]52.center) -- ([xshift=-5mm,yshift=-5mm]42.center);
\draw[dotted,blue] ([xshift=-5mm,yshift=-5mm]42.center) -- ([xshift=-5mm,yshift=-5mm]32.center);
\draw[-{Latex[length=3mm]}] ([xshift=-5mm,yshift=-5mm]32.center) -- (22);
\draw[dotted] (1) -- ([xshift=-5mm,yshift=-5mm]41.center);
\draw[dotted,red] ([xshift=-5mm,yshift=-5mm]41.center) -- ([xshift=-5mm,yshift=-5mm]31.center);
\draw[-{Latex[length=3mm]}] ([xshift=-5mm,yshift=-5mm]31.center)--(11);

\draw[-{Latex[length=3mm]}] ([xshift=-5mm,yshift=-5mm]54.center) -- ([xshift=-5mm,yshift=-5mm]55.center);
\draw[-{Latex[length=3mm]},opacity=0.35] ([xshift=-5mm,yshift=-5mm]53.center) -- ([xshift=-5mm,yshift=-5mm]54.center);
\draw[-{Latex[length=3mm]},dashed] ([xshift=-5mm,yshift=-5mm]52.center) -- ([xshift=-5mm,yshift=-5mm]53.center);
\draw[-{Latex[length=3mm]},dashed] ([xshift=-5mm,yshift=-5mm]43.center) -- ([xshift=-5mm,yshift=-5mm]44.center);
\draw[-{Latex[length=3mm]}] ([xshift=-5mm,yshift=-5mm]42.center) -- ([xshift=-5mm,yshift=-5mm]43.center);
\draw[-{Latex[length=3mm]},dotted,blue] ([xshift=-5mm,yshift=-5mm]41.center) -- ([xshift=-5mm,yshift=-5mm]42.center);
\draw[-{Latex[length=3mm]},dotted,color=red] ([xshift=-5mm,yshift=-5mm]31.center) -- ([xshift=-5mm,yshift=-5mm]32.center);
\draw[-{Latex[length=3mm]},dotted] ([xshift=-5mm,yshift=-5mm]32.center) -- ([xshift=-5mm,yshift=-5mm]33.center);

\end{tikzpicture} }
 \caption{Suppose the grey edge in column 1 were not part of this graph. Then the dashed path combined with the dotted path using either the red or the blue dotted edges, would give two different left extreme path collections originating at $\{1',2'\}$. However, the existence of the vertical red edge in column $3$ means, by \Cref{posdist}, we must have the grey edge in the graph. If we include the grey edge, one can verify that we do indeed have a unique left extreme path collection (with sink set $\{4,5\}$), and that path collection is greedy.}
\label{fig:casesforid}

\end{figure}

The next statement is a consequence of the previous Lemma, by considering each of the extremal path collections consisting of a single greedy path union diagonal paths in $G_{id,w}$.

\begin{cor}\label{coruniquepath}
 For any $w\in \mathfrak{S}_n$, there is a unique left extreme path originating from any single source vertex in $G_{id,w}$.

\end{cor}

Having established the base case for $\ell(v)$, we now establish the notation and machinery we will use for the induction from permutations of length $d-1$ to those of length $d$. Let $\bm{w}=s_{i_1}\cdots s_{i_\ell}$ be the positive distinguished subexpression for $w$ in $\bm{w_0}$. Suppose that $v$ has positive distinguished subexpression $\bm{v}=s_{i_{j_1}}\cdots s_{i_{j_m}}$ in $\bm{w}$. Let $\bm{v}^{(d)}=s_{i_{j_1}}\cdots s_{i_{j_d}}$ for $1\leq d\leq m$ be an expression for a permutation of length $d$ which we denote $v^{(d)}$. Note that each $\bm{v}^{(d)}$ is a positive distinguished subexpression in $\bm{w}$. 

\begin{remark} \label{rem:differentnotations} The permutation $v^{(d)}$ differs from $v_{(d)}$, introduced in \Cref{rmk:wkandvk}, where we truncated $v$ to the first $d$ simple reflections of $w$. The latter notation will not appear in the rest of this paper, which we hope will avoid confusion. 
\end{remark}

We fix $1\leq d\leq m$ and also fix $G_1=G_{{v^{(d)}},w}$ and $G_2=G_{v^{(d+1)},w}$. We will argue that if \Cref{extremunique} holds for $G_1$, it also holds for $G_2$. We define $O_1=O_{v^{(d)},w}$ and $O_2=O_{v^{(d+1)},w}$ to be the set of origin strands for extremal path collections in $G_1$ and $G_2$, respectively.  We also define $\mathbf{w}^{(d+1)}$ to be the truncation of $\mathbf{w}$ such that the last simple reflection of $\mathbf{w}^{(d+1)}$ is the final simple reflection, $s_{i_{j_{d+1}}}$ of $\mathbf{v}^{(d+1)}=s_{i_{j_1}}\cdots s_{i_{j_{d+1}}}$. 

Suppose $s_{i_{j_{d+1}}}=s_r$, so that $v^{(d+1)}=v^{(d)}s_r$. Then, \Cref{matrixdecompose} and \Cref{constructG} imply that $G_2$ differs from $G_1$ according to the following three operations, which we name for future reference:

 \begin{itemize}
 \item \typeop{(\textbf{Rem})}{removal} A single vertical edge is removed from $G_1$ between strands $r$ and $r+1$ in some column, say in column $c$.
 \item \typeop{(\textbf{Int})}{interchange} The source vertex labels on strands $r$ and $r+1$ are interchanged, with the larger label moving from strand $r+1$ to strand $r$. 
 \item \typeop{(\textbf{Swap})}{incidences} Any vertical edge of $G_1$ strictly left of column $c$, or in column $c$ and originating below strand $r+1$, which has an endpoint on either strand $r$ or strand $r+1$ has that endpoint swapped to strand $r+1$ or strand $r$, respectively. Note that it is impossible for such an edge to have endpoints on both strands $r$ and $r+1$ by \Cref{vertarrow}. 
  \end{itemize}

As a result of \Cref{incidences}, there are a number of different paths in $G_2$ and in $G_1$ which can be naturally associated with one another. Explicitly, we will define a map from paths of $G_2$ to paths of $G_1$ and say that paths $p_1$ and $p_2$ in $G_1$ and $G_2$, respectively, are associated if this map takes $p_2$ to $p_1$. These associations will help us to get a concrete grasp on how the extremal path collections in $G_2$ compare to those in $G_1$. Note that the map goes ``backwards"; $G_2$ is obtained from $G_1$ by applying \Cref{removal}, \Cref{interchange}, and \Cref{incidences}, but we map each path in $G_2$ to an associated path in $G_1$. We now describe the map explicitly on a case-by-case basis. By construction, each vertical edge of $G_2$ corresponds to a vertical edge of $G_1$; some are identical and some have their endpoints modified by \Cref{incidences}. In what follows, we will abuse notation and identify an edge in $G_2$ with its counterpart in $G_1$.

\begin{definition}\label{def:swapequivalencemap}
    Define the map $\Lambda$ from paths in $G_2$ to paths in $G_1$ as follows: 
\begin{enumerate}
    \item \label{case1} Let $p$ be a path in $G_2$ originating on strand $r$.
      \begin{enumerate}
          \item \label{case0} If $p$ is diagonal, $\Lambda(p)$ is the diagonal path on strand $r+1$ in $G_1$.
          \item \label{case1a}If $p$ leaves strand $r$ weakly left of column $c$, $\Lambda(p)$ is the path in $G_1$ originating on strand $r+1$ which uses precisely the same edges as $p$.  
          \item  \label{case1b} If $p$ leaves strand $r$ right of column $c$, $\Lambda(p)$ is the path in $G_1$ originating on strand $r+1$ which uses all the edges of $p$ except the edge which $p$ uses to get from strand $r$ to strand $r+1$. 
      \end{enumerate}
      See \Cref{figupto} for examples.
    \item \label{case2} Let $p$ be a path in $G_2$ originating on strand $r+1$.
    
    \begin{enumerate}
        \item \label{case2a}If $p$ leaves strand $r+1$ strictly left of column $c$, $\Lambda(p)$ is the path in $G_1$ originating on strand $r$ which uses the same vertical edges. 
        
        \item \label{case2b} If $p$ leaves strand $r+1$ weakly right of column $c$ or is diagonal, $\Lambda(p)$ is the path in $G_1$ originating on strand $r$ which uses the vertical edge removed by \Cref{removal} to get from strand $r$ to strand $r+1$, and then afterwards uses the same vertical edges as $p$. 
    \end{enumerate}
    
    \item \label{case3} Let $p$ be a path in $G_2$ originating on strand $\sigma$ for $\sigma\neq r,r+1$.
    
    \begin{enumerate}
        \item \label{case3a}If $p$ does not intersect strands $r$ or $r+1$, $\Lambda(p)$ is the path in $G_1$ originating on strand $\sigma$ which uses the same vertical edges.
        \item \label{case3b}
        If $p$ both enters and exits strand $r$ strictly left of column $c$ or both enters and exits strand $r+1$ weakly left of column $c$, $\Lambda(p)$ is the path in $G_1$ originating on strand $\sigma$ which uses the same vertical edges as $p$. 
         \item \label{case3c} If $p$ enters strand $r+1$ weakly left of column $c$ and leaves strand $r+1$ weakly right of column $c$ or terminates on strand $r+1$, $\Lambda(p)$ is the path in $G_1$ which originates on strand $\sigma$, uses all the same edges as $p$ and additionally uses the vertical edge removed by \Cref{removal} to get from strand $r$ to strand $r+1$. See \Cref{figupto2} for an example.
         
        \item \label{case3d}If $p$ enters strand $r$ weakly left of column $c$, uses a vertical edge to reach strand $r+1$ and then either exits strand $r+1$ weakly right of column $c$ or terminates on strand $r+1$, $\Lambda(p)$ is the path in $G_1$ originating on strand $\sigma$ which uses the same vertical edges except for the edge between strands $r$ and $r+1$. See \Cref{figupto3} for an example. 
        
        \item \label{case3d2}If $p$ enters strand $r$ strictly left of column $c$, and terminates on strand $r$, $\Lambda(p)$ is the path in $G_1$ originating on strand $\sigma$ which uses the same vertical edges and terminates on strand $r+1$. 
        
        \item \label{case3e}If $p$ enters strand $r$ strictly right of column $c$, $\Lambda(p)$ is the path in $G_1$ originating on strand $\sigma$ which uses the same vertical edges.

    \end{enumerate}

\end{enumerate}
\end{definition}
Observe that we need not account for when $p$ enters strand $r$ in column $c$. If this were possible, there would have to be two vertical edges terminating at $(r+1,c)_{G_1}$; The one removed by \Cref{removal} and the one modified by \Cref{incidences} to allow $p$ to enter strand $r$ in column $c$. The construction in \Cref{constructG} does not allow this to happen. Also, since there are no vertical edges skipping strands to the right of column $c$, any path entering strand $r+1$ strictly right of column $c$ must pass through strand $r$ and so is covered by cases \ref{case3d} and \ref{case3e}. Accordingly, $\Lambda$ is defined on all paths in $G_2$.

\begin{definition}\label{upto}
Let $p$ be a path in $G_2$ and let $p'=\Lambda(p)$ be a path in $G_1$. We will say that $p$ and $p'$ are \textbf{\Cref{incidences} equivalent}. Some instances of this relationship are illustrated in \Cref{figupto}, \Cref{figupto2}, and \Cref{figupto3}.
\end{definition}

\begin{lemma}\label{lem:laminverse}
Let $p'$ be a path in $G_1$ and assume that $p'$ does not leave strand $r+1$ to the right of column $c$. Then, $|\Lambda^{-1}(p')|\leq 1$. In words, $p'$ is \Cref{incidences} equivalent to at most one path.     
\end{lemma}

\begin{proof}
    This can be checked directly. Specifically, one can observe that if $p_1\neq p_2$ and $\Lambda(p_1)=\Lambda(p_2)$, then $p_1$ and $p_2$ fall under cases \ref{case2b}, \ref{case3d}, or \ref{case3d2} of \Cref{def:swapequivalencemap}.
\end{proof}

\begin{figure}[H]
 \begin{subfigure}{0.49\textwidth}
  \scalebox{0.79}{
   \begin{tikzpicture}[node distance={10.5 mm}, thick, main/.style = {draw, circle, minimum size=0.8cm}, 
blank/.style={circle, draw=green!0, fill=green!0, very thin, minimum size=3.5mm}]

\node[main] (1) {$i_1'$};
\node[main] (2) [above of=1] {$i_2'$};
\node[main] (3) [above of = 2] {$i_3'$}; 
\node[main] (4) [above of=3] {$i_4'$};
\node[main](5) [above of = 4] {$i_5'$};
\node[main](6) [above of = 5]{$i_6'$};
\node (blank)[above of = 6]{};
\node[main](66) [right of = blank]{\tiny $r+5$};
\node(65) [below of = 66]{};
\node(64) [below of = 65]{};
\node(63) [below of = 64]{};
\node(62) [below of = 63]{};
\node(61) [below of = 62]{};
\node[main] (55) [right of = 66]{\tiny $r+4$};
\node(54) [below of = 55]{};
\node(53) [below of = 54]{};
\node(52) [below of = 53]{};
\node(51) [below of = 52]{};
\node[main] (44) [right of = 55] {\tiny $r+3$};
\node(43) [below of = 44]{};
\node(42) [below of = 43]{};
\node(41) [below of = 42]{};
\node[main] (33) [right of = 44] {\tiny $r+2$};
\node(32) [below of = 33]{};
\node(31) [below of = 32]{};
\node[main] (22) [right of = 33] {\tiny $r+1$};
\node(21) [below of = 22]{};
\node[main] (11) [right of = 22] {$r$};

\draw[] (6) -- ([xshift=-5mm,yshift=-5mm]66.center);
\draw[-{Latex[length=3mm]},red] ([xshift=-5mm,yshift=-5mm]66.center) -- (66);
\draw[] (5) -- ([xshift=-5mm,yshift=-5mm]55.center);
\draw[-{Latex[length=3mm]}] ([xshift=-5mm,yshift=-5mm]55.center) -- (55);
\draw[-{Latex[length=3mm]}] (4) -- (44);
\draw[] (3) -- ([xshift=-5mm,yshift=-5mm]53.center);
\draw[] ([xshift=-5mm,yshift=-5mm]53.center) -- ([xshift=-5mm,yshift=-5mm]33.center);
\draw[-{Latex[length=3mm]},dotted] ([xshift=-5mm,yshift=-5mm]33.center)--(33);
\draw[red,dotted] (2) -- ([xshift=-5mm,yshift=-5mm]62.center);
\draw[dotted] ([xshift=-5mm,yshift=-5mm]62.center) -- ([xshift=-5mm,yshift=-5mm]42.center);
\draw[dotted] ([xshift=-5mm,yshift=-5mm]42.center) -- ([xshift=-5mm,yshift=-5mm]32.center);
\draw[-{Latex[length=3mm]}] ([xshift=-5mm,yshift=-5mm]32.center) -- (22);
\draw[] (1) -- ([xshift=-5mm,yshift=-5mm]61.center);
\draw[] ([xshift=-5mm,yshift=-5mm]61.center) -- ([xshift=-5mm,yshift=-5mm]41.center);
\draw[] ([xshift=-5mm,yshift=-5mm]41.center) -- ([xshift=-5mm,yshift=-5mm]31.center);
\draw[-{Latex[length=3mm]}] ([xshift=-5mm,yshift=-5mm]31.center)--(11);

\draw[-{Latex[length=3mm]},red] ([xshift=-5mm,yshift=-5mm]62.center) to ([xshift=-5mm,yshift=-5mm]63.center);
\draw[-{Latex[length=3mm]},red] ([xshift=-5mm,yshift=-5mm]63.center) -- ([xshift=-5mm,yshift=-5mm]64.center);
\draw[-{Latex[length=3mm]},red] ([xshift=-5mm,yshift=-5mm]64.center) -- ([xshift=-5mm,yshift=-5mm]65.center);
\draw[-{Latex[length=3mm]},red]
([xshift=-5mm,yshift=-5mm]65.center) -- ([xshift=-5mm,yshift=-5mm]66.center);
\draw[-{Latex[length=3mm]}] ([xshift=-5mm,yshift=-5mm]51.center) --
([xshift=-5mm,yshift=-5mm]52.center)node [pos=0.55, right] {$e$};
\draw[-{Latex[length=3mm]}] ([xshift=-5mm,yshift=-5mm]41.center) -- ([xshift=-5mm,yshift=-5mm]42.center);
\draw[-{Latex[length=3mm]},dotted] ([xshift=-5mm,yshift=-5mm]32.center) -- ([xshift=-5mm,yshift=-5mm]33.center);
\draw[-{Latex[length=3mm]}] ([xshift=-5mm,yshift=-5mm]31.center) -- ([xshift=-5mm,yshift=-5mm]32.center);

\end{tikzpicture}} 
\label{upto2}
\caption{}
\end{subfigure}
\begin{subfigure}{0.49\textwidth}
  \scalebox{0.79}{
  \begin{tikzpicture}[node distance={10.5 mm}, thick, main/.style = {draw, circle,minimum size= 0.8cm}, 
blank/.style={circle, draw=green!0, fill=green!0, very thin, minimum size=3.5mm}]

\node[main] (1) {$i_2'$};
\node[main] (2) [above of=1] {$i_1'$};
\node[main] (3) [above of = 2] {$i_3'$}; 
\node[main] (4) [above of=3] {$i_4'$};
\node[main](5) [above of = 4] {$i_5'$};
\node[main](6) [above of = 5]{$i_6'$};
\node (blank)[above of = 6]{};
\node[main](66) [right of = blank]{\tiny $r+5$};
\node(65) [below of = 66]{};
\node(64) [below of = 65]{};
\node(63) [below of = 64]{};
\node(62) [below of = 63]{};
\node(61) [below of = 62]{};
\node[main] (55) [right of = 66]{\tiny $r+4$};
\node(54) [below of = 55]{};
\node(53) [below of = 54]{};
\node(52) [below of = 53]{};
\node(51) [below of = 52]{};
\node[main] (44) [right of = 55] {\tiny $r+3$};
\node(43) [below of = 44]{};
\node(42) [below of = 43]{};
\node(41) [below of = 42]{};
\node[main] (33) [right of = 44] {\tiny $r+2$};
\node(32) [below of = 33]{};
\node(31) [below of = 32]{};
\node[main] (22) [right of = 33] {\tiny $r+1$};
\node(21) [below of = 22]{};
\node[main] (11) [right of = 22] {$r$};

\draw[] (6) -- ([xshift=-5mm,yshift=-5mm]66.center);
\draw[-{Latex[length=3mm]},red] ([xshift=-5mm,yshift=-5mm]66.center) -- (66);
\draw[] (5) -- ([xshift=-5mm,yshift=-5mm]55.center);
\draw[-{Latex[length=3mm]}] ([xshift=-5mm,yshift=-5mm]55.center) -- (55);
\draw[-{Latex[length=3mm]}] (4) -- (44);
\draw[] (3) -- ([xshift=-5mm,yshift=-5mm]53.center);
\draw[] ([xshift=-5mm,yshift=-5mm]53.center) -- ([xshift=-5mm,yshift=-5mm]33.center);
\draw[-{Latex[length=3mm]},dotted] ([xshift=-5mm,yshift=-5mm]33.center)--(33);
\draw[] (2) -- ([xshift=-5mm,yshift=-5mm]52.center);
\draw[] ([xshift=-5mm,yshift=-5mm]52.center) -- ([xshift=-5mm,yshift=-5mm]42.center);
\draw[dotted] ([xshift=-5mm,yshift=-5mm]42.center) -- ([xshift=-5mm,yshift=-5mm]32.center);
\draw[-{Latex[length=3mm]}] ([xshift=-5mm,yshift=-5mm]32.center) -- (22);
\draw[red,dotted] (1) -- ([xshift=-5mm,yshift=-5mm]61.center);
\draw[dotted] ([xshift=-5mm,yshift=-5mm]61.center) -- ([xshift=-5mm,yshift=-5mm]41.center);
\draw[dotted] ([xshift=-5mm,yshift=-5mm]41.center) -- ([xshift=-5mm,yshift=-5mm]31.center);
\draw[-{Latex[length=3mm]}] ([xshift=-5mm,yshift=-5mm]31.center)--(11);

\draw[-{Latex[length=3mm]},red] ([xshift=-5mm,yshift=-5mm]61.center) to [out=-280,in=-78] ([xshift=-5mm,yshift=-5mm]63.center);
\draw[-{Latex[length=3mm]},red] ([xshift=-5mm,yshift=-5mm]63.center) -- ([xshift=-5mm,yshift=-5mm]64.center);
\draw[-{Latex[length=3mm]},red] ([xshift=-5mm,yshift=-5mm]64.center) -- ([xshift=-5mm,yshift=-5mm]65.center);
\draw[-{Latex[length=3mm]},red]
([xshift=-5mm,yshift=-5mm]65.center) -- ([xshift=-5mm,yshift=-5mm]66.center);
\draw[-{Latex[length=3mm]},dotted] ([xshift=-5mm,yshift=-5mm]41.center) -- ([xshift=-5mm,yshift=-5mm]42.center);
\draw[-{Latex[length=3mm]},dotted] ([xshift=-5mm,yshift=-5mm]32.center) -- ([xshift=-5mm,yshift=-5mm]33.center);
\draw[-{Latex[length=3mm]},dotted] ([xshift=-5mm,yshift=-5mm]31.center) -- ([xshift=-5mm,yshift=-5mm]32.center);

\end{tikzpicture}} 
\label{upto1}
\caption{}
\end{subfigure}
\caption{The left graph is $G_1$ and the right graph is $G_2$. Edge $e$ is the edge which is removed by \Cref{removal} and lies in column $c$. $\Lambda$ maps the red path on the right to the red path on the left (case \ref{case2a}) and each of the dotted paths on the right to the dotted path on the left (case \ref{case2b}). Accordingly, the red paths are \Cref{incidences} equivalent, and the dotted path on the left is \Cref{incidences} equivalent to each of the dotted paths on the right.}
\label{figupto}
 \end{figure}

\begin{figure}[H]

\begin{subfigure}{0.49\textwidth}
  \scalebox{0.79}{
  \begin{tikzpicture}[node distance={10.5 mm}, thick, main/.style = {draw, circle,minimum size= 0.8cm}, 
blank/.style={circle, draw=green!0, fill=green!0, very thin, minimum size=3.5mm}]

\node[main] (1) {$i_1'$};
\node[main] (2) [above of=1] {$i_2'$};
\node[main] (3) [above of = 2] {$i_3'$}; 
\node[main] (4) [above of=3] {$i_4'$};
\node[main](5) [above of = 4] {$i_5'$};
\node[main](6) [above of = 5]{$i_6'$};
\node (blank)[above of = 6]{};
\node[main](66) [right of = blank]{\tiny $r+4$};
\node(65) [below of = 66]{};
\node(64) [below of = 65]{};
\node(63) [below of = 64]{};
\node(62) [below of = 63]{};
\node(61) [below of = 62]{};
\node[main] (55) [right of = 66]{\tiny $r+3$};
\node(54) [below of = 55]{};
\node(53) [below of = 54]{};
\node(52) [below of = 53]{};
\node(51) [below of = 52]{};
\node[main] (44) [right of = 55] {\tiny $r+2$};
\node(43) [below of = 44]{};
\node(42) [below of = 43]{};
\node(41) [below of = 42]{};
\node[main] (33) [right of = 44] {\tiny $r+1$};
\node(32) [below of = 33]{};
\node(31) [below of = 32]{};
\node[main] (22) [right of = 33] { $r$};
\node(21) [below of = 22]{};
\node[main] (11) [right of = 22] {\tiny $r-1$};

\draw[] (6) -- ([xshift=-5mm,yshift=-5mm]66.center);
\draw[-{Latex[length=3mm]}] ([xshift=-5mm,yshift=-5mm]66.center) -- (66);
\draw[] (5) -- ([xshift=-5mm,yshift=-5mm]55.center);
\draw[-{Latex[length=3mm]}] ([xshift=-5mm,yshift=-5mm]55.center) -- (55);
\draw[] (4) -- ([xshift=-5mm,yshift=-5mm]44.center);
\draw[-{Latex[length=3mm]},dotted] ([xshift=-5mm,yshift=-5mm]44.center) -- (44);
\draw[] (3) -- ([xshift=-5mm,yshift=-5mm]53.center);
\draw[dotted] ([xshift=-5mm,yshift=-5mm]53.center) -- ([xshift=-5mm,yshift=-5mm]43.center);
\draw[-{Latex[length=3mm]}] ([xshift=-5mm,yshift=-5mm]43.center)--(33);
\draw[] (2) -- ([xshift=-5mm,yshift=-5mm]62.center);
\draw[dotted] ([xshift=-5mm,yshift=-5mm]62.center) -- ([xshift=-5mm,yshift=-5mm]52.center);
\draw[-{Latex[length=3mm]}] ([xshift=-5mm,yshift=-5mm]52.center) -- (22);
\draw[dotted] (1) -- ([xshift=-5mm,yshift=-5mm]61.center);
\draw[] ([xshift=-5mm,yshift=-5mm]61.center) -- ([xshift=-5mm,yshift=-5mm]41.center);
\draw[] ([xshift=-5mm,yshift=-5mm]41.center) -- ([xshift=-5mm,yshift=-5mm]31.center);
\draw[-{Latex[length=3mm]}] ([xshift=-5mm,yshift=-5mm]31.center)--(11);

\draw[-{Latex[length=3mm]},dotted] ([xshift=-5mm,yshift=-5mm]61.center) to ([xshift=-5mm,yshift=-5mm]62.center);
\draw[-{Latex[length=3mm]},dotted] ([xshift=-5mm,yshift=-5mm]52.center) --
([xshift=-5mm,yshift=-5mm]53.center)node [pos=0.55, right] {$e$};
\draw[-{Latex[length=3mm]},dotted] ([xshift=-5mm,yshift=-5mm]43.center) -- ([xshift=-5mm,yshift=-5mm]44.center);
\draw[-{Latex[length=3mm]}] ([xshift=-5mm,yshift=-5mm]42.center) -- ([xshift=-5mm,yshift=-5mm]43.center);
\draw[-{Latex[length=3mm]}] ([xshift=-5mm,yshift=-5mm]41.center) -- ([xshift=-5mm,yshift=-5mm]42.center);

\end{tikzpicture}}
\caption{}
\end{subfigure}
 \begin{subfigure}{0.49\textwidth}
  \scalebox{0.79}{
  \begin{tikzpicture}[node distance={10.5 mm}, thick, main/.style = {draw, circle,minimum size= 0.8cm}, 
blank/.style={circle, draw=green!0, fill=green!0, very thin, minimum size=3.5mm}]

\node[main] (1) {$i_1'$};
\node[main] (2) [above of=1] {$i_3'$};
\node[main] (3) [above of = 2] {$i_2'$}; 
\node[main] (4) [above of=3] {$i_4'$};
\node[main](5) [above of = 4] {$i_5'$};
\node[main](6) [above of = 5]{$i_6'$};
\node (blank)[above of = 6]{};
\node[main](66) [right of = blank]{\tiny $r+4$};
\node(65) [below of = 66]{};
\node(64) [below of = 65]{};
\node(63) [below of = 64]{};
\node(62) [below of = 63]{};
\node(61) [below of = 62]{};
\node[main] (55) [right of = 66]{\tiny $r+3$};
\node(54) [below of = 55]{};
\node(53) [below of = 54]{};
\node(52) [below of = 53]{};
\node(51) [below of = 52]{};
\node[main] (44) [right of = 55] {\tiny $r+2$};
\node(43) [below of = 44]{};
\node(42) [below of = 43]{};
\node(41) [below of = 42]{};
\node[main] (33) [right of = 44] {\tiny $r+1$};
\node(32) [below of = 33]{};
\node(31) [below of = 32]{};
\node[main] (22) [right of = 33] { $r$};
\node(21) [below of = 22]{};
\node[main] (11) [right of = 22] {\tiny $r-1$};

\draw[] (6) -- ([xshift=-5mm,yshift=-5mm]66.center);
\draw[-{Latex[length=3mm]}] ([xshift=-5mm,yshift=-5mm]66.center) -- (66);
\draw[] (5) -- ([xshift=-5mm,yshift=-5mm]55.center);
\draw[-{Latex[length=3mm]}] ([xshift=-5mm,yshift=-5mm]55.center) -- (55);
\draw[] (4) -- ([xshift=-5mm,yshift=-5mm]44.center);
\draw[-{Latex[length=3mm]},dotted] ([xshift=-5mm,yshift=-5mm]44.center) -- (44);
\draw[] (3) -- ([xshift=-5mm,yshift=-5mm]63.center);
\draw[dotted] ([xshift=-5mm,yshift=-5mm]63.center) -- ([xshift=-5mm,yshift=-5mm]43.center);
\draw[-{Latex[length=3mm]}] ([xshift=-5mm,yshift=-5mm]43.center)--(33);
\draw[] (2) -- ([xshift=-5mm,yshift=-5mm]62.center);
\draw[] ([xshift=-5mm,yshift=-5mm]62.center) -- ([xshift=-5mm,yshift=-5mm]32.center);
\draw[-{Latex[length=3mm]}] ([xshift=-5mm,yshift=-5mm]32.center) -- (22);
\draw[dotted] (1) -- ([xshift=-5mm,yshift=-5mm]61.center);
\draw[] ([xshift=-5mm,yshift=-5mm]61.center) -- ([xshift=-5mm,yshift=-5mm]41.center);
\draw[] ([xshift=-5mm,yshift=-5mm]41.center) -- ([xshift=-5mm,yshift=-5mm]31.center);
\draw[-{Latex[length=3mm]}] ([xshift=-5mm,yshift=-5mm]31.center)--(11);

\draw[-{Latex[length=3mm]},dotted] ([xshift=-5mm,yshift=-5mm]61.center) to [out=-280,in=-78] ([xshift=-5mm,yshift=-5mm]63.center);
\draw[-{Latex[length=3mm]},dotted] ([xshift=-5mm,yshift=-5mm]43.center) -- ([xshift=-5mm,yshift=-5mm]44.center);
\draw[-{Latex[length=3mm]}] ([xshift=-5mm,yshift=-5mm]42.center) -- ([xshift=-5mm,yshift=-5mm]43.center);
\draw[-{Latex[length=3mm]}] ([xshift=-5mm,yshift=-5mm]41.center) -- ([xshift=-5mm,yshift=-5mm]42.center);

\end{tikzpicture}} 
\caption{}
\end{subfigure}
\caption{The left graph is $G_1$ and the right graph is $G_2$. Edge $e$ is the edge which is removed by \Cref{removal} and lies in column $c$. $\Lambda$ maps the dotted path on the right to the dotted path on the left (case \ref{case3c}). Accordingly, the dotted paths are \Cref{incidences} equivalent. }

\label{figupto2}
 \end{figure}

\begin{figure}[H]

\begin{subfigure}{0.49\textwidth}
  \scalebox{0.79}{
  \begin{tikzpicture}[node distance={10.5 mm}, thick, main/.style = {draw, circle,minimum size= 0.8cm}, 
blank/.style={circle, draw=green!0, fill=green!0, very thin, minimum size=3.5mm}]

\node[main] (1) {$i_1'$};
\node[main] (2) [above of=1] {$i_2'$};
\node[main] (3) [above of = 2] {$i_3'$}; 
\node[main] (4) [above of=3] {$i_4'$};
\node[main](5) [above of = 4] {$i_5'$};
\node[main](6) [above of = 5]{$i_6'$};
\node (blank)[above of = 6]{};
\node[main](66) [right of = blank]{\tiny $r+4$};
\node(65) [below of = 66]{};
\node(64) [below of = 65]{};
\node(63) [below of = 64]{};
\node(62) [below of = 63]{};
\node(61) [below of = 62]{};
\node[main] (55) [right of = 66]{\tiny $r+3$};
\node(54) [below of = 55]{};
\node(53) [below of = 54]{};
\node(52) [below of = 53]{};
\node(51) [below of = 52]{};
\node[main] (44) [right of = 55] {\tiny $r+2$};
\node(43) [below of = 44]{};
\node(42) [below of = 43]{};
\node(41) [below of = 42]{};
\node[main] (33) [right of = 44] {\tiny $r+1$};
\node(32) [below of = 33]{};
\node(31) [below of = 32]{};
\node[main] (22) [right of = 33] { $r$};
\node(21) [below of = 22]{};
\node[main] (11) [right of = 22] {\tiny $r-1$};

\draw[] (6) -- ([xshift=-5mm,yshift=-5mm]66.center);
\draw[-{Latex[length=3mm]}] ([xshift=-5mm,yshift=-5mm]66.center) -- (66);
\draw[] (5) -- ([xshift=-5mm,yshift=-5mm]55.center);
\draw[-{Latex[length=3mm]}] ([xshift=-5mm,yshift=-5mm]55.center) -- (55);
\draw[-{Latex[length=3mm]}] (4) -- (44);
\draw[] (3) -- ([xshift=-5mm,yshift=-5mm]63.center);
\draw[dotted] ([xshift=-5mm,yshift=-5mm]63.center) -- ([xshift=-5mm,yshift=-5mm]43.center);
\draw[-{Latex[length=3mm]},dotted] ([xshift=-5mm,yshift=-5mm]43.center)--(33);
\draw[] (2) -- ([xshift=-5mm,yshift=-5mm]62.center);
\draw[] ([xshift=-5mm,yshift=-5mm]62.center) -- ([xshift=-5mm,yshift=-5mm]32.center);
\draw[-{Latex[length=3mm]}] ([xshift=-5mm,yshift=-5mm]32.center) -- (22);
\draw[dotted] (1) -- ([xshift=-5mm,yshift=-5mm]61.center);
\draw[] ([xshift=-5mm,yshift=-5mm]61.center) -- ([xshift=-5mm,yshift=-5mm]41.center);
\draw[] ([xshift=-5mm,yshift=-5mm]41.center) -- ([xshift=-5mm,yshift=-5mm]31.center);
\draw[-{Latex[length=3mm]}] ([xshift=-5mm,yshift=-5mm]31.center)--(11);

\draw[-{Latex[length=3mm]},dotted] ([xshift=-5mm,yshift=-5mm]61.center) to [out=-280,in=-78] ([xshift=-5mm,yshift=-5mm]63.center);
\draw[-{Latex[length=3mm]}] ([xshift=-5mm,yshift=-5mm]52.center) --
([xshift=-5mm,yshift=-5mm]53.center)node [pos=0.55, right] {$e$};
\draw[-{Latex[length=3mm]}] ([xshift=-5mm,yshift=-5mm]41.center) -- ([xshift=-5mm,yshift=-5mm]42.center);
\draw[-{Latex[length=3mm]}] ([xshift=-5mm,yshift=-5mm]42.center) -- ([xshift=-5mm,yshift=-5mm]43.center);
\draw[-{Latex[length=3mm]}] ([xshift=-5mm,yshift=-5mm]32.center) -- ([xshift=-5mm,yshift=-5mm]33.center);

\end{tikzpicture}}
\caption{}
\end{subfigure}
 \begin{subfigure}{0.49\textwidth}
  \scalebox{0.79}{
  \begin{tikzpicture}[node distance={10.5 mm}, thick, main/.style = {draw, circle,minimum size= 0.8cm}, 
blank/.style={circle, draw=green!0, fill=green!0, very thin, minimum size=3.5mm}]

\node[main] (1) {$i_1'$};
\node[main] (2) [above of=1] {$i_3'$};
\node[main] (3) [above of = 2] {$i_2'$}; 
\node[main] (4) [above of=3] {$i_4'$};
\node[main](5) [above of = 4] {$i_5'$};
\node[main](6) [above of = 5]{$i_6'$};
\node (blank)[above of = 6]{};
\node[main](66) [right of = blank]{\tiny $r+4$};
\node(65) [below of = 66]{};
\node(64) [below of = 65]{};
\node(63) [below of = 64]{};
\node(62) [below of = 63]{};
\node(61) [below of = 62]{};
\node[main] (55) [right of = 66]{\tiny $r+3$};
\node(54) [below of = 55]{};
\node(53) [below of = 54]{};
\node(52) [below of = 53]{};
\node(51) [below of = 52]{};
\node[main] (44) [right of = 55] {\tiny $r+2$};
\node(43) [below of = 44]{};
\node(42) [below of = 43]{};
\node(41) [below of = 42]{};
\node[main] (33) [right of = 44] {\tiny $r+1$};
\node(32) [below of = 33]{};
\node(31) [below of = 32]{};
\node[main] (22) [right of = 33] { $r$};
\node(21) [below of = 22]{};
\node[main] (11) [right of = 22] {\tiny $r-1$};

\draw[] (6) -- ([xshift=-5mm,yshift=-5mm]66.center);
\draw[-{Latex[length=3mm]}] ([xshift=-5mm,yshift=-5mm]66.center) -- (66);
\draw[] (5) -- ([xshift=-5mm,yshift=-5mm]55.center);
\draw[-{Latex[length=3mm]}] ([xshift=-5mm,yshift=-5mm]55.center) -- (55);
\draw[-{Latex[length=3mm]}] (4) -- (44);
\draw[] (3) -- ([xshift=-5mm,yshift=-5mm]53.center);
\draw[] ([xshift=-5mm,yshift=-5mm]53.center) -- ([xshift=-5mm,yshift=-5mm]43.center);
\draw[-{Latex[length=3mm]},dotted] ([xshift=-5mm,yshift=-5mm]43.center)--(33);
\draw[] (2) -- ([xshift=-5mm,yshift=-5mm]62.center);
\draw[dotted] ([xshift=-5mm,yshift=-5mm]62.center) -- ([xshift=-5mm,yshift=-5mm]32.center);
\draw[-{Latex[length=3mm]}] ([xshift=-5mm,yshift=-5mm]32.center) -- (22);
\draw[dotted] (1) -- ([xshift=-5mm,yshift=-5mm]61.center);
\draw[] ([xshift=-5mm,yshift=-5mm]61.center) -- ([xshift=-5mm,yshift=-5mm]41.center);
\draw[] ([xshift=-5mm,yshift=-5mm]41.center) -- ([xshift=-5mm,yshift=-5mm]31.center);
\draw[-{Latex[length=3mm]}] ([xshift=-5mm,yshift=-5mm]31.center)--(11);

\draw[-{Latex[length=3mm]},dotted] ([xshift=-5mm,yshift=-5mm]61.center) to ([xshift=-5mm,yshift=-5mm]62.center);
\draw[-{Latex[length=3mm]}] ([xshift=-5mm,yshift=-5mm]41.center) -- ([xshift=-5mm,yshift=-5mm]42.center);
\draw[-{Latex[length=3mm]},dotted] ([xshift=-5mm,yshift=-5mm]42.center) -- ([xshift=-5mm,yshift=-5mm]43.center);
\draw[-{Latex[length=3mm]},dotted] ([xshift=-5mm,yshift=-5mm]32.center) -- ([xshift=-5mm,yshift=-5mm]33.center);

\end{tikzpicture}} 
\caption{}
\end{subfigure}

\caption{The left graph is $G_1$ and the right graph is $G_2$. Edge $e$ is the edge which is removed by \Cref{removal} and lies in column $c$. $\Lambda$ maps each of the dotted paths on the right to the dotted path on the left (case \ref{case3d}). Accordingly, the dotted path on the left is \Cref{incidences} equivalent to each of the dotted paths on the right.}
\label{figupto3}
 \end{figure}

We return to the induction structure for our proof of \Cref{extremunique}. In the final stage of that proof, we will need to induct on the size of our \Pl coordinates. The base case is proved in the following result.

\begin{lemma}\label{kequalsone}
Let $b_1$ be the maximal sink attainable by a path originating at source vertex $1'$ in $G=G_{v,w}$. Then, there exists a unique path from $1'$ to $b_1$ and this path is greedy.
\end{lemma}

\begin{proof}
We begin by observing that by \Cref{bruhatinterval}, $b_1$ is the largest number such that there exists $v^{-1}\leq u\leq w^{-1}$ with $u(1)=b_1$. Thus, by choosing $u=w^{-1}$, $b_1\geq w^{-1}(1)$. However, if $u(1)>w^{-1}(1)$, then it is not possible to have $u<w^{-1}$ by \Cref{tableaucrit}. Thus, $b_1=w^{-1}(1)$. 

This lemma is true if $v=id$ by \Cref{idextremunique}. We continue to use the notation introduced just before \Cref{rem:differentnotations}. Recall that $\mathbf{v}^{(d+1)}=\mathbf{v}^{(d)}s_r$, and accordingly, $G_2=G_{v^{(d)}s_r,w}$. We proceed by induction, assuming that the result holds for $G_1=G_{v^{(d)},w}$ and proving it for $G_2=G_{v^{(d+1)},w}$. Note that since $w$ is fixed throughout this induction, so is $b_1$. 

We consider a few cases.

\begin{enumerate}
    \item If $1'$ lies above strand $r+1$ in $G_2$, then $1'$ also lies above strand $r+1$ in $G_1$, since the source labeling in these graphs differ only by \Cref{interchange}. Since $G_2$ and $G_1$ have precisely the same set of edges with both endpoints lying above strand $r+1$, and there are no vertical edges oriented downwards by \Cref{vertarrow}, the paths originating at $1'$ in $G_2$ and $G_1$ are identical. Thus, by induction, the Lemma  also holds in $G_2$. Similarly, if sink vertex $b_1$ lies to the right of strand $r$ in both $G_2$ and $G_1$, all vertical edges used by a path from $1'$ to $b_1$ lie strictly below strand $r$ and so the paths terminating at $b_1$ in $G_2$ and $G_1$ are identical, proving the lemma for $G_2$.

    \item Suppose $1'$ lies below strand $r$ and strand $b_1$ lies left of strand $r$ in $G_2$. Then, consider $p_1$ and $p_2$ from $1'$ to $b_1$ in $G_2$. We will show they coincide and are greedy. Consider the paths $p_1'=\Lambda(p_1)$ and $p_2'=\Lambda(p_2)$ in $G_1$. The paths $p_1$ and $p_2$ fall under one of subcases \ref{case3b}, \ref{case3c}, \ref{case3d}, or \ref{case3e} of \Cref{def:swapequivalencemap}. A close inspection of these subcases shows that $p_1'$ and $p_2'$ both terminate at $b_1$. Thus, by induction, $p_1'=p_2'$ is a greedy path. 
    
    If at least one of $p_1$ and $p_2$ does not fall under subcase \ref{case3d}, then \Cref{lem:laminverse} implies that $p_1=p_2$. If $p_1$ were not greedy, then by examining the relevant subcases of case \ref{case3}, we observe that $\Lambda(p)$ is not greedy either. This means the unique path in $G_1$ from $1'$ to $b_1$ is not greedy, contradicting our induction hypothesis.

    We are left to consider the case where both $p_1$ and $p_2$ fall under subcase \ref{case3d}. Suppose towards a contradiction that both $p_1$ and $p_2$ enter strand $r$ strictly left of column $c$, use a vertical edge to reach strand $r+1$ and then either exit strand $r+1$ weakly right of column $c$ or terminate on strand $r+1$ in $G_2$. Note that $p_1$ and $p_2$ must use strand $r$ to get from strictly left of column $c$ to strictly right of column $c$, since all edges from strand $r$ to strand $r+1$ in $G_2$ are to the right of column $c$. If $p_1$ and $p_2$ differ before reaching $(r,c)_{G_2}$, then $\Lambda(p_1)\neq \Lambda(p_2)$ in $G_1$, a contradiction. Thus, $p_1$ and $p_2$ are identical before reaching $(r,c)_{G_2}$. Moreover, $p_1$ and $p_2$ are greedy before reaching $(r,c)_{G_2}$ since the path they are \Cref{incidences} equivalent to in $G_1$ is greedy by induction and subcase \ref{case3d} preserves greediness.
    
    We now look at the part of the graph to the right of column $c$. For a subexpression $\mathbf{u}$ of $\mathbf{w_0}$, define $\mathbf{\hat{u}}$ to be the expression obtained from $\mathbf{u}$ by removing the simple reflections in $\mathbf{u}$ coming from the first $c$ runs of $\bm{w_0}$. Let $\hat{u}$ be the resulting permutation. Note that, in particular, $\mathbf{\widehat{w_0}}$ is precisely our expression for the longest permutation in $\mathfrak{S}_{n-c}$. Moreover, $\mathbf{\hat{u}}$ is a positive distinguished subexpression of $\mathbf{\widehat{w_0}}$, as can be checked directly from \Cref{posdist}. By construction, $\widehat{v^{(d+1)}}=id$. Thus, if we ignore the labeling of the source vertices, the part of $G_2$ which lies strictly to the right of column $c$ is identical to $G_{id,\hat{w}}$, with $id$ and $\hat{w}$ viewed as permutations in $\mathfrak{S}_{n-c}$. We give a brief example of this construction in \Cref{ex:truncatedgraph}. By \Cref{coruniquepath}, there is a unique left extreme path in $G_{id,\hat{w}}$ which originates on any given strand, and this path is greedy. Thus, $p_1$ and $p_2$ must continue on from $(r,c)_{G_2}$ following the unique greedy path from strand $r$ to $b_1$ in $G_{id,\hat{w}}$. To summarize, we have shown that $p_1=p_2$ is greedy by showing that any path from $1'$ to $b_1$ in $G_2$ must reach $(r,c)_{G_2}$ in the same way, which is greedy, and then terminate as the unique left extreme path originating on strand $r$ in $G_{id,\hat{w}}$, which is also greedy.   
        \begin{example}\label{ex:truncatedgraph}
            Consider the graph $G_{v,w}=G_{s_2,s_1s_3s_2s_1}$ on the right of \Cref{weightedgraph}. Let $d=0$, so $v^{(d+1)}=v^{(1)}=v$. Let us now identify the positive distinguished subexpression for $w$ in $\mathbf{w_0}=(s_1s_2s_3)(s_1s_2)(s_1)$. It is $(s_1s_3)(s_2)(s_1)$, where simple reflections in the $i^{\textnormal{th}}$ pair of parentheses come from the $i^{\textnormal{th}}$ run of $\mathbf{w_0}$. Similarly, the positive distinguished subexpression for $v$ in $\mathbf{w}$ is $()(s_2)()$. Since the unique simple reflection of $\mathbf{v}$ comes from run $2$ of $\mathbf{w_0}$, $c=2$. Removing the first $2$ runs of $\bm{w}$, we are left with $\bm{\hat{w}}=s_1$, viewed as a positive distinguished subexpression of the longest element $\mathbf{\widehat{w_0}}=s_1$ in $\mathfrak{S}_{4-2}=\mathfrak{S}_2$. Ignoring source labels, the part of $G_{v,w}$ to the right of column $2$ is identical to $G_{id,\hat{w}}=G_{id,s_1}$ for $n=2$.
        \end{example}
       
        \item If $1'$ lies below strand $r$ and  $b_1=r$, then a path $p$ in $G_2$ from $1'$ to $b_1$ falls under subcase \ref{case3d2} or \ref{case3e} of the definition of \Cref{incidences} equivalence. If $p$ falls under subcase \ref{case3d2}, then $b_1=r$ and the \Cref{incidences} equivalent path in $G_1$ terminates at $r+1$, to the left of $b_1$. We noted earlier that $b_1$ is fixed throughout the induction, so this is impossible. Thus, we are in subcase \ref{case3e}. The rest of this argument is similar to the argument employed in the previous numbered bullet. If $p_1$ and $p_2$ are both paths from $1'$ to $b_1$, then $\Lambda(p_1)=\Lambda(p_2)$ by induction. By \Cref{lem:laminverse}, this implies $p_1=p_2$. If the unique path $p$ from $1'$ to $b_1$ in $G_2$ were not greedy, then $\Lambda(p)$ is not greedy, again contradicting our induction hypothesis. 
    
    \item Source $1'$ is not on strand $r$ in $G_2$, since \Cref{interchange} implies $1'$ cannot move from strand $r+1$ to strand $r$.
    
    \item If $1'$ lies on strand $r+1$, we are in case \ref{case2} of the definition of \Cref{incidences} equivalence. We repeat the same argument we have used previously: If $p_1$ and $p_2$ are both paths from $1'$ to $b_1$, then $\Lambda(p_1)=\Lambda(p_2)$ by induction. By \Cref{lem:laminverse}, this implies $p_1=p_2$. If the unique path $p$ from $1'$ to $b_1$ in $G_2$ were not greedy, then $\Lambda(p)$ is not greedy, again contradicting our induction hypothesis. 
\end{enumerate}  

This proves the result for $G_2$ and we are done by induction.
\end{proof}

Our goal in the proof of \Cref{extremunique} will be to show that the union of a greedy path collection with diagonal paths in $G_2$ is, first, extremal and second, unique. The next two Lemmas will help with these goals. Recall that $O_1(q)$ and $O_2(q)$ are the sets of strands weakly above strand $q$ with source label in $[k]'$ in $G_1$ and $G_2$, respectively.

 \begin{lemma}\label{maximalbound} 
 Let $q\in[n]$. Assume $S$ is Gale maximal among the sink sets of all non-intersecting path collections originating at $O_1(q)$ in $G_1$. Further assume that if $q=r+1$, then either both or neither of strands $r$ and $r+1$ are in $O_1$. Then there is no non-intersecting path collection in $G_2$ from $O_2(q)$ to a sink set which is Gale larger than $S$. 
 \end{lemma}
 \begin{proof}

We begin by clarifying that the further assumption when $q=r+1$ simply ensures that $|O_1(q)|=|O_2(q)|$. 

Suppose there exists a non-intersecting path collection $C^2_0$ originating at $O_2(q)$ in $G_2$ which violates the lemma by having a sink set $S'$ Gale larger than $S$. We extend $C^2_0$ to a non-intersecting path collection $C^2$ originating from $O_2$ by adding diagonal paths on each strand in $O_2\setminus  O_2(q)$. By \Cref{bruhatinterval}, the sink set of this path collection is of the form $u[k]$ for some $u$ such that $s_r(v^{(d)})^{-1}\leq u\leq w^{-1}$. However, since $(v^{(d)})^{-1}<s_r(v^{(d)})^{-1}$, this means that $(v^{(d)})^{-1}\leq u\leq w^{-1}$. Thus, there exists a non-intersecting path collection $C^1$ in $G_1$ originating from $O_1$ with this same sink set.

  By \Cref{vertarrow}, $C^1$ must have diagonal paths on strands $O_1\setminus O_1(q)$ in order to reach the sink vertices of the diagonal paths in $C^2$. Removing these diagonal paths, we are left with a non-intersecting path collection from $O_1(q)$ to $S'$, which is Gale larger than $S$, a contradiction.

 \end{proof}

 \begin{lemma}\label{uniqueness}
     
     Suppose that $w=w^{(d+1)}$, so that $G_2=G_{v^{(d+1)},w^{(d+1)}}$. Suppose $C^2$ and $D^2$ are two distinct non-intersecting path collections from strands $O_2$ to $S$ in $G_2$. Then there exist two distinct non-intersecting path collections from strands $O_1$ to sink sets which are at least as large as $S$ in Gale order in $G_1$.
 \end{lemma}

 \begin{proof}
    Recall that the final simple reflection in both $\bm{v}^{(d+1)}$ and $\bm{w}^{(d+1)}$ is an $s_r$. It follows that  $G_2$ has no vertical edges which leave strands $r$ or $r+1$ in column $c$, no vertical edges strictly to the right of column $c$, and no vertical edges between strands $r$ and $r+1$ (since all such edges must lie to the right of column $c$). Accordingly, \Cref{lem:laminverse} holds for all paths in $G_2$.

     Construct the path collections $\tilde{C}^1$ and $\tilde{D}^1$ in $G_1$ which consist of the paths \Cref{incidences} equivalent to each of the paths in $C^2$ and in $D^2$, respectively. These are not necessarily non-intersecting. Modify them as follows: If ever a path $p$ in $\tilde{C}^1$ uses the edge removed by \Cref{removal} to get from strand $r$ to strand $r+1$ and intersects with another path in $\tilde{C}^1$ at the top of that edge, namely the point $(r+1,c)_{G_1}$, then replace $p$ by a path that starts identically to $p$, but terminates on strand $r$ without using the edge removed by \Cref{removal}. Call this path collection $C^1$. Similarly, construct $D^1$. We claim these satisfy the conditions of the theorem. This construction is illustrated in \Cref{fig:Ctilde}. 
     
     First we observe that $C^1$ and $D^1$ are non-intersecting path collections. This is a tedious but straightforward check which can be done by considering paths in different subcases of \Cref{def:swapequivalencemap}. Specifically, paths which do not intersect strands $r$ or $r+1$ are unchanged by \Cref{incidences} equivalence, so all one needs to do is to consider, pairwise, paths intersecting strands $r$ and $r+1$ which lie in different subcases of \Cref{def:swapequivalencemap} and which do not enter or leave strands $r$ or $r+1$ to the right of column $c$. 

Next we observe that $C^1$ and $D^1$ are distinct. First, $\tilde{C}^1$ and $\tilde{D}^1$ are distinct since \Cref{lem:laminverse} holds for all paths in $G_2$. For $p$ a path in $G_2$, note that if $\Lambda(p)$ reaches the point $(r,c)_{G_1}$ at the bottom of the edge $e$ removed by \Cref{removal}, it actually uses $e$. It follows that the modification we make to construct $C^1$ and $D^1$ from $\tilde{C}^1$ and $\tilde{D}^1$ is invertible: If ever a path in $C^1$ or $D^1$ takes a diagonal strand past the bottom of the edge removed by \Cref{removal}, we know that the corresponding path in $\tilde{C}^1$ or $\tilde{D}^1$ should use that edge. Thus, the entire construction of $C^1$ and $D^1$ from $C^2$ and $D^2$ is invertible. Since $C^2$ and $D^2$ are distinct, so are $C^1$ and $D^1$. 

Finally, we show that the sink sets of $C^1$ and $D^1$ are at least Gale as large as $S$. Without loss of generality, we focus on $C^1$. Note that a path $p$ in $G_2$ terminates at a sink at most as large as the sink of $\Lambda(p)$. Thus, each path of $\tilde{C}^1$ terminates at a sink at least as large as the sink of the corresponding path in $C^2$. If $C^1=\tilde{C}^1$, we are done. Otherwise, $C^1$ is obtained from $\tilde{C}^1$ by replacing a path $p$ which uses the edge removed by \Cref{removal} with a path terminating at sink $r$. Specifically, this occurs if there is a second path $p'$ which passes the point $(r+1,c)_{G_1}$ at the terminus of the edge removed by \Cref{removal}. Observe that, since there are no vertical edges leaving strand $r+1$ to the right of column $c$, this means that both $p$ and $p'$ terminate at $r+1$ in $\tilde{C}^1$. A path which is \Cref{incidences} equivalent to a path terminating at $r$ or $r+1$ must itself terminate at $r$ or $r+1$. Since $C^2$ is non-intersecting, of the paths which $\Lambda$ maps to $p$ and $p'$, exactly one terminates at $r$ and one at $r+1$. Therefore, $C^1$ only differs from $\tilde{C}^1$ if the sink set $S$ of $C^2$ contains both $r$ and $r+1$, and $\tilde{C}^1$ has two paths terminating at $r+1$. Then, by our modification, $C^1$ contains both $r$ and $r+1$ in its sink set. Together with the fact that each other path in $C^1$ terminates at a sink at least as large as the corresponding path in $C^2$, we conclude that the sink set of $C^1$ is at least as large as $S$ in Gale order.  

\end{proof}

    \begin{figure}[H]
 \begin{tikzpicture}[node distance={10.5 mm}, thick, main/.style = {draw, circle,minimum size=2 mm}, 
blank/.style={circle, draw=green!0, fill=green!0, very thin, minimum size=3.5mm},]

\node[main] (1) {$2'$};
\node[main] (2) [above of=1] {$1'$};
\node[main] (3) [above of = 2] {$3'$}; 
\node (blank)[above of = 3]{};
\node[main] (33) [right of = blank] {$3$};
\node(32) [below of = 33]{};
\node(31) [below of = 32]{};
\node[main] (22) [right of = 33] {$2$};
\node(21) [below of = 22]{};
\node[main] (11) [right of = 22] {$1$};

\draw[-{Latex[length=3mm]}] (3) -- (33);
\draw[-{Latex[length=3mm]},dashed] (2)--(22);
\draw[-{Latex[length=3mm]},red] (1)--(11);

\end{tikzpicture} 
 \begin{tikzpicture}[node distance={10.5 mm}, thick, main/.style = {draw, circle,minimum size=2 mm}, 
blank/.style={circle, draw=green!0, fill=green!0, very thin, minimum size=3.5mm},]
\node[main] (1) {$1'$};
\node[main] (2) [above of=1] {$2'$};
\node[main] (3) [above of = 2] {$3'$}; 
\node (blank)[above of = 3]{};
\node[main] (33) [right of = blank] {$3$};
\node(32) [below of = 33]{};
\node(31) [below of = 32]{};
\node[main] (22) [right of = 33] {$2$};
\node(21) [below of = 22]{};
\node[main] (11) [right of = 22] {$1$};

\draw[-{Latex[length=3mm]}] (3) -- (33);
\draw[red] (2)--([xshift=-5mm,yshift=-5mm]32.center);
\draw[-{Latex[length=3mm]},dashed,red] ([xshift=-5mm,yshift=-5mm]32.center)--(22);

\draw[dashed] (1)--([xshift=-5mm,yshift=-5mm]31.center);
\draw[-{Latex[length=3mm]}] ([xshift=-5mm,yshift=-5mm]31.center) -- (11);

\draw[-{Latex[length=3mm]},dashed] ([xshift=-5mm,yshift=-5mm]31.center) -- ([xshift=-5mm,yshift=-5mm]32.center);

\end{tikzpicture} 
\begin{tikzpicture}[node distance={10.5 mm}, thick, main/.style = {draw, circle,minimum size=2 mm}, 
blank/.style={circle, draw=green!0, fill=green!0, very thin, minimum size=3.5mm},]
\node[main] (1) {$1'$};
\node[main] (2) [above of=1] {$2'$};
\node[main] (3) [above of = 2] {$3'$}; 
\node (blank)[above of = 3]{};
\node[main] (33) [right of = blank] {$3$};
\node(32) [below of = 33]{};
\node(31) [below of = 32]{};
\node[main] (22) [right of = 33] {$2$};
\node(21) [below of = 22]{};
\node[main] (11) [right of = 22] {$1$};

\draw[-{Latex[length=3mm]}] (3) -- (33);
\draw[red] (2)--([xshift=-5mm,yshift=-5mm]32.center);
\draw[-{Latex[length=3mm]},red] ([xshift=-5mm,yshift=-5mm]32.center)--(22);

\draw[dashed] (1)--([xshift=-5mm,yshift=-5mm]31.center);
\draw[-{Latex[length=3mm]},dashed] ([xshift=-5mm,yshift=-5mm]31.center) -- (11);

\draw[-{Latex[length=3mm]}] ([xshift=-5mm,yshift=-5mm]31.center) -- ([xshift=-5mm,yshift=-5mm]32.center);

\end{tikzpicture} 
\caption{The left image shows a non-intersecting path collection $C^2$ in the graph $G_2=G_{s_1,s_1}$, consisting of a dashed path originating on strand $2$ and a red path originating on strand $1$. The middle image shows the path collection $\tilde{C}^1$ consisting of the paths which are \Cref{incidences} equivalent to $C^2$ in $G_1=G_{id,s_1}$. Observe that it is not a non-intersecting path collection. The right image shows the non-intersecting path collection $C^1$ obtained by replacing the dashed path in $\tilde{C}^1$ by a horizontal path.}
\label{fig:Ctilde}
 \end{figure}

We now move on to the proof of \Cref{extremunique}. The proof first addresses the special case $w=w^{(d+1)}$ using a careful induction, before finishing with the general case.

\begin{proof}[Proof of \Cref{extremunique}]

 We begin by showing that extremal path collections are unique and consist of a union of a diagonal path collection and a greedy path collection. The converse will be addressed later. 
 
 Our proof strategy is as follows: We will work by induction, assuming the result holds for $G_1=G_{{v^{(d)}},w}$ and proving it for $G_2=G_{{v^{(d+1)}},w}$. For each $q\in[n]$ and $i\in \{1,2\}$, we will consider the path collection $C^i_q$ in $G_i$ which originates from strands $O_i$ and consists of diagonal paths originating below strand $q$, and a greedy path collection originating weakly above strand $q$. We denote by $S^i_q$ the sink set of $C^i_q$. We will show that the greedy part of $C^2_q$ is left extreme and that $C^2_q$ is unique, proving that for each extremal index, there is a unique extremal path collection consisting of a union of greedy and diagonal paths. We will do all this first in the special case where $w=w^{(d+1)}$, so that the last simple reflection in $\mathbf{w}$ is also the last simple reflection in $\mathbf{v}^{(d+1)}$, and then consider the general case later.

 We proceed by induction on $d$, with base case $v^{(0)}=id$ considered in \Cref{idextremunique}. Let $k=|O_2|$ be the size of the indices being considered, which will be fixed throughout the proof. Recall that $O_2(q)$ is the part of $O_2$ weakly above strand $q$.

For now, let $w=w^{(d+1)}$, which guarantees that $G_2$ has no vertical edges which leave strands $r$ or $r+1$ weakly to the right of column $c$ or enter it strictly to the right of column $c$. We will compare $C^2_q$ with $C^1_q$. Fix $e$ to be the edge which gets removed by \Cref{removal}.
We establish three useful observations: 

\begin{enumerate}[label=\roman*.]

\item 
\label{firstrestriction}

Let $s_r$ be the simple reflection swapping $r$ and $r+1$. Suppose a greedy path $p_\sigma^{C_q^1}$ originating on strand $\sigma$ in the greedy path collection $C^1_q$ uses vertical edges $E$. Then, the path $p_{s_r(\sigma)}^{C^2_q}$ originating on strand $s_r(\sigma)$ in $C^2_q$ uses the vertical edges $E\setminus \{e\}$, that is, the same vertical edges except possibly for $e$. This is straightforward to verify using the facts that (1) other than edge $e$, if there is an edge between strands $\sigma_1$ and $\sigma_2$ in $G_1$, then the corresponding edge in $G_2$ goes from strand $s_r\sigma_1$ to strand $s_r\sigma_2$ and (2) if a path in $G_1$ uses edge $e$, it is necessarily the final vertical edge used by that path.

\item \label{secondrestriction} Suppose $p$ is a path in ${C^1_q}$ which is non-diagonal and terminates on strand $r+1$. Then, it must use edge $e$: Let $\sigma$ be the strand on which $p$ originates. The claim is clear if $\sigma=r$, since $e$ is the unique edge from strand $r$ to strand $r+1$ when $w=w^{(d+1)}$. For the rest of this argument, we suppose $\sigma\neq r$. By the inductive definition of greedy path collections, it suffices to show the result for $q=\sigma$. Suppose, towards a contradiction, that in ${C^1_{\sigma}}$, $p$ originates from strand $\sigma$, terminates on strand $r+1$, and does not use $e$. 

We first show that no path of $C^1_\sigma$ terminates on strand $r$. Suppose otherwise. A path $p_1$ terminating on strand $r$ must pass by the origin of $e$. By greediness, there must be a second path $p_2$ blocking $p_1$ from using $e$. Since $p$ is the greedy path with the lowest origin in $C_\sigma^1$, we know that $p$ is different from $p_2$. However, since $e$ is the rightmost vertical edge incident to strand $r+1$, $p_2$ would block $p$ from terminating on strand $r+1$, a contradiction. 

We next observe that $S_\sigma^2=(S_\sigma^1\setminus (r+1))\cup r$. For $\sigma'>\sigma$, the path path $p_{\sigma'}^{C_{\sigma}^1}$ originates above strand $\sigma$ and thus terminates on neither strand $r$ nor $r+1$. It follows from \hyperref[firstrestriction]{(i)}, that $p_{\sigma'}^{C_{\sigma}^2}$ terminates at the same sink as $p_{\sigma'}^{C_{\sigma}^1}$. Since $p$ terminates on strand $r+1$ and is non-diagonal, and also since we have already considered $\sigma=r$, we may assume $\sigma<r$. Moreover, by assumption (still towards a contradiction), $p$ does not use edge $e$. By \hyperref[firstrestriction]{(i)},  $p_\sigma^{C^2_\sigma}$ uses precisely the same vertical edges as $p$. Let $e'\neq e$ be the vertical edge which $p$ uses to reach strand $r+1$. This is the last vertical edge used by $p$ along its path. After applying \Cref{incidences}, the edge $e'$ terminates on strand $r$ in $G_2$, and it is the last edge used by $p_\sigma^{C^2_\sigma}$ along its path. Thus, $p_\sigma^{C^2_\sigma}$ terminates on strand $r$. Paths originating below strand $\sigma$ in $C_\sigma^1$ are diagonal paths originating below strand $r$ and are thus identical to the corresponding paths in $G_2$.

We now finally reach our contradiction. By \Cref{bruhatinterval}, $S^2_\sigma=(S^1_\sigma\setminus (r+1))\cup r=u([k])$ for some $u$ such that $s_r(v^{(d)})^{-1}\leq u\leq w^{-1}$. This implies that $(v^{(d)})^{-1}\leq u\leq w^{-1}$. Thus, there is some non-intersecting path collection $D$ in $G_1$ with sink set $(S\setminus (r+1))\cup r$. Since there are no edges leaving strand $r+1$ to the right of column $c$ and there is no path in $D$ terminating on strand $r+1$, we can modify $D$ by replacing the path terminating on strand $r$ with a path which begins identically but uses the edge $e$ and terminates on strand $r+1$ instead of $r$. This gives a non-intersecting path collection with source set $[k]'$ and sink set $S^1_\sigma$, which differs from $C^1_\sigma$ in its use of the edge $e$. This contradicts the uniqueness part of our induction hypothesis. Thus, we conclude that if $p$ terminates on strand $r+1$ and is non-diagonal, it must use edge $e$, as desired. 

\item \label{thirdrestriction} It is impossible for $C^1_q$ to have a greedy path terminating on strand $r$ unless it also has a greedy path terminating at $r+1$ which is diagonal: If a greedy path $p$ in $C^1_q$ terminated on strand $r$, then some other path $p'$ must have blocked it from using $e$. Since there are no vertical edges above and to the right of edge $e$, $p'$ must terminate on strand $r+1$. However, by \hyperref[secondrestriction]{(ii)}, if $p'$ were not diagonal, it uses edge $e$. No vertical edge terminates on strand $r$ to the right of $e$, so $p'$ would block any path from terminating on strand $r$.

\end{enumerate}

For now, we assume that $q\neq r+1$; we will handle the $q=r+1$ case separately in \Cref{lem:q=r+1}.

Recall that if a path collection $C$ with sink set $S$ originates at $O$, we denote by $C(q)$ the subcollection originating at $O(q)$ and by $S(q)$ the sink set of $O(q)$. We next show that $S^1_q(q)=S^2_q(q)$, that is, the greedy parts of $C^1_q$ and $C^2_q$ have identical sink sets. First suppose $r+1\notin S^1_q(q)$. By \hyperref[thirdrestriction]{(iii)}, we also have $r\notin S^1_q(q)$. Fix a path $p$ in $C^1_q(q)$ and let $e'$ be the last vertical edge it uses. Then $e'$ terminates on a strand $\sigma\notin\{r,r+1\}$, and so the corresponding edge in $G_2$ also terminates at $\sigma$. The claim is then immediate from \hyperref[firstrestriction]{(i)}. Next, suppose $r+1\in S^1_q(q)$ and the path $p$ terminating there is non-diagonal. By \hyperref[thirdrestriction]{(iii)}, $r\notin S^1_q(q)$. By \hyperref[secondrestriction]{(ii)}, $p$ must use the edge $e$ removed by \Cref{removal}. If $p$ originates on strand $r$, then \hyperref[firstrestriction]{(i)} guarantees a diagonal path on strand $r+1$ in $C^2_q(q)$. If $p$ originates below strand $r$, let $e'$ be the edge it uses to reach strand $r$. The corresponding edge in $G_2$ then terminates at $s_r r=r+1$. Thus, again by \hyperref[firstrestriction]{(i)}, we will have a path terminating at $r+1$ in $C^2_q(q)$ and so $S_q^1(q)=S_q^2(q)$. Finally, suppose $r+1\in S^1_q(q)$ and the path $p$ terminating there is diagonal. Note that there must be a path originating on strand $r$ in $C^1_q$, since the vertex label on strand $r$ is smaller than the vertex label on strand $r+1$ by \Cref{interchange}. Using \hyperref[firstrestriction]{(i)}, it suffices to show that $p_{r}^{C_q^1}$, $p_{r+1}^{C_q^2}$ and $p_{r}^{C_q^2}$ are diagonal too. Note that, by \hyperref[firstrestriction]{(i)}, any path originating above strand $r+1$ is identical in both $C^1_q$ and $C^2_q$. Thus, $p_{r+1}^{C_q^2}$ must be diagonal to satisfy \Cref{maximalbound}. Again by \hyperref[firstrestriction]{(i)}, the greedy path $p_r^{C_q^1}$ can use no vertical edge other than $e$, which it is blocked from doing by the diagonal path $p_{r+1}^{C_q^1}$, so $p_r^{C_q^1}$ is diagonal. Once again by \hyperref[firstrestriction]{(i)}, $p_{r}^{C_q^2}$ must use a subset of the edges used by the diagonal path $p_{r+1}^{C_q^1}$ and so is itself diagonal.

Assuming \Cref{lem:q=r+1}, which we will prove soon, we can now conclude our proof for the $w=w^{(d+1)}$ case. The greedy part of $C_q^2$ is left extreme by \Cref{maximalbound} and \Cref{galemax}. Uniqueness follows from \Cref{uniqueness}.

We consider the general case, where we do not necessarily have $w=w^{(d+1)}$. We continue to assume $q\neq r+1$.
 
  Let $G'_2=G_{v^{(d+1)},w^{(d+1)}}$. Observe that $G'_2$ is obtained from $G_2=G_{v^{(d+1)},w}$ by removing some vertical edges to the right of column $c$, or in column $c$ and above strand $r$. Given a path collection $C$ in $G_2$, define the truncation $T(C)$ in $G'_2$ to be the non-intersecting path collection which is identical to $C$ to the left of column $c$ or in column $c$ and weakly below strand $r+1$, and then terminates diagonally. Similarly, one constructs $G'_1=G_{v^{(d)},w^{(d+1)}}$ and, for a non-intersecting path collection $C$ in $G_1$, one obtains the truncation $T(C)$ in $G'_1$. Note that the set of vertical edges in $G_1$ but not $G'_1$ is precisely the same as the set of vertical edges in $G_2$ but not $G'_2$ (and in bijection with the simple reflections in $\bm{w}$ but not $\bm{w^{(d+1)}}$). We denote this common set by $E$.

Let $C^1_q$ and $C^2_q$ be defined as in the special case above. By induction, $C^1_q$ is extremal in $G_1$ and is unique. We show the same is true for $C^2_q$. 

Since $T(C^2_q)$ is the union of a greedy path collection and a diagonal path collection in $G'_2$, we know it is extremal and unique by the special case $w=w^{(d+1)}$ considered earlier. Moreover, it follows from the proof in that special case that $T(C^2_q)$ has the same sink set as $T(C^1_q)$. This also implies the sink set $S^1_q$ of $C^1_q$ equals the sink set $S^2_q$ of $C^2_q$, since $C^1_q$ and $C^2_q$ are obtained from $T(C^1_q)$ and $T(C^2_q)$, respectively, by greedily using the same set of edges $E$.

Suppose, towards a contradiction, there were a different path collection $C\neq C_q^2$ in $G_2$ with a sink set $S$ such that $S(q)$ is at least as large as $S^2_q(q)=S^1_q(q)$ in Gale order. We will use this to construct a path collection $\tilde{C}$ in $G_1$ which contradicts the induction hypothesis that $C^1_q$ is the unique extremal path collection.

Let $T(S)$ be the sink set of $T(C)$. Then $C$ begins identically to $T(C)$ before using a subset of the edges of $E$ to get from strands $T(S)$ to the sink set $S$. Note that since $T(C)$ terminates at $T(S)$, by \Cref{bruhatinterval}, we have $T(S)=u[k]$ for some $s_r(v^{(d)})^{-1}\leq u\leq (w^{(d+1)})^{-1}$. This implies that $(v^{(d)})^{-1}\leq u\leq (w^{(d+1)})^{-1}$ and so there exists a non-intersecting path collection $\hat{C}$ in $G'_1$ from strands $O_1$ to $T(S)$. Define a non-intersecting path collection $\tilde{C}$ in $G_1$ from $O_1$ to $S$ which extends $\hat{C}$ as follows: it begins identically to $\hat{C}$, and then uses the same subset of $E$ that is used by $C$ to get from strands $T(S)$ to $S$. If $S(q)$ is strictly larger than $S^1_q(q)$ in Gale order, we contradict the fact that $C^1_q(q)$ is left extreme. Otherwise, $S(q)=S^1_q(q)$ and we argue that $\tilde{C}$ differs from $C^1_q$, contradicting the uniqueness of $C^1_q$ and completing our proof. By the uniqueness of extremal path collections in $G_2'$, if $T(C)\neq T(C^2_q)$, then $T(S)$ differs from the sink set of $T(C^2_q)$, which is also the sink set of $T(C^1_q)$. By construction, the sink set of $T(\tilde{C})$ is $T(S)$, so $\tilde{C}\neq C^1_q$. Otherwise, $T(C)=T(C^2_q)$ and so, since $C\neq C^2_q$, the non-diagonal paths of $C$ must use a subset of $E$ in a way that is not greedy. However, by construction, the same would be true of $\tilde{C}$ and so $\tilde{C}\neq C^1_q$.

For the converse, observe that every path collection in $G_2$ that is the union of a greedy path collection originating from the top $q$ vertices of $[|I|]'$ and a diagonal path collection originating from the remaining vertices of $[|I|]'$ is of the form $C^2_q$. 
\end{proof}

\begin{lemma}\label{lem:q=r+1}
    Let $v\leq w\in \mathfrak{S}_n$. Let $\mathbf{w}$ be the positive distinguished subexpression for w  in $\mathbf{w_0}$. Assume the final transposition in the positive distinguished subexpression $\mathbf{v}$ for v in $\mathbf{w}$ is $s_r$. Then, for any $1\leq k\leq n-1$, there is a unique extremal path collection of size $k$ consisting of diagonal paths below strand $r+1$ and a left extreme path collection weakly above strand $r+1$. Moreover, this left extreme path collection is greedy. 
\end{lemma}

\begin{proof}
    We continue using the same notations as in the proof of \Cref{extremunique}. We also adopt the induction structure on the value of $d$, assuming the result holds for $G_1$ and proving it for $G_2$. The base case was proven before we assumed $q\neq r+1$. We may also assume that \Cref{extremunique} holds for any extremal path collection with $q\neq r+1$. In this proof, we do not condition on whether $w=w^{(d+1)}$.

For a fixed value of $d$, we induct on the size of indices being considered. Let $k=|O_2|$. We assume the result holds for extremal path collections consisting of fewer than $|O_2|$ many paths, with the base case of single paths considered in \Cref{kequalsone}.

If strand $r+1$ is not in $O_2$, then $C^2_{r+1}=C^2_{r+2}$ and we conclude the desired result from \Cref{extremunique}. Similarly, if strand $r$ were not in $O_2$, we would have $C^2_{r+1}=C^2_{r}$ and could conclude the desired result from \Cref{extremunique}. Thus, we assume that both strands $r$ and $r+1$ are in $O_2$ and, equivalently, in $O_1$. If $S_{r+1}^2(r+1)> S_{r+1}^1(r+1)$, we have a contradiction to \Cref{maximalbound} combined with \Cref{galemax}. If $S_{r+1}^2(r+1)= S_{r+1}^1(r+1)$, we have that $C^2_{r+1}$ is extremal by \Cref{maximalbound} combined with \Cref{galemax}, and uniqueness follows from \Cref{uniqueness}. Thus, we may assume $S_{r+1}^2(r+1)< S_{r+1}^1(r+1)$. 

    By \Cref{addins} and our graphical description of extremal indices, we have that the sink sets of the left extreme parts of extremal path collections form a flag. Explicitly, for any $1\leq i\leq n-1$,  $S_{i+2}^2(i+2)\subset S_{i+1}^2(i+1)\subset S_{i}^2(i)$ (where we define $S_{n+1}^2(n+1)=\emptyset$). Suppose that $S^2_r(r)=S^2_{r+2}(r+2)\cup\{a,b\}$ for $a<b$. From the proof of \Cref{extremunique}, $S^2_r=S^1_r$ and $S^2_{r+2}=S^1_{r+2}$. To have $S_{r+1}^2(r+1)<S_{r+1}^1(r+1)$, we must have that $p_{r+1}^{C^2_{r+1}}$ terminates at $a$ and $p_{r+1}^{C^1_{r+1}}$ terminates at $b$. Accordingly, $p_{r}^{C^2_{r}}$ terminates at $b$ and $p_{r}^{C^1_{r}}$ terminates at $a$. In order to show $C^2_{r+1}$ is extremal, it suffices to prove there is no non-intersecting path collection in $G_2$ from $O_2$ to $S^1_{r+1}=(S^2_{r+1}\setminus a)\cup b$, as $S_{r+1}^1(r+1)$ is the only set other than $S_{r+1}^2(r+1)$ forming a flag with $S^2_{r+2}(r+2)$ and $S^2_r(r)$.

     We will now consider path collections of size $k-1$. Thus, it will be useful to define $\hat{O}_1$ and $\hat{O}_2$ to be the sets of strands with source labels $[k-1]'$ in $G_1$ and $G_2$, respectively. If source $k'$ lies below strand $r+1$, then the greedy part of the path collection $D$ obtained from $C^2_{r+1}$ by deleting the diagonal path originating from $k'$ is identical to the greedy part of $C^2_{r+1}$. In this case, it is straightforward to see by induction on $k$ that $C^2_{r+1}$ is extremal and unique. Note that the source label on strand $r$ in $G_2$ is larger than the source label on strand $r+1$ by \Cref{interchange}. Since we have both $r$ and $r+1$ in $O_2$, we cannot have $k'$ on strand $r+1$. Thus, we assume that $k'$ lies above strand $r+1$. 
    
    Suppose towards a contradiction there were a non-intersecting path collection $\Gamma$ in $G_2$ from $O_2$ to $S^1_{r+1}$. Apply \Cref{lem:constructionstar} to $C^2_{r+1}(r+1)$ to obtain a greedy path collection $\tilde{D}^2_{r+1}$ originating from $\hat{O}_2(r+1)$. Add the diagonal part of $C^2_{r+1}$ to $\tilde{D}^2_{r+1}$ to obtain a path collection $D^2_{r+1}$ in $G_2$ from sources $\hat{O}_2$ to sink set $S^2_{r+1}\setminus \kappa$ for some $\kappa$. We note that $D^2_{r+1}$ is the union of the greedy path collection $\tilde{D}^2_{r+1}$ and some diagonal paths so, by induction on $k$, $D^2_{r+1}$ is extremal and unique. By deleting the path originating from $k'$ in $\Gamma$, there is also a non-intersecting path collection $\Delta$ in $G_2$ from strands $\hat{O}_2$ to a subset of $S^1_{r+1}$ of size $|S^1_{r+1}|-1$. By \Cref{galemax} and \Cref{maximalbound}, the sink set of the greedy part of $\Delta$ must be no larger than the sink set of the greedy part of $D^2_{r+1}$ in Gale order. This is only possible if $\kappa\leq a$, since $S^2_{r+1}$ has more elements less than or equal to $a$ than does $S^1_{r+1}=(S^2_{r+1}\setminus a)\cup b$. Since $k'$ lies above strand $r+1$, we have $\kappa>r+1$. Thus,  $r+1<\kappa\leq a$. We separate into cases depending on whether $\kappa<a$ or $\kappa=a$, still looking for a contradiction in both cases.

     Recall that \Cref{cor:LGVcor} says that $P_{S_1}P_{S_2}$, expressed as a polynomial in the weights $a_i$, has a monomial for each pair of non-intersecting path collections, one from $[|S_1|]'$ to $S_1$ and the other from $[|S_2|]'$ to $S_2$.

    Suppose $\kappa<a$. Then, let $S=S^2_{r+2}$. Since $r+1<\kappa<b$, we can consider the \Pl relation $P_{(S\setminus\kappa)\cup b}P_{S\setminus (r+1)}=P_{S}P_{(S\setminus (r+1)\kappa)\cup b}+P_{(S\setminus (r+1))\cup b}P_{S\setminus \kappa}$ in the cell corresponding to $G_2$. Note that since $C^2_{r+2}$ is extremal by \Cref{extremunique}, there is no path collection with sink set $(S\setminus \kappa)\cup b$, as this would involve increasing the sink set of the greedy part of $C^2_{r+2}$. Thus, the left side of this \Pl relation is zero. The right hand side must be $0$ as well. Since all \Pl coordinates are a subtraction free combination of the $a_i$ by \Cref{sumonly}, we must in particular have $P_{(S\setminus (r+1))\cup b}P_{S\setminus \kappa}=0$. The greedy path with the bottom-most source in $C^2_{r+1}$ is the path from strand $r+1$ to $a$. Since $\kappa\neq a$, it follows from \Cref{lem:constructionstar} that the path originating on strand $r+1$ in $D^2_{r+1}$ also terminates at $a$. Replacing this path by a diagonal path, we see that there is a path collection originating from sources $[k-1]'$ with sink set $S\setminus \kappa$. This implies that $P_{(S\setminus (r+1))\cup b}=0$, contradicting the existence of $\Gamma$. 

    The only other option is $\kappa=a$. We will again derive a contradiction. Now, let $S=S^2_{r+1}$. Consider the \Pl relation $P_{(S\setminus a \cup b)}P_{S\setminus r}=P_SP_{(S\setminus ar)\cup b}+P_{S\setminus r\cup b}P_{S\setminus a}$ in the cell corresponding to $G_2$. Note that $(S\setminus r)\cup b=S_r^2$, and $S\setminus a$ is extremal since $\kappa=a$. By \Cref{extremunique}, there is a unique extremal path collection with sink set $S_r^2$ and it is greedy weakly above strand $r$. There is a unique extremal path collection with sink set $S\setminus a$ and it is greedy path weakly above strand $r+1$, by induction on the size of extremal indices. Thus, $P_{S\setminus r\cup b}P_{S\setminus a}$ is a monomial. We derive a contradiction by showing that there is no pair of path collections contributing to $P_{(S\setminus a \cup b)}P_{S\setminus r}$ with the same weight.

Let $\mathcal{V}^1=(\mathcal{V}_1=C_{r}^2,\hat{\mathcal{V}}_1)$ be the unique pair of non-intersecting path collections contributing to $P_{S\setminus r\cup b}P_{S\setminus a}$. Let $\mathcal{V}^2=(\mathcal{V}_2,\hat{\mathcal{V}}_2)$ be any pair of non-intersecting path collections contributing to the term $P_{(S\setminus a \cup b)}P_{S\setminus r}$. We will show its total weight cannot be equal to the monomial $P_{S\setminus r\cup b}P_{S\setminus a}$, that is, the total weight of $\mathcal{V}^1$. Recall the notation $\epsilon$ introduced in \Cref{rem:lexmax}. 

In order for the weight of $\mathcal{V}^2$ to be the same as the weight of $\mathcal{V}^1$, we must (at least) have $\epsilon(\mathcal{V}_2)+\epsilon(\hat{\mathcal{V}}_2)=\epsilon(\mathcal{V}_1)+\epsilon(\hat{\mathcal{V}}_1)$. In fact, since diagonal paths do not contribute to $\epsilon$, it suffices to focus on the non-diagonal parts of these path collections, namely, $\mathcal{V}_1(r)$, $\hat{\mathcal{V}}_1(r+1)$, $\mathcal{V}_2(r+1)$ and $\hat{\mathcal{V}}_2(r)$. We will use the fact that a greedy path collection originating from source vertices $O$ uniquely maximizes $\epsilon$ amongst all non-intersecting path collections originating from $O$ in lexicographic order to show this equality cannot hold.

 We will also need a few helpful observations. First, $\hat{\mathcal{V}}_2(r+1)$ is greedy. If it were not, then at some point, a path $q$ of $\hat{\mathcal{V}}_2(r+1)$ fails to use a vertical edge $e$ which it could have used, and which was used by the greedy $\hat{\mathcal{V}}_1(r+1)$. In particular, $q$ passes along the bottom edge of $e$. Thus, no path of $\hat{\mathcal{V}}_2$ can use the edge $e$. If the total weight of $\mathcal{V}^2$ is to equal the weight of $\mathcal{V}^1$, then $e$ must be used by $\mathcal{V}_2$, but not by $\mathcal{V}_1$. We show that this is impossible. In fact, every edge used by $\hat{\mathcal{V}}_1(r+1)$ is used by $\mathcal{V}_1$: We obtain the greedy path collection $\hat{\mathcal{V}}_1(r+1)$ from the greedy path collection $\mathcal{V}_1(r+1)$ by \Cref{lem:constructionstar}. A close reading of the proof of this Lemma proves the claim. In particular, we have shown that $\hat{\mathcal{V}}_2(r+1)=\hat{\mathcal{V}}_1(r+1)$ are both the greedy path collection originating from $\hat{O}_2(r+1)$.

Next, let $p$ be the path originating on strand $r$ in $\mathcal{V}_1$ and let $\hat{p}$ be the path originating on strand $r$ in $\hat{\mathcal{V}}_2$. Thus, $p=p_{r}^{C^2_{r}}$, which terminates at $b$ by assumption. Let $\hat{\mathcal{U}}$ be the greedy path collection originating from $\hat{O}_2(r)$. We now show that $p$ appears in $\hat{\mathcal{U}}$ and use this to conclude that $\epsilon(\hat{p})\leq \epsilon(p)$. Note that since $\hat{\mathcal{U}}$ is greedy and originates from strands $\hat{O}_2(r)$, it must be left extreme by induction on $k$. By \Cref{galemax}, $\hat{\mathcal{U}}$ has the Gale maximal sink set amongst non-intersecting path collections originating from strands $\hat{O}_2(r)$. Observe that $\hat{\mathcal{U}}$ is obtained from the greedy path collection $\mathcal{V}_1(r)$ by using \Cref{lem:constructionstar}. By the inductive nature of the proof of \Cref{lem:constructionstar}, $\hat{\mathcal{U}}$ will contain $\hat{\mathcal{V}}_1(r+1)$ (which is obtained via \Cref{lem:constructionstar} from $\mathcal{V}_1(r+1)$) and either $p$, if it is greedy, or, if not, some other path $\tilde{p}$ originating from strand $r$ which is greedy. Again using the proof of \Cref{lem:constructionstar}, in the latter case, $\tilde{p}$ must terminate at the unique element in the sink of $\mathcal{V}_1(r+1)=C^2_{r}(r+1)$ but not in the sink of $\hat{\mathcal{V}}_1(r+1)$. By definition, this is $\kappa=a$. Thus, by Gale maximality of its sink set, $\hat{\mathcal{U}}$ contains $p$, which terminates at $b>a$, rather than $\tilde{p}$. 

Observe that $\hat{\mathcal{V}}_2(r)$ originates on strands $\hat{O}_2(r)$ and therefore, by greediness of $\hat{\mathcal{U}}$, $\epsilon(\hat{\mathcal{V}}_2(r))\leq \epsilon(\hat{\mathcal{U}})$. However, since $\hat{\mathcal{U}}$ contains $\hat{\mathcal{V}}_1(r+1)=\hat{\mathcal{V}}_2(r+1)$ and the path $p$, we can subtract $\epsilon(\hat{\mathcal{V}}_2(r+1))$ from both sides to obtain $\epsilon(\hat{p})\leq \epsilon(p)$. 

Finally, we observe that since $\mathcal{V}_1(r+1)$ is greedy but $\mathcal{V}_2(r+1)$ is not, we have $\epsilon(\mathcal{V}_2(r+1))< \epsilon(\mathcal{V}_1(r+1))$. Similarly, since  $\hat{\mathcal{V}}_1(r+1)$ is greedy, we have $\epsilon(\hat{\mathcal{V}}_2(r+1))\leq \epsilon(\hat{\mathcal{V}}_1(r+1))$. Putting it all together,

\begin{align*}
\epsilon(\mathcal{V}_2)+\epsilon(\hat{\mathcal{V}}_2) &=\epsilon(\mathcal{V}_2(r+1))+\epsilon(\hat{\mathcal{V}}_2(r)) \\ &  =\epsilon(\mathcal{V}_2(r+1))+\epsilon(\hat{\mathcal{V}}_2(r+1))+\epsilon(p')\\ & <\epsilon(\mathcal{V}_1(r+1))+\epsilon(\hat{\mathcal{V}}_1(r+1))+\epsilon(p) \\ & =\epsilon(\mathcal{V}_1(r))+\epsilon(\hat{\mathcal{V}}_1(r+1)) \\ & 
=\epsilon(\mathcal{V}_1)+\epsilon(\hat{\mathcal{V}}_1),
\end{align*}

proving that $\mathcal{V}^1$ and $\mathcal{V}^2$ cannot have the same weight. As a result, it is impossible to satisfy the \Pl relation, contradicting our assumption that $\kappa=a$.

 \end{proof}

\printbibliography
\end{document}